\def\section{\@startsection{section}{1}%
  \z@{1.2\linespacing\@plus\linespacing}{.5\linespacing}%
  {\normalfont\scshape\centering}}
\def\subsection{\@startsection{subsection}{2}%
  \z@{0.9\linespacing\@plus.7\linespacing}{-.5em}%
  {\normalfont\bfseries}}
\global\let\figforTeXisloaded=\relax\fi
\def\ctr@ln@m#1{\ifx#1\undefined\else%
    \immediate\write16{*** Fig4TeX WARNING : \string#1 already defined.}\fi}
\def\ctr@ld@f#1#2{\ctr@ln@m#2#1#2}
\def\ctr@ln@w#1#2{\ctr@ln@m#2\csname#1\endcsname#2}
{\catcode`\/=0 \catcode`/\=12 /ctr@ld@f/gdef/BS@{\}}
\ctr@ld@f\def\ctr@lcsn@m#1{\expandafter\ifx\csname#1\endcsname\relax\else%
    \immediate\write16{*** Fig4TeX WARNING : \BS@\expandafter\string#1\space already defined.}\fi}
\ctr@ld@f\edef\colonc@tcode{\the\catcode`\:}
\ctr@ld@f\edef\semicolonc@tcode{\the\catcode`\;}
\ctr@ld@f\def\t@stc@tcodech@nge{{\let\c@tcodech@nged=\z@%
    \ifnum\colonc@tcode=\the\catcode`\:\else\let\c@tcodech@nged=\@ne\fi%
    \ifnum\semicolonc@tcode=\the\catcode`\;\else\let\c@tcodech@nged=\@ne\fi%
    \ifx\c@tcodech@nged\@ne%
    \immediate\write16{}
    \immediate\write16{!!!=============================================================!!!}
    \immediate\write16{ Fig4TeX WARNING :}
    \immediate\write16{ The category code of some characters has been changed, which will}
    \immediate\write16{ result in an error (message "Runaway argument?").}
    \immediate\write16{ This probably comes from another package that changed the category}
    \immediate\write16{ code after Fig4TeX was loaded. If that proves to be exact, the}
    \immediate\write16{ solution is to exchange the loading commands on top of your file}
    \immediate\write16{ so that Fig4TeX is loaded last. For example, in LaTeX, we should}
    \immediate\write16{ say :}
    \immediate\write16{\BS@ usepackage[french]{babel}}
    \immediate\write16{\BS@ usepackage{fig4tex}}
    \immediate\write16{!!!=============================================================!!!}
    \immediate\write16{}
    \fi}}
\ctr@ld@f\def\FigforTeX{F\kern-.05em i\kern-.05em g\kern-.1em\raise-.14em\hbox{4}\kern-.19em\TeX}
\ctr@ln@w{newdimen}\epsil@n\epsil@n=0.00005pt
\ctr@ln@w{newdimen}\Cepsil@n\Cepsil@n=0.005pt
\ctr@ln@w{newdimen}\dcq@\dcq@=254pt
\ctr@ln@w{newdimen}\PI@\PI@=3.141592pt
\ctr@ln@w{newdimen}\DemiPI@deg\DemiPI@deg=90pt
\ctr@ln@w{newdimen}\PI@deg\PI@deg=180pt
\ctr@ln@w{newdimen}\DePI@deg\DePI@deg=360pt
\ctr@ld@f\chardef\t@n=10
\ctr@ld@f\chardef\c@nt=100
\ctr@ld@f\chardef\@lxxiv=74
\ctr@ld@f\chardef\@xci=91
\ctr@ld@f\mathchardef\@nMnCQn=9949
\ctr@ld@f\chardef\@vi=6
\ctr@ld@f\chardef\@xxx=30
\ctr@ld@f\chardef\@lvi=56
\ctr@ld@f\chardef\@@lxxi=71
\ctr@ld@f\chardef\@lxxxv=85
\ctr@ld@f\mathchardef\@@mmmmlxviii=4068
\ctr@ld@f\mathchardef\@ccclx=360
\ctr@ld@f\mathchardef\@dccxx=720
\ctr@ln@w{newcount}\p@rtent \ctr@ln@w{newcount}\f@ctech \ctr@ln@w{newcount}\result@tent
\ctr@ln@w{newdimen}\v@lmin \ctr@ln@w{newdimen}\v@lmax \ctr@ln@w{newdimen}\v@leur
\ctr@ln@w{newdimen}\result@t\ctr@ln@w{newdimen}\result@@t
\ctr@ln@w{newdimen}\mili@u \ctr@ln@w{newdimen}\c@rre \ctr@ln@w{newdimen}\delt@
\ctr@ld@f\def\degT@rd{0.017453 }  
\ctr@ld@f\def\rdT@deg{57.295779 } 
\ctr@ln@m\v@leurseule
{\catcode`p=12 \catcode`t=12 \gdef\v@leurseule#1pt{#1}}
\ctr@ld@f\def\repdecn@mb#1{\expandafter\v@leurseule\the#1\space}
\ctr@ld@f\def\arct@n#1(#2,#3){{\v@lmin=#2\v@lmax=#3%
    \maxim@m{\mili@u}{-\v@lmin}{\v@lmin}\maxim@m{\c@rre}{-\v@lmax}{\v@lmax}%
    \delt@=\mili@u\m@ech\mili@u%
    \ifdim\c@rre>\@nMnCQn\mili@u\divide\v@lmax\tw@\c@lATAN\v@leur(\z@,\v@lmax)
    \else%
    \maxim@m{\mili@u}{-\v@lmin}{\v@lmin}\maxim@m{\c@rre}{-\v@lmax}{\v@lmax}%
    \m@ech\c@rre%
    \ifdim\mili@u>\@nMnCQn\c@rre\divide\v@lmin\tw@
    \maxim@m{\mili@u}{-\v@lmin}{\v@lmin}\c@lATAN\v@leur(\mili@u,\z@)%
    \else\c@lATAN\v@leur(\delt@,\v@lmax)\fi\fi%
    \ifdim\v@lmin<\z@\v@leur=-\v@leur\ifdim\v@lmax<\z@\advance\v@leur-\PI@%
    \else\advance\v@leur\PI@\fi\fi%
    \global\result@t=\v@leur}#1=\result@t}
\ctr@ld@f\def\m@ech#1{\ifdim#1>1.646pt\divide\mili@u\t@n\divide\c@rre\t@n\m@ech#1\fi}
\ctr@ld@f\def\c@lATAN#1(#2,#3){{\v@lmin=#2\v@lmax=#3\v@leur=\z@\delt@=\tw@ pt%
    \un@iter{0.785398}{\v@lmax<}%
    \un@iter{0.463648}{\v@lmax<}%
    \un@iter{0.244979}{\v@lmax<}%
    \un@iter{0.124355}{\v@lmax<}%
    \un@iter{0.062419}{\v@lmax<}%
    \un@iter{0.031240}{\v@lmax<}%
    \un@iter{0.015624}{\v@lmax<}%
    \un@iter{0.007812}{\v@lmax<}%
    \un@iter{0.003906}{\v@lmax<}%
    \un@iter{0.001953}{\v@lmax<}%
    \un@iter{0.000976}{\v@lmax<}%
    \un@iter{0.000488}{\v@lmax<}%
    \un@iter{0.000244}{\v@lmax<}%
    \un@iter{0.000122}{\v@lmax<}%
    \un@iter{0.000061}{\v@lmax<}%
    \un@iter{0.000030}{\v@lmax<}%
    \un@iter{0.000015}{\v@lmax<}%
    \global\result@t=\v@leur}#1=\result@t}
\ctr@ld@f\def\un@iter#1#2{%
    \divide\delt@\tw@\edef\dpmn@{\repdecn@mb{\delt@}}%
    \mili@u=\v@lmin%
    \ifdim#2\z@%
      \advance\v@lmin-\dpmn@\v@lmax\advance\v@lmax\dpmn@\mili@u%
      \advance\v@leur-#1pt%
    \else%
      \advance\v@lmin\dpmn@\v@lmax\advance\v@lmax-\dpmn@\mili@u%
      \advance\v@leur#1pt%
    \fi}
\ctr@ld@f\def\c@ssin#1#2#3{\expandafter\ifx\csname COS@\number#3\endcsname\relax\c@lCS{#3pt}%
    \expandafter\xdef\csname COS@\number#3\endcsname{\repdecn@mb\result@t}%
    \expandafter\xdef\csname SIN@\number#3\endcsname{\repdecn@mb\result@@t}\fi%
    \edef#1{\csname COS@\number#3\endcsname}\edef#2{\csname SIN@\number#3\endcsname}}
\ctr@ld@f\def\c@lCS#1{{\mili@u=#1\p@rtent=\@ne%
    \relax\ifdim\mili@u<\z@\red@ng<-\else\red@ng>+\fi\f@ctech=\p@rtent%
    \relax\ifdim\mili@u<\z@\mili@u=-\mili@u\f@ctech=-\f@ctech\fi\c@@lCS}}
\ctr@ld@f\def\c@@lCS{\v@lmin=\mili@u\c@rre=-\mili@u\advance\c@rre\DemiPI@deg\v@lmax=\c@rre%
    \mili@u\@@lxxi\mili@u\divide\mili@u\@@mmmmlxviii%
    \edef\v@larg{\repdecn@mb{\mili@u}}\mili@u=-\v@larg\mili@u%
    \edef\v@lmxde{\repdecn@mb{\mili@u}}%
    \c@rre\@@lxxi\c@rre\divide\c@rre\@@mmmmlxviii%
    \edef\v@largC{\repdecn@mb{\c@rre}}\c@rre=-\v@largC\c@rre%
    \edef\v@lmxdeC{\repdecn@mb{\c@rre}}%
    \fctc@s\mili@u\v@lmin\global\result@t\p@rtent\v@leur%
    \let\t@mp=\v@larg\let\v@larg=\v@largC\let\v@largC=\t@mp%
    \let\t@mp=\v@lmxde\let\v@lmxde=\v@lmxdeC\let\v@lmxdeC=\t@mp%
    \fctc@s\c@rre\v@lmax\global\result@@t\f@ctech\v@leur}
\ctr@ld@f\def\fctc@s#1#2{\v@leur=#1\relax\ifdim#2<\@lxxxv\p@\cosser@h\else\sinser@t\fi}
\ctr@ld@f\def\cosser@h{\advance\v@leur\@lvi\p@\divide\v@leur\@lvi%
    \v@leur=\v@lmxde\v@leur\advance\v@leur\@xxx\p@%
    \v@leur=\v@lmxde\v@leur\advance\v@leur\@ccclx\p@%
    \v@leur=\v@lmxde\v@leur\advance\v@leur\@dccxx\p@\divide\v@leur\@dccxx}
\ctr@ld@f\def\sinser@t{\v@leur=\v@lmxdeC\p@\advance\v@leur\@vi\p@%
    \v@leur=\v@largC\v@leur\divide\v@leur\@vi}
\ctr@ld@f\def\red@ng#1#2{\relax\ifdim\mili@u#1#2\DemiPI@deg\advance\mili@u#2-\PI@deg%
    \p@rtent=-\p@rtent\red@ng#1#2\fi}
\ctr@ld@f\def\pr@c@lCS#1#2#3{\ctr@lcsn@m{COS@\number#3 }%
    \expandafter\xdef\csname COS@\number#3\endcsname{#1}%
    \expandafter\xdef\csname SIN@\number#3\endcsname{#2}}
\pr@c@lCS{1}{0}{0}
\pr@c@lCS{0.7071}{0.7071}{45}\pr@c@lCS{0.7071}{-0.7071}{-45}
\pr@c@lCS{0}{1}{90}          \pr@c@lCS{0}{-1}{-90}
\pr@c@lCS{-1}{0}{180}        \pr@c@lCS{-1}{0}{-180}
\pr@c@lCS{0}{-1}{270}        \pr@c@lCS{0}{1}{-270}
\ctr@ld@f\def\invers@#1#2{{\v@leur=#2\maxim@m{\v@lmax}{-\v@leur}{\v@leur}%
    \f@ctech=\@ne\m@inv@rs%
    \multiply\v@leur\f@ctech\edef\v@lv@leur{\repdecn@mb{\v@leur}}%
    \p@rtentiere{\p@rtent}{\v@leur}\v@lmin=\p@\divide\v@lmin\p@rtent%
    \inv@rs@\multiply\v@lmax\f@ctech\global\result@t=\v@lmax}#1=\result@t}
\ctr@ld@f\def\m@inv@rs{\ifdim\v@lmax<\p@\multiply\v@lmax\t@n\multiply\f@ctech\t@n\m@inv@rs\fi}
\ctr@ld@f\def\inv@rs@{\v@lmax=-\v@lmin\v@lmax=\v@lv@leur\v@lmax%
    \advance\v@lmax\tw@ pt\v@lmax=\repdecn@mb{\v@lmin}\v@lmax%
    \delt@=\v@lmax\advance\delt@-\v@lmin\ifdim\delt@<\z@\delt@=-\delt@\fi%
    \ifdim\delt@>\epsil@n\v@lmin=\v@lmax\inv@rs@\fi}
\ctr@ld@f\def\minim@m#1#2#3{\relax\ifdim#2<#3#1=#2\else#1=#3\fi}
\ctr@ld@f\def\maxim@m#1#2#3{\relax\ifdim#2>#3#1=#2\else#1=#3\fi}
\ctr@ld@f\def\p@rtentiere#1#2{#1=#2\divide#1by65536 }
\ctr@ld@f\def\r@undint#1#2{{\v@leur=#2\divide\v@leur\t@n\p@rtentiere{\p@rtent}{\v@leur}%
    \v@leur=\p@rtent pt\global\result@t=\t@n\v@leur}#1=\result@t}
\ctr@ld@f\def\sqrt@#1#2{{\v@leur=#2%
    \minim@m{\v@lmin}{\p@}{\v@leur}\maxim@m{\v@lmax}{\p@}{\v@leur}%
    \f@ctech=\@ne\m@sqrt@\sqrt@@%
    \mili@u=\v@lmin\advance\mili@u\v@lmax\divide\mili@u\tw@\multiply\mili@u\f@ctech%
    \global\result@t=\mili@u}#1=\result@t}
\ctr@ld@f\def\m@sqrt@{\ifdim\v@leur>\dcq@\divide\v@leur\c@nt\v@lmax=\v@leur%
    \multiply\f@ctech\t@n\m@sqrt@\fi}
\ctr@ld@f\def\sqrt@@{\mili@u=\v@lmin\advance\mili@u\v@lmax\divide\mili@u\tw@%
    \c@rre=\repdecn@mb{\mili@u}\mili@u%
    \ifdim\c@rre<\v@leur\v@lmin=\mili@u\else\v@lmax=\mili@u\fi%
    \delt@=\v@lmax\advance\delt@-\v@lmin\ifdim\delt@>\epsil@n\sqrt@@\fi}
\ctr@ld@f\def\extrairelepremi@r#1\de#2{\expandafter\lepremi@r#2@#1#2}
\ctr@ld@f\def\lepremi@r#1,#2@#3#4{\def#3{#1}\def#4{#2}\ignorespaces}
\ctr@ld@f\def\@cfor#1:=#2\do#3{%
  \edef\@fortemp{#2}%
  \ifx\@fortemp\empty\else\@cforloop#2,\@nil,\@nil\@@#1{#3}\fi}
\ctr@ln@m\@nextwhile
\ctr@ld@f\def\@cforloop#1,#2\@@#3#4{%
  \def#3{#1}%
  \ifx#3\Fig@nnil\let\@nextwhile=\Fig@fornoop\else#4\relax\let\@nextwhile=\@cforloop\fi%
  \@nextwhile#2\@@#3{#4}}

\ctr@ld@f\def\@ecfor#1:=#2\do#3{%
  \def\@@cfor{\@cfor#1:=}%
  \edef\@@@cfor{#2}%
  \expandafter\@@cfor\@@@cfor\do{#3}}
\ctr@ld@f\def\Fig@nnil{\@nil}
\ctr@ld@f\def\Fig@fornoop#1\@@#2#3{}
\ctr@ln@m\list@@rg
\ctr@ld@f\def\trtlis@rg#1#2{\def\list@@rg{#1}%
    \@ecfor\p@rv@l:=\list@@rg\do{\expandafter#2\p@rv@l|}}
\ctr@ln@w{newbox}\b@xvisu
\ctr@ln@w{newtoks}\let@xte
\ctr@ln@w{newif}\ifitis@K
\ctr@ln@w{newcount}\s@mme
\ctr@ln@w{newcount}\l@mbd@un \ctr@ln@w{newcount}\l@mbd@de
\ctr@ln@w{newcount}\superc@ntr@l\superc@ntr@l=\@ne        
\ctr@ln@w{newcount}\typec@ntr@l\typec@ntr@l=\superc@ntr@l 
\ctr@ln@w{newdimen}\v@lX  \ctr@ln@w{newdimen}\v@lY  \ctr@ln@w{newdimen}\v@lZ
\ctr@ln@w{newdimen}\v@lXa \ctr@ln@w{newdimen}\v@lYa \ctr@ln@w{newdimen}\v@lZa
\ctr@ln@w{newdimen}\unit@\unit@=\p@ 
\ctr@ld@f\def\unit@util{pt}
\ctr@ld@f\def\ptT@ptps{0.996264 }
\ctr@ld@f\def\ptpsT@pt{1.00375 }
\ctr@ld@f\def\ptT@unit@{1} 
\ctr@ld@f\def\setunit@#1{\def\unit@util{#1}\setunit@@#1:\invers@{\result@t}{\unit@}%
    \edef\ptT@unit@{\repdecn@mb\result@t}}
\ctr@ld@f\def\setunit@@#1#2:{\ifcat#1a\unit@=\@ne#1#2\else\unit@=#1#2\fi}
\ctr@ld@f\def\d@fm@cdim#1#2{{\v@leur=#2\v@leur=\ptT@unit@\v@leur\xdef#1{\repdecn@mb\v@leur}}}
\ctr@ln@w{newif}\ifBdingB@x\BdingB@xtrue
\ctr@ln@w{newdimen}\c@@rdXmin \ctr@ln@w{newdimen}\c@@rdYmin  
\ctr@ln@w{newdimen}\c@@rdXmax \ctr@ln@w{newdimen}\c@@rdYmax
\ctr@ld@f\def\b@undb@x#1#2{\ifBdingB@x%
    \relax\ifdim#1<\c@@rdXmin\global\c@@rdXmin=#1\fi%
    \relax\ifdim#2<\c@@rdYmin\global\c@@rdYmin=#2\fi%
    \relax\ifdim#1>\c@@rdXmax\global\c@@rdXmax=#1\fi%
    \relax\ifdim#2>\c@@rdYmax\global\c@@rdYmax=#2\fi\fi}
\ctr@ld@f\def\b@undb@xP#1{{\Figg@tXY{#1}\b@undb@x{\v@lX}{\v@lY}}}
\ctr@ld@f\def\ellBB@x#1;#2,#3(#4,#5,#6){{\s@uvc@ntr@l\et@tellBB@x%
    \setc@ntr@l{2}\figptell-2::#1;#2,#3(#4,#6)\b@undb@xP{-2}%
    \figptell-2::#1;#2,#3(#5,#6)\b@undb@xP{-2}%
    \c@ssin{\C@}{\S@}{#6}\v@lmin=\C@ pt\v@lmax=\S@ pt%
    \mili@u=#3\v@lmin\delt@=#2\v@lmax\arct@n\v@leur(\delt@,\mili@u)%
    \mili@u=-#3\v@lmax\delt@=#2\v@lmin\arct@n\c@rre(\delt@,\mili@u)%
    \v@leur=\rdT@deg\v@leur\advance\v@leur-\DePI@deg%
    \c@rre=\rdT@deg\c@rre\advance\c@rre-\DePI@deg%
    \v@lmin=#4pt\v@lmax=#5pt%
    \loop\ifdim\v@leur<\v@lmax\ifdim\v@leur>\v@lmin%
    \edef\@ngle{\repdecn@mb\v@leur}\figptell-2::#1;#2,#3(\@ngle,#6)%
    \b@undb@xP{-2}\fi\advance\v@leur\PI@deg\repeat%
    \loop\ifdim\c@rre<\v@lmax\ifdim\c@rre>\v@lmin%
    \edef\@ngle{\repdecn@mb\c@rre}\figptell-2::#1;#2,#3(\@ngle,#6)%
    \b@undb@xP{-2}\fi\advance\c@rre\PI@deg\repeat%
    \resetc@ntr@l\et@tellBB@x}\ignorespaces}
\ctr@ld@f\def\initb@undb@x{\c@@rdXmin=\maxdimen\c@@rdYmin=\maxdimen%
    \c@@rdXmax=-\maxdimen\c@@rdYmax=-\maxdimen}
\ctr@ld@f\def\c@ntr@lnum#1{%
    \relax\ifnum\typec@ntr@l=\@ne%
    \ifnum#1<\z@%
    \immediate\write16{*** Forbidden point number (#1). Abort.}\end\fi\fi%
    \set@bjc@de{#1}}
\ctr@ln@m\objc@de
\ctr@ld@f\def\set@bjc@de#1{\edef\objc@de{@BJ\ifnum#1<\z@ M\romannumeral-#1\else\romannumeral#1\fi}}
\s@mme=\m@ne\loop\ifnum\s@mme>-19
  \set@bjc@de{\s@mme}\ctr@lcsn@m\objc@de\ctr@lcsn@m{\objc@de T}
\advance\s@mme\m@ne\repeat
\s@mme=\@ne\loop\ifnum\s@mme<6
  \set@bjc@de{\s@mme}\ctr@lcsn@m\objc@de\ctr@lcsn@m{\objc@de T}
\advance\s@mme\@ne\repeat
\ctr@ld@f\def\setc@ntr@l#1{\ifnum\superc@ntr@l>#1\typec@ntr@l=\superc@ntr@l%
    \else\typec@ntr@l=#1\fi}
\ctr@ld@f\def\resetc@ntr@l#1{\global\superc@ntr@l=#1\setc@ntr@l{#1}}
\ctr@ld@f\def\s@uvc@ntr@l#1{\edef#1{\the\superc@ntr@l}}
\ctr@ln@m\c@lproscal
\ctr@ld@f\def\c@lproscalDD#1[#2,#3]{{\Figg@tXY{#2}%
    \edef\Xu@{\repdecn@mb{\v@lX}}\edef\Yu@{\repdecn@mb{\v@lY}}\Figg@tXY{#3}%
    \global\result@t=\Xu@\v@lX\global\advance\result@t\Yu@\v@lY}#1=\result@t}
\ctr@ld@f\def\c@lproscalTD#1[#2,#3]{{\Figg@tXY{#2}\edef\Xu@{\repdecn@mb{\v@lX}}%
    \edef\Yu@{\repdecn@mb{\v@lY}}\edef\Zu@{\repdecn@mb{\v@lZ}}%
    \Figg@tXY{#3}\global\result@t=\Xu@\v@lX\global\advance\result@t\Yu@\v@lY%
    \global\advance\result@t\Zu@\v@lZ}#1=\result@t}
\ctr@ld@f\def\c@lprovec#1{%
    \det@rmC\v@lZa(\v@lX,\v@lY,\v@lmin,\v@lmax)%
    \det@rmC\v@lXa(\v@lY,\v@lZ,\v@lmax,\v@leur)%
    \det@rmC\v@lYa(\v@lZ,\v@lX,\v@leur,\v@lmin)%
    \Figv@ctCreg#1(\v@lXa,\v@lYa,\v@lZa)}
\ctr@ld@f\def\det@rm#1[#2,#3]{{\Figg@tXY{#2}\Figg@tXYa{#3}%
    \delt@=\repdecn@mb{\v@lX}\v@lYa\advance\delt@-\repdecn@mb{\v@lY}\v@lXa%
    \global\result@t=\delt@}#1=\result@t}
\ctr@ld@f\def\det@rmC#1(#2,#3,#4,#5){{\global\result@t=\repdecn@mb{#2}#5%
    \global\advance\result@t-\repdecn@mb{#3}#4}#1=\result@t}
\ctr@ld@f\def\getredf@ctDD#1(#2,#3){{\maxim@m{\v@lXa}{-#2}{#2}\maxim@m{\v@lYa}{-#3}{#3}%
    \maxim@m{\v@lXa}{\v@lXa}{\v@lYa}
    \ifdim\v@lXa>\@xci pt\divide\v@lXa\@xci%
    \p@rtentiere{\p@rtent}{\v@lXa}\advance\p@rtent\@ne\else\p@rtent=\@ne\fi%
    \global\result@tent=\p@rtent}#1=\result@tent\ignorespaces}
\ctr@ld@f\def\getredf@ctTD#1(#2,#3,#4){{\maxim@m{\v@lXa}{-#2}{#2}\maxim@m{\v@lYa}{-#3}{#3}%
    \maxim@m{\v@lZa}{-#4}{#4}\maxim@m{\v@lXa}{\v@lXa}{\v@lYa}%
    \maxim@m{\v@lXa}{\v@lXa}{\v@lZa}
    \ifdim\v@lXa>\@lxxiv pt\divide\v@lXa\@lxxiv%
    \p@rtentiere{\p@rtent}{\v@lXa}\advance\p@rtent\@ne\else\p@rtent=\@ne\fi%
    \global\result@tent=\p@rtent}#1=\result@tent\ignorespaces}
\ctr@ld@f\def\FigptintercircB@zDD#1:#2:#3,#4[#5,#6,#7,#8]{{\s@uvc@ntr@l\et@tfigptintercircB@zDD%
    \setc@ntr@l{2}\figvectPDD-1[#5,#8]\Figg@tXY{-1}\getredf@ctDD\f@ctech(\v@lX,\v@lY)%
    \mili@u=#4\unit@\divide\mili@u\f@ctech\c@rre=\repdecn@mb{\mili@u}\mili@u%
    \figptBezierDD-5::#3[#5,#6,#7,#8]%
    \v@lmin=#3\p@\v@lmax=\v@lmin\advance\v@lmax0.1\p@%
    \loop\edef\T@{\repdecn@mb{\v@lmax}}\figptBezierDD-2::\T@[#5,#6,#7,#8]%
    \figvectPDD-1[-5,-2]\n@rmeucCDD{\delt@}{-1}\ifdim\delt@<\c@rre\v@lmin=\v@lmax%
    \advance\v@lmax0.1\p@\repeat%
    \loop\mili@u=\v@lmin\advance\mili@u\v@lmax%
    \divide\mili@u\tw@\edef\T@{\repdecn@mb{\mili@u}}\figptBezierDD-2::\T@[#5,#6,#7,#8]%
    \figvectPDD-1[-5,-2]\n@rmeucCDD{\delt@}{-1}\ifdim\delt@>\c@rre\v@lmax=\mili@u%
    \else\v@lmin=\mili@u\fi\v@leur=\v@lmax\advance\v@leur-\v@lmin%
    \ifdim\v@leur>\epsil@n\repeat\figptcopyDD#1:#2/-2/%
    \resetc@ntr@l\et@tfigptintercircB@zDD}\ignorespaces}
\ctr@ln@m\figptinterlines
\ctr@ld@f\def\inters@cDD#1:#2[#3,#4;#5,#6]{{\s@uvc@ntr@l\et@tinters@cDD%
    \setc@ntr@l{2}\vecunit@{-1}{#4}\vecunit@{-2}{#6}%
    \Figg@tXY{-1}\setc@ntr@l{1}\Figg@tXYa{#3}%
    \edef\A@{\repdecn@mb{\v@lX}}\edef\B@{\repdecn@mb{\v@lY}}%
    \v@lmin=\B@\v@lXa\advance\v@lmin-\A@\v@lYa%
    \Figg@tXYa{#5}\setc@ntr@l{2}\Figg@tXY{-2}%
    \edef\C@{\repdecn@mb{\v@lX}}\edef\D@{\repdecn@mb{\v@lY}}%
    \v@lmax=\D@\v@lXa\advance\v@lmax-\C@\v@lYa%
    \delt@=\A@\v@lY\advance\delt@-\B@\v@lX%
    \invers@{\v@leur}{\delt@}\edef\v@ldelta{\repdecn@mb{\v@leur}}%
    \v@lXa=\A@\v@lmax\advance\v@lXa-\C@\v@lmin%
    \v@lYa=\B@\v@lmax\advance\v@lYa-\D@\v@lmin%
    \v@lXa=\v@ldelta\v@lXa\v@lYa=\v@ldelta\v@lYa%
    \setc@ntr@l{1}\Figp@intregDD#1:{#2}(\v@lXa,\v@lYa)%
    \resetc@ntr@l\et@tinters@cDD}\ignorespaces}
\ctr@ld@f\def\inters@cTD#1:#2[#3,#4;#5,#6]{{\s@uvc@ntr@l\et@tinters@cTD%
    \setc@ntr@l{2}\figvectNVTD-1[#4,#6]\figvectNVTD-2[#6,-1]\figvectPTD-1[#3,#5]%
    \r@pPSTD\v@leur[-2,-1,#4]\edef\v@lcoef{\repdecn@mb{\v@leur}}%
    \figpttraTD#1:{#2}=#3/\v@lcoef,#4/\resetc@ntr@l\et@tinters@cTD}\ignorespaces}
\ctr@ld@f\def\r@pPSTD#1[#2,#3,#4]{{\Figg@tXY{#2}\edef\Xu@{\repdecn@mb{\v@lX}}%
    \edef\Yu@{\repdecn@mb{\v@lY}}\edef\Zu@{\repdecn@mb{\v@lZ}}%
    \Figg@tXY{#3}\v@lmin=\Xu@\v@lX\advance\v@lmin\Yu@\v@lY\advance\v@lmin\Zu@\v@lZ%
    \Figg@tXY{#4}\v@lmax=\Xu@\v@lX\advance\v@lmax\Yu@\v@lY\advance\v@lmax\Zu@\v@lZ%
    \invers@{\v@leur}{\v@lmax}\global\result@t=\repdecn@mb{\v@leur}\v@lmin}%
    #1=\result@t}
\ctr@ln@m\n@rminf
\ctr@ld@f\def\n@rminfDD#1#2{{\Figg@tXY{#2}\maxim@m{\v@lX}{\v@lX}{-\v@lX}%
    \maxim@m{\v@lY}{\v@lY}{-\v@lY}\maxim@m{\global\result@t}{\v@lX}{\v@lY}}%
    #1=\result@t}
\ctr@ld@f\def\n@rminfTD#1#2{{\Figg@tXY{#2}\maxim@m{\v@lX}{\v@lX}{-\v@lX}%
    \maxim@m{\v@lY}{\v@lY}{-\v@lY}\maxim@m{\v@lZ}{\v@lZ}{-\v@lZ}%
    \maxim@m{\v@lX}{\v@lX}{\v@lY}\maxim@m{\global\result@t}{\v@lX}{\v@lZ}}%
    #1=\result@t}
\ctr@ld@f\def\n@rmeucCDD#1#2{\Figg@tXY{#2}\divide\v@lX\f@ctech\divide\v@lY\f@ctech%
    #1=\repdecn@mb{\v@lX}\v@lX\v@lX=\repdecn@mb{\v@lY}\v@lY\advance#1\v@lX}
\ctr@ld@f\def\n@rmeucCTD#1#2{\Figg@tXY{#2}%
    \divide\v@lX\f@ctech\divide\v@lY\f@ctech\divide\v@lZ\f@ctech%
    #1=\repdecn@mb{\v@lX}\v@lX\v@lX=\repdecn@mb{\v@lY}\v@lY\advance#1\v@lX%
    \v@lX=\repdecn@mb{\v@lZ}\v@lZ\advance#1\v@lX}
\ctr@ln@m\n@rmeucSV
\ctr@ld@f\def\n@rmeucSVDD#1#2{{\Figg@tXY{#2}%
    \v@lXa=\repdecn@mb{\v@lX}\v@lX\v@lYa=\repdecn@mb{\v@lY}\v@lY%
    \advance\v@lXa\v@lYa\sqrt@{\global\result@t}{\v@lXa}}#1=\result@t}
\ctr@ld@f\def\n@rmeucSVTD#1#2{{\Figg@tXY{#2}\v@lXa=\repdecn@mb{\v@lX}\v@lX%
    \v@lYa=\repdecn@mb{\v@lY}\v@lY\v@lZa=\repdecn@mb{\v@lZ}\v@lZ%
    \advance\v@lXa\v@lYa\advance\v@lXa\v@lZa\sqrt@{\global\result@t}{\v@lXa}}#1=\result@t}
\ctr@ln@m\n@rmeuc
\ctr@ld@f\def\n@rmeucDD#1#2{{\Figg@tXY{#2}\getredf@ctDD\f@ctech(\v@lX,\v@lY)%
    \divide\v@lX\f@ctech\divide\v@lY\f@ctech%
    \v@lXa=\repdecn@mb{\v@lX}\v@lX\v@lYa=\repdecn@mb{\v@lY}\v@lY%
    \advance\v@lXa\v@lYa\sqrt@{\global\result@t}{\v@lXa}%
    \global\multiply\result@t\f@ctech}#1=\result@t}
\ctr@ld@f\def\n@rmeucTD#1#2{{\Figg@tXY{#2}\getredf@ctTD\f@ctech(\v@lX,\v@lY,\v@lZ)%
    \divide\v@lX\f@ctech\divide\v@lY\f@ctech\divide\v@lZ\f@ctech%
    \v@lXa=\repdecn@mb{\v@lX}\v@lX%
    \v@lYa=\repdecn@mb{\v@lY}\v@lY\v@lZa=\repdecn@mb{\v@lZ}\v@lZ%
    \advance\v@lXa\v@lYa\advance\v@lXa\v@lZa\sqrt@{\global\result@t}{\v@lXa}%
    \global\multiply\result@t\f@ctech}#1=\result@t}
\ctr@ln@m\vecunit@
\ctr@ld@f\def\vecunit@DD#1#2{{\Figg@tXY{#2}\getredf@ctDD\f@ctech(\v@lX,\v@lY)%
    \divide\v@lX\f@ctech\divide\v@lY\f@ctech%
    \Figv@ctCreg#1(\v@lX,\v@lY)\n@rmeucSV{\v@lYa}{#1}%
    \invers@{\v@lXa}{\v@lYa}\edef\v@lv@lXa{\repdecn@mb{\v@lXa}}%
    \v@lX=\v@lv@lXa\v@lX\v@lY=\v@lv@lXa\v@lY%
    \Figv@ctCreg#1(\v@lX,\v@lY)\multiply\v@lYa\f@ctech\global\result@t=\v@lYa}}
\ctr@ld@f\def\vecunit@TD#1#2{{\Figg@tXY{#2}\getredf@ctTD\f@ctech(\v@lX,\v@lY,\v@lZ)%
    \divide\v@lX\f@ctech\divide\v@lY\f@ctech\divide\v@lZ\f@ctech%
    \Figv@ctCreg#1(\v@lX,\v@lY,\v@lZ)\n@rmeucSV{\v@lYa}{#1}%
    \invers@{\v@lXa}{\v@lYa}\edef\v@lv@lXa{\repdecn@mb{\v@lXa}}%
    \v@lX=\v@lv@lXa\v@lX\v@lY=\v@lv@lXa\v@lY\v@lZ=\v@lv@lXa\v@lZ%
    \Figv@ctCreg#1(\v@lX,\v@lY,\v@lZ)\multiply\v@lYa\f@ctech\global\result@t=\v@lYa}}
\ctr@ld@f\def\vecunitC@TD[#1,#2]{\Figg@tXYa{#1}\Figg@tXY{#2}%
    \advance\v@lX-\v@lXa\advance\v@lY-\v@lYa\advance\v@lZ-\v@lZa\c@lvecunitTD}
\ctr@ld@f\def\vecunitCV@TD#1{\Figg@tXY{#1}\c@lvecunitTD}
\ctr@ld@f\def\c@lvecunitTD{\getredf@ctTD\f@ctech(\v@lX,\v@lY,\v@lZ)%
    \divide\v@lX\f@ctech\divide\v@lY\f@ctech\divide\v@lZ\f@ctech%
    \v@lXa=\repdecn@mb{\v@lX}\v@lX%
    \v@lYa=\repdecn@mb{\v@lY}\v@lY\v@lZa=\repdecn@mb{\v@lZ}\v@lZ%
    \advance\v@lXa\v@lYa\advance\v@lXa\v@lZa\sqrt@{\v@lYa}{\v@lXa}%
    \invers@{\v@lXa}{\v@lYa}\edef\v@lv@lXa{\repdecn@mb{\v@lXa}}%
    \v@lX=\v@lv@lXa\v@lX\v@lY=\v@lv@lXa\v@lY\v@lZ=\v@lv@lXa\v@lZ}
\ctr@ln@m\figgetangle
\ctr@ld@f\def\figgetangleDD#1[#2,#3,#4]{\ifps@cri{\s@uvc@ntr@l\et@tfiggetangleDD\setc@ntr@l{2}%
    \figvectPDD-1[#2,#3]\figvectPDD-2[#2,#4]\vecunit@{-1}{-1}%
    \c@lproscalDD\delt@[-2,-1]\figvectNVDD-1[-1]\c@lproscalDD\v@leur[-2,-1]%
    \arct@n\v@lmax(\delt@,\v@leur)\v@lmax=\rdT@deg\v@lmax%
    \ifdim\v@lmax<\z@\advance\v@lmax\DePI@deg\fi\xdef#1{\repdecn@mb{\v@lmax}}%
    \resetc@ntr@l\et@tfiggetangleDD}\ignorespaces\fi}
\ctr@ld@f\def\figgetangleTD#1[#2,#3,#4,#5]{\ifps@cri{\s@uvc@ntr@l\et@tfiggetangleTD\setc@ntr@l{2}%
    \figvectPTD-1[#2,#3]\figvectPTD-2[#2,#5]\figvectNVTD-3[-1,-2]%
    \figvectPTD-2[#2,#4]\figvectNVTD-4[-3,-1]%
    \vecunit@{-1}{-1}\c@lproscalTD\delt@[-2,-1]\c@lproscalTD\v@leur[-2,-4]%
    \arct@n\v@lmax(\delt@,\v@leur)\v@lmax=\rdT@deg\v@lmax%
    \ifdim\v@lmax<\z@\advance\v@lmax\DePI@deg\fi\xdef#1{\repdecn@mb{\v@lmax}}%
    \resetc@ntr@l\et@tfiggetangleTD}\ignorespaces\fi}    
\ctr@ld@f\def\figgetdist#1[#2,#3]{\ifps@cri{\s@uvc@ntr@l\et@tfiggetdist\setc@ntr@l{2}%
    \figvectP-1[#2,#3]\n@rmeuc{\v@lX}{-1}\v@lX=\ptT@unit@\v@lX\xdef#1{\repdecn@mb{\v@lX}}%
    \resetc@ntr@l\et@tfiggetdist}\ignorespaces\fi}
\ctr@ld@f\def\Figg@tT#1{\c@ntr@lnum{#1}%
    {\expandafter\expandafter\expandafter\extr@ctT\csname\objc@de\endcsname:%
     \ifnum\B@@ltxt=\z@\ptn@me{#1}\else\csname\objc@de T\endcsname\fi}}
\ctr@ld@f\def\extr@ctT#1,#2,#3/#4:{\def\B@@ltxt{#3}}
\ctr@ld@f\def\Figg@tXY#1{\c@ntr@lnum{#1}%
    \expandafter\expandafter\expandafter\extr@ctC\csname\objc@de\endcsname:}
\ctr@ln@m\extr@ctC
\ctr@ld@f\def\extr@ctCDD#1/#2,#3,#4:{\v@lX=#2\v@lY=#3}
\ctr@ld@f\def\extr@ctCTD#1/#2,#3,#4:{\v@lX=#2\v@lY=#3\v@lZ=#4}
\ctr@ld@f\def\Figg@tXYa#1{\c@ntr@lnum{#1}%
    \expandafter\expandafter\expandafter\extr@ctCa\csname\objc@de\endcsname:}
\ctr@ln@m\extr@ctCa
\ctr@ld@f\def\extr@ctCaDD#1/#2,#3,#4:{\v@lXa=#2\v@lYa=#3}
\ctr@ld@f\def\extr@ctCaTD#1/#2,#3,#4:{\v@lXa=#2\v@lYa=#3\v@lZa=#4}
\ctr@ln@m\t@xt@
\ctr@ld@f\def\figinit#1{\t@stc@tcodech@nge\initpr@lim\Figinit@#1,:\initpss@ttings\ignorespaces}
\ctr@ld@f\def\Figinit@#1,#2:{\setunit@{#1}\def\t@xt@{#2}\ifx\t@xt@\empty\else\Figinit@@#2:\fi}
\ctr@ld@f\def\Figinit@@#1#2:{\if#12 \else\Figs@tproj{#1}\initTD@\fi}
\ctr@ln@w{newif}\ifTr@isDim
\ctr@ld@f\def\UnD@fined{UNDEFINED}
\ctr@ld@f\def\ifundefined#1{\expandafter\ifx\csname#1\endcsname\relax}
\ctr@ln@m\@utoFN
\ctr@ln@m\@utoFInDone
\ctr@ln@m\disob@unit
\ctr@ld@f\def\initpr@lim{\initb@undb@x\figsetmark{}\figsetptname{$A_{##1}$}\def\Sc@leFact{1}%
    \initDD@\figsetroundcoord{yes}\ps@critrue\expandafter\setupd@te\defaultupdate:%
    \edef\disob@unit{\UnD@fined}\edef\t@rgetpt{\UnD@fined}\gdef\@utoFInDone{1}\gdef\@utoFN{0}}
\ctr@ld@f\def\initDD@{\Tr@isDimfalse%
    \ifPDFm@ke%
     \let\Ps@rcerc=\Ps@rcercBz%
     \let\Ps@rell=\Ps@rellBz%
    \fi
    \let\c@lDCUn=\c@lDCUnDD%
    \let\c@lDCDeux=\c@lDCDeuxDD%
    \let\c@ldefproj=\relax%
    \let\c@lproscal=\c@lproscalDD%
    \let\c@lprojSP=\relax%
    \let\extr@ctC=\extr@ctCDD%
    \let\extr@ctCa=\extr@ctCaDD%
    \let\extr@ctCF=\extr@ctCFDD%
    \let\Figp@intreg=\Figp@intregDD%
    \let\Figpts@xes=\Figpts@xesDD%
    \let\n@rmeucSV=\n@rmeucSVDD\let\n@rmeuc=\n@rmeucDD\let\n@rminf=\n@rminfDD%
    \let\pr@dMatV=\pr@dMatVDD%
    \let\ps@xes=\ps@xesDD%
    \let\vecunit@=\vecunit@DD%
    \let\figcoord=\figcoordDD%
    \let\figgetangle=\figgetangleDD%
    \let\figpt=\figptDD%
    \let\figptBezier=\figptBezierDD%
    \let\figptbary=\figptbaryDD%
    \let\figptcirc=\figptcircDD%
    \let\figptcircumcenter=\figptcircumcenterDD%
    \let\figptcopy=\figptcopyDD%
    \let\figptcurvcenter=\figptcurvcenterDD%
    \let\figptell=\figptellDD%
    \let\figptendnormal=\figptendnormalDD%
    \let\figptinterlineplane=\figptinterlineplaneDD%
    \let\figptinterlines=\inters@cDD%
    \let\figptorthocenter=\figptorthocenterDD%
    \let\figptorthoprojline=\figptorthoprojlineDD%
    \let\figptorthoprojplane=\figptorthoprojplaneDD%
    \let\figptrot=\figptrotDD%
    \let\figptscontrol=\figptscontrolDD%
    \let\figptsintercirc=\figptsintercircDD%
    \let\figptsinterlinell=\figptsinterlinellDD%
    \let\figptsorthoprojline=\figptsorthoprojlineDD%
    \let\figptorthoprojplane=\figptorthoprojplaneDD%
    \let\figptsrot=\figptsrotDD%
    \let\figptssym=\figptssymDD%
    \let\figptstra=\figptstraDD%
    \let\figptsym=\figptsymDD%
    \let\figpttraC=\figpttraCDD%
    \let\figpttra=\figpttraDD%
    \let\figptvisilimSL=\figptvisilimSLDD%
    \let\figsetobdist=\figsetobdistDD%
    \let\figsettarget=\figsettargetDD%
    \let\figsetview=\figsetviewDD%
    \let\figvectDBezier=\figvectDBezierDD%
    \let\figvectN=\figvectNDD%
    \let\figvectNV=\figvectNVDD%
    \let\figvectP=\figvectPDD%
    \let\figvectU=\figvectUDD%
    \let\psarccircP=\psarccircPDD%
    \let\psarccirc=\psarccircDD%
    \let\psarcell=\psarcellDD%
    \let\psarcellPA=\psarcellPADD%
    \let\psarrowBezier=\psarrowBezierDD%
    \let\psarrowcircP=\psarrowcircPDD%
    \let\psarrowcirc=\psarrowcircDD%
    \let\psarrowhead=\psarrowheadDD%
    \let\psarrow=\psarrowDD%
    \let\psBezier=\psBezierDD%
    \let\pscirc=\pscircDD%
    \let\pscurve=\pscurveDD%
    \let\psnormal=\psnormalDD%
    }
\ctr@ld@f\def\initTD@{\Tr@isDimtrue\initb@undb@xTD\newt@rgetptfalse\newdis@bfalse%
    \let\c@lDCUn=\c@lDCUnTD%
    \let\c@lDCDeux=\c@lDCDeuxTD%
    \let\c@ldefproj=\c@ldefprojTD%
    \let\c@lproscal=\c@lproscalTD%
    \let\extr@ctC=\extr@ctCTD%
    \let\extr@ctCa=\extr@ctCaTD%
    \let\extr@ctCF=\extr@ctCFTD%
    \let\Figp@intreg=\Figp@intregTD%
    \let\Figpts@xes=\Figpts@xesTD%
    \let\n@rmeucSV=\n@rmeucSVTD\let\n@rmeuc=\n@rmeucTD\let\n@rminf=\n@rminfTD%
    \let\pr@dMatV=\pr@dMatVTD%
    \let\ps@xes=\ps@xesTD%
    \let\vecunit@=\vecunit@TD%
    \let\figcoord=\figcoordTD%
    \let\figgetangle=\figgetangleTD%
    \let\figpt=\figptTD%
    \let\figptBezier=\figptBezierTD%
    \let\figptbary=\figptbaryTD%
    \let\figptcirc=\figptcircTD%
    \let\figptcircumcenter=\figptcircumcenterTD%
    \let\figptcopy=\figptcopyTD%
    \let\figptcurvcenter=\figptcurvcenterTD%
    \let\figptinterlineplane=\figptinterlineplaneTD%
    \let\figptinterlines=\inters@cTD%
    \let\figptorthocenter=\figptorthocenterTD%
    \let\figptorthoprojline=\figptorthoprojlineTD%
    \let\figptorthoprojplane=\figptorthoprojplaneTD%
    \let\figptrot=\figptrotTD%
    \let\figptscontrol=\figptscontrolTD%
    \let\figptsintercirc=\figptsintercircTD%
    \let\figptsorthoprojline=\figptsorthoprojlineTD%
    \let\figptsorthoprojplane=\figptsorthoprojplaneTD%
    \let\figptsrot=\figptsrotTD%
    \let\figptssym=\figptssymTD%
    \let\figptstra=\figptstraTD%
    \let\figptsym=\figptsymTD%
    \let\figpttraC=\figpttraCTD%
    \let\figpttra=\figpttraTD%
    \let\figptvisilimSL=\figptvisilimSLTD%
    \let\figsetobdist=\figsetobdistTD%
    \let\figsettarget=\figsettargetTD%
    \let\figsetview=\figsetviewTD%
    \let\figvectDBezier=\figvectDBezierTD%
    \let\figvectN=\figvectNTD%
    \let\figvectNV=\figvectNVTD%
    \let\figvectP=\figvectPTD%
    \let\figvectU=\figvectUTD%
    \let\psarccircP=\psarccircPTD%
    \let\psarccirc=\psarccircTD%
    \let\psarcell=\psarcellTD%
    \let\psarcellPA=\psarcellPATD%
    \let\psarrowBezier=\psarrowBezierTD%
    \let\psarrowcircP=\psarrowcircPTD%
    \let\psarrowcirc=\psarrowcircTD%
    \let\psarrowhead=\psarrowheadTD%
    \let\psarrow=\psarrowTD%
    \let\psBezier=\psBezierTD%
    \let\pscirc=\pscircTD%
    \let\pscurve=\pscurveTD%
    }
\ctr@ld@f\def\un@v@ilable#1{\immediate\write16{*** The macro #1 is not available in the current context.}}
\ctr@ld@f\def\figinsert#1{{\def\t@xt@{#1}\relax%
    \ifx\t@xt@\empty\ifnum\@utoFInDone>\z@\Figinsert@\DefGIfilen@me,:\fi%
    \else\expandafter\FiginsertNu@#1 :\fi}\ignorespaces}
\ctr@ld@f\def\FiginsertNu@#1 #2:{\def\t@xt@{#1}\relax\ifx\t@xt@\empty\def\t@xt@{#2}%
    \ifx\t@xt@\empty\ifnum\@utoFInDone>\z@\Figinsert@\DefGIfilen@me,:\fi%
    \else\FiginsertNu@#2:\fi\else\expandafter\FiginsertNd@#1 #2:\fi}
\ctr@ld@f\def\FiginsertNd@#1#2:{\ifcat#1a\Figinsert@#1#2,:\else%
    \ifnum\@utoFInDone>\z@\Figinsert@\DefGIfilen@me,#1#2,:\fi\fi}
\ctr@ln@m\Sc@leFact
\ctr@ld@f\def\Figinsert@#1,#2:{\def\t@xt@{#2}\ifx\t@xt@\empty\xdef\Sc@leFact{1}\else%
    \X@rgdeux@#2\xdef\Sc@leFact{\@rgdeux}\fi%
    \Figdisc@rdLTS{#1}{\t@xt@}\@psfgetbb{\t@xt@}%
    \v@lX=\@psfllx\p@\v@lX=\ptpsT@pt\v@lX\v@lX=\Sc@leFact\v@lX%
    \v@lY=\@psflly\p@\v@lY=\ptpsT@pt\v@lY\v@lY=\Sc@leFact\v@lY%
    \b@undb@x{\v@lX}{\v@lY}%
    \v@lX=\@psfurx\p@\v@lX=\ptpsT@pt\v@lX\v@lX=\Sc@leFact\v@lX%
    \v@lY=\@psfury\p@\v@lY=\ptpsT@pt\v@lY\v@lY=\Sc@leFact\v@lY%
    \b@undb@x{\v@lX}{\v@lY}%
    \ifPDFm@ke\Figinclud@PDF{\t@xt@}{\Sc@leFact}\else%
    \v@lX=\c@nt pt\v@lX=\Sc@leFact\v@lX\edef\F@ct{\repdecn@mb{\v@lX}}%
    \ifx\TeXturesonMacOSltX\special{postscriptfile #1 vscale=\F@ct\space hscale=\F@ct}%
    \else\includegraphics{#1}\fi\fi%
    \message{[\t@xt@]}\ignorespaces}
\ctr@ld@f\def\Figdisc@rdLTS#1#2{\expandafter\Figdisc@rdLTS@#1 :#2}
\ctr@ld@f\def\Figdisc@rdLTS@#1 #2:#3{\def#3{#1}\relax\ifx#3\empty\expandafter\Figdisc@rdLTS@#2:#3\fi}
\ctr@ld@f\def\figinsertE#1{\FiginsertE@#1,:\ignorespaces}
\ctr@ld@f\def\FiginsertE@#1,#2:{{\def\t@xt@{#2}\ifx\t@xt@\empty\xdef\Sc@leFact{1}\else%
    \X@rgdeux@#2\xdef\Sc@leFact{\@rgdeux}\fi%
    \Figdisc@rdLTS{#1}{\t@xt@}\pdfximage{\t@xt@}%
    \setbox\Gb@x=\hbox{\pdfrefximage\pdflastximage}%
    \v@lX=\z@\v@lY=-\Sc@leFact\dp\Gb@x\b@undb@x{\v@lX}{\v@lY}%
    \advance\v@lX\Sc@leFact\wd\Gb@x\advance\v@lY\Sc@leFact\dp\Gb@x%
    \advance\v@lY\Sc@leFact\ht\Gb@x\b@undb@x{\v@lX}{\v@lY}%
    \v@lX=\Sc@leFact\wd\Gb@x\pdfximage width \v@lX {\t@xt@}%
    \rlap{\pdfrefximage\pdflastximage}\message{[\t@xt@]}}\ignorespaces}
\ctr@ld@f\def\X@rgdeux@#1,{\edef\@rgdeux{#1}}
\ctr@ln@m\figpt
\ctr@ld@f\def\figptDD#1:#2(#3,#4){\ifps@cri\c@ntr@lnum{#1}%
    {\v@lX=#3\unit@\v@lY=#4\unit@\Fig@dmpt{#2}{\z@}}\ignorespaces\fi}
\ctr@ld@f\def\Fig@dmpt#1#2{\def\t@xt@{#1}\ifx\t@xt@\empty\def\B@@ltxt{\z@}%
    \else\expandafter\gdef\csname\objc@de T\endcsname{#1}\def\B@@ltxt{\@ne}\fi%
    \expandafter\xdef\csname\objc@de\endcsname{\ifitis@vect@r\C@dCl@svect%
    \else\C@dCl@spt\fi,\z@,\B@@ltxt/\the\v@lX,\the\v@lY,#2}}
\ctr@ld@f\def\C@dCl@spt{P}
\ctr@ld@f\def\C@dCl@svect{V}
\ctr@ln@m\c@@rdYZ
\ctr@ln@m\c@@rdY
\ctr@ld@f\def\figptTD#1:#2(#3,#4){\ifps@cri\c@ntr@lnum{#1}%
    \def\c@@rdYZ{#4,0,0}\extrairelepremi@r\c@@rdY\de\c@@rdYZ%
    \extrairelepremi@r\c@@rdZ\de\c@@rdYZ%
    {\v@lX=#3\unit@\v@lY=\c@@rdY\unit@\v@lZ=\c@@rdZ\unit@\Fig@dmpt{#2}{\the\v@lZ}%
    \b@undb@xTD{\v@lX}{\v@lY}{\v@lZ}}\ignorespaces\fi}
\ctr@ln@m\Figp@intreg
\ctr@ld@f\def\Figp@intregDD#1:#2(#3,#4){\c@ntr@lnum{#1}%
    {\result@t=#4\v@lX=#3\v@lY=\result@t\Fig@dmpt{#2}{\z@}}\ignorespaces}
\ctr@ld@f\def\Figp@intregTD#1:#2(#3,#4){\c@ntr@lnum{#1}%
    \def\c@@rdYZ{#4,\z@,\z@}\extrairelepremi@r\c@@rdY\de\c@@rdYZ%
    \extrairelepremi@r\c@@rdZ\de\c@@rdYZ%
    {\v@lX=#3\v@lY=\c@@rdY\v@lZ=\c@@rdZ\Fig@dmpt{#2}{\the\v@lZ}%
    \b@undb@xTD{\v@lX}{\v@lY}{\v@lZ}}\ignorespaces}
\ctr@ln@m\figptBezier
\ctr@ld@f\def\figptBezierDD#1:#2:#3[#4,#5,#6,#7]{\ifps@cri{\s@uvc@ntr@l\et@tfigptBezierDD%
    \FigptBezier@#3[#4,#5,#6,#7]\Figp@intregDD#1:{#2}(\v@lX,\v@lY)%
    \resetc@ntr@l\et@tfigptBezierDD}\ignorespaces\fi}
\ctr@ld@f\def\figptBezierTD#1:#2:#3[#4,#5,#6,#7]{\ifps@cri{\s@uvc@ntr@l\et@tfigptBezierTD%
    \FigptBezier@#3[#4,#5,#6,#7]\Figp@intregTD#1:{#2}(\v@lX,\v@lY,\v@lZ)%
    \resetc@ntr@l\et@tfigptBezierTD}\ignorespaces\fi}
\ctr@ld@f\def\FigptBezier@#1[#2,#3,#4,#5]{\setc@ntr@l{2}%
    \edef\T@{#1}\v@leur=\p@\advance\v@leur-#1pt\edef\UNmT@{\repdecn@mb{\v@leur}}%
    \figptcopy-4:/#2/\figptcopy-3:/#3/\figptcopy-2:/#4/\figptcopy-1:/#5/%
    \l@mbd@un=-4 \l@mbd@de=-\thr@@\p@rtent=\m@ne\c@lDecast%
    \l@mbd@un=-4 \l@mbd@de=-\thr@@\p@rtent=-\tw@\c@lDecast%
    \l@mbd@un=-4 \l@mbd@de=-\thr@@\p@rtent=-\thr@@\c@lDecast\Figg@tXY{-4}}
\ctr@ln@m\c@lDCUn
\ctr@ld@f\def\c@lDCUnDD#1#2{\Figg@tXY{#1}\v@lX=\UNmT@\v@lX\v@lY=\UNmT@\v@lY%
    \Figg@tXYa{#2}\advance\v@lX\T@\v@lXa\advance\v@lY\T@\v@lYa%
    \Figp@intregDD#1:(\v@lX,\v@lY)}
\ctr@ld@f\def\c@lDCUnTD#1#2{\Figg@tXY{#1}\v@lX=\UNmT@\v@lX\v@lY=\UNmT@\v@lY\v@lZ=\UNmT@\v@lZ%
    \Figg@tXYa{#2}\advance\v@lX\T@\v@lXa\advance\v@lY\T@\v@lYa\advance\v@lZ\T@\v@lZa%
    \Figp@intregTD#1:(\v@lX,\v@lY,\v@lZ)}
\ctr@ld@f\def\c@lDecast{\relax\ifnum\l@mbd@un<\p@rtent\c@lDCUn{\l@mbd@un}{\l@mbd@de}%
    \advance\l@mbd@un\@ne\advance\l@mbd@de\@ne\c@lDecast\fi}
\ctr@ld@f\def\figptmap#1:#2=#3/#4/#5/{\ifps@cri{\s@uvc@ntr@l\et@tfigptmap%
    \setc@ntr@l{2}\figvectP-1[#4,#3]\Figg@tXY{-1}%
    \pr@dMatV/#5/\figpttra#1:{#2}=#4/1,-1/%
    \resetc@ntr@l\et@tfigptmap}\ignorespaces\fi}
\ctr@ln@m\pr@dMatV
\ctr@ld@f\def\pr@dMatVDD/#1,#2;#3,#4/{\v@lXa=#1\v@lX\advance\v@lXa#2\v@lY%
    \v@lYa=#3\v@lX\advance\v@lYa#4\v@lY\Figv@ctCreg-1(\v@lXa,\v@lYa)}
\ctr@ld@f\def\pr@dMatVTD/#1,#2,#3;#4,#5,#6;#7,#8,#9/{%
    \v@lXa=#1\v@lX\advance\v@lXa#2\v@lY\advance\v@lXa#3\v@lZ%
    \v@lYa=#4\v@lX\advance\v@lYa#5\v@lY\advance\v@lYa#6\v@lZ%
    \v@lZa=#7\v@lX\advance\v@lZa#8\v@lY\advance\v@lZa#9\v@lZ%
    \Figv@ctCreg-1(\v@lXa,\v@lYa,\v@lZa)}
\ctr@ln@m\figptbary
\ctr@ld@f\def\figptbaryDD#1:#2[#3;#4]{\ifps@cri{\edef\list@num{#3}\extrairelepremi@r\p@int\de\list@num%
    \s@mme=\z@\@ecfor\c@ef:=#4\do{\advance\s@mme\c@ef}%
    \edef\listec@ef{#4,0}\extrairelepremi@r\c@ef\de\listec@ef%
    \Figg@tXY{\p@int}\divide\v@lX\s@mme\divide\v@lY\s@mme%
    \multiply\v@lX\c@ef\multiply\v@lY\c@ef%
    \@ecfor\p@int:=\list@num\do{\extrairelepremi@r\c@ef\de\listec@ef%
           \Figg@tXYa{\p@int}\divide\v@lXa\s@mme\divide\v@lYa\s@mme%
           \multiply\v@lXa\c@ef\multiply\v@lYa\c@ef%
           \advance\v@lX\v@lXa\advance\v@lY\v@lYa}%
    \Figp@intregDD#1:{#2}(\v@lX,\v@lY)}\ignorespaces\fi}
\ctr@ld@f\def\figptbaryTD#1:#2[#3;#4]{\ifps@cri{\edef\list@num{#3}\extrairelepremi@r\p@int\de\list@num%
    \s@mme=\z@\@ecfor\c@ef:=#4\do{\advance\s@mme\c@ef}%
    \edef\listec@ef{#4,0}\extrairelepremi@r\c@ef\de\listec@ef%
    \Figg@tXY{\p@int}\divide\v@lX\s@mme\divide\v@lY\s@mme\divide\v@lZ\s@mme%
    \multiply\v@lX\c@ef\multiply\v@lY\c@ef\multiply\v@lZ\c@ef%
    \@ecfor\p@int:=\list@num\do{\extrairelepremi@r\c@ef\de\listec@ef%
           \Figg@tXYa{\p@int}\divide\v@lXa\s@mme\divide\v@lYa\s@mme\divide\v@lZa\s@mme%
           \multiply\v@lXa\c@ef\multiply\v@lYa\c@ef\multiply\v@lZa\c@ef%
           \advance\v@lX\v@lXa\advance\v@lY\v@lYa\advance\v@lZ\v@lZa}%
    \Figp@intregTD#1:{#2}(\v@lX,\v@lY,\v@lZ)}\ignorespaces\fi}
\ctr@ld@f\def\figptbaryR#1:#2[#3;#4]{\ifps@cri{%
    \v@leur=\z@\@ecfor\c@ef:=#4\do{\maxim@m{\v@lmax}{\c@ef pt}{-\c@ef pt}%
    \ifdim\v@lmax>\v@leur\v@leur=\v@lmax\fi}%
    \ifdim\v@leur<\p@\f@ctech=\@M\else\ifdim\v@leur<\t@n\p@\f@ctech=\@m\else%
    \ifdim\v@leur<\c@nt\p@\f@ctech=\c@nt\else\ifdim\v@leur<\@m\p@\f@ctech=\t@n\else%
    \f@ctech=\@ne\fi\fi\fi\fi%
    \def\listec@ef{0}%
    \@ecfor\c@ef:=#4\do{\sc@lec@nvRI{\c@ef pt}\edef\listec@ef{\listec@ef,\the\s@mme}}%
    \extrairelepremi@r\c@ef\de\listec@ef\figptbary#1:#2[#3;\listec@ef]}\ignorespaces\fi}
\ctr@ld@f\def\sc@lec@nvRI#1{\v@leur=#1\p@rtentiere{\s@mme}{\v@leur}\advance\v@leur-\s@mme\p@%
    \multiply\v@leur\f@ctech\p@rtentiere{\p@rtent}{\v@leur}%
    \multiply\s@mme\f@ctech\advance\s@mme\p@rtent}
\ctr@ln@m\figptcirc
\ctr@ld@f\def\figptcircDD#1:#2:#3;#4(#5){\ifps@cri{\s@uvc@ntr@l\et@tfigptcircDD%
    \c@lptellDD#1:{#2}:#3;#4,#4(#5)\resetc@ntr@l\et@tfigptcircDD}\ignorespaces\fi}
\ctr@ld@f\def\figptcircTD#1:#2:#3,#4,#5;#6(#7){\ifps@cri{\s@uvc@ntr@l\et@tfigptcircTD%
    \setc@ntr@l{2}\c@lExtAxes#3,#4,#5(#6)\figptellP#1:{#2}:#3,-4,-5(#7)%
    \resetc@ntr@l\et@tfigptcircTD}\ignorespaces\fi}
\ctr@ln@m\figptcircumcenter
\ctr@ld@f\def\figptcircumcenterDD#1:#2[#3,#4,#5]{\ifps@cri{\s@uvc@ntr@l\et@tfigptcircumcenterDD%
    \setc@ntr@l{2}\figvectNDD-5[#3,#4]\figptbaryDD-3:[#3,#4;1,1]%
                  \figvectNDD-6[#4,#5]\figptbaryDD-4:[#4,#5;1,1]%
    \resetc@ntr@l{2}\inters@cDD#1:{#2}[-3,-5;-4,-6]%
    \resetc@ntr@l\et@tfigptcircumcenterDD}\ignorespaces\fi}
\ctr@ld@f\def\figptcircumcenterTD#1:#2[#3,#4,#5]{\ifps@cri{\s@uvc@ntr@l\et@tfigptcircumcenterTD%
    \setc@ntr@l{2}\figvectNTD-1[#3,#4,#5]%
    \figvectPTD-3[#3,#4]\figvectNVTD-5[-1,-3]\figptbaryTD-3:[#3,#4;1,1]%
    \figvectPTD-4[#4,#5]\figvectNVTD-6[-1,-4]\figptbaryTD-4:[#4,#5;1,1]%
    \resetc@ntr@l{2}\inters@cTD#1:{#2}[-3,-5;-4,-6]%
    \resetc@ntr@l\et@tfigptcircumcenterTD}\ignorespaces\fi}
\ctr@ln@m\figptcopy
\ctr@ld@f\def\figptcopyDD#1:#2/#3/{\ifps@cri{\Figg@tXY{#3}%
    \Figp@intregDD#1:{#2}(\v@lX,\v@lY)}\ignorespaces\fi}
\ctr@ld@f\def\figptcopyTD#1:#2/#3/{\ifps@cri{\Figg@tXY{#3}%
    \Figp@intregTD#1:{#2}(\v@lX,\v@lY,\v@lZ)}\ignorespaces\fi}
\ctr@ln@m\figptcurvcenter
\ctr@ld@f\def\figptcurvcenterDD#1:#2:#3[#4,#5,#6,#7]{\ifps@cri{\s@uvc@ntr@l\et@tfigptcurvcenterDD%
    \setc@ntr@l{2}\c@lcurvradDD#3[#4,#5,#6,#7]\edef\Sprim@{\repdecn@mb{\result@t}}%
    \figptBezierDD-1::#3[#4,#5,#6,#7]\figpttraDD#1:{#2}=-1/\Sprim@,-5/%
    \resetc@ntr@l\et@tfigptcurvcenterDD}\ignorespaces\fi}
\ctr@ld@f\def\figptcurvcenterTD#1:#2:#3[#4,#5,#6,#7]{\ifps@cri{\s@uvc@ntr@l\et@tfigptcurvcenterTD%
    \setc@ntr@l{2}\figvectDBezierTD -5:1,#3[#4,#5,#6,#7]%
    \figvectDBezierTD -6:2,#3[#4,#5,#6,#7]\vecunit@TD{-5}{-5}%
    \edef\Sprim@{\repdecn@mb{\result@t}}\figvectNVTD-1[-6,-5]%
    \figvectNVTD-5[-5,-1]\c@lproscalTD\v@leur[-6,-5]%
    \invers@{\v@leur}{\v@leur}\v@leur=\Sprim@\v@leur\v@leur=\Sprim@\v@leur%
    \figptBezierTD-1::#3[#4,#5,#6,#7]\edef\Sprim@{\repdecn@mb{\v@leur}}%
    \figpttraTD#1:{#2}=-1/\Sprim@,-5/\resetc@ntr@l\et@tfigptcurvcenterTD}\ignorespaces\fi}
\ctr@ld@f\def\c@lcurvradDD#1[#2,#3,#4,#5]{{\figvectDBezierDD -5:1,#1[#2,#3,#4,#5]%
    \figvectDBezierDD -6:2,#1[#2,#3,#4,#5]\vecunit@DD{-5}{-5}%
    \edef\Sprim@{\repdecn@mb{\result@t}}\figvectNVDD-5[-5]\c@lproscalDD\v@leur[-6,-5]%
    \invers@{\v@leur}{\v@leur}\v@leur=\Sprim@\v@leur\v@leur=\Sprim@\v@leur%
    \global\result@t=\v@leur}}
\ctr@ln@m\figptell
\ctr@ld@f\def\figptellDD#1:#2:#3;#4,#5(#6,#7){\ifps@cri{\s@uvc@ntr@l\et@tfigptell%
    \c@lptellDD#1::#3;#4,#5(#6)\figptrotDD#1:{#2}=#1/#3,#7/%
    \resetc@ntr@l\et@tfigptell}\ignorespaces\fi}
\ctr@ld@f\def\c@lptellDD#1:#2:#3;#4,#5(#6){\c@ssin{\C@}{\S@}{#6}\v@lmin=\C@ pt\v@lmax=\S@ pt%
    \v@lmin=#4\v@lmin\v@lmax=#5\v@lmax%
    \edef\Xc@mp{\repdecn@mb{\v@lmin}}\edef\Yc@mp{\repdecn@mb{\v@lmax}}%
    \setc@ntr@l{2}\figvectC-1(\Xc@mp,\Yc@mp)\figpttraDD#1:{#2}=#3/1,-1/}
\ctr@ld@f\def\figptellP#1:#2:#3,#4,#5(#6){\ifps@cri{\s@uvc@ntr@l\et@tfigptellP%
    \setc@ntr@l{2}\figvectP-1[#3,#4]\figvectP-2[#3,#5]%
    \v@leur=#6pt\c@lptellP{#3}{-1}{-2}\figptcopy#1:{#2}/-3/%
    \resetc@ntr@l\et@tfigptellP}\ignorespaces\fi}
\ctr@ln@m\@ngle
\ctr@ld@f\def\c@lptellP#1#2#3{\edef\@ngle{\repdecn@mb\v@leur}\c@ssin{\C@}{\S@}{\@ngle}%
    \figpttra-3:=#1/\C@,#2/\figpttra-3:=-3/\S@,#3/}
\ctr@ln@m\figptendnormal
\ctr@ld@f\def\figptendnormalDD#1:#2:#3,#4[#5,#6]{\ifps@cri{\s@uvc@ntr@l\et@tfigptendnormal%
    \Figg@tXYa{#5}\Figg@tXY{#6}%
    \advance\v@lX-\v@lXa\advance\v@lY-\v@lYa%
    \setc@ntr@l{2}\Figv@ctCreg-1(\v@lX,\v@lY)\vecunit@{-1}{-1}\Figg@tXY{-1}%
    \delt@=#3\unit@\maxim@m{\delt@}{\delt@}{-\delt@}\edef\l@ngueur{\repdecn@mb{\delt@}}%
    \v@lX=\l@ngueur\v@lX\v@lY=\l@ngueur\v@lY%
    \delt@=\p@\advance\delt@-#4pt\edef\l@ngueur{\repdecn@mb{\delt@}}%
    \figptbaryR-1:[#5,#6;#4,\l@ngueur]\Figg@tXYa{-1}%
    \advance\v@lXa\v@lY\advance\v@lYa-\v@lX%
    \setc@ntr@l{1}\Figp@intregDD#1:{#2}(\v@lXa,\v@lYa)\resetc@ntr@l\et@tfigptendnormal}%
    \ignorespaces\fi}
\ctr@ld@f\def\figptexcenter#1:#2[#3,#4,#5]{\ifps@cri{\let@xte={-}%
    \Figptexinsc@nter#1:#2[#3,#4,#5]}\ignorespaces\fi}
\ctr@ld@f\def\figptincenter#1:#2[#3,#4,#5]{\ifps@cri{\let@xte={}%
    \Figptexinsc@nter#1:#2[#3,#4,#5]}\ignorespaces\fi}
\ctr@ld@f
\ctr@ld@f\def\Figptexinsc@nter#1:#2[#3,#4,#5]{%
    \figgetdist\LA@[#4,#5]\figgetdist\LB@[#3,#5]\figgetdist\LC@[#3,#4]%
    \figptbaryR#1:{#2}[#3,#4,#5;\the\let@xte\LA@,\LB@,\LC@]}
\ctr@ln@m\figptinterlineplane
\ctr@ld@f\def\figptinterlineplaneDD{\un@v@ilable{figptinterlineplane}}
\ctr@ld@f\def\figptinterlineplaneTD#1:#2[#3,#4;#5,#6]{\ifps@cri{\s@uvc@ntr@l\et@tfigptinterlineplane%
    \setc@ntr@l{2}\figvectPTD-1[#3,#5]\vecunit@TD{-2}{#6}%
    \r@pPSTD\v@leur[-2,-1,#4]\edef\v@lcoef{\repdecn@mb{\v@leur}}%
    \figpttraTD#1:{#2}=#3/\v@lcoef,#4/\resetc@ntr@l\et@tfigptinterlineplane}\ignorespaces\fi}
\ctr@ln@m\figptorthocenter
\ctr@ld@f\def\figptorthocenterDD#1:#2[#3,#4,#5]{\ifps@cri{\s@uvc@ntr@l\et@tfigptorthocenterDD%
    \setc@ntr@l{2}\figvectNDD-3[#3,#4]\figvectNDD-4[#4,#5]%
    \resetc@ntr@l{2}\inters@cDD#1:{#2}[#5,-3;#3,-4]%
    \resetc@ntr@l\et@tfigptorthocenterDD}\ignorespaces\fi}
\ctr@ld@f\def\figptorthocenterTD#1:#2[#3,#4,#5]{\ifps@cri{\s@uvc@ntr@l\et@tfigptorthocenterTD%
    \setc@ntr@l{2}\figvectNTD-1[#3,#4,#5]%
    \figvectPTD-2[#3,#4]\figvectNVTD-3[-1,-2]%
    \figvectPTD-2[#4,#5]\figvectNVTD-4[-1,-2]%
    \resetc@ntr@l{2}\inters@cTD#1:{#2}[#5,-3;#3,-4]%
    \resetc@ntr@l\et@tfigptorthocenterTD}\ignorespaces\fi}
\ctr@ln@m\figptorthoprojline
\ctr@ld@f\def\figptorthoprojlineDD#1:#2=#3/#4,#5/{\ifps@cri{\s@uvc@ntr@l\et@tfigptorthoprojlineDD%
    \setc@ntr@l{2}\figvectPDD-3[#4,#5]\figvectNVDD-4[-3]\resetc@ntr@l{2}%
    \inters@cDD#1:{#2}[#3,-4;#4,-3]\resetc@ntr@l\et@tfigptorthoprojlineDD}\ignorespaces\fi}
\ctr@ld@f\def\figptorthoprojlineTD#1:#2=#3/#4,#5/{\ifps@cri{\s@uvc@ntr@l\et@tfigptorthoprojlineTD%
    \setc@ntr@l{2}\figvectPTD-1[#4,#3]\figvectPTD-2[#4,#5]\vecunit@TD{-2}{-2}%
    \c@lproscalTD\v@leur[-1,-2]\edef\v@lcoef{\repdecn@mb{\v@leur}}%
    \figpttraTD#1:{#2}=#4/\v@lcoef,-2/\resetc@ntr@l\et@tfigptorthoprojlineTD}\ignorespaces\fi}
\ctr@ln@m\figptorthoprojplane
\ctr@ld@f\def\figptorthoprojplaneDD{\un@v@ilable{figptorthoprojplane}}
\ctr@ld@f\def\figptorthoprojplaneTD#1:#2=#3/#4,#5/{\ifps@cri{\s@uvc@ntr@l\et@tfigptorthoprojplane%
    \setc@ntr@l{2}\figvectPTD-1[#3,#4]\vecunit@TD{-2}{#5}%
    \c@lproscalTD\v@leur[-1,-2]\edef\v@lcoef{\repdecn@mb{\v@leur}}%
    \figpttraTD#1:{#2}=#3/\v@lcoef,-2/\resetc@ntr@l\et@tfigptorthoprojplane}\ignorespaces\fi}
\ctr@ld@f\def\figpthom#1:#2=#3/#4,#5/{\ifps@cri{\s@uvc@ntr@l\et@tfigpthom%
    \setc@ntr@l{2}\figvectP-1[#4,#3]\figpttra#1:{#2}=#4/#5,-1/%
    \resetc@ntr@l\et@tfigpthom}\ignorespaces\fi}
\ctr@ln@m\figptrot
\ctr@ld@f\def\figptrotDD#1:#2=#3/#4,#5/{\ifps@cri{\s@uvc@ntr@l\et@tfigptrotDD%
    \c@ssin{\C@}{\S@}{#5}\setc@ntr@l{2}\figvectPDD-1[#4,#3]\Figg@tXY{-1}%
    \v@lXa=\C@\v@lX\advance\v@lXa-\S@\v@lY%
    \v@lYa=\S@\v@lX\advance\v@lYa\C@\v@lY%
    \Figv@ctCreg-1(\v@lXa,\v@lYa)\figpttraDD#1:{#2}=#4/1,-1/%
    \resetc@ntr@l\et@tfigptrotDD}\ignorespaces\fi}
\ctr@ld@f\def\figptrotTD#1:#2=#3/#4,#5,#6/{\ifps@cri{\s@uvc@ntr@l\et@tfigptrotTD%
    \c@ssin{\C@}{\S@}{#5}%
    \setc@ntr@l{2}\figptorthoprojplaneTD-3:=#4/#3,#6/\figvectPTD-2[-3,#3]%
    \n@rmeucTD\v@leur{-2}\ifdim\v@leur<\Cepsil@n\Figg@tXYa{#3}\else%
    \edef\v@lcoef{\repdecn@mb{\v@leur}}\figvectNVTD-1[#6,-2]%
    \Figg@tXYa{-1}\v@lXa=\v@lcoef\v@lXa\v@lYa=\v@lcoef\v@lYa\v@lZa=\v@lcoef\v@lZa%
    \v@lXa=\S@\v@lXa\v@lYa=\S@\v@lYa\v@lZa=\S@\v@lZa\Figg@tXY{-2}%
    \advance\v@lXa\C@\v@lX\advance\v@lYa\C@\v@lY\advance\v@lZa\C@\v@lZ%
    \Figg@tXY{-3}\advance\v@lXa\v@lX\advance\v@lYa\v@lY\advance\v@lZa\v@lZ\fi%
    \Figp@intregTD#1:{#2}(\v@lXa,\v@lYa,\v@lZa)\resetc@ntr@l\et@tfigptrotTD}\ignorespaces\fi}
\ctr@ln@m\figptsym
\ctr@ld@f\def\figptsymDD#1:#2=#3/#4,#5/{\ifps@cri{\s@uvc@ntr@l\et@tfigptsymDD%
    \resetc@ntr@l{2}\figptorthoprojlineDD-5:=#3/#4,#5/\figvectPDD-2[#3,-5]%
    \figpttraDD#1:{#2}=#3/2,-2/\resetc@ntr@l\et@tfigptsymDD}\ignorespaces\fi}
\ctr@ld@f\def\figptsymTD#1:#2=#3/#4,#5/{\ifps@cri{\s@uvc@ntr@l\et@tfigptsymTD%
    \resetc@ntr@l{2}\figptorthoprojplaneTD-3:=#3/#4,#5/\figvectPTD-2[#3,-3]%
    \figpttraTD#1:{#2}=#3/2,-2/\resetc@ntr@l\et@tfigptsymTD}\ignorespaces\fi}
\ctr@ln@m\figpttra
\ctr@ld@f\def\figpttraDD#1:#2=#3/#4,#5/{\ifps@cri{\Figg@tXYa{#5}\v@lXa=#4\v@lXa\v@lYa=#4\v@lYa%
    \Figg@tXY{#3}\advance\v@lX\v@lXa\advance\v@lY\v@lYa%
    \Figp@intregDD#1:{#2}(\v@lX,\v@lY)}\ignorespaces\fi}
\ctr@ld@f\def\figpttraTD#1:#2=#3/#4,#5/{\ifps@cri{\Figg@tXYa{#5}\v@lXa=#4\v@lXa\v@lYa=#4\v@lYa%
    \v@lZa=#4\v@lZa\Figg@tXY{#3}\advance\v@lX\v@lXa\advance\v@lY\v@lYa%
    \advance\v@lZ\v@lZa\Figp@intregTD#1:{#2}(\v@lX,\v@lY,\v@lZ)}\ignorespaces\fi}
\ctr@ln@m\figpttraC
\ctr@ld@f\def\figpttraCDD#1:#2=#3/#4,#5/{\ifps@cri{\v@lXa=#4\unit@\v@lYa=#5\unit@%
    \Figg@tXY{#3}\advance\v@lX\v@lXa\advance\v@lY\v@lYa%
    \Figp@intregDD#1:{#2}(\v@lX,\v@lY)}\ignorespaces\fi}
\ctr@ld@f\def\figpttraCTD#1:#2=#3/#4,#5,#6/{\ifps@cri{\v@lXa=#4\unit@\v@lYa=#5\unit@\v@lZa=#6\unit@%
    \Figg@tXY{#3}\advance\v@lX\v@lXa\advance\v@lY\v@lYa\advance\v@lZ\v@lZa%
    \Figp@intregTD#1:{#2}(\v@lX,\v@lY,\v@lZ)}\ignorespaces\fi}
\ctr@ld@f\def\figptsaxes#1:#2(#3){\ifps@cri{\an@lys@xes#3,:\ifx\t@xt@\empty%
    \ifTr@isDim\Figpts@xes#1:#2(0,#3,0,#3,0,#3)\else\Figpts@xes#1:#2(0,#3,0,#3)\fi%
    \else\Figpts@xes#1:#2(#3)\fi}\ignorespaces\fi}
\ctr@ln@m\Figpts@xes
\ctr@ld@f\def\Figpts@xesDD#1:#2(#3,#4,#5,#6){%
    \s@mme=#1\figpttraC\the\s@mme:$x$=#2/#4,0/%
    \advance\s@mme\@ne\figpttraC\the\s@mme:$y$=#2/0,#6/}
\ctr@ld@f\def\Figpts@xesTD#1:#2(#3,#4,#5,#6,#7,#8){%
    \s@mme=#1\figpttraC\the\s@mme:$x$=#2/#4,0,0/%
    \advance\s@mme\@ne\figpttraC\the\s@mme:$y$=#2/0,#6,0/%
    \advance\s@mme\@ne\figpttraC\the\s@mme:$z$=#2/0,0,#8/}
\ctr@ld@f\def\figptsmap#1=#2/#3/#4/{\ifps@cri{\s@uvc@ntr@l\et@tfigptsmap%
    \setc@ntr@l{2}\def\list@num{#2}\s@mme=#1%
    \@ecfor\p@int:=\list@num\do{\figvectP-1[#3,\p@int]\Figg@tXY{-1}%
    \pr@dMatV/#4/\figpttra\the\s@mme:=#3/1,-1/\advance\s@mme\@ne}%
    \resetc@ntr@l\et@tfigptsmap}\ignorespaces\fi}
\ctr@ln@m\figptscontrol
\ctr@ld@f\def\figptscontrolDD#1[#2,#3,#4,#5]{\ifps@cri{\s@uvc@ntr@l\et@tfigptscontrolDD\setc@ntr@l{2}%
    \v@lX=\z@\v@lY=\z@\Figtr@nptDD{-5}{#2}\Figtr@nptDD{2}{#5}%
    \divide\v@lX\@vi\divide\v@lY\@vi%
    \Figtr@nptDD{3}{#3}\Figtr@nptDD{-1.5}{#4}\Figp@intregDD-1:(\v@lX,\v@lY)%
    \v@lX=\z@\v@lY=\z@\Figtr@nptDD{2}{#2}\Figtr@nptDD{-5}{#5}%
    \divide\v@lX\@vi\divide\v@lY\@vi\Figtr@nptDD{-1.5}{#3}\Figtr@nptDD{3}{#4}%
    \s@mme=#1\advance\s@mme\@ne\Figp@intregDD\the\s@mme:(\v@lX,\v@lY)%
    \figptcopyDD#1:/-1/\resetc@ntr@l\et@tfigptscontrolDD}\ignorespaces\fi}
\ctr@ld@f\def\figptscontrolTD#1[#2,#3,#4,#5]{\ifps@cri{\s@uvc@ntr@l\et@tfigptscontrolTD\setc@ntr@l{2}%
    \v@lX=\z@\v@lY=\z@\v@lZ=\z@\Figtr@nptTD{-5}{#2}\Figtr@nptTD{2}{#5}%
    \divide\v@lX\@vi\divide\v@lY\@vi\divide\v@lZ\@vi%
    \Figtr@nptTD{3}{#3}\Figtr@nptTD{-1.5}{#4}\Figp@intregTD-1:(\v@lX,\v@lY,\v@lZ)%
    \v@lX=\z@\v@lY=\z@\v@lZ=\z@\Figtr@nptTD{2}{#2}\Figtr@nptTD{-5}{#5}%
    \divide\v@lX\@vi\divide\v@lY\@vi\divide\v@lZ\@vi\Figtr@nptTD{-1.5}{#3}\Figtr@nptTD{3}{#4}%
    \s@mme=#1\advance\s@mme\@ne\Figp@intregTD\the\s@mme:(\v@lX,\v@lY,\v@lZ)%
    \figptcopyTD#1:/-1/\resetc@ntr@l\et@tfigptscontrolTD}\ignorespaces\fi}
\ctr@ld@f\def\Figtr@nptDD#1#2{\Figg@tXYa{#2}\v@lXa=#1\v@lXa\v@lYa=#1\v@lYa%
    \advance\v@lX\v@lXa\advance\v@lY\v@lYa}
\ctr@ld@f\def\Figtr@nptTD#1#2{\Figg@tXYa{#2}\v@lXa=#1\v@lXa\v@lYa=#1\v@lYa\v@lZa=#1\v@lZa%
    \advance\v@lX\v@lXa\advance\v@lY\v@lYa\advance\v@lZ\v@lZa}
\ctr@ld@f\def\figptscontrolcurve#1,#2[#3]{\ifps@cri{\s@uvc@ntr@l\et@tfigptscontrolcurve%
    \def\list@num{#3}\extrairelepremi@r\Ak@\de\list@num%
    \extrairelepremi@r\Ai@\de\list@num\extrairelepremi@r\Aj@\de\list@num%
    \s@mme=#1\figptcopy\the\s@mme:/\Ai@/%
    \setc@ntr@l{2}\figvectP -1[\Ak@,\Aj@]%
    \@ecfor\Ak@:=\list@num\do{\advance\s@mme\@ne\figpttra\the\s@mme:=\Ai@/\curv@roundness,-1/%
       \figvectP -1[\Ai@,\Ak@]\advance\s@mme\@ne\figpttra\the\s@mme:=\Aj@/-\curv@roundness,-1/%
       \advance\s@mme\@ne\figptcopy\the\s@mme:/\Aj@/%
       \edef\Ai@{\Aj@}\edef\Aj@{\Ak@}}\advance\s@mme-#1\divide\s@mme\thr@@%
       \xdef#2{\the\s@mme}%
    \resetc@ntr@l\et@tfigptscontrolcurve}\ignorespaces\fi}
\ctr@ln@m\figptsintercirc
\ctr@ld@f\def\figptsintercircDD#1[#2,#3;#4,#5]{\ifps@cri{\s@uvc@ntr@l\et@tfigptsintercircDD%
    \setc@ntr@l{2}\let\c@lNVintc=\c@lNVintcDD\Figptsintercirc@#1[#2,#3;#4,#5]%
    \resetc@ntr@l\et@tfigptsintercircDD}\ignorespaces\fi}
\ctr@ld@f\def\figptsintercircTD#1[#2,#3;#4,#5;#6]{\ifps@cri{\s@uvc@ntr@l\et@tfigptsintercircTD%
    \setc@ntr@l{2}\let\c@lNVintc=\c@lNVintcTD\vecunitC@TD[#2,#6]%
    \Figv@ctCreg-3(\v@lX,\v@lY,\v@lZ)\Figptsintercirc@#1[#2,#3;#4,#5]%
    \resetc@ntr@l\et@tfigptsintercircTD}\ignorespaces\fi}
\ctr@ld@f\def\Figptsintercirc@#1[#2,#3;#4,#5]{\figvectP-1[#2,#4]%
    \vecunit@{-1}{-1}\delt@=\result@t\f@ctech=\result@tent%
    \s@mme=#1\advance\s@mme\@ne\figptcopy#1:/#2/\figptcopy\the\s@mme:/#4/%
    \ifdim\delt@=\z@\else%
    \v@lmin=#3\unit@\v@lmax=#5\unit@\v@leur=\v@lmin\advance\v@leur\v@lmax%
    \ifdim\v@leur>\delt@%
    \v@leur=\v@lmin\advance\v@leur-\v@lmax\maxim@m{\v@leur}{\v@leur}{-\v@leur}%
    \ifdim\v@leur<\delt@%
    \divide\v@lmin\f@ctech\divide\v@lmax\f@ctech\divide\delt@\f@ctech%
    \v@lmin=\repdecn@mb{\v@lmin}\v@lmin\v@lmax=\repdecn@mb{\v@lmax}\v@lmax%
    \invers@{\v@leur}{\delt@}\advance\v@lmax-\v@lmin%
    \v@lmax=-\repdecn@mb{\v@leur}\v@lmax\advance\delt@\v@lmax\delt@=.5\delt@%
    \v@lmax=\delt@\multiply\v@lmax\f@ctech%
    \edef\t@ille{\repdecn@mb{\v@lmax}}\figpttra-2:=#2/\t@ille,-1/%
    \delt@=\repdecn@mb{\delt@}\delt@\advance\v@lmin-\delt@%
    \sqrt@{\v@leur}{\v@lmin}\multiply\v@leur\f@ctech\edef\t@ille{\repdecn@mb{\v@leur}}%
    \c@lNVintc\figpttra#1:=-2/-\t@ille,-1/\figpttra\the\s@mme:=-2/\t@ille,-1/\fi\fi\fi}
\ctr@ld@f\def\c@lNVintcDD{\Figg@tXY{-1}\Figv@ctCreg-1(-\v@lY,\v@lX)} 
\ctr@ld@f\def\c@lNVintcTD{{\Figg@tXY{-3}\v@lmin=\v@lX\v@lmax=\v@lY\v@leur=\v@lZ%
    \Figg@tXY{-1}\c@lprovec{-3}\vecunit@{-3}{-3}
    \Figg@tXY{-1}\v@lmin=\v@lX\v@lmax=\v@lY%
    \v@leur=\v@lZ\Figg@tXY{-3}\c@lprovec{-1}}} 
\ctr@ln@m\figptsinterlinell
\ctr@ld@f\def\figptsinterlinellDD#1[#2,#3,#4,#5;#6,#7]{\ifps@cri{\s@uvc@ntr@l\et@tfigptsinterlinellDD%
    \figptcopy#1:/#6/\s@mme=#1\advance\s@mme\@ne\figptcopy\the\s@mme:/#7/%
    \v@lmin=#3\unit@\v@lmax=#4\unit@
    \setc@ntr@l{2}\figptbaryDD-4:[#6,#7;1,1]\figptsrotDD-3=-4,#7/#2,-#5/
    \Figg@tXY{-3}\Figg@tXYa{#2}\advance\v@lX-\v@lXa\advance\v@lY-\v@lYa
    \figvectP-1[-3,-2]\Figg@tXYa{-1}\figvectP-3[-4,#7]\Figptsint@rLE{#1}
    \resetc@ntr@l\et@tfigptsinterlinellDD}\ignorespaces\fi}
\ctr@ld@f\def\figptsinterlinellP#1[#2,#3,#4;#5,#6]{\ifps@cri{\s@uvc@ntr@l\et@tfigptsinterlinellP%
    \figptcopy#1:/#5/\s@mme=#1\advance\s@mme\@ne\figptcopy\the\s@mme:/#6/\setc@ntr@l{2}%
    \figvectP-1[#2,#3]\vecunit@{-1}{-1}\v@lmin=\result@t
    \figvectP-2[#2,#4]\vecunit@{-2}{-2}\v@lmax=\result@t
    \figptbary-4:[#5,#6;1,1]
    \figvectP-3[#2,-4]\c@lproscal\v@lX[-3,-1]\c@lproscal\v@lY[-3,-2]
    \figvectP-3[-4,#6]\c@lproscal\v@lXa[-3,-1]\c@lproscal\v@lYa[-3,-2]
    \Figptsint@rLE{#1}\resetc@ntr@l\et@tfigptsinterlinellP}\ignorespaces\fi}
\ctr@ld@f\def\Figptsint@rLE#1{%
    \getredf@ctDD\f@ctech(\v@lmin,\v@lmax)%
    \getredf@ctDD\p@rtent(\v@lX,\v@lY)\ifnum\p@rtent>\f@ctech\f@ctech=\p@rtent\fi%
    \getredf@ctDD\p@rtent(\v@lXa,\v@lYa)\ifnum\p@rtent>\f@ctech\f@ctech=\p@rtent\fi%
    \divide\v@lmin\f@ctech\divide\v@lmax\f@ctech\divide\v@lX\f@ctech\divide\v@lY\f@ctech%
    \divide\v@lXa\f@ctech\divide\v@lYa\f@ctech%
    \c@rre=\repdecn@mb\v@lXa\v@lmax\mili@u=\repdecn@mb\v@lYa\v@lmin%
    \getredf@ctDD\f@ctech(\c@rre,\mili@u)%
    \c@rre=\repdecn@mb\v@lX\v@lmax\mili@u=\repdecn@mb\v@lY\v@lmin%
    \getredf@ctDD\p@rtent(\c@rre,\mili@u)\ifnum\p@rtent>\f@ctech\f@ctech=\p@rtent\fi%
    \divide\v@lmin\f@ctech\divide\v@lmax\f@ctech\divide\v@lX\f@ctech\divide\v@lY\f@ctech%
    \divide\v@lXa\f@ctech\divide\v@lYa\f@ctech%
    \v@lmin=\repdecn@mb{\v@lmin}\v@lmin\v@lmax=\repdecn@mb{\v@lmax}\v@lmax%
    \edef\G@xde{\repdecn@mb\v@lmin}\edef\P@xde{\repdecn@mb\v@lmax}%
    \c@rre=-\v@lmax\v@leur=\repdecn@mb\v@lY\v@lY\advance\c@rre\v@leur\c@rre=\G@xde\c@rre%
    \v@leur=\repdecn@mb\v@lX\v@lX\v@leur=\P@xde\v@leur\advance\c@rre\v@leur
    \v@lmin=\repdecn@mb\v@lYa\v@lmin\v@lmax=\repdecn@mb\v@lXa\v@lmax%
    \mili@u=\repdecn@mb\v@lX\v@lmax\advance\mili@u\repdecn@mb\v@lY\v@lmin
    \v@lmax=\repdecn@mb\v@lXa\v@lmax\advance\v@lmax\repdecn@mb\v@lYa\v@lmin
    \ifdim\v@lmax>\epsil@n%
    \maxim@m{\v@leur}{\c@rre}{-\c@rre}\maxim@m{\v@lmin}{\mili@u}{-\mili@u}%
    \maxim@m{\v@leur}{\v@leur}{\v@lmin}\maxim@m{\v@lmin}{\v@lmax}{-\v@lmax}%
    \maxim@m{\v@leur}{\v@leur}{\v@lmin}\p@rtentiere{\p@rtent}{\v@leur}\advance\p@rtent\@ne%
    \divide\c@rre\p@rtent\divide\mili@u\p@rtent\divide\v@lmax\p@rtent%
    \delt@=\repdecn@mb{\mili@u}\mili@u\v@leur=\repdecn@mb{\v@lmax}\c@rre%
    \advance\delt@-\v@leur\ifdim\delt@<\z@\else\sqrt@\delt@\delt@%
    \invers@\v@lmax\v@lmax\edef\Uns@rAp{\repdecn@mb\v@lmax}%
    \v@leur=-\mili@u\advance\v@leur-\delt@\v@leur=\Uns@rAp\v@leur%
    \edef\t@ille{\repdecn@mb\v@leur}\figpttra#1:=-4/\t@ille,-3/\s@mme=#1\advance\s@mme\@ne%
    \v@leur=-\mili@u\advance\v@leur\delt@\v@leur=\Uns@rAp\v@leur%
    \edef\t@ille{\repdecn@mb\v@leur}\figpttra\the\s@mme:=-4/\t@ille,-3/\fi\fi}
\ctr@ln@m\figptsorthoprojline
\ctr@ld@f\def\figptsorthoprojlineDD#1=#2/#3,#4/{\ifps@cri{\s@uvc@ntr@l\et@tfigptsorthoprojlineDD%
    \setc@ntr@l{2}\figvectPDD-3[#3,#4]\figvectNVDD-4[-3]\resetc@ntr@l{2}%
    \def\list@num{#2}\s@mme=#1\@ecfor\p@int:=\list@num\do{%
    \inters@cDD\the\s@mme:[\p@int,-4;#3,-3]\advance\s@mme\@ne}%
    \resetc@ntr@l\et@tfigptsorthoprojlineDD}\ignorespaces\fi}
\ctr@ld@f\def\figptsorthoprojlineTD#1=#2/#3,#4/{\ifps@cri{\s@uvc@ntr@l\et@tfigptsorthoprojlineTD%
    \setc@ntr@l{2}\figvectPTD-2[#3,#4]\vecunit@TD{-2}{-2}%
    \def\list@num{#2}\s@mme=#1\@ecfor\p@int:=\list@num\do{%
    \figvectPTD-1[#3,\p@int]\c@lproscalTD\v@leur[-1,-2]%
    \edef\v@lcoef{\repdecn@mb{\v@leur}}\figpttraTD\the\s@mme:=#3/\v@lcoef,-2/%
    \advance\s@mme\@ne}\resetc@ntr@l\et@tfigptsorthoprojlineTD}\ignorespaces\fi}
\ctr@ln@m\figptsorthoprojplane
\ctr@ld@f\def\figptsorthoprojplaneDD{\un@v@ilable{figptsorthoprojplane}}
\ctr@ld@f\def\figptsorthoprojplaneTD#1=#2/#3,#4/{\ifps@cri{\s@uvc@ntr@l\et@tfigptsorthoprojplane%
    \setc@ntr@l{2}\vecunit@TD{-2}{#4}%
    \def\list@num{#2}\s@mme=#1\@ecfor\p@int:=\list@num\do{\figvectPTD-1[\p@int,#3]%
    \c@lproscalTD\v@leur[-1,-2]\edef\v@lcoef{\repdecn@mb{\v@leur}}%
    \figpttraTD\the\s@mme:=\p@int/\v@lcoef,-2/\advance\s@mme\@ne}%
    \resetc@ntr@l\et@tfigptsorthoprojplane}\ignorespaces\fi}
\ctr@ld@f\def\figptshom#1=#2/#3,#4/{\ifps@cri{\s@uvc@ntr@l\et@tfigptshom%
    \setc@ntr@l{2}\def\list@num{#2}\s@mme=#1%
    \@ecfor\p@int:=\list@num\do{\figvectP-1[#3,\p@int]%
    \figpttra\the\s@mme:=#3/#4,-1/\advance\s@mme\@ne}%
    \resetc@ntr@l\et@tfigptshom}\ignorespaces\fi}
\ctr@ln@m\figptsrot
\ctr@ld@f\def\figptsrotDD#1=#2/#3,#4/{\ifps@cri{\s@uvc@ntr@l\et@tfigptsrotDD%
    \c@ssin{\C@}{\S@}{#4}\setc@ntr@l{2}\def\list@num{#2}\s@mme=#1%
    \@ecfor\p@int:=\list@num\do{\figvectPDD-1[#3,\p@int]\Figg@tXY{-1}%
    \v@lXa=\C@\v@lX\advance\v@lXa-\S@\v@lY%
    \v@lYa=\S@\v@lX\advance\v@lYa\C@\v@lY%
    \Figv@ctCreg-1(\v@lXa,\v@lYa)\figpttraDD\the\s@mme:=#3/1,-1/\advance\s@mme\@ne}%
    \resetc@ntr@l\et@tfigptsrotDD}\ignorespaces\fi}
\ctr@ld@f\def\figptsrotTD#1=#2/#3,#4,#5/{\ifps@cri{\s@uvc@ntr@l\et@tfigptsrotTD%
    \c@ssin{\C@}{\S@}{#4}%
    \setc@ntr@l{2}\def\list@num{#2}\s@mme=#1%
    \@ecfor\p@int:=\list@num\do{\figptorthoprojplaneTD-3:=#3/\p@int,#5/%
    \figvectPTD-2[-3,\p@int]%
    \figvectNVTD-1[#5,-2]\n@rmeucTD\v@leur{-2}\edef\v@lcoef{\repdecn@mb{\v@leur}}%
    \Figg@tXYa{-1}\v@lXa=\v@lcoef\v@lXa\v@lYa=\v@lcoef\v@lYa\v@lZa=\v@lcoef\v@lZa%
    \v@lXa=\S@\v@lXa\v@lYa=\S@\v@lYa\v@lZa=\S@\v@lZa\Figg@tXY{-2}%
    \advance\v@lXa\C@\v@lX\advance\v@lYa\C@\v@lY\advance\v@lZa\C@\v@lZ%
    \Figg@tXY{-3}\advance\v@lXa\v@lX\advance\v@lYa\v@lY\advance\v@lZa\v@lZ%
    \Figp@intregTD\the\s@mme:(\v@lXa,\v@lYa,\v@lZa)\advance\s@mme\@ne}%
    \resetc@ntr@l\et@tfigptsrotTD}\ignorespaces\fi}
\ctr@ln@m\figptssym
\ctr@ld@f\def\figptssymDD#1=#2/#3,#4/{\ifps@cri{\s@uvc@ntr@l\et@tfigptssymDD%
    \setc@ntr@l{2}\figvectPDD-3[#3,#4]\Figg@tXY{-3}\Figv@ctCreg-4(-\v@lY,\v@lX)%
    \resetc@ntr@l{2}\def\list@num{#2}\s@mme=#1%
    \@ecfor\p@int:=\list@num\do{\inters@cDD-5:[#3,-3;\p@int,-4]\figvectPDD-2[\p@int,-5]%
    \figpttraDD\the\s@mme:=\p@int/2,-2/\advance\s@mme\@ne}%
    \resetc@ntr@l\et@tfigptssymDD}\ignorespaces\fi}
\ctr@ld@f\def\figptssymTD#1=#2/#3,#4/{\ifps@cri{\s@uvc@ntr@l\et@tfigptssymTD%
    \setc@ntr@l{2}\vecunit@TD{-2}{#4}\def\list@num{#2}\s@mme=#1%
    \@ecfor\p@int:=\list@num\do{\figvectPTD-1[\p@int,#3]%
    \c@lproscalTD\v@leur[-1,-2]\v@leur=2\v@leur\edef\v@lcoef{\repdecn@mb{\v@leur}}%
    \figpttraTD\the\s@mme:=\p@int/\v@lcoef,-2/\advance\s@mme\@ne}%
    \resetc@ntr@l\et@tfigptssymTD}\ignorespaces\fi}
\ctr@ln@m\figptstra
\ctr@ld@f\def\figptstraDD#1=#2/#3,#4/{\ifps@cri{\Figg@tXYa{#4}\v@lXa=#3\v@lXa\v@lYa=#3\v@lYa%
    \def\list@num{#2}\s@mme=#1\@ecfor\p@int:=\list@num\do{\Figg@tXY{\p@int}%
    \advance\v@lX\v@lXa\advance\v@lY\v@lYa%
    \Figp@intregDD\the\s@mme:(\v@lX,\v@lY)\advance\s@mme\@ne}}\ignorespaces\fi}
\ctr@ld@f\def\figptstraTD#1=#2/#3,#4/{\ifps@cri{\Figg@tXYa{#4}\v@lXa=#3\v@lXa\v@lYa=#3\v@lYa%
    \v@lZa=#3\v@lZa\def\list@num{#2}\s@mme=#1\@ecfor\p@int:=\list@num\do{\Figg@tXY{\p@int}%
    \advance\v@lX\v@lXa\advance\v@lY\v@lYa\advance\v@lZ\v@lZa%
    \Figp@intregTD\the\s@mme:(\v@lX,\v@lY,\v@lZ)\advance\s@mme\@ne}}\ignorespaces\fi}
\ctr@ln@m\figptvisilimSL
\ctr@ld@f\def\figptvisilimSLDD{\un@v@ilable{figptvisilimSL}}
\ctr@ld@f\def\figptvisilimSLTD#1:#2[#3,#4;#5,#6]{\ifps@cri{\s@uvc@ntr@l\et@tfigptvisilimSLTD%
    \setc@ntr@l{2}\figvectP-1[#3,#4]\n@rminf{\delt@}{-1}%
    \ifcase\curr@ntproj\v@lX=\cxa@\p@\v@lY=-\p@\v@lZ=\cxb@\p@
    \Figv@ctCreg-2(\v@lX,\v@lY,\v@lZ)\figvectP-3[#5,#6]\figvectNV-1[-2,-3]%
    \or\figvectP-1[#5,#6]\vecunitCV@TD{-1}\v@lmin=\v@lX\v@lmax=\v@lY
    \v@leur=\v@lZ\v@lX=\cza@\p@\v@lY=\czb@\p@\v@lZ=\czc@\p@\c@lprovec{-1}%
    \or\c@ley@pt{-2}\figvectN-1[#5,#6,-2]\fi
    \edef\Ai@{#3}\edef\Aj@{#4}\figvectP-2[#5,\Ai@]\c@lproscal\v@leur[-1,-2]%
    \ifdim\v@leur>\z@\p@rtent=\@ne\else\p@rtent=\m@ne\fi%
    \figvectP-2[#5,\Aj@]\c@lproscal\v@leur[-1,-2]%
    \ifdim\p@rtent\v@leur>\z@\figptcopy#1:#2/#3/%
    \message{*** \BS@ figptvisilimSL: points are on the same side.}\else%
    \figptcopy-3:/#3/\figptcopy-4:/#4/%
    \loop\figptbary-5:[-3,-4;1,1]\figvectP-2[#5,-5]\c@lproscal\v@leur[-1,-2]%
    \ifdim\p@rtent\v@leur>\z@\figptcopy-3:/-5/\else\figptcopy-4:/-5/\fi%
    \divide\delt@\tw@\ifdim\delt@>\epsil@n\repeat%
    \figptbary#1:#2[-3,-4;1,1]\fi\resetc@ntr@l\et@tfigptvisilimSLTD}\ignorespaces\fi}
\ctr@ld@f\def\c@ley@pt#1{\t@stp@r\ifitis@K\v@lX=\cza@\p@\v@lY=\czb@\p@\v@lZ=\czc@\p@%
    \Figv@ctCreg-1(\v@lX,\v@lY,\v@lZ)\Figp@intreg-2:(\wd\Bt@rget,\ht\Bt@rget,\dp\Bt@rget)%
    \figpttra#1:=-2/-\disob@intern,-1/\else\end\fi}
\ctr@ld@f\def\t@stp@r{\itis@Ktrue\ifnewt@rgetpt\else\itis@Kfalse%
    \message{*** \BS@ figptvisilimXX: target point undefined.}\fi\ifnewdis@b\else%
    \itis@Kfalse\message{*** \BS@ figptvisilimXX: observation distance undefined.}\fi%
    \ifitis@K\else\message{*** This macro must be called after \BS@ psbeginfig or after
    having set the missing parameter(s) with \BS@ figset proj()}\fi}
\ctr@ld@f\def\figscan#1(#2,#3){{\s@uvc@ntr@l\et@tfigscan\@psfgetbb{#1}\if@psfbbfound\else%
    \def\@psfllx{0}\def\@psflly{20}\def\@psfurx{540}\def\@psfury{640}\fi\figscan@{#2}{#3}%
    \resetc@ntr@l\et@tfigscan}\ignorespaces}
\ctr@ld@f\def\figscan@#1#2{%
    \unit@=\@ne bp\setc@ntr@l{2}\figsetmark{}%
    \def\minst@p{20pt}%
    \v@lX=\@psfllx\p@\v@lX=\Sc@leFact\v@lX\r@undint\v@lX\v@lX%
    \v@lY=\@psflly\p@\v@lY=\Sc@leFact\v@lY\ifdim\v@lY>\z@\r@undint\v@lY\v@lY\fi%
    \delt@=\@psfury\p@\delt@=\Sc@leFact\delt@%
    \advance\delt@-\v@lY\v@lXa=\@psfurx\p@\v@lXa=\Sc@leFact\v@lXa\v@leur=\minst@p%
    \edef\valv@lY{\repdecn@mb{\v@lY}}\edef\LgTr@it{\the\delt@}%
    \loop\ifdim\v@lX<\v@lXa\edef\valv@lX{\repdecn@mb{\v@lX}}%
    \figptDD -1:(\valv@lX,\valv@lY)\figwriten -1:\hbox{\vrule height\LgTr@it}(0)%
    \ifdim\v@leur<\minst@p\else\figsetmark{\raise-8bp\hbox{$\scriptscriptstyle\triangle$}}%
    \figwrites -1:\@ffichnb{0}{\valv@lX}(6)\v@leur=\z@\figsetmark{}\fi%
    \advance\v@leur#1pt\advance\v@lX#1pt\repeat%
    \def\minst@p{10pt}%
    \v@lX=\@psfllx\p@\v@lX=\Sc@leFact\v@lX\ifdim\v@lX>\z@\r@undint\v@lX\v@lX\fi%
    \v@lY=\@psflly\p@\v@lY=\Sc@leFact\v@lY\r@undint\v@lY\v@lY%
    \delt@=\@psfurx\p@\delt@=\Sc@leFact\delt@%
    \advance\delt@-\v@lX\v@lYa=\@psfury\p@\v@lYa=\Sc@leFact\v@lYa\v@leur=\minst@p%
    \edef\valv@lX{\repdecn@mb{\v@lX}}\edef\LgTr@it{\the\delt@}%
    \loop\ifdim\v@lY<\v@lYa\edef\valv@lY{\repdecn@mb{\v@lY}}%
    \figptDD -1:(\valv@lX,\valv@lY)\figwritee -1:\vbox{\hrule width\LgTr@it}(0)%
    \ifdim\v@leur<\minst@p\else\figsetmark{$\triangleright$\kern4bp}%
    \figwritew -1:\@ffichnb{0}{\valv@lY}(6)\v@leur=\z@\figsetmark{}\fi%
    \advance\v@leur#2pt\advance\v@lY#2pt\repeat}
\ctr@ld@f
\ctr@ld@f\def\figscan@E#1(#2,#3){{\s@uvc@ntr@l\et@tfigscan@E%
    \Figdisc@rdLTS{#1}{\t@xt@}\pdfximage{\t@xt@}%
    \setbox\Gb@x=\hbox{\pdfrefximage\pdflastximage}%
    \edef\@psfllx{0}\v@lY=-\dp\Gb@x\edef\@psflly{\repdecn@mb{\v@lY}}%
    \edef\@psfurx{\repdecn@mb{\wd\Gb@x}}%
    \v@lY=\dp\Gb@x\advance\v@lY\ht\Gb@x\edef\@psfury{\repdecn@mb{\v@lY}}%
    \figscan@{#2}{#3}\resetc@ntr@l\et@tfigscan@E}\ignorespaces}
\ctr@ld@f\def\figshowpts[#1,#2]{{\figsetmark{$\bullet$}\figsetptname{\bf ##1}%
    \p@rtent=#2\relax\ifnum\p@rtent<\z@\p@rtent=\z@\fi%
    \s@mme=#1\relax\ifnum\s@mme<\z@\s@mme=\z@\fi%
    \loop\ifnum\s@mme<\p@rtent\pt@rvect{\s@mme}%
    \ifitis@K\figwriten{\the\s@mme}:(4pt)\fi\advance\s@mme\@ne\repeat%
    \pt@rvect{\s@mme}\ifitis@K\figwriten{\the\s@mme}:(4pt)\fi}\ignorespaces}
\ctr@ld@f\def\pt@rvect#1{\set@bjc@de{#1}%
    \expandafter\expandafter\expandafter\inqpt@rvec\csname\objc@de\endcsname:}
\ctr@ld@f\def\inqpt@rvec#1#2:{\if#1\C@dCl@spt\itis@Ktrue\else\itis@Kfalse\fi}
\ctr@ld@f\def\figshowsettings{{%
    \immediate\write16{====================================================================}%
    \immediate\write16{ Current settings about:}%
    \immediate\write16{ --- GENERAL ---}%
    \immediate\write16{Scale factor and Unit = \unit@util\space (\the\unit@)
     \space -> \BS@ figinit{ScaleFactorUnit}}%
    \immediate\write16{Update mode = \ifpsupdatem@de yes\else no\fi
     \space-> \BS@ psset(update=yes/no) or \BS@ pssetdefault(update=yes/no)}%
    \immediate\write16{ --- PRINTING ---}%
    \immediate\write16{Implicit point name = \ptn@me{i} \space-> \BS@ figsetptname{Name}}%
    \immediate\write16{Point marker = \the\c@nsymb \space -> \BS@ figsetmark{Mark}}%
    \immediate\write16{Print rounded coordinates = \ifr@undcoord yes\else no\fi
     \space-> \BS@ figsetroundcoord{yes/no}}%
    \immediate\write16{ --- GRAPHICAL (general) ---}%
    \immediate\write16{First-level (or primary) settings:}%
    \immediate\write16{ Color = \curr@ntcolor \space-> \BS@ psset(color=ColorDefinition)}%
    \immediate\write16{ Filling mode = \iffillm@de yes\else no\fi
     \space-> \BS@ psset(fillmode=yes/no)}%
    \immediate\write16{ Line join = \curr@ntjoin \space-> \BS@ psset(join=miter/round/bevel)}%
    \immediate\write16{ Line style = \curr@ntdash \space-> \BS@ psset(dash=Index/Pattern)}%
    \immediate\write16{ Line width = \curr@ntwidth
     \space-> \BS@ psset(width=real in PostScript units)}%
    \immediate\write16{Second-level (or secondary) settings:}%
    \immediate\write16{ Color = \sec@ndcolor \space-> \BS@ psset second(color=ColorDefinition)}%
    \immediate\write16{ Line style = \curr@ntseconddash
     \space-> \BS@ psset second(dash=Index/Pattern)}%
    \immediate\write16{ Line width = \curr@ntsecondwidth
     \space-> \BS@ psset second(width=real in PostScript units)}%
    \immediate\write16{Third-level (or ternary) settings:}%
    \immediate\write16{ Color = \th@rdcolor \space-> \BS@ psset third(color=ColorDefinition)}%
    \immediate\write16{ --- GRAPHICAL (specific) ---}%
    \immediate\write16{Arrow-head:}%
    \immediate\write16{ (half-)Angle = \@rrowheadangle
     \space-> \BS@ psset arrowhead(angle=real in degrees)}%
    \immediate\write16{ Filling mode = \if@rrowhfill yes\else no\fi
     \space-> \BS@ psset arrowhead(fillmode=yes/no)}%
    \immediate\write16{ "Outside" = \if@rrowhout yes\else no\fi
     \space-> \BS@ psset arrowhead(out=yes/no)}%
    \immediate\write16{ Length = \@rrowheadlength
     \if@rrowratio\space(not active)\else\space(active)\fi
     \space-> \BS@ psset arrowhead(length=real in user coord.)}%
    \immediate\write16{ Ratio = \@rrowheadratio
     \if@rrowratio\space(active)\else\space(not active)\fi
     \space-> \BS@ psset arrowhead(ratio=real in [0,1])}%
    \immediate\write16{Curve: Roundness = \curv@roundness
     \space-> \BS@ psset curve(roundness=real in [0,0.5])}%
    \immediate\write16{Mesh: Diagonal = \c@ntrolmesh
     \space-> \BS@ psset mesh(diag=integer in {-1,0,1})}%
    \immediate\write16{Flow chart:}%
    \immediate\write16{ Arrow position = \@rrowp@s
     \space-> \BS@ psset flowchart(arrowposition=real in [0,1])}%
    \immediate\write16{ Arrow reference point = \ifcase\@rrowr@fpt start\else end\fi
     \space-> \BS@ psset flowchart(arrowrefpt = start/end)}%
    \immediate\write16{ Line type = \ifcase\fclin@typ@ curve\else polygon\fi
     \space-> \BS@ psset flowchart(line=polygon/curve)}%
    \immediate\write16{ Padding = (\Xp@dd, \Yp@dd)
     \space-> \BS@ psset flowchart(padding = real in user coord.)}%
    \immediate\write16{\space\space\space\space(or
     \BS@ psset flowchart(xpadding=real, ypadding=real) )}%
    \immediate\write16{ Radius = \fclin@r@d
     \space-> \BS@ psset flowchart(radius=positive real in user coord.)}%
    \immediate\write16{ Shape = \fcsh@pe
     \space-> \BS@ psset flowchart(shape = rectangle, ellipse or lozenge)}%
    \immediate\write16{ Thickness = \thickn@ss
     \space-> \BS@ psset flowchart(thickness = real in user coord.)}%
    \ifTr@isDim%
    \immediate\write16{ --- 3D to 2D PROJECTION ---}%
    \immediate\write16{Projection : \typ@proj \space-> \BS@ figinit{ScaleFactorUnit, ProjType}}%
    \immediate\write16{Longitude (psi) = \v@lPsi \space-> \BS@ figset proj(psi=real in degrees)}%
    \ifcase\curr@ntproj\immediate\write16{Depth coeff. (Lambda)
     \space = \v@lTheta \space-> \BS@ figset proj(lambda=real in [0,1])}%
    \else\immediate\write16{Latitude (theta)
     \space = \v@lTheta \space-> \BS@ figset proj(theta=real in degrees)}%
    \fi%
    \ifnum\curr@ntproj=\tw@%
    \immediate\write16{Observation distance = \disob@unit
     \space-> \BS@ figset proj(dist=real in user coord.)}%
    \immediate\write16{Target point = \t@rgetpt \space-> \BS@ figset proj(targetpt=pt number)}%
     \v@lX=\ptT@unit@\wd\Bt@rget\v@lY=\ptT@unit@\ht\Bt@rget\v@lZ=\ptT@unit@\dp\Bt@rget%
    \immediate\write16{ Its coordinates are
     (\repdecn@mb{\v@lX}, \repdecn@mb{\v@lY}, \repdecn@mb{\v@lZ})}%
    \fi%
    \fi%
    \immediate\write16{====================================================================}%
    \ignorespaces}}
\ctr@ln@w{newif}\ifitis@vect@r
\ctr@ld@f\def\figvectC#1(#2,#3){{\itis@vect@rtrue\figpt#1:(#2,#3)}\ignorespaces}
\ctr@ld@f\def\Figv@ctCreg#1(#2,#3){{\itis@vect@rtrue\Figp@intreg#1:(#2,#3)}\ignorespaces}
\ctr@ln@m\figvectDBezier
\ctr@ld@f\def\figvectDBezierDD#1:#2,#3[#4,#5,#6,#7]{\ifps@cri{\s@uvc@ntr@l\et@tfigvectDBezierDD%
    \FigvectDBezier@#2,#3[#4,#5,#6,#7]\v@lX=\c@ef\v@lX\v@lY=\c@ef\v@lY%
    \Figv@ctCreg#1(\v@lX,\v@lY)\resetc@ntr@l\et@tfigvectDBezierDD}\ignorespaces\fi}
\ctr@ld@f\def\figvectDBezierTD#1:#2,#3[#4,#5,#6,#7]{\ifps@cri{\s@uvc@ntr@l\et@tfigvectDBezierTD%
    \FigvectDBezier@#2,#3[#4,#5,#6,#7]\v@lX=\c@ef\v@lX\v@lY=\c@ef\v@lY\v@lZ=\c@ef\v@lZ%
    \Figv@ctCreg#1(\v@lX,\v@lY,\v@lZ)\resetc@ntr@l\et@tfigvectDBezierTD}\ignorespaces\fi}
\ctr@ld@f\def\FigvectDBezier@#1,#2[#3,#4,#5,#6]{\setc@ntr@l{2}%
    \edef\T@{#2}\v@leur=\p@\advance\v@leur-#2pt\edef\UNmT@{\repdecn@mb{\v@leur}}%
    \ifnum#1=\tw@\def\c@ef{6}\else\def\c@ef{3}\fi%
    \figptcopy-4:/#3/\figptcopy-3:/#4/\figptcopy-2:/#5/\figptcopy-1:/#6/%
    \l@mbd@un=-4 \l@mbd@de=-\thr@@\p@rtent=\m@ne\c@lDecast%
    \ifnum#1=\tw@\c@lDCDeux{-4}{-3}\c@lDCDeux{-3}{-2}\c@lDCDeux{-4}{-3}\else%
    \l@mbd@un=-4 \l@mbd@de=-\thr@@\p@rtent=-\tw@\c@lDecast%
    \c@lDCDeux{-4}{-3}\fi\Figg@tXY{-4}}
\ctr@ln@m\c@lDCDeux
\ctr@ld@f\def\c@lDCDeuxDD#1#2{\Figg@tXY{#2}\Figg@tXYa{#1}%
    \advance\v@lX-\v@lXa\advance\v@lY-\v@lYa\Figp@intregDD#1:(\v@lX,\v@lY)}
\ctr@ld@f\def\c@lDCDeuxTD#1#2{\Figg@tXY{#2}\Figg@tXYa{#1}\advance\v@lX-\v@lXa%
    \advance\v@lY-\v@lYa\advance\v@lZ-\v@lZa\Figp@intregTD#1:(\v@lX,\v@lY,\v@lZ)}
\ctr@ln@m\figvectN
\ctr@ld@f\def\figvectNDD#1[#2,#3]{\ifps@cri{\Figg@tXYa{#2}\Figg@tXY{#3}%
    \advance\v@lX-\v@lXa\advance\v@lY-\v@lYa%
    \Figv@ctCreg#1(-\v@lY,\v@lX)}\ignorespaces\fi}
\ctr@ld@f\def\figvectNTD#1[#2,#3,#4]{\ifps@cri{\vecunitC@TD[#2,#4]\v@lmin=\v@lX\v@lmax=\v@lY%
    \v@leur=\v@lZ\vecunitC@TD[#2,#3]\c@lprovec{#1}}\ignorespaces\fi}
\ctr@ln@m\figvectNV
\ctr@ld@f\def\figvectNVDD#1[#2]{\ifps@cri{\Figg@tXY{#2}\Figv@ctCreg#1(-\v@lY,\v@lX)}\ignorespaces\fi}
\ctr@ld@f\def\figvectNVTD#1[#2,#3]{\ifps@cri{\vecunitCV@TD{#3}\v@lmin=\v@lX\v@lmax=\v@lY%
    \v@leur=\v@lZ\vecunitCV@TD{#2}\c@lprovec{#1}}\ignorespaces\fi}
\ctr@ln@m\figvectP
\ctr@ld@f\def\figvectPDD#1[#2,#3]{\ifps@cri{\Figg@tXYa{#2}\Figg@tXY{#3}%
    \advance\v@lX-\v@lXa\advance\v@lY-\v@lYa%
    \Figv@ctCreg#1(\v@lX,\v@lY)}\ignorespaces\fi}
\ctr@ld@f\def\figvectPTD#1[#2,#3]{\ifps@cri{\Figg@tXYa{#2}\Figg@tXY{#3}%
    \advance\v@lX-\v@lXa\advance\v@lY-\v@lYa\advance\v@lZ-\v@lZa%
    \Figv@ctCreg#1(\v@lX,\v@lY,\v@lZ)}\ignorespaces\fi}
\ctr@ln@m\figvectU
\ctr@ld@f\def\figvectUDD#1[#2]{\ifps@cri{\n@rmeuc\v@leur{#2}\invers@\v@leur\v@leur%
    \delt@=\repdecn@mb{\v@leur}\unit@\edef\v@ldelt@{\repdecn@mb{\delt@}}%
    \Figg@tXY{#2}\v@lX=\v@ldelt@\v@lX\v@lY=\v@ldelt@\v@lY%
    \Figv@ctCreg#1(\v@lX,\v@lY)}\ignorespaces\fi}
\ctr@ld@f\def\figvectUTD#1[#2]{\ifps@cri{\n@rmeuc\v@leur{#2}\invers@\v@leur\v@leur%
    \delt@=\repdecn@mb{\v@leur}\unit@\edef\v@ldelt@{\repdecn@mb{\delt@}}%
    \Figg@tXY{#2}\v@lX=\v@ldelt@\v@lX\v@lY=\v@ldelt@\v@lY\v@lZ=\v@ldelt@\v@lZ%
    \Figv@ctCreg#1(\v@lX,\v@lY,\v@lZ)}\ignorespaces\fi}
\ctr@ld@f\def\figvisu#1#2#3{\c@ldefproj\initb@undb@x\xdef\figforTeXFigno{\figforTeXnextFigno}%
    \s@mme=\figforTeXnextFigno\advance\s@mme\@ne\xdef\figforTeXnextFigno{\number\s@mme}%
    \setbox\b@xvisu=\hbox{\ifnum\@utoFN>\z@\figinsert{}\gdef\@utoFInDone{0}\fi\ignorespaces#3}%
    \gdef\@utoFInDone{1}\gdef\@utoFN{0}%
    \v@lXa=-\c@@rdYmin\v@lYa=\c@@rdYmax\advance\v@lYa-\c@@rdYmin%
    \v@lX=\c@@rdXmax\advance\v@lX-\c@@rdXmin%
    \setbox#1=\hbox{#2}\v@lY=-\v@lX\maxim@m{\v@lX}{\v@lX}{\wd#1}%
    \advance\v@lY\v@lX\divide\v@lY\tw@\advance\v@lY-\c@@rdXmin%
    \setbox#1=\vbox{\parindent0mm\hsize=\v@lX\vskip\v@lYa%
    \rlap{\hskip\v@lY\smash{\raise\v@lXa\box\b@xvisu}}%
    \def\t@xt@{#2}\ifx\t@xt@\empty\else\medskip\centerline{#2}\fi}\wd#1=\v@lX}
\ctr@ld@f\def\figDecrementFigno{{\xdef\figforTeXnextFigno{\figforTeXFigno}%
    \s@mme=\figforTeXFigno\advance\s@mme\m@ne\xdef\figforTeXFigno{\number\s@mme}}}
\ctr@ln@w{newbox}\Bt@rget\setbox\Bt@rget=\null
\ctr@ln@w{newbox}\BminTD@\setbox\BminTD@=\null
\ctr@ln@w{newbox}\BmaxTD@\setbox\BmaxTD@=\null
\ctr@ln@w{newif}\ifnewt@rgetpt\ctr@ln@w{newif}\ifnewdis@b
\ctr@ld@f\def\b@undb@xTD#1#2#3{%
    \relax\ifdim#1<\wd\BminTD@\global\wd\BminTD@=#1\fi%
    \relax\ifdim#2<\ht\BminTD@\global\ht\BminTD@=#2\fi%
    \relax\ifdim#3<\dp\BminTD@\global\dp\BminTD@=#3\fi%
    \relax\ifdim#1>\wd\BmaxTD@\global\wd\BmaxTD@=#1\fi%
    \relax\ifdim#2>\ht\BmaxTD@\global\ht\BmaxTD@=#2\fi%
    \relax\ifdim#3>\dp\BmaxTD@\global\dp\BmaxTD@=#3\fi}
\ctr@ld@f\def\c@ldefdisob{{\ifdim\wd\BminTD@<\maxdimen\v@leur=\wd\BmaxTD@\advance\v@leur-\wd\BminTD@%
    \delt@=\ht\BmaxTD@\advance\delt@-\ht\BminTD@\maxim@m{\v@leur}{\v@leur}{\delt@}%
    \delt@=\dp\BmaxTD@\advance\delt@-\dp\BminTD@\maxim@m{\v@leur}{\v@leur}{\delt@}%
    \v@leur=5\v@leur\else\v@leur=800pt\fi\c@ldefdisob@{\v@leur}}}
\ctr@ln@m\disob@intern
\ctr@ln@m\disob@
\ctr@ln@m\divf@ctproj
\ctr@ld@f\def\c@ldefdisob@#1{{\v@leur=#1\ifdim\v@leur<\p@\v@leur=800pt\fi%
    \xdef\disob@intern{\repdecn@mb{\v@leur}}%
    \delt@=\ptT@unit@\v@leur\xdef\disob@unit{\repdecn@mb{\delt@}}%
    \f@ctech=\@ne\loop\ifdim\v@leur>\t@n pt\divide\v@leur\t@n\multiply\f@ctech\t@n\repeat%
    \xdef\disob@{\repdecn@mb{\v@leur}}\xdef\divf@ctproj{\the\f@ctech}}%
    \global\newdis@btrue}
\ctr@ln@m\t@rgetpt
\ctr@ld@f\def\c@ldeft@rgetpt{\newt@rgetpttrue\def\t@rgetpt{CenterBoundBox}{%
    \delt@=\wd\BmaxTD@\advance\delt@-\wd\BminTD@\divide\delt@\tw@%
    \v@leur=\wd\BminTD@\advance\v@leur\delt@\global\wd\Bt@rget=\v@leur%
    \delt@=\ht\BmaxTD@\advance\delt@-\ht\BminTD@\divide\delt@\tw@%
    \v@leur=\ht\BminTD@\advance\v@leur\delt@\global\ht\Bt@rget=\v@leur%
    \delt@=\dp\BmaxTD@\advance\delt@-\dp\BminTD@\divide\delt@\tw@%
    \v@leur=\dp\BminTD@\advance\v@leur\delt@\global\dp\Bt@rget=\v@leur}}
\ctr@ln@m\c@ldefproj
\ctr@ld@f\def\c@ldefprojTD{\ifnewt@rgetpt\else\c@ldeft@rgetpt\fi\ifnewdis@b\else\c@ldefdisob\fi}
\ctr@ld@f\def\c@lprojcav{
    \v@lZa=\cxa@\v@lY\advance\v@lX\v@lZa%
    \v@lZa=\cxb@\v@lY\v@lY=\v@lZ\advance\v@lY\v@lZa\ignorespaces}
\ctr@ln@m\v@lcoef
\ctr@ld@f\def\c@lprojrea{
    \advance\v@lX-\wd\Bt@rget\advance\v@lY-\ht\Bt@rget\advance\v@lZ-\dp\Bt@rget%
    \v@lZa=\cza@\v@lX\advance\v@lZa\czb@\v@lY\advance\v@lZa\czc@\v@lZ%
    \divide\v@lZa\divf@ctproj\advance\v@lZa\disob@ pt\invers@{\v@lZa}{\v@lZa}%
    \v@lZa=\disob@\v@lZa\edef\v@lcoef{\repdecn@mb{\v@lZa}}%
    \v@lXa=\cxa@\v@lX\advance\v@lXa\cxb@\v@lY\v@lXa=\v@lcoef\v@lXa%
    \v@lY=\cyb@\v@lY\advance\v@lY\cya@\v@lX\advance\v@lY\cyc@\v@lZ%
    \v@lY=\v@lcoef\v@lY\v@lX=\v@lXa\ignorespaces}
\ctr@ld@f\def\c@lprojort{
    \v@lXa=\cxa@\v@lX\advance\v@lXa\cxb@\v@lY%
    \v@lY=\cyb@\v@lY\advance\v@lY\cya@\v@lX\advance\v@lY\cyc@\v@lZ%
    \v@lX=\v@lXa\ignorespaces}
\ctr@ld@f\def\Figptpr@j#1:#2/#3/{{\Figg@tXY{#3}\superc@lprojSP%
    \Figp@intregDD#1:{#2}(\v@lX,\v@lY)}\ignorespaces}
\ctr@ln@m\figsetobdist
\ctr@ld@f\def\figsetobdistDD{\un@v@ilable{figsetobdist}}
\ctr@ld@f\def\figsetobdistTD(#1){{\ifcurr@ntPS%
    \immediate\write16{*** \BS@ figsetobdist is ignored inside a
     \BS@ psbeginfig-\BS@ psendfig block.}%
    \else\v@leur=#1\unit@\c@ldefdisob@{\v@leur}\fi}\ignorespaces}
\ctr@ln@m\c@lprojSP
\ctr@ln@m\curr@ntproj
\ctr@ln@m\typ@proj
\ctr@ln@m\superc@lprojSP
\ctr@ld@f\def\Figs@tproj#1{%
    \if#13 \d@faultproj\else\if#1c\d@faultproj%
    \else\if#1o\xdef\curr@ntproj{1}\xdef\typ@proj{orthogonal}%
         \figsetviewTD(\def@ultpsi,\def@ulttheta)%
         \global\let\c@lprojSP=\c@lprojort\global\let\superc@lprojSP=\c@lprojort%
    \else\if#1r\xdef\curr@ntproj{2}\xdef\typ@proj{realistic}%
         \figsetviewTD(\def@ultpsi,\def@ulttheta)%
         \global\let\c@lprojSP=\c@lprojrea\global\let\superc@lprojSP=\c@lprojrea%
    \else\d@faultproj\message{*** Unknown projection. Cavalier projection assumed.}%
    \fi\fi\fi\fi}
\ctr@ld@f\def\d@faultproj{\xdef\curr@ntproj{0}\xdef\typ@proj{cavalier}\figsetviewTD(\def@ultpsi,0.5)%
         \global\let\c@lprojSP=\c@lprojcav\global\let\superc@lprojSP=\c@lprojcav}
\ctr@ln@m\figsettarget
\ctr@ld@f\def\figsettargetDD{\un@v@ilable{figsettarget}}
\ctr@ld@f\def\figsettargetTD[#1]{{\ifcurr@ntPS%
    \immediate\write16{*** \BS@ figsettarget is ignored inside a
     \BS@ psbeginfig-\BS@ psendfig block.}%
    \else\global\newt@rgetpttrue\xdef\t@rgetpt{#1}\Figg@tXY{#1}\global\wd\Bt@rget=\v@lX%
    \global\ht\Bt@rget=\v@lY\global\dp\Bt@rget=\v@lZ\fi}\ignorespaces}
\ctr@ln@m\figsetview
\ctr@ld@f\def\figsetviewDD{\un@v@ilable{figsetview}}
\ctr@ld@f\def\figsetviewTD(#1){\ifcurr@ntPS%
     \immediate\write16{*** \BS@ figsetview is ignored inside a
     \BS@ psbeginfig-\BS@ psendfig block.}\else\Figsetview@#1,:\fi\ignorespaces}
\ctr@ld@f\def\Figsetview@#1,#2:{{\xdef\v@lPsi{#1}\def\t@xt@{#2}%
    \ifx\t@xt@\empty\def\@rgdeux{\v@lTheta}\else\X@rgdeux@#2\fi%
    \c@ssin{\costhet@}{\sinthet@}{#1}\v@lmin=\costhet@ pt\v@lmax=\sinthet@ pt%
    \ifcase\curr@ntproj%
    \v@leur=\@rgdeux\v@lmin\xdef\cxa@{\repdecn@mb{\v@leur}}%
    \v@leur=\@rgdeux\v@lmax\xdef\cxb@{\repdecn@mb{\v@leur}}\v@leur=\@rgdeux pt%
    \relax\ifdim\v@leur>\p@\message{*** Lambda too large ! See \BS@ figset proj() !}\fi%
    \else%
    \v@lmax=-\v@lmax\xdef\cxa@{\repdecn@mb{\v@lmax}}\xdef\cxb@{\costhet@}%
    \ifx\t@xt@\empty\edef\@rgdeux{\def@ulttheta}\fi\c@ssin{\C@}{\S@}{\@rgdeux}%
    \v@lmax=-\S@ pt%
    \v@leur=\v@lmax\v@leur=\costhet@\v@leur\xdef\cya@{\repdecn@mb{\v@leur}}%
    \v@leur=\v@lmax\v@leur=\sinthet@\v@leur\xdef\cyb@{\repdecn@mb{\v@leur}}%
    \xdef\cyc@{\C@}\v@lmin=-\C@ pt%
    \v@leur=\v@lmin\v@leur=\costhet@\v@leur\xdef\cza@{\repdecn@mb{\v@leur}}%
    \v@leur=\v@lmin\v@leur=\sinthet@\v@leur\xdef\czb@{\repdecn@mb{\v@leur}}%
    \xdef\czc@{\repdecn@mb{\v@lmax}}\fi%
    \xdef\v@lTheta{\@rgdeux}}}
\ctr@ld@f\def\def@ultpsi{40}
\ctr@ld@f\def\def@ulttheta{25}
\ctr@ln@m\l@debut
\ctr@ln@m\n@mref
\ctr@ld@f\def\figset#1(#2){\def\t@xt@{#1}\ifx\t@xt@\empty\trtlis@rg{#2}{\Figsetwr@te}
    \else\keln@mde#1|%
    \def\n@mref{pr}\ifx\l@debut\n@mref\ifcurr@ntPS
     \immediate\write16{*** \BS@ figset proj(...) is ignored inside a
     \BS@ psbeginfig-\BS@ psendfig block.}\else\trtlis@rg{#2}{\Figsetpr@j}\fi\else%
    \def\n@mref{wr}\ifx\l@debut\n@mref\trtlis@rg{#2}{\Figsetwr@te}\else
    \immediate\write16{*** Unknown keyword: \BS@ figset #1(...)}%
    \fi\fi\fi\ignorespaces}
\ctr@ld@f\def\Figsetpr@j#1=#2|{\keln@mtr#1|%
    \def\n@mref{dep}\ifx\l@debut\n@mref\Figsetd@p{#2}\else
    \def\n@mref{dis}\ifx\l@debut\n@mref%
     \ifnum\curr@ntproj=\tw@\figsetobdist(#2)\else\Figset@rr\fi\else
    \def\n@mref{lam}\ifx\l@debut\n@mref\Figsetd@p{#2}\else
    \def\n@mref{lat}\ifx\l@debut\n@mref\Figsetth@{#2}\else
    \def\n@mref{lon}\ifx\l@debut\n@mref\figsetview(#2)\else
    \def\n@mref{psi}\ifx\l@debut\n@mref\figsetview(#2)\else
    \def\n@mref{tar}\ifx\l@debut\n@mref%
     \ifnum\curr@ntproj=\tw@\figsettarget[#2]\else\Figset@rr\fi\else
    \def\n@mref{the}\ifx\l@debut\n@mref\Figsetth@{#2}\else
    \immediate\write16{*** Unknown attribute: \BS@ figset proj(..., #1=...).}%
    \fi\fi\fi\fi\fi\fi\fi\fi}
\ctr@ld@f\def\Figsetd@p#1{\ifnum\curr@ntproj=\z@\figsetview(\v@lPsi,#1)\else\Figset@rr\fi}
\ctr@ld@f\def\Figsetth@#1{\ifnum\curr@ntproj=\z@\Figset@rr\else\figsetview(\v@lPsi,#1)\fi}
\ctr@ld@f\def\Figset@rr{\message{*** \BS@ figset proj(): Attribute "\n@mref" ignored, incompatible
    with current projection}}
\ctr@ld@f\def\initb@undb@xTD{\wd\BminTD@=\maxdimen\ht\BminTD@=\maxdimen\dp\BminTD@=\maxdimen%
    \wd\BmaxTD@=-\maxdimen\ht\BmaxTD@=-\maxdimen\dp\BmaxTD@=-\maxdimen}
\ctr@ln@w{newbox}\Gb@x      
\ctr@ln@w{newbox}\Gb@xSC    
\ctr@ln@w{newtoks}\c@nsymb  
\ctr@ln@w{newif}\ifr@undcoord\ctr@ln@w{newif}\ifunitpr@sent
\ctr@ld@f\def\unssqrttw@{0.707106 }
\ctr@ld@f\def\figAst{\raise-1.15ex\hbox{$\ast$}}
\ctr@ld@f\def\figBullet{\raise-1.15ex\hbox{$\bullet$}}
\ctr@ld@f\def\figCirc{\raise-1.15ex\hbox{$\circ$}}
\ctr@ld@f\def\figDiamond{\raise-1.15ex\hbox{$\diamond$}}%
\ctr@ld@f\def\boxit#1#2{\leavevmode\hbox{\vrule\vbox{\hrule\vglue#1%
    \vtop{\hbox{\kern#1{#2}\kern#1}\vglue#1\hrule}}\vrule}}
\ctr@ld@f
\ctr@ld@f
\ctr@ld@f\def\c@nterpt{\ignorespaces%
    \kern-.5\wd\Gb@xSC%
    \raise-.5\ht\Gb@xSC\rlap{\hbox{\raise.5\dp\Gb@xSC\hbox{\copy\Gb@xSC}}}%
    \kern .5\wd\Gb@xSC\ignorespaces}
\ctr@ld@f\def\b@undb@xSC#1#2{{\v@lXa=#1\v@lYa=#2%
    \v@leur=\ht\Gb@xSC\advance\v@leur\dp\Gb@xSC%
    \advance\v@lXa-.5\wd\Gb@xSC\advance\v@lYa-.5\v@leur\b@undb@x{\v@lXa}{\v@lYa}%
    \advance\v@lXa\wd\Gb@xSC\advance\v@lYa\v@leur\b@undb@x{\v@lXa}{\v@lYa}}}
\ctr@ln@m\Dist@n
\ctr@ln@m\l@suite
\ctr@ld@f\def\@keldist#1#2{\edef\Dist@n{#2}\y@tiunit{\Dist@n}%
    \ifunitpr@sent#1=\Dist@n\else#1=\Dist@n\unit@\fi}
\ctr@ld@f\def\y@tiunit#1{\unitpr@sentfalse\expandafter\y@tiunit@#1:}
\ctr@ld@f\def\y@tiunit@#1#2:{\ifcat#1a\unitpr@senttrue\else\def\l@suite{#2}%
    \ifx\l@suite\empty\else\y@tiunit@#2:\fi\fi}
\ctr@ln@m\figcoord
\ctr@ld@f\def\figcoordDD#1{{\v@lX=\ptT@unit@\v@lX\v@lY=\ptT@unit@\v@lY%
    \ifr@undcoord\ifcase#1\v@leur=0.5pt\or\v@leur=0.05pt\or\v@leur=0.005pt%
    \or\v@leur=0.0005pt\else\v@leur=\z@\fi%
    \ifdim\v@lX<\z@\advance\v@lX-\v@leur\else\advance\v@lX\v@leur\fi%
    \ifdim\v@lY<\z@\advance\v@lY-\v@leur\else\advance\v@lY\v@leur\fi\fi%
    (\@ffichnb{#1}{\repdecn@mb{\v@lX}},\ifmmode\else\thinspace\fi%
    \@ffichnb{#1}{\repdecn@mb{\v@lY}})}}
\ctr@ld@f\def\@ffichnb#1#2{{\def\@@ffich{\@ffich#1(}\edef\n@mbre{#2}%
    \expandafter\@@ffich\n@mbre)}}
\ctr@ld@f\def\@ffich#1(#2.#3){{#2\ifnum#1>\z@.\fi\def\dig@ts{#3}\s@mme=\z@%
    \loop\ifnum\s@mme<#1\expandafter\@ffichdec\dig@ts:\advance\s@mme\@ne\repeat}}
\ctr@ld@f\def\@ffichdec#1#2:{\relax#1\def\dig@ts{#20}}
\ctr@ld@f\def\figcoordTD#1{{\v@lX=\ptT@unit@\v@lX\v@lY=\ptT@unit@\v@lY\v@lZ=\ptT@unit@\v@lZ%
    \ifr@undcoord\ifcase#1\v@leur=0.5pt\or\v@leur=0.05pt\or\v@leur=0.005pt%
    \or\v@leur=0.0005pt\else\v@leur=\z@\fi%
    \ifdim\v@lX<\z@\advance\v@lX-\v@leur\else\advance\v@lX\v@leur\fi%
    \ifdim\v@lY<\z@\advance\v@lY-\v@leur\else\advance\v@lY\v@leur\fi%
    \ifdim\v@lZ<\z@\advance\v@lZ-\v@leur\else\advance\v@lZ\v@leur\fi\fi%
    (\@ffichnb{#1}{\repdecn@mb{\v@lX}},\ifmmode\else\thinspace\fi%
     \@ffichnb{#1}{\repdecn@mb{\v@lY}},\ifmmode\else\thinspace\fi%
     \@ffichnb{#1}{\repdecn@mb{\v@lZ}})}}
\ctr@ld@f\def\figsetroundcoord#1{\expandafter\Figsetr@undcoord#1:\ignorespaces}
\ctr@ld@f\def\Figsetr@undcoord#1#2:{\if#1n\r@undcoordfalse\else\r@undcoordtrue\fi}
\ctr@ld@f\def\Figsetwr@te#1=#2|{\keln@mun#1|%
    \def\n@mref{m}\ifx\l@debut\n@mref\figsetmark{#2}\else
    \immediate\write16{*** Unknown attribute: \BS@ figset (..., #1=...)}%
    \fi}
\ctr@ld@f\def\figsetmark#1{\c@nsymb={#1}\setbox\Gb@xSC=\hbox{\the\c@nsymb}\ignorespaces}
\ctr@ln@m\ptn@me
\ctr@ld@f\def\figsetptname#1{\def\ptn@me##1{#1}\ignorespaces}
\ctr@ld@f\def\FigWrit@L#1:#2(#3,#4){\ignorespaces\@keldist\v@leur{#3}\@keldist\delt@{#4}%
    \C@rp@r@m\def\list@num{#1}\@ecfor\p@int:=\list@num\do{\FigWrit@pt{\p@int}{#2}}}
\ctr@ld@f\def\FigWrit@pt#1#2{\FigWp@r@m{#1}{#2}\Vc@rrect\figWp@si%
    \ifdim\wd\Gb@xSC>\z@\b@undb@xSC{\v@lX}{\v@lY}\fi\figWBB@x}
\ctr@ld@f\def\FigWp@r@m#1#2{\Figg@tXY{#1}%
    \setbox\Gb@x=\hbox{\def\t@xt@{#2}\ifx\t@xt@\empty\Figg@tT{#1}\else#2\fi}\c@lprojSP}
\ctr@ld@f\let\Vc@rrect=\relax
\ctr@ld@f\let\C@rp@r@m=\relax
\ctr@ld@f\def\figwrite[#1]#2{{\ignorespaces\def\list@num{#1}\@ecfor\p@int:=\list@num\do{%
    \setbox\Gb@x=\hbox{\def\t@xt@{#2}\ifx\t@xt@\empty\Figg@tT{\p@int}\else#2\fi}%
    \Figwrit@{\p@int}}}\ignorespaces}
\ctr@ld@f\def\Figwrit@#1{\Figg@tXY{#1}\c@lprojSP%
    \rlap{\kern\v@lX\raise\v@lY\hbox{\unhcopy\Gb@x}}\v@leur=\v@lY%
    \advance\v@lY\ht\Gb@x\b@undb@x{\v@lX}{\v@lY}\advance\v@lX\wd\Gb@x%
    \v@lY=\v@leur\advance\v@lY-\dp\Gb@x\b@undb@x{\v@lX}{\v@lY}}
\ctr@ld@f\def\figwritec[#1]#2{{\ignorespaces\def\list@num{#1}%
    \@ecfor\p@int:=\list@num\do{\Figwrit@c{\p@int}{#2}}}\ignorespaces}
\ctr@ld@f\def\Figwrit@c#1#2{\FigWp@r@m{#1}{#2}%
    \rlap{\kern\v@lX\raise\v@lY\hbox{\rlap{\kern-.5\wd\Gb@x%
    \raise-.5\ht\Gb@x\hbox{\raise.5\dp\Gb@x\hbox{\unhcopy\Gb@x}}}}}%
    \v@leur=\ht\Gb@x\advance\v@leur\dp\Gb@x%
    \advance\v@lX-.5\wd\Gb@x\advance\v@lY-.5\v@leur\b@undb@x{\v@lX}{\v@lY}%
    \advance\v@lX\wd\Gb@x\advance\v@lY\v@leur\b@undb@x{\v@lX}{\v@lY}}
\ctr@ld@f\def\figwritep[#1]{{\ignorespaces\def\list@num{#1}\setbox\Gb@x=\hbox{\c@nterpt}%
    \@ecfor\p@int:=\list@num\do{\Figwrit@{\p@int}}}\ignorespaces}
\ctr@ld@f\def\figwritew#1:#2(#3){\figwritegcw#1:{#2}(#3,0pt)}
\ctr@ld@f\def\figwritee#1:#2(#3){\figwritegce#1:{#2}(#3,0pt)}
\ctr@ld@f\def\figwriten#1:#2(#3){{\def\Vc@rrect{\v@lZ=\v@leur\advance\v@lZ\dp\Gb@x}%
    \Figwrit@NS#1:{#2}(#3)}\ignorespaces}
\ctr@ld@f\def\figwrites#1:#2(#3){{\def\Vc@rrect{\v@lZ=-\v@leur\advance\v@lZ-\ht\Gb@x}%
    \Figwrit@NS#1:{#2}(#3)}\ignorespaces}
\ctr@ld@f\def\Figwrit@NS#1:#2(#3){\let\figWp@si=\FigWp@siNS\let\figWBB@x=\FigWBB@xNS%
    \FigWrit@L#1:{#2}(#3,0pt)}
\ctr@ld@f\def\FigWp@siNS{\rlap{\kern\v@lX\raise\v@lY\hbox{\rlap{\kern-.5\wd\Gb@x%
    \raise\v@lZ\hbox{\unhcopy\Gb@x}}\c@nterpt}}}
\ctr@ld@f\def\FigWBB@xNS{\advance\v@lY\v@lZ%
    \advance\v@lY-\dp\Gb@x\advance\v@lX-.5\wd\Gb@x\b@undb@x{\v@lX}{\v@lY}%
    \advance\v@lY\ht\Gb@x\advance\v@lY\dp\Gb@x%
    \advance\v@lX\wd\Gb@x\b@undb@x{\v@lX}{\v@lY}}
\ctr@ld@f\def\figwritenw#1:#2(#3){{\let\figWp@si=\FigWp@sigW\let\figWBB@x=\FigWBB@xgWE%
    \def\C@rp@r@m{\v@leur=\unssqrttw@\v@leur\delt@=\v@leur%
    \ifdim\delt@=\z@\delt@=\epsil@n\fi}\let@xte={-}\FigWrit@L#1:{#2}(#3,0pt)}\ignorespaces}
\ctr@ld@f\def\figwritesw#1:#2(#3){{\let\figWp@si=\FigWp@sigW\let\figWBB@x=\FigWBB@xgWE%
    \def\C@rp@r@m{\v@leur=\unssqrttw@\v@leur\delt@=-\v@leur%
    \ifdim\delt@=\z@\delt@=-\epsil@n\fi}\let@xte={-}\FigWrit@L#1:{#2}(#3,0pt)}\ignorespaces}
\ctr@ld@f\def\figwritene#1:#2(#3){{\let\figWp@si=\FigWp@sigE\let\figWBB@x=\FigWBB@xgWE%
    \def\C@rp@r@m{\v@leur=\unssqrttw@\v@leur\delt@=\v@leur%
    \ifdim\delt@=\z@\delt@=\epsil@n\fi}\let@xte={}\FigWrit@L#1:{#2}(#3,0pt)}\ignorespaces}
\ctr@ld@f\def\figwritese#1:#2(#3){{\let\figWp@si=\FigWp@sigE\let\figWBB@x=\FigWBB@xgWE%
    \def\C@rp@r@m{\v@leur=\unssqrttw@\v@leur\delt@=-\v@leur%
    \ifdim\delt@=\z@\delt@=-\epsil@n\fi}\let@xte={}\FigWrit@L#1:{#2}(#3,0pt)}\ignorespaces}
\ctr@ld@f\def\figwritegw#1:#2(#3,#4){{\let\figWp@si=\FigWp@sigW\let\figWBB@x=\FigWBB@xgWE%
    \let@xte={-}\FigWrit@L#1:{#2}(#3,#4)}\ignorespaces}
\ctr@ld@f\def\figwritege#1:#2(#3,#4){{\let\figWp@si=\FigWp@sigE\let\figWBB@x=\FigWBB@xgWE%
    \let@xte={}\FigWrit@L#1:{#2}(#3,#4)}\ignorespaces}
\ctr@ld@f\def\FigWp@sigW{\v@lXa=\z@\v@lYa=\ht\Gb@x\advance\v@lYa\dp\Gb@x%
    \ifdim\delt@>\z@\relax%
    \rlap{\kern\v@lX\raise\v@lY\hbox{\rlap{\kern-\wd\Gb@x\kern-\v@leur%
          \raise\delt@\hbox{\raise\dp\Gb@x\hbox{\unhcopy\Gb@x}}}\c@nterpt}}%
    \else\ifdim\delt@<\z@\relax\v@lYa=-\v@lYa%
    \rlap{\kern\v@lX\raise\v@lY\hbox{\rlap{\kern-\wd\Gb@x\kern-\v@leur%
          \raise\delt@\hbox{\raise-\ht\Gb@x\hbox{\unhcopy\Gb@x}}}\c@nterpt}}%
    \else\v@lXa=-.5\v@lYa%
    \rlap{\kern\v@lX\raise\v@lY\hbox{\rlap{\kern-\wd\Gb@x\kern-\v@leur%
          \raise-.5\ht\Gb@x\hbox{\raise.5\dp\Gb@x\hbox{\unhcopy\Gb@x}}}\c@nterpt}}%
    \fi\fi}
\ctr@ld@f\def\FigWp@sigE{\v@lXa=\z@\v@lYa=\ht\Gb@x\advance\v@lYa\dp\Gb@x%
    \ifdim\delt@>\z@\relax%
    \rlap{\kern\v@lX\raise\v@lY\hbox{\c@nterpt\kern\v@leur%
          \raise\delt@\hbox{\raise\dp\Gb@x\hbox{\unhcopy\Gb@x}}}}%
    \else\ifdim\delt@<\z@\relax\v@lYa=-\v@lYa%
    \rlap{\kern\v@lX\raise\v@lY\hbox{\c@nterpt\kern\v@leur%
          \raise\delt@\hbox{\raise-\ht\Gb@x\hbox{\unhcopy\Gb@x}}}}%
    \else\v@lXa=-.5\v@lYa%
    \rlap{\kern\v@lX\raise\v@lY\hbox{\c@nterpt\kern\v@leur%
          \raise-.5\ht\Gb@x\hbox{\raise.5\dp\Gb@x\hbox{\unhcopy\Gb@x}}}}%
    \fi\fi}
\ctr@ld@f\def\FigWBB@xgWE{\advance\v@lY\delt@%
    \advance\v@lX\the\let@xte\v@leur\advance\v@lY\v@lXa\b@undb@x{\v@lX}{\v@lY}%
    \advance\v@lX\the\let@xte\wd\Gb@x\advance\v@lY\v@lYa\b@undb@x{\v@lX}{\v@lY}}
\ctr@ld@f\def\figwritegcw#1:#2(#3,#4){{\let\figWp@si=\FigWp@sigcW\let\figWBB@x=\FigWBB@xgcWE%
    \let@xte={-}\FigWrit@L#1:{#2}(#3,#4)}\ignorespaces}
\ctr@ld@f\def\figwritegce#1:#2(#3,#4){{\let\figWp@si=\FigWp@sigcE\let\figWBB@x=\FigWBB@xgcWE%
    \let@xte={}\FigWrit@L#1:{#2}(#3,#4)}\ignorespaces}
\ctr@ld@f\def\FigWp@sigcW{\rlap{\kern\v@lX\raise\v@lY\hbox{\rlap{\kern-\wd\Gb@x\kern-\v@leur%
     \raise-.5\ht\Gb@x\hbox{\raise\delt@\hbox{\raise.5\dp\Gb@x\hbox{\unhcopy\Gb@x}}}}%
     \c@nterpt}}}
\ctr@ld@f\def\FigWp@sigcE{\rlap{\kern\v@lX\raise\v@lY\hbox{\c@nterpt\kern\v@leur%
    \raise-.5\ht\Gb@x\hbox{\raise\delt@\hbox{\raise.5\dp\Gb@x\hbox{\unhcopy\Gb@x}}}}}}
\ctr@ld@f\def\FigWBB@xgcWE{\v@lZ=\ht\Gb@x\advance\v@lZ\dp\Gb@x%
    \advance\v@lX\the\let@xte\v@leur\advance\v@lY\delt@\advance\v@lY.5\v@lZ%
    \b@undb@x{\v@lX}{\v@lY}%
    \advance\v@lX\the\let@xte\wd\Gb@x\advance\v@lY-\v@lZ\b@undb@x{\v@lX}{\v@lY}}
\ctr@ld@f\def\figwritebn#1:#2(#3){{\def\Vc@rrect{\v@lZ=\v@leur}\Figwrit@NS#1:{#2}(#3)}\ignorespaces}
\ctr@ld@f\def\figwritebs#1:#2(#3){{\def\Vc@rrect{\v@lZ=-\v@leur}\Figwrit@NS#1:{#2}(#3)}\ignorespaces}
\ctr@ld@f\def\figwritebw#1:#2(#3){{\let\figWp@si=\FigWp@sibW\let\figWBB@x=\FigWBB@xbWE%
    \let@xte={-}\FigWrit@L#1:{#2}(#3,0pt)}\ignorespaces}
\ctr@ld@f\def\figwritebe#1:#2(#3){{\let\figWp@si=\FigWp@sibE\let\figWBB@x=\FigWBB@xbWE%
    \let@xte={}\FigWrit@L#1:{#2}(#3,0pt)}\ignorespaces}
\ctr@ld@f\def\FigWp@sibW{\rlap{\kern\v@lX\raise\v@lY\hbox{\rlap{\kern-\wd\Gb@x\kern-\v@leur%
          \hbox{\unhcopy\Gb@x}}\c@nterpt}}}
\ctr@ld@f\def\FigWp@sibE{\rlap{\kern\v@lX\raise\v@lY\hbox{\c@nterpt\kern\v@leur%
          \hbox{\unhcopy\Gb@x}}}}
\ctr@ld@f\def\FigWBB@xbWE{\v@lZ=\ht\Gb@x\advance\v@lZ\dp\Gb@x%
    \advance\v@lX\the\let@xte\v@leur\advance\v@lY\ht\Gb@x\b@undb@x{\v@lX}{\v@lY}%
    \advance\v@lX\the\let@xte\wd\Gb@x\advance\v@lY-\v@lZ\b@undb@x{\v@lX}{\v@lY}}
\ctr@ln@w{newread}\frf@g  \ctr@ln@w{newwrite}\fwf@g
\ctr@ln@w{newif}\ifcurr@ntPS
\ctr@ln@w{newif}\ifps@cri
\ctr@ln@w{newif}\ifUse@llipse
\ctr@ln@w{newif}\ifpsdebugmode \psdebugmodefalse 
\ctr@ln@w{newif}\ifPDFm@ke
\ifx\pdfliteral\undefined\else\ifnum\pdfoutput>\z@\PDFm@ketrue\fi\fi
\ctr@ld@f\def\initPDF@rDVI{%
\ifPDFm@ke
 \let\figscan=\figscan@E
 \let\newGr@FN=\newGr@FNPDF
 \ctr@ld@f\def\c@mcurveto{c}
 \ctr@ld@f\def\c@mfill{f}
 \ctr@ld@f\def\c@mgsave{q}
 \ctr@ld@f\def\c@mgrestore{Q}
 \ctr@ld@f\def\c@mlineto{l}
 \ctr@ld@f\def\c@mmoveto{m}
 \ctr@ld@f\def\c@msetgray{g}     \ctr@ld@f\def\c@msetgrayStroke{G}
 \ctr@ld@f\def\c@msetcmykcolor{k}\ctr@ld@f\def\c@msetcmykcolorStroke{K}
 \ctr@ld@f\def\c@msetrgbcolor{rg}\ctr@ld@f\def\c@msetrgbcolorStroke{RG}
 \ctr@ld@f\def\d@fprimarC@lor{\curr@ntcolor\space\curr@ntcolorc@md%
               \space\curr@ntcolor\space\curr@ntcolorc@mdStroke}
 \ctr@ld@f\def\d@fsecondC@lor{\sec@ndcolor\space\sec@ndcolorc@md%
               \space\sec@ndcolor\space\sec@ndcolorc@mdStroke}
 \ctr@ld@f\def\d@fthirdC@lor{\th@rdcolor\space\th@rdcolorc@md%
              \space\th@rdcolor\space\th@rdcolorc@mdStroke}
 \ctr@ld@f\def\c@msetdash{d}
 \ctr@ld@f\def\c@msetlinejoin{j}
 \ctr@ld@f\def\c@msetlinewidth{w}
 \ctr@ld@f\def\f@gclosestroke{\immediate\write\fwf@g{s}}
 \ctr@ld@f\def\f@gfill{\immediate\write\fwf@g{\fillc@md}}
 \ctr@ld@f\def\f@gnewpath{}
 \ctr@ld@f\def\f@gstroke{\immediate\write\fwf@g{S}}
\else
 \let\figinsertE=\figinsert
 \let\newGr@FN=\newGr@FNDVI
 \ctr@ld@f\def\c@mcurveto{curveto}
 \ctr@ld@f\def\c@mfill{fill}
 \ctr@ld@f\def\c@mgsave{gsave}
 \ctr@ld@f\def\c@mgrestore{grestore}
 \ctr@ld@f\def\c@mlineto{lineto}
 \ctr@ld@f\def\c@mmoveto{moveto}
 \ctr@ld@f\def\c@msetgray{setgray}          \ctr@ld@f\def\c@msetgrayStroke{}
 \ctr@ld@f\def\c@msetcmykcolor{setcmykcolor}\ctr@ld@f\def\c@msetcmykcolorStroke{}
 \ctr@ld@f\def\c@msetrgbcolor{setrgbcolor}  \ctr@ld@f\def\c@msetrgbcolorStroke{}
 \ctr@ld@f\def\d@fprimarC@lor{\curr@ntcolor\space\curr@ntcolorc@md}
 \ctr@ld@f\def\d@fsecondC@lor{\sec@ndcolor\space\sec@ndcolorc@md}
 \ctr@ld@f\def\d@fthirdC@lor{\th@rdcolor\space\th@rdcolorc@md}
 \ctr@ld@f\def\c@msetdash{setdash}
 \ctr@ld@f\def\c@msetlinejoin{setlinejoin}
 \ctr@ld@f\def\c@msetlinewidth{setlinewidth}
 \ctr@ld@f\def\f@gclosestroke{\immediate\write\fwf@g{closepath\space stroke}}
 \ctr@ld@f\def\f@gfill{\immediate\write\fwf@g{\fillc@md}}
 \ctr@ld@f\def\f@gnewpath{\immediate\write\fwf@g{newpath}}
 \ctr@ld@f\def\f@gstroke{\immediate\write\fwf@g{stroke}}
\fi}
\ctr@ld@f\def\c@pypsfile#1#2{\c@pyfil@{\immediate\write#1}{#2}}
\ctr@ld@f\def\Figinclud@PDF#1#2{\openin\frf@g=#1\pdfliteral{q #2 0 0 #2 0 0 cm}%
    \c@pyfil@{\pdfliteral}{\frf@g}\pdfliteral{Q}\closein\frf@g}
\ctr@ln@w{newif}\ifmored@ta
\ctr@ln@m\bl@nkline
\ctr@ld@f\def\c@pyfil@#1#2{\def\bl@nkline{\par}{\catcode`\%=12
    \loop\ifeof#2\mored@tafalse\else\mored@tatrue\immediate\read#2 to\tr@c
    \ifx\tr@c\bl@nkline\else#1{\tr@c}\fi\fi\ifmored@ta\repeat}}
\ctr@ld@f\def\keln@mun#1#2|{\def\l@debut{#1}\def\l@suite{#2}}
\ctr@ld@f\def\keln@mde#1#2#3|{\def\l@debut{#1#2}\def\l@suite{#3}}
\ctr@ld@f\def\keln@mtr#1#2#3#4|{\def\l@debut{#1#2#3}\def\l@suite{#4}}
\ctr@ld@f\def\keln@mqu#1#2#3#4#5|{\def\l@debut{#1#2#3#4}\def\l@suite{#5}}
\ctr@ld@f\let\@psffilein=\frf@g 
\ctr@ln@w{newif}\if@psffileok    
\ctr@ln@w{newif}\if@psfbbfound   
\ctr@ln@w{newif}\if@psfverbose   
\@psfverbosetrue
\ctr@ln@m\@psfllx \ctr@ln@m\@psflly
\ctr@ln@m\@psfurx \ctr@ln@m\@psfury
\ctr@ln@m\resetcolonc@tcode
\ctr@ld@f\def\@psfgetbb#1{\global\@psfbbfoundfalse%
\global\def\@psfllx{0}\global\def\@psflly{0}%
\global\def\@psfurx{30}\global\def\@psfury{30}%
\openin\@psffilein=#1\relax
\ifeof\@psffilein\errmessage{I couldn't open #1, will ignore it}\else
   \edef\resetcolonc@tcode{\catcode`\noexpand\:\the\catcode`\:\relax}%
   {\@psffileoktrue \chardef\other=12
    \def\do##1{\catcode`##1=\other}\dospecials \catcode`\ =10 \resetcolonc@tcode
    \loop
       \read\@psffilein to \@psffileline
       \ifeof\@psffilein\@psffileokfalse\else
          \expandafter\@psfaux\@psffileline:. \\%
       \fi
   \if@psffileok\repeat
   \if@psfbbfound\else
    \if@psfverbose\message{No bounding box comment in #1; using defaults}\fi\fi
   }\closein\@psffilein\fi}%
\ctr@ln@m\@psfbblit
\ctr@ln@m\@psfpercent
{\catcode`\%=12 \global\let\@psfpercent=
\ctr@ln@m\@psfaux
\long\def\@psfaux#1#2:#3\\{\ifx#1\@psfpercent
   \def\testit{#2}\ifx\testit\@psfbblit
      \@psfgrab #3 . . . \\%
      \@psffileokfalse
      \global\@psfbbfoundtrue
   \fi\else\ifx#1\par\else\@psffileokfalse\fi\fi}%
\ctr@ld@f\def\@psfempty{}%
\ctr@ld@f\def\@psfgrab #1 #2 #3 #4 #5\\{%
\global\def\@psfllx{#1}\ifx\@psfllx\@psfempty
      \@psfgrab #2 #3 #4 #5 .\\\else
   \global\def\@psflly{#2}%
   \global\def\@psfurx{#3}\global\def\@psfury{#4}\fi}%
\ctr@ld@f\def\PSwrit@cmd#1#2#3{{\Figg@tXY{#1}\c@lprojSP\b@undb@x{\v@lX}{\v@lY}%
    \v@lX=\ptT@ptps\v@lX\v@lY=\ptT@ptps\v@lY%
    \immediate\write#3{\repdecn@mb{\v@lX}\space\repdecn@mb{\v@lY}\space#2}}}
\ctr@ld@f\def\PSwrit@cmdS#1#2#3#4#5{{\Figg@tXY{#1}\c@lprojSP\b@undb@x{\v@lX}{\v@lY}%
    \global\result@t=\v@lX\global\result@@t=\v@lY%
    \v@lX=\ptT@ptps\v@lX\v@lY=\ptT@ptps\v@lY%
    \immediate\write#3{\repdecn@mb{\v@lX}\space\repdecn@mb{\v@lY}\space#2}}%
    \edef#4{\the\result@t}\edef#5{\the\result@@t}}
\ctr@ld@f\def\psaltitude#1[#2,#3,#4]{{\ifcurr@ntPS\ifps@cri%
    \PSc@mment{psaltitude Square Dim=#1, Triangle=[#2 / #3,#4]}%
    \s@uvc@ntr@l\et@tpsaltitude\resetc@ntr@l{2}\figptorthoprojline-5:=#2/#3,#4/%
    \figvectP -1[#3,#4]\n@rminf{\v@leur}{-1}\vecunit@{-3}{-1}%
    \figvectP -1[-5,#3]\n@rminf{\v@lmin}{-1}\figvectP -2[-5,#4]\n@rminf{\v@lmax}{-2}%
    \ifdim\v@lmin<\v@lmax\s@mme=#3\else\v@lmax=\v@lmin\s@mme=#4\fi%
    \figvectP -4[-5,#2]\vecunit@{-4}{-4}\delt@=#1\unit@%
    \edef\t@ille{\repdecn@mb{\delt@}}\figpttra-1:=-5/\t@ille,-3/%
    \figptstra-3=-5,-1/\t@ille,-4/\psline[#2,-5]\s@uvdash{\typ@dash}%
    \pssetdash{\defaultdash}\psline[-1,-2,-3]\pssetdash{\typ@dash}%
    \ifdim\v@leur<\v@lmax\Pss@tsecondSt\psline[-5,\the\s@mme]\Psrest@reSt\fi%
    \PSc@mment{End psaltitude}\resetc@ntr@l\et@tpsaltitude\fi\fi}}
\ctr@ld@f\def\Ps@rcerc#1;#2(#3,#4){\ellBB@x#1;#2,#2(#3,#4,0)%
    \f@gnewpath{\delt@=#2\unit@\delt@=\ptT@ptps\delt@%
    \BdingB@xfalse%
    \PSwrit@cmd{#1}{\repdecn@mb{\delt@}\space #3\space #4\space arc}{\fwf@g}}}
\ctr@ln@m\psarccirc
\ctr@ld@f\def\psarccircDD#1;#2(#3,#4){\ifcurr@ntPS\ifps@cri%
    \PSc@mment{psarccircDD Center=#1 ; Radius=#2 (Ang1=#3, Ang2=#4)}%
    \iffillm@de\Ps@rcerc#1;#2(#3,#4)%
    \f@gfill%
    \else\Ps@rcerc#1;#2(#3,#4)\f@gstroke\fi%
    \PSc@mment{End psarccircDD}\fi\fi}
\ctr@ld@f\def\psarccircTD#1,#2,#3;#4(#5,#6){{\ifcurr@ntPS\ifps@cri\s@uvc@ntr@l\et@tpsarccircTD%
    \PSc@mment{psarccircTD Center=#1,P1=#2,P2=#3 ; Radius=#4 (Ang1=#5, Ang2=#6)}%
    \setc@ntr@l{2}\c@lExtAxes#1,#2,#3(#4)\psarcellPATD#1,-4,-5(#5,#6)%
    \PSc@mment{End psarccircTD}\resetc@ntr@l\et@tpsarccircTD\fi\fi}}
\ctr@ld@f\def\c@lExtAxes#1,#2,#3(#4){%
    \figvectPTD-5[#1,#2]\vecunit@{-5}{-5}\figvectNTD-4[#1,#2,#3]\vecunit@{-4}{-4}%
    \figvectNVTD-3[-4,-5]\delt@=#4\unit@\edef\r@yon{\repdecn@mb{\delt@}}%
    \figpttra-4:=#1/\r@yon,-5/\figpttra-5:=#1/\r@yon,-3/}
\ctr@ln@m\psarccircP
\ctr@ld@f\def\psarccircPDD#1;#2[#3,#4]{{\ifcurr@ntPS\ifps@cri\s@uvc@ntr@l\et@tpsarccircPDD%
    \PSc@mment{psarccircPDD Center=#1; Radius=#2, [P1=#3, P2=#4]}%
    \Ps@ngleparam#1;#2[#3,#4]\ifdim\v@lmin>\v@lmax\advance\v@lmax\DePI@deg\fi%
    \edef\@ngdeb{\repdecn@mb{\v@lmin}}\edef\@ngfin{\repdecn@mb{\v@lmax}}%
    \psarccirc#1;\r@dius(\@ngdeb,\@ngfin)%
    \PSc@mment{End psarccircPDD}\resetc@ntr@l\et@tpsarccircPDD\fi\fi}}
\ctr@ld@f\def\psarccircPTD#1;#2[#3,#4,#5]{{\ifcurr@ntPS\ifps@cri\s@uvc@ntr@l\et@tpsarccircPTD%
    \PSc@mment{psarccircPTD Center=#1; Radius=#2, [P1=#3, P2=#4, P3=#5]}%
    \setc@ntr@l{2}\c@lExtAxes#1,#3,#5(#2)\psarcellPP#1,-4,-5[#3,#4]%
    \PSc@mment{End psarccircPTD}\resetc@ntr@l\et@tpsarccircPTD\fi\fi}}
\ctr@ld@f\def\Ps@ngleparam#1;#2[#3,#4]{\setc@ntr@l{2}%
    \figvectPDD-1[#1,#3]\vecunit@{-1}{-1}\Figg@tXY{-1}\arct@n\v@lmin(\v@lX,\v@lY)%
    \figvectPDD-2[#1,#4]\vecunit@{-2}{-2}\Figg@tXY{-2}\arct@n\v@lmax(\v@lX,\v@lY)%
    \v@lmin=\rdT@deg\v@lmin\v@lmax=\rdT@deg\v@lmax%
    \v@leur=#2pt\maxim@m{\mili@u}{-\v@leur}{\v@leur}%
    \edef\r@dius{\repdecn@mb{\mili@u}}}
\ctr@ld@f\def\Ps@rcercBz#1;#2(#3,#4){\Ps@rellBz#1;#2,#2(#3,#4,0)}
\ctr@ld@f\def\Ps@rellBz#1;#2,#3(#4,#5,#6){%
    \ellBB@x#1;#2,#3(#4,#5,#6)\BdingB@xfalse%
    \c@lNbarcs{#4}{#5}\v@leur=#4pt\setc@ntr@l{2}\figptell-13::#1;#2,#3(#4,#6)%
    \f@gnewpath\PSwrit@cmd{-13}{\c@mmoveto}{\fwf@g}%
    \s@mme=\z@\bcl@rellBz#1;#2,#3(#6)\BdingB@xtrue}
\ctr@ld@f\def\bcl@rellBz#1;#2,#3(#4){\relax%
    \ifnum\s@mme<\p@rtent\advance\s@mme\@ne%
    \advance\v@leur\delt@\edef\@ngle{\repdecn@mb\v@leur}\figptell-14::#1;#2,#3(\@ngle,#4)%
    \advance\v@leur\delt@\edef\@ngle{\repdecn@mb\v@leur}\figptell-15::#1;#2,#3(\@ngle,#4)%
    \advance\v@leur\delt@\edef\@ngle{\repdecn@mb\v@leur}\figptell-16::#1;#2,#3(\@ngle,#4)%
    \figptscontrolDD-18[-13,-14,-15,-16]%
    \PSwrit@cmd{-18}{}{\fwf@g}\PSwrit@cmd{-17}{}{\fwf@g}%
    \PSwrit@cmd{-16}{\c@mcurveto}{\fwf@g}%
    \figptcopyDD-13:/-16/\bcl@rellBz#1;#2,#3(#4)\fi}
\ctr@ld@f\def\Ps@rell#1;#2,#3(#4,#5,#6){\ellBB@x#1;#2,#3(#4,#5,#6)%
    \f@gnewpath{\v@lmin=#2\unit@\v@lmin=\ptT@ptps\v@lmin%
    \v@lmax=#3\unit@\v@lmax=\ptT@ptps\v@lmax\BdingB@xfalse%
    \PSwrit@cmd{#1}%
    {#6\space\repdecn@mb{\v@lmin}\space\repdecn@mb{\v@lmax}\space #4\space #5\space ellipse}{\fwf@g}}%
    \global\Use@llipsetrue}
\ctr@ln@m\psarcell
\ctr@ld@f\def\psarcellDD#1;#2,#3(#4,#5,#6){{\ifcurr@ntPS\ifps@cri%
    \PSc@mment{psarcellDD Center=#1 ; XRad=#2, YRad=#3 (Ang1=#4, Ang2=#5, Inclination=#6)}%
    \iffillm@de\Ps@rell#1;#2,#3(#4,#5,#6)%
    \f@gfill%
    \else\Ps@rell#1;#2,#3(#4,#5,#6)\f@gstroke\fi%
    \PSc@mment{End psarcellDD}\fi\fi}}
\ctr@ld@f\def\psarcellTD#1;#2,#3(#4,#5,#6){{\ifcurr@ntPS\ifps@cri\s@uvc@ntr@l\et@tpsarcellTD%
    \PSc@mment{psarcellTD Center=#1 ; XRad=#2, YRad=#3 (Ang1=#4, Ang2=#5, Inclination=#6)}%
    \setc@ntr@l{2}\figpttraC -8:=#1/#2,0,0/\figpttraC -7:=#1/0,#3,0/%
    \figvectC -4(0,0,1)\figptsrot -8=-8,-7/#1,#6,-4/\psarcellPATD#1,-8,-7(#4,#5)%
    \PSc@mment{End psarcellTD}\resetc@ntr@l\et@tpsarcellTD\fi\fi}}
\ctr@ln@m\psarcellPA
\ctr@ld@f\def\psarcellPADD#1,#2,#3(#4,#5){{\ifcurr@ntPS\ifps@cri\s@uvc@ntr@l\et@tpsarcellPADD%
    \PSc@mment{psarcellPADD Center=#1,PtAxis1=#2,PtAxis2=#3 (Ang1=#4, Ang2=#5)}%
    \setc@ntr@l{2}\figvectPDD-1[#1,#2]\vecunit@DD{-1}{-1}\v@lX=\ptT@unit@\result@t%
    \edef\XR@d{\repdecn@mb{\v@lX}}\Figg@tXY{-1}\arct@n\v@lmin(\v@lX,\v@lY)%
    \v@lmin=\rdT@deg\v@lmin\edef\Inclin@{\repdecn@mb{\v@lmin}}%
    \figgetdist\YR@d[#1,#3]\psarcellDD#1;\XR@d,\YR@d(#4,#5,\Inclin@)%
    \PSc@mment{End psarcellPADD}\resetc@ntr@l\et@tpsarcellPADD\fi\fi}}
\ctr@ld@f\def\psarcellPATD#1,#2,#3(#4,#5){{\ifcurr@ntPS\ifps@cri\s@uvc@ntr@l\et@tpsarcellPATD%
    \PSc@mment{psarcellPATD Center=#1,PtAxis1=#2,PtAxis2=#3 (Ang1=#4, Ang2=#5)}%
    \iffillm@de\Ps@rellPATD#1,#2,#3(#4,#5)%
    \f@gfill%
    \else\Ps@rellPATD#1,#2,#3(#4,#5)\f@gstroke\fi%
    \PSc@mment{End psarcellPATD}\resetc@ntr@l\et@tpsarcellPATD\fi\fi}}
\ctr@ld@f\def\Ps@rellPATD#1,#2,#3(#4,#5){\let\c@lprojSP=\relax%
    \setc@ntr@l{2}\figvectPTD-1[#1,#2]\figvectPTD-2[#1,#3]\c@lNbarcs{#4}{#5}%
    \v@leur=#4pt\c@lptellP{#1}{-1}{-2}\Figptpr@j-5:/-3/%
    \f@gnewpath\PSwrit@cmdS{-5}{\c@mmoveto}{\fwf@g}{\X@un}{\Y@un}%
    \edef\C@nt@r{#1}\s@mme=\z@\bcl@rellPATD}
\ctr@ld@f\def\bcl@rellPATD{\relax%
    \ifnum\s@mme<\p@rtent\advance\s@mme\@ne%
    \advance\v@leur\delt@\c@lptellP{\C@nt@r}{-1}{-2}\Figptpr@j-4:/-3/%
    \advance\v@leur\delt@\c@lptellP{\C@nt@r}{-1}{-2}\Figptpr@j-6:/-3/%
    \advance\v@leur\delt@\c@lptellP{\C@nt@r}{-1}{-2}\Figptpr@j-3:/-3/%
    \v@lX=\z@\v@lY=\z@\Figtr@nptDD{-5}{-5}\Figtr@nptDD{2}{-3}%
    \divide\v@lX\@vi\divide\v@lY\@vi%
    \Figtr@nptDD{3}{-4}\Figtr@nptDD{-1.5}{-6}\v@lmin=\v@lX\v@lmax=\v@lY%
    \v@lX=\z@\v@lY=\z@\Figtr@nptDD{2}{-5}\Figtr@nptDD{-5}{-3}%
    \divide\v@lX\@vi\divide\v@lY\@vi\Figtr@nptDD{-1.5}{-4}\Figtr@nptDD{3}{-6}%
    \BdingB@xfalse%
    \Figp@intregDD-4:(\v@lmin,\v@lmax)\PSwrit@cmdS{-4}{}{\fwf@g}{\X@de}{\Y@de}%
    \Figp@intregDD-4:(\v@lX,\v@lY)\PSwrit@cmdS{-4}{}{\fwf@g}{\X@tr}{\Y@tr}%
    \BdingB@xtrue\PSwrit@cmdS{-3}{\c@mcurveto}{\fwf@g}{\X@qu}{\Y@qu}%
    \B@zierBB@x{1}{\Y@un}(\X@un,\X@de,\X@tr,\X@qu)%
    \B@zierBB@x{2}{\X@un}(\Y@un,\Y@de,\Y@tr,\Y@qu)%
    \edef\X@un{\X@qu}\edef\Y@un{\Y@qu}\figptcopyDD-5:/-3/\bcl@rellPATD\fi}
\ctr@ld@f\def\c@lNbarcs#1#2{%
    \delt@=#2pt\advance\delt@-#1pt\maxim@m{\v@lmax}{\delt@}{-\delt@}%
    \v@leur=\v@lmax\divide\v@leur45 \p@rtentiere{\p@rtent}{\v@leur}\advance\p@rtent\@ne%
    \s@mme=\p@rtent\multiply\s@mme\thr@@\divide\delt@\s@mme}
\ctr@ld@f\def\psarcellPP#1,#2,#3[#4,#5]{{\ifcurr@ntPS\ifps@cri\s@uvc@ntr@l\et@tpsarcellPP%
    \PSc@mment{psarcellPP Center=#1,PtAxis1=#2,PtAxis2=#3 [Point1=#4, Point2=#5]}%
    \setc@ntr@l{2}\figvectP-2[#1,#3]\vecunit@{-2}{-2}\v@lmin=\result@t%
    \invers@{\v@lmax}{\v@lmin}%
    \figvectP-1[#1,#2]\vecunit@{-1}{-1}\v@leur=\result@t%
    \v@leur=\repdecn@mb{\v@lmax}\v@leur\edef\AsB@{\repdecn@mb{\v@leur}}
    \c@lAngle{#1}{#4}{\v@lmin}\edef\@ngdeb{\repdecn@mb{\v@lmin}}%
    \c@lAngle{#1}{#5}{\v@lmax}\ifdim\v@lmin>\v@lmax\advance\v@lmax\DePI@deg\fi%
    \edef\@ngfin{\repdecn@mb{\v@lmax}}\psarcellPA#1,#2,#3(\@ngdeb,\@ngfin)%
    \PSc@mment{End psarcellPP}\resetc@ntr@l\et@tpsarcellPP\fi\fi}}
\ctr@ld@f\def\c@lAngle#1#2#3{\figvectP-3[#1,#2]%
    \c@lproscal\delt@[-3,-1]\c@lproscal\v@leur[-3,-2]%
    \v@leur=\AsB@\v@leur\arct@n#3(\delt@,\v@leur)#3=\rdT@deg#3}
\ctr@ln@w{newif}\if@rrowratio\@rrowratiotrue
\ctr@ln@w{newif}\if@rrowhfill
\ctr@ln@w{newif}\if@rrowhout
\ctr@ld@f\def\Psset@rrowhe@d#1=#2|{\keln@mun#1|%
    \def\n@mref{a}\ifx\l@debut\n@mref\pssetarrowheadangle{#2}\else
    \def\n@mref{f}\ifx\l@debut\n@mref\pssetarrowheadfill{#2}\else
    \def\n@mref{l}\ifx\l@debut\n@mref\pssetarrowheadlength{#2}\else
    \def\n@mref{o}\ifx\l@debut\n@mref\pssetarrowheadout{#2}\else
    \def\n@mref{r}\ifx\l@debut\n@mref\pssetarrowheadratio{#2}\else
    \immediate\write16{*** Unknown attribute: \BS@ psset arrowhead(..., #1=...)}%
    \fi\fi\fi\fi\fi}
\ctr@ln@m\@rrowheadangle
\ctr@ln@m\C@AHANG \ctr@ln@m\S@AHANG \ctr@ln@m\UNSS@N
\ctr@ld@f\def\pssetarrowheadangle#1{\edef\@rrowheadangle{#1}{\c@ssin{\C@}{\S@}{#1}%
    \xdef\C@AHANG{\C@}\xdef\S@AHANG{\S@}\v@lmax=\S@ pt%
    \invers@{\v@leur}{\v@lmax}\maxim@m{\v@leur}{\v@leur}{-\v@leur}%
    \xdef\UNSS@N{\the\v@leur}}}
\ctr@ld@f\def\pssetarrowheadfill#1{\expandafter\set@rrowhfill#1:}
\ctr@ld@f\def\set@rrowhfill#1#2:{\if#1n\@rrowhfillfalse\else\@rrowhfilltrue\fi}
\ctr@ld@f\def\pssetarrowheadout#1{\expandafter\set@rrowhout#1:}
\ctr@ld@f\def\set@rrowhout#1#2:{\if#1n\@rrowhoutfalse\else\@rrowhouttrue\fi}
\ctr@ln@m\@rrowheadlength
\ctr@ld@f\def\pssetarrowheadlength#1{\edef\@rrowheadlength{#1}\@rrowratiofalse}
\ctr@ln@m\@rrowheadratio
\ctr@ld@f\def\pssetarrowheadratio#1{\edef\@rrowheadratio{#1}\@rrowratiotrue}
\ctr@ln@m\defaultarrowheadlength
\ctr@ld@f\def\psresetarrowhead{%
    \pssetarrowheadangle{\defaultarrowheadangle}%
    \pssetarrowheadfill{\defaultarrowheadfill}%
    \pssetarrowheadout{\defaultarrowheadout}%
    \pssetarrowheadratio{\defaultarrowheadratio}%
    \d@fm@cdim\defaultarrowheadlength{\defaulth@rdahlength}
    \pssetarrowheadlength{\defaultarrowheadlength}}
\ctr@ld@f\def\defaultarrowheadratio{0.1}
\ctr@ld@f\def\defaultarrowheadangle{20}
\ctr@ld@f\def\defaultarrowheadfill{no}
\ctr@ld@f\def\defaultarrowheadout{no}
\ctr@ld@f\def\defaulth@rdahlength{8pt}
\ctr@ln@m\psarrow
\ctr@ld@f\def\psarrowDD[#1,#2]{{\ifcurr@ntPS\ifps@cri\s@uvc@ntr@l\et@tpsarrow%
    \PSc@mment{psarrowDD [Pt1,Pt2]=[#1,#2]}\pssetfillmode{no}%
    \psarrowheadDD[#1,#2]\setc@ntr@l{2}\psline[#1,-3]%
    \PSc@mment{End psarrowDD}\resetc@ntr@l\et@tpsarrow\fi\fi}}
\ctr@ld@f\def\psarrowTD[#1,#2]{{\ifcurr@ntPS\ifps@cri\s@uvc@ntr@l\et@tpsarrowTD%
    \PSc@mment{psarrowTD [Pt1,Pt2]=[#1,#2]}\resetc@ntr@l{2}%
    \Figptpr@j-5:/#1/\Figptpr@j-6:/#2/\let\c@lprojSP=\relax\psarrowDD[-5,-6]%
    \PSc@mment{End psarrowTD}\resetc@ntr@l\et@tpsarrowTD\fi\fi}}
\ctr@ln@m\psarrowhead
\ctr@ld@f\def\psarrowheadDD[#1,#2]{{\ifcurr@ntPS\ifps@cri\s@uvc@ntr@l\et@tpsarrowheadDD%
    \if@rrowhfill\def\@hangle{-\@rrowheadangle}\else\def\@hangle{\@rrowheadangle}\fi%
    \if@rrowratio%
    \if@rrowhout\def\@hratio{-\@rrowheadratio}\else\def\@hratio{\@rrowheadratio}\fi%
    \PSc@mment{psarrowheadDD Ratio=\@hratio, Angle=\@hangle, [Pt1,Pt2]=[#1,#2]}%
    \Ps@rrowhead\@hratio,\@hangle[#1,#2]%
    \else%
    \if@rrowhout\def\@hlength{-\@rrowheadlength}\else\def\@hlength{\@rrowheadlength}\fi%
    \PSc@mment{psarrowheadDD Length=\@hlength, Angle=\@hangle, [Pt1,Pt2]=[#1,#2]}%
    \Ps@rrowheadfd\@hlength,\@hangle[#1,#2]%
    \fi%
    \PSc@mment{End psarrowheadDD}\resetc@ntr@l\et@tpsarrowheadDD\fi\fi}}
\ctr@ld@f\def\psarrowheadTD[#1,#2]{{\ifcurr@ntPS\ifps@cri\s@uvc@ntr@l\et@tpsarrowheadTD%
    \PSc@mment{psarrowheadTD [Pt1,Pt2]=[#1,#2]}\resetc@ntr@l{2}%
    \Figptpr@j-5:/#1/\Figptpr@j-6:/#2/\let\c@lprojSP=\relax\psarrowheadDD[-5,-6]%
    \PSc@mment{End psarrowheadTD}\resetc@ntr@l\et@tpsarrowheadTD\fi\fi}}
\ctr@ld@f\def\Ps@rrowhead#1,#2[#3,#4]{\v@leur=#1\p@\maxim@m{\v@leur}{\v@leur}{-\v@leur}%
    \ifdim\v@leur>\Cepsil@n{
    \PSc@mment{ps@rrowhead Ratio=#1, Angle=#2, [Pt1,Pt2]=[#3,#4]}\v@leur=\UNSS@N%
    \v@leur=\curr@ntwidth\v@leur\v@leur=\ptpsT@pt\v@leur\delt@=.5\v@leur
    \setc@ntr@l{2}\figvectPDD-3[#4,#3]%
    \Figg@tXY{-3}\v@lX=#1\v@lX\v@lY=#1\v@lY\Figv@ctCreg-3(\v@lX,\v@lY)%
    \vecunit@{-4}{-3}\mili@u=\result@t%
    \ifdim#2pt>\z@\v@lXa=-\C@AHANG\delt@%
     \edef\c@ef{\repdecn@mb{\v@lXa}}\figpttraDD-3:=-3/\c@ef,-4/\fi%
    \edef\c@ef{\repdecn@mb{\delt@}}%
    \v@lXa=\mili@u\v@lXa=\C@AHANG\v@lXa%
    \v@lYa=\ptpsT@pt\p@\v@lYa=\curr@ntwidth\v@lYa\v@lYa=\sDcc@ngle\v@lYa%
    \advance\v@lXa-\v@lYa\gdef\sDcc@ngle{0}%
    \ifdim\v@lXa>\v@leur\edef\c@efendpt{\repdecn@mb{\v@leur}}%
    \else\edef\c@efendpt{\repdecn@mb{\v@lXa}}\fi%
    \Figg@tXY{-3}\v@lmin=\v@lX\v@lmax=\v@lY%
    \v@lXa=\C@AHANG\v@lmin\v@lYa=\S@AHANG\v@lmax\advance\v@lXa\v@lYa%
    \v@lYa=-\S@AHANG\v@lmin\v@lX=\C@AHANG\v@lmax\advance\v@lYa\v@lX%
    \setc@ntr@l{1}\Figg@tXY{#4}\advance\v@lX\v@lXa\advance\v@lY\v@lYa%
    \setc@ntr@l{2}\Figp@intregDD-2:(\v@lX,\v@lY)%
    \v@lXa=\C@AHANG\v@lmin\v@lYa=-\S@AHANG\v@lmax\advance\v@lXa\v@lYa%
    \v@lYa=\S@AHANG\v@lmin\v@lX=\C@AHANG\v@lmax\advance\v@lYa\v@lX%
    \setc@ntr@l{1}\Figg@tXY{#4}\advance\v@lX\v@lXa\advance\v@lY\v@lYa%
    \setc@ntr@l{2}\Figp@intregDD-1:(\v@lX,\v@lY)%
    \ifdim#2pt<\z@\fillm@detrue\psline[-2,#4,-1]
    \else\figptstraDD-3=#4,-2,-1/\c@ef,-4/\psline[-2,-3,-1]\fi
    \ifdim#1pt>\z@\figpttraDD-3:=#4/\c@efendpt,-4/\else\figptcopyDD-3:/#4/\fi%
    \PSc@mment{End ps@rrowhead}}\fi}
\ctr@ld@f\def\sDcc@ngle{0}
\ctr@ld@f\def\Ps@rrowheadfd#1,#2[#3,#4]{{%
    \PSc@mment{ps@rrowheadfd Length=#1, Angle=#2, [Pt1,Pt2]=[#3,#4]}%
    \setc@ntr@l{2}\figvectPDD-1[#3,#4]\n@rmeucDD{\v@leur}{-1}\v@leur=\ptT@unit@\v@leur%
    \invers@{\v@leur}{\v@leur}\v@leur=#1\v@leur\edef\R@tio{\repdecn@mb{\v@leur}}%
    \Ps@rrowhead\R@tio,#2[#3,#4]\PSc@mment{End ps@rrowheadfd}}}
\ctr@ln@m\psarrowBezier
\ctr@ld@f\def\psarrowBezierDD[#1,#2,#3,#4]{{\ifcurr@ntPS\ifps@cri\s@uvc@ntr@l\et@tpsarrowBezierDD%
    \PSc@mment{psarrowBezierDD Control points=#1,#2,#3,#4}\setc@ntr@l{2}%
    \if@rrowratio\c@larclengthDD\v@leur,10[#1,#2,#3,#4]\else\v@leur=\z@\fi%
    \Ps@rrowB@zDD\v@leur[#1,#2,#3,#4]%
    \PSc@mment{End psarrowBezierDD}\resetc@ntr@l\et@tpsarrowBezierDD\fi\fi}}
\ctr@ld@f\def\psarrowBezierTD[#1,#2,#3,#4]{{\ifcurr@ntPS\ifps@cri\s@uvc@ntr@l\et@tpsarrowBezierTD%
    \PSc@mment{psarrowBezierTD Control points=#1,#2,#3,#4}\resetc@ntr@l{2}%
    \Figptpr@j-7:/#1/\Figptpr@j-8:/#2/\Figptpr@j-9:/#3/\Figptpr@j-10:/#4/%
    \let\c@lprojSP=\relax\ifnum\curr@ntproj<\tw@\psarrowBezierDD[-7,-8,-9,-10]%
    \else\f@gnewpath\PSwrit@cmd{-7}{\c@mmoveto}{\fwf@g}%
    \if@rrowratio\c@larclengthDD\mili@u,10[-7,-8,-9,-10]\else\mili@u=\z@\fi%
    \p@rtent=\NBz@rcs\advance\p@rtent\m@ne\subB@zierTD\p@rtent[#1,#2,#3,#4]%
    \f@gstroke%
    \advance\v@lmin\p@rtent\delt@
    \v@leur=\v@lmin\advance\v@leur0.33333 \delt@\edef\unti@rs{\repdecn@mb{\v@leur}}%
    \v@leur=\v@lmin\advance\v@leur0.66666 \delt@\edef\deti@rs{\repdecn@mb{\v@leur}}%
    \figptcopyDD-8:/-10/\c@lsubBzarc\unti@rs,\deti@rs[#1,#2,#3,#4]%
    \figptcopyDD-8:/-4/\figptcopyDD-9:/-3/\Ps@rrowB@zDD\mili@u[-7,-8,-9,-10]\fi%
    \PSc@mment{End psarrowBezierTD}\resetc@ntr@l\et@tpsarrowBezierTD\fi\fi}}
\ctr@ld@f\def\c@larclengthDD#1,#2[#3,#4,#5,#6]{{\p@rtent=#2\figptcopyDD-5:/#3/%
    \delt@=\p@\divide\delt@\p@rtent\c@rre=\z@\v@leur=\z@\s@mme=\z@%
    \loop\ifnum\s@mme<\p@rtent\advance\s@mme\@ne\advance\v@leur\delt@%
    \edef\T@{\repdecn@mb{\v@leur}}\figptBezierDD-6::\T@[#3,#4,#5,#6]%
    \figvectPDD-1[-5,-6]\n@rmeucDD{\mili@u}{-1}\advance\c@rre\mili@u%
    \figptcopyDD-5:/-6/\repeat\global\result@t=\ptT@unit@\c@rre}#1=\result@t}
\ctr@ld@f\def\Ps@rrowB@zDD#1[#2,#3,#4,#5]{{\pssetfillmode{no}%
    \if@rrowratio\delt@=\@rrowheadratio#1\else\delt@=\@rrowheadlength pt\fi%
    \v@leur=\C@AHANG\delt@\edef\R@dius{\repdecn@mb{\v@leur}}%
    \FigptintercircB@zDD-5::0,\R@dius[#5,#4,#3,#2]%
    \pssetarrowheadlength{\repdecn@mb{\delt@}}\psarrowheadDD[-5,#5]%
    \let\n@rmeuc=\n@rmeucDD\figgetdist\R@dius[#5,-3]%
    \FigptintercircB@zDD-6::0,\R@dius[#5,#4,#3,#2]%
    \figptBezierDD-5::0.33333[#5,#4,#3,#2]\figptBezierDD-3::0.66666[#5,#4,#3,#2]%
    \figptscontrolDD-5[-6,-5,-3,#2]\psBezierDD1[-6,-5,-4,#2]}}
\ctr@ln@m\psarrowcirc
\ctr@ld@f\def\psarrowcircDD#1;#2(#3,#4){{\ifcurr@ntPS\ifps@cri\s@uvc@ntr@l\et@tpsarrowcircDD%
    \PSc@mment{psarrowcircDD Center=#1 ; Radius=#2 (Ang1=#3,Ang2=#4)}%
    \pssetfillmode{no}\Pscirc@rrowhead#1;#2(#3,#4)%
    \setc@ntr@l{2}\figvectPDD -4[#1,-3]\vecunit@{-4}{-4}%
    \Figg@tXY{-4}\arct@n\v@lmin(\v@lX,\v@lY)%
    \v@lmin=\rdT@deg\v@lmin\v@leur=#4pt\advance\v@leur-\v@lmin%
    \maxim@m{\v@leur}{\v@leur}{-\v@leur}%
    \ifdim\v@leur>\DemiPI@deg\relax\ifdim\v@lmin<#4pt\advance\v@lmin\DePI@deg%
    \else\advance\v@lmin-\DePI@deg\fi\fi\edef\ar@ngle{\repdecn@mb{\v@lmin}}%
    \ifdim#3pt<#4pt\psarccirc#1;#2(#3,\ar@ngle)\else\psarccirc#1;#2(\ar@ngle,#3)\fi%
    \PSc@mment{End psarrowcircDD}\resetc@ntr@l\et@tpsarrowcircDD\fi\fi}}
\ctr@ld@f\def\psarrowcircTD#1,#2,#3;#4(#5,#6){{\ifcurr@ntPS\ifps@cri\s@uvc@ntr@l\et@tpsarrowcircTD%
    \PSc@mment{psarrowcircTD Center=#1,P1=#2,P2=#3 ; Radius=#4 (Ang1=#5, Ang2=#6)}%
    \resetc@ntr@l{2}\c@lExtAxes#1,#2,#3(#4)\let\c@lprojSP=\relax%
    \figvectPTD-11[#1,-4]\figvectPTD-12[#1,-5]\c@lNbarcs{#5}{#6}%
    \if@rrowratio\v@lmax=\degT@rd\v@lmax\edef\D@lpha{\repdecn@mb{\v@lmax}}\fi%
    \advance\p@rtent\m@ne\mili@u=\z@%
    \v@leur=#5pt\c@lptellP{#1}{-11}{-12}\Figptpr@j-9:/-3/%
    \f@gnewpath\PSwrit@cmdS{-9}{\c@mmoveto}{\fwf@g}{\X@un}{\Y@un}%
    \edef\C@nt@r{#1}\s@mme=\z@\bcl@rcircTD\f@gstroke%
    \advance\v@leur\delt@\c@lptellP{#1}{-11}{-12}\Figptpr@j-5:/-3/%
    \advance\v@leur\delt@\c@lptellP{#1}{-11}{-12}\Figptpr@j-6:/-3/%
    \advance\v@leur\delt@\c@lptellP{#1}{-11}{-12}\Figptpr@j-10:/-3/%
    \figptscontrolDD-8[-9,-5,-6,-10]%
    \if@rrowratio\c@lcurvradDD0.5[-9,-8,-7,-10]\advance\mili@u\result@t%
    \maxim@m{\mili@u}{\mili@u}{-\mili@u}\mili@u=\ptT@unit@\mili@u%
    \mili@u=\D@lpha\mili@u\advance\p@rtent\@ne\divide\mili@u\p@rtent\fi%
    \Ps@rrowB@zDD\mili@u[-9,-8,-7,-10]%
    \PSc@mment{End psarrowcircTD}\resetc@ntr@l\et@tpsarrowcircTD\fi\fi}}
\ctr@ld@f\def\bcl@rcircTD{\relax%
    \ifnum\s@mme<\p@rtent\advance\s@mme\@ne%
    \advance\v@leur\delt@\c@lptellP{\C@nt@r}{-11}{-12}\Figptpr@j-5:/-3/%
    \advance\v@leur\delt@\c@lptellP{\C@nt@r}{-11}{-12}\Figptpr@j-6:/-3/%
    \advance\v@leur\delt@\c@lptellP{\C@nt@r}{-11}{-12}\Figptpr@j-10:/-3/%
    \figptscontrolDD-8[-9,-5,-6,-10]\BdingB@xfalse%
    \PSwrit@cmdS{-8}{}{\fwf@g}{\X@de}{\Y@de}\PSwrit@cmdS{-7}{}{\fwf@g}{\X@tr}{\Y@tr}%
    \BdingB@xtrue\PSwrit@cmdS{-10}{\c@mcurveto}{\fwf@g}{\X@qu}{\Y@qu}%
    \if@rrowratio\c@lcurvradDD0.5[-9,-8,-7,-10]\advance\mili@u\result@t\fi%
    \B@zierBB@x{1}{\Y@un}(\X@un,\X@de,\X@tr,\X@qu)%
    \B@zierBB@x{2}{\X@un}(\Y@un,\Y@de,\Y@tr,\Y@qu)%
    \edef\X@un{\X@qu}\edef\Y@un{\Y@qu}\figptcopyDD-9:/-10/\bcl@rcircTD\fi}
\ctr@ld@f\def\Pscirc@rrowhead#1;#2(#3,#4){{%
    \PSc@mment{pscirc@rrowhead Center=#1 ; Radius=#2 (Ang1=#3,Ang2=#4)}%
    \v@leur=#2\unit@\edef\s@glen{\repdecn@mb{\v@leur}}\v@lY=\z@\v@lX=\v@leur%
    \resetc@ntr@l{2}\Figv@ctCreg-3(\v@lX,\v@lY)\figpttraDD-5:=#1/1,-3/%
    \figptrotDD-5:=-5/#1,#4/%
    \figvectPDD-3[#1,-5]\Figg@tXY{-3}\v@leur=\v@lX%
    \ifdim#3pt<#4pt\v@lX=\v@lY\v@lY=-\v@leur\else\v@lX=-\v@lY\v@lY=\v@leur\fi%
    \Figv@ctCreg-3(\v@lX,\v@lY)\vecunit@{-3}{-3}%
    \if@rrowratio\v@leur=#4pt\advance\v@leur-#3pt\maxim@m{\mili@u}{-\v@leur}{\v@leur}%
    \mili@u=\degT@rd\mili@u\v@leur=\s@glen\mili@u\edef\s@glen{\repdecn@mb{\v@leur}}%
    \mili@u=#2\mili@u\mili@u=\@rrowheadratio\mili@u\else\mili@u=\@rrowheadlength pt\fi%
    \figpttraDD-6:=-5/\s@glen,-3/\v@leur=#2pt\v@leur=2\v@leur%
    \invers@{\v@leur}{\v@leur}\c@rre=\repdecn@mb{\v@leur}\mili@u
    \mili@u=\c@rre\mili@u=\repdecn@mb{\c@rre}\mili@u%
    \v@leur=\p@\advance\v@leur-\mili@u
    \invers@{\mili@u}{2\v@leur}\delt@=\c@rre\delt@=\repdecn@mb{\mili@u}\delt@%
    \xdef\sDcc@ngle{\repdecn@mb{\delt@}}
    \sqrt@{\mili@u}{\v@leur}\arct@n\v@leur(\mili@u,\c@rre)%
    \v@leur=\rdT@deg\v@leur
    \ifdim#3pt<#4pt\v@leur=-\v@leur\fi%
    \if@rrowhout\v@leur=-\v@leur\fi\edef\cor@ngle{\repdecn@mb{\v@leur}}%
    \figptrotDD-6:=-6/-5,\cor@ngle/\psarrowheadDD[-6,-5]%
    \PSc@mment{End pscirc@rrowhead}}}
\ctr@ln@m\psarrowcircP
\ctr@ld@f\def\psarrowcircPDD#1;#2[#3,#4]{{\ifcurr@ntPS\ifps@cri%
    \PSc@mment{psarrowcircPDD Center=#1; Radius=#2, [P1=#3,P2=#4]}%
    \s@uvc@ntr@l\et@tpsarrowcircPDD\Ps@ngleparam#1;#2[#3,#4]%
    \ifdim\v@leur>\z@\ifdim\v@lmin>\v@lmax\advance\v@lmax\DePI@deg\fi%
    \else\ifdim\v@lmin<\v@lmax\advance\v@lmin\DePI@deg\fi\fi%
    \edef\@ngdeb{\repdecn@mb{\v@lmin}}\edef\@ngfin{\repdecn@mb{\v@lmax}}%
    \psarrowcirc#1;\r@dius(\@ngdeb,\@ngfin)%
    \PSc@mment{End psarrowcircPDD}\resetc@ntr@l\et@tpsarrowcircPDD\fi\fi}}
\ctr@ld@f\def\psarrowcircPTD#1;#2[#3,#4,#5]{{\ifcurr@ntPS\ifps@cri\s@uvc@ntr@l\et@tpsarrowcircPTD%
    \PSc@mment{psarrowcircPTD Center=#1; Radius=#2, [P1=#3,P2=#4,P3=#5]}%
    \figgetangleTD\@ngfin[#1,#3,#4,#5]\v@leur=#2pt%
    \maxim@m{\mili@u}{-\v@leur}{\v@leur}\edef\r@dius{\repdecn@mb{\mili@u}}%
    \ifdim\v@leur<\z@\v@lmax=\@ngfin pt\advance\v@lmax-\DePI@deg%
    \edef\@ngfin{\repdecn@mb{\v@lmax}}\fi\psarrowcircTD#1,#3,#5;\r@dius(0,\@ngfin)%
    \PSc@mment{End psarrowcircPTD}\resetc@ntr@l\et@tpsarrowcircPTD\fi\fi}}
\ctr@ld@f\def\psaxes#1(#2){{\ifcurr@ntPS\ifps@cri\s@uvc@ntr@l\et@tpsaxes%
    \PSc@mment{psaxes Origin=#1 Range=(#2)}\an@lys@xes#2,:\resetc@ntr@l{2}%
    \ifx\t@xt@\empty\ifTr@isDim\ps@xes#1(0,#2,0,#2,0,#2)\else\ps@xes#1(0,#2,0,#2)\fi%
    \else\ps@xes#1(#2)\fi\PSc@mment{End psaxes}\resetc@ntr@l\et@tpsaxes\fi\fi}}
\ctr@ld@f\def\an@lys@xes#1,#2:{\def\t@xt@{#2}}
\ctr@ln@m\ps@xes
\ctr@ld@f\def\ps@xesDD#1(#2,#3,#4,#5){%
    \figpttraC-5:=#1/#2,0/\figpttraC-6:=#1/#3,0/\psarrowDD[-5,-6]%
    \figpttraC-5:=#1/0,#4/\figpttraC-6:=#1/0,#5/\psarrowDD[-5,-6]}
\ctr@ld@f\def\ps@xesTD#1(#2,#3,#4,#5,#6,#7){%
    \figpttraC-7:=#1/#2,0,0/\figpttraC-8:=#1/#3,0,0/\psarrowTD[-7,-8]%
    \figpttraC-7:=#1/0,#4,0/\figpttraC-8:=#1/0,#5,0/\psarrowTD[-7,-8]%
    \figpttraC-7:=#1/0,0,#6/\figpttraC-8:=#1/0,0,#7/\psarrowTD[-7,-8]}
\ctr@ln@m\newGr@FN
\ctr@ld@f\def\newGr@FNPDF#1{\s@mme=\Gr@FNb\advance\s@mme\@ne\xdef\Gr@FNb{\number\s@mme}}
\ctr@ld@f\def\newGr@FNDVI#1{\newGr@FNPDF{}\xdef#1{\jobname GI\Gr@FNb.anx}}
\ctr@ld@f\def\psbeginfig#1{\newGr@FN\DefGIfilen@me\gdef\@utoFN{0}%
    \def\t@xt@{#1}\relax\ifx\t@xt@\empty\psupdatem@detrue%
    \gdef\@utoFN{1}\Psb@ginfig\DefGIfilen@me\else\expandafter\Psb@ginfigNu@#1 :\fi}
\ctr@ld@f\def\Psb@ginfigNu@#1 #2:{\def\t@xt@{#1}\relax\ifx\t@xt@\empty\def\t@xt@{#2}%
    \ifx\t@xt@\empty\psupdatem@detrue\gdef\@utoFN{1}\Psb@ginfig\DefGIfilen@me%
    \else\Psb@ginfigNu@#2:\fi\else\Psb@ginfig{#1}\fi}
\ctr@ln@m\PSfilen@me \ctr@ln@m\auxfilen@me
\ctr@ld@f\def\Psb@ginfig#1{\ifcurr@ntPS\else%
    \edef\PSfilen@me{#1}\edef\auxfilen@me{\jobname.anx}%
    \ifpsupdatem@de\ps@critrue\else\openin\frf@g=\PSfilen@me\relax%
    \ifeof\frf@g\ps@critrue\else\ps@crifalse\fi\closein\frf@g\fi%
    \curr@ntPStrue\c@ldefproj\expandafter\setupd@te\defaultupdate:%
    \ifps@cri\initb@undb@x%
    \immediate\openout\fwf@g=\auxfilen@me\initpss@ttings\fi%
    \fi}
\ctr@ld@f\def\Gr@FNb{0}
\ctr@ld@f\def\figforTeXFileno{\Gr@FNb}
\ctr@ld@f\def\figforTeXFigno{0 }
\ctr@ld@f\def\figforTeXnextFigno{1 }
\ctr@ld@f\edef\DefGIfilen@me{\jobname GI.anx}
\ctr@ld@f\def\initpss@ttings{\psreset{arrowhead,curve,first,flowchart,mesh,second,third}%
    \Use@llipsefalse}
\ctr@ld@f\def\B@zierBB@x#1#2(#3,#4,#5,#6){{\c@rre=\t@n\epsil@n
    \v@lmax=#4\advance\v@lmax-#5\v@lmax=\thr@@\v@lmax\advance\v@lmax#6\advance\v@lmax-#3%
    \mili@u=#4\mili@u=-\tw@\mili@u\advance\mili@u#3\advance\mili@u#5%
    \v@lmin=#4\advance\v@lmin-#3\maxim@m{\v@leur}{-\v@lmax}{\v@lmax}%
    \maxim@m{\delt@}{-\mili@u}{\mili@u}\maxim@m{\v@leur}{\v@leur}{\delt@}%
    \maxim@m{\delt@}{-\v@lmin}{\v@lmin}\maxim@m{\v@leur}{\v@leur}{\delt@}%
    \ifdim\v@leur>\c@rre\invers@{\v@leur}{\v@leur}\edef\Uns@rM@x{\repdecn@mb{\v@leur}}%
    \v@lmax=\Uns@rM@x\v@lmax\mili@u=\Uns@rM@x\mili@u\v@lmin=\Uns@rM@x\v@lmin%
    \maxim@m{\v@leur}{-\v@lmax}{\v@lmax}\ifdim\v@leur<\c@rre%
    \maxim@m{\v@leur}{-\mili@u}{\mili@u}\ifdim\v@leur<\c@rre\else%
    \invers@{\mili@u}{\mili@u}\v@leur=-0.5\v@lmin%
    \v@leur=\repdecn@mb{\mili@u}\v@leur\m@jBBB@x{\v@leur}{#1}{#2}(#3,#4,#5,#6)\fi%
    \else\delt@=\repdecn@mb{\mili@u}\mili@u\v@leur=\repdecn@mb{\v@lmax}\v@lmin%
    \advance\delt@-\v@leur\ifdim\delt@<\z@\else\invers@{\v@lmax}{\v@lmax}%
    \edef\Uns@rAp{\repdecn@mb{\v@lmax}}\sqrt@{\delt@}{\delt@}%
    \v@leur=-\mili@u\advance\v@leur\delt@\v@leur=\Uns@rAp\v@leur%
    \m@jBBB@x{\v@leur}{#1}{#2}(#3,#4,#5,#6)%
    \v@leur=-\mili@u\advance\v@leur-\delt@\v@leur=\Uns@rAp\v@leur%
    \m@jBBB@x{\v@leur}{#1}{#2}(#3,#4,#5,#6)\fi\fi\fi}}
\ctr@ld@f\def\m@jBBB@x#1#2#3(#4,#5,#6,#7){{\relax\ifdim#1>\z@\ifdim#1<\p@%
    \edef\T@{\repdecn@mb{#1}}\v@lX=\p@\advance\v@lX-#1\edef\UNmT@{\repdecn@mb{\v@lX}}%
    \v@lX=#4\v@lY=#5\v@lZ=#6\v@lXa=#7\v@lX=\UNmT@\v@lX\advance\v@lX\T@\v@lY%
    \v@lY=\UNmT@\v@lY\advance\v@lY\T@\v@lZ\v@lZ=\UNmT@\v@lZ\advance\v@lZ\T@\v@lXa%
    \v@lX=\UNmT@\v@lX\advance\v@lX\T@\v@lY\v@lY=\UNmT@\v@lY\advance\v@lY\T@\v@lZ%
    \v@lX=\UNmT@\v@lX\advance\v@lX\T@\v@lY%
    \ifcase#2\or\v@lY=#3\or\v@lY=\v@lX\v@lX=#3\fi\b@undb@x{\v@lX}{\v@lY}\fi\fi}}
\ctr@ld@f\def\PsB@zier#1[#2]{{\f@gnewpath%
    \s@mme=\z@\def\list@num{#2,0}\extrairelepremi@r\p@int\de\list@num%
    \PSwrit@cmdS{\p@int}{\c@mmoveto}{\fwf@g}{\X@un}{\Y@un}\p@rtent=#1\bclB@zier}}
\ctr@ld@f\def\bclB@zier{\relax%
    \ifnum\s@mme<\p@rtent\advance\s@mme\@ne\BdingB@xfalse%
    \extrairelepremi@r\p@int\de\list@num\PSwrit@cmdS{\p@int}{}{\fwf@g}{\X@de}{\Y@de}%
    \extrairelepremi@r\p@int\de\list@num\PSwrit@cmdS{\p@int}{}{\fwf@g}{\X@tr}{\Y@tr}%
    \BdingB@xtrue%
    \extrairelepremi@r\p@int\de\list@num\PSwrit@cmdS{\p@int}{\c@mcurveto}{\fwf@g}{\X@qu}{\Y@qu}%
    \B@zierBB@x{1}{\Y@un}(\X@un,\X@de,\X@tr,\X@qu)%
    \B@zierBB@x{2}{\X@un}(\Y@un,\Y@de,\Y@tr,\Y@qu)%
    \edef\X@un{\X@qu}\edef\Y@un{\Y@qu}\bclB@zier\fi}
\ctr@ln@m\psBezier
\ctr@ld@f\def\psBezierDD#1[#2]{\ifcurr@ntPS\ifps@cri%
    \PSc@mment{psBezierDD N arcs=#1, Control points=#2}%
    \iffillm@de\PsB@zier#1[#2]%
    \f@gfill%
    \else\PsB@zier#1[#2]\f@gstroke\fi%
    \PSc@mment{End psBezierDD}\fi\fi}
\ctr@ln@m\et@tpsBezierTD
\ctr@ld@f\def\psBezierTD#1[#2]{\ifcurr@ntPS\ifps@cri\s@uvc@ntr@l\et@tpsBezierTD%
    \PSc@mment{psBezierTD N arcs=#1, Control points=#2}%
    \iffillm@de\PsB@zierTD#1[#2]%
    \f@gfill%
    \else\PsB@zierTD#1[#2]\f@gstroke\fi%
    \PSc@mment{End psBezierTD}\resetc@ntr@l\et@tpsBezierTD\fi\fi}
\ctr@ld@f\def\PsB@zierTD#1[#2]{\ifnum\curr@ntproj<\tw@\PsB@zier#1[#2]\else\PsB@zier@TD#1[#2]\fi}
\ctr@ld@f\def\PsB@zier@TD#1[#2]{{\f@gnewpath%
    \s@mme=\z@\def\list@num{#2,0}\extrairelepremi@r\p@int\de\list@num%
    \let\c@lprojSP=\relax\setc@ntr@l{2}\Figptpr@j-7:/\p@int/%
    \PSwrit@cmd{-7}{\c@mmoveto}{\fwf@g}%
    \loop\ifnum\s@mme<#1\advance\s@mme\@ne\extrairelepremi@r\p@intun\de\list@num%
    \extrairelepremi@r\p@intde\de\list@num\extrairelepremi@r\p@inttr\de\list@num%
    \subB@zierTD\NBz@rcs[\p@int,\p@intun,\p@intde,\p@inttr]\edef\p@int{\p@inttr}\repeat}}
\ctr@ld@f\def\subB@zierTD#1[#2,#3,#4,#5]{\delt@=\p@\divide\delt@\NBz@rcs\v@lmin=\z@%
    {\Figg@tXY{-7}\edef\X@un{\the\v@lX}\edef\Y@un{\the\v@lY}%
    \s@mme=\z@\loop\ifnum\s@mme<#1\advance\s@mme\@ne%
    \v@leur=\v@lmin\advance\v@leur0.33333 \delt@\edef\unti@rs{\repdecn@mb{\v@leur}}%
    \v@leur=\v@lmin\advance\v@leur0.66666 \delt@\edef\deti@rs{\repdecn@mb{\v@leur}}%
    \advance\v@lmin\delt@\edef\trti@rs{\repdecn@mb{\v@lmin}}%
    \figptBezierTD-8::\trti@rs[#2,#3,#4,#5]\Figptpr@j-8:/-8/%
    \c@lsubBzarc\unti@rs,\deti@rs[#2,#3,#4,#5]\BdingB@xfalse%
    \PSwrit@cmdS{-4}{}{\fwf@g}{\X@de}{\Y@de}\PSwrit@cmdS{-3}{}{\fwf@g}{\X@tr}{\Y@tr}%
    \BdingB@xtrue\PSwrit@cmdS{-8}{\c@mcurveto}{\fwf@g}{\X@qu}{\Y@qu}%
    \B@zierBB@x{1}{\Y@un}(\X@un,\X@de,\X@tr,\X@qu)%
    \B@zierBB@x{2}{\X@un}(\Y@un,\Y@de,\Y@tr,\Y@qu)%
    \edef\X@un{\X@qu}\edef\Y@un{\Y@qu}\figptcopyDD-7:/-8/\repeat}}
\ctr@ld@f\def\NBz@rcs{2}
\ctr@ld@f\def\c@lsubBzarc#1,#2[#3,#4,#5,#6]{\figptBezierTD-5::#1[#3,#4,#5,#6]%
    \figptBezierTD-6::#2[#3,#4,#5,#6]\Figptpr@j-4:/-5/\Figptpr@j-5:/-6/%
    \figptscontrolDD-4[-7,-4,-5,-8]}
\ctr@ln@m\pscirc
\ctr@ld@f\def\pscircDD#1(#2){\ifcurr@ntPS\ifps@cri\PSc@mment{pscircDD Center=#1 (Radius=#2)}%
    \psarccircDD#1;#2(0,360)\PSc@mment{End pscircDD}\fi\fi}
\ctr@ld@f\def\pscircTD#1,#2,#3(#4){\ifcurr@ntPS\ifps@cri%
    \PSc@mment{pscircTD Center=#1,P1=#2,P2=#3 (Radius=#4)}%
    \psarccircTD#1,#2,#3;#4(0,360)\PSc@mment{End pscircTD}\fi\fi}
\ctr@ln@m\p@urcent
{\catcode`\%=12\gdef\p@urcent{
\ctr@ld@f\def\PSc@mment#1{\ifpsdebugmode\immediate\write\fwf@g{\p@urcent\space#1}\fi}
\ctr@ln@m\acc@louv \ctr@ln@m\acc@lfer
{\catcode`\[=1\catcode`\{=12\gdef\acc@louv[{}}
{\catcode`\]=2\catcode`\}=12\gdef\acc@lfer{}]]
\ctr@ld@f\def\PSdict@{\ifUse@llipse%
    \immediate\write\fwf@g{/ellipsedict 9 dict def ellipsedict /mtrx matrix put}%
    \immediate\write\fwf@g{/ellipse \acc@louv ellipsedict begin}%
    \immediate\write\fwf@g{ /endangle exch def /startangle exch def}%
    \immediate\write\fwf@g{ /yrad exch def /xrad exch def}%
    \immediate\write\fwf@g{ /rotangle exch def /y exch def /x exch def}%
    \immediate\write\fwf@g{ /savematrix mtrx currentmatrix def}%
    \immediate\write\fwf@g{ x y translate rotangle rotate xrad yrad scale}%
    \immediate\write\fwf@g{ 0 0 1 startangle endangle arc}%
    \immediate\write\fwf@g{ savematrix setmatrix end\acc@lfer def}%
    \fi\PShe@der{EndProlog}}
\ctr@ld@f\def\Pssetc@rve#1=#2|{\keln@mun#1|%
    \def\n@mref{r}\ifx\l@debut\n@mref\pssetroundness{#2}\else
    \immediate\write16{*** Unknown attribute: \BS@ psset curve(..., #1=...)}%
    \fi}
\ctr@ln@m\curv@roundness
\ctr@ld@f\def\pssetroundness#1{\edef\curv@roundness{#1}}
\ctr@ld@f\def\defaultroundness{0.2} 
\ctr@ln@m\pscurve
\ctr@ld@f\def\pscurveDD[#1]{{\ifcurr@ntPS\ifps@cri\PSc@mment{pscurveDD Points=#1}%
    \s@uvc@ntr@l\et@tpscurveDD%
    \iffillm@de\Psc@rveDD\curv@roundness[#1]%
    \f@gfill%
    \else\Psc@rveDD\curv@roundness[#1]\f@gstroke\fi%
    \PSc@mment{End pscurveDD}\resetc@ntr@l\et@tpscurveDD\fi\fi}}
\ctr@ld@f\def\pscurveTD[#1]{{\ifcurr@ntPS\ifps@cri%
    \PSc@mment{pscurveTD Points=#1}\s@uvc@ntr@l\et@tpscurveTD\let\c@lprojSP=\relax%
    \iffillm@de\Psc@rveTD\curv@roundness[#1]%
    \f@gfill%
    \else\Psc@rveTD\curv@roundness[#1]\f@gstroke\fi%
    \PSc@mment{End pscurveTD}\resetc@ntr@l\et@tpscurveTD\fi\fi}}
\ctr@ld@f\def\Psc@rveDD#1[#2]{%
    \def\list@num{#2}\extrairelepremi@r\Ak@\de\list@num%
    \extrairelepremi@r\Ai@\de\list@num\extrairelepremi@r\Aj@\de\list@num%
    \f@gnewpath\PSwrit@cmdS{\Ai@}{\c@mmoveto}{\fwf@g}{\X@un}{\Y@un}%
    \setc@ntr@l{2}\figvectPDD -1[\Ak@,\Aj@]%
    \@ecfor\Ak@:=\list@num\do{\figpttraDD-2:=\Ai@/#1,-1/\BdingB@xfalse%
       \PSwrit@cmdS{-2}{}{\fwf@g}{\X@de}{\Y@de}%
       \figvectPDD -1[\Ai@,\Ak@]\figpttraDD-2:=\Aj@/-#1,-1/%
       \PSwrit@cmdS{-2}{}{\fwf@g}{\X@tr}{\Y@tr}\BdingB@xtrue%
       \PSwrit@cmdS{\Aj@}{\c@mcurveto}{\fwf@g}{\X@qu}{\Y@qu}%
       \B@zierBB@x{1}{\Y@un}(\X@un,\X@de,\X@tr,\X@qu)%
       \B@zierBB@x{2}{\X@un}(\Y@un,\Y@de,\Y@tr,\Y@qu)%
       \edef\X@un{\X@qu}\edef\Y@un{\Y@qu}\edef\Ai@{\Aj@}\edef\Aj@{\Ak@}}}
\ctr@ld@f\def\Psc@rveTD#1[#2]{\ifnum\curr@ntproj<\tw@\Psc@rvePPTD#1[#2]\else\Psc@rveCPTD#1[#2]\fi}
\ctr@ld@f\def\Psc@rvePPTD#1[#2]{\setc@ntr@l{2}%
    \def\list@num{#2}\extrairelepremi@r\Ak@\de\list@num\Figptpr@j-5:/\Ak@/%
    \extrairelepremi@r\Ai@\de\list@num\Figptpr@j-3:/\Ai@/%
    \extrairelepremi@r\Aj@\de\list@num\Figptpr@j-4:/\Aj@/%
    \f@gnewpath\PSwrit@cmdS{-3}{\c@mmoveto}{\fwf@g}{\X@un}{\Y@un}%
    \figvectPDD -1[-5,-4]%
    \@ecfor\Ak@:=\list@num\do{\Figptpr@j-5:/\Ak@/\figpttraDD-2:=-3/#1,-1/%
       \BdingB@xfalse\PSwrit@cmdS{-2}{}{\fwf@g}{\X@de}{\Y@de}%
       \figvectPDD -1[-3,-5]\figpttraDD-2:=-4/-#1,-1/%
       \PSwrit@cmdS{-2}{}{\fwf@g}{\X@tr}{\Y@tr}\BdingB@xtrue%
       \PSwrit@cmdS{-4}{\c@mcurveto}{\fwf@g}{\X@qu}{\Y@qu}%
       \B@zierBB@x{1}{\Y@un}(\X@un,\X@de,\X@tr,\X@qu)%
       \B@zierBB@x{2}{\X@un}(\Y@un,\Y@de,\Y@tr,\Y@qu)%
       \edef\X@un{\X@qu}\edef\Y@un{\Y@qu}\figptcopyDD-3:/-4/\figptcopyDD-4:/-5/}}
\ctr@ld@f\def\Psc@rveCPTD#1[#2]{\setc@ntr@l{2}%
    \def\list@num{#2}\extrairelepremi@r\Ak@\de\list@num%
    \extrairelepremi@r\Ai@\de\list@num\extrairelepremi@r\Aj@\de\list@num%
    \Figptpr@j-7:/\Ai@/%
    \f@gnewpath\PSwrit@cmd{-7}{\c@mmoveto}{\fwf@g}%
    \figvectPTD -9[\Ak@,\Aj@]%
    \@ecfor\Ak@:=\list@num\do{\figpttraTD-10:=\Ai@/#1,-9/%
       \figvectPTD -9[\Ai@,\Ak@]\figpttraTD-11:=\Aj@/-#1,-9/%
       \subB@zierTD\NBz@rcs[\Ai@,-10,-11,\Aj@]\edef\Ai@{\Aj@}\edef\Aj@{\Ak@}}}
\ctr@ld@f\def\psendfig{\ifcurr@ntPS\ifps@cri\immediate\closeout\fwf@g%
    \immediate\openout\fwf@g=\PSfilen@me\relax%
    \ifPDFm@ke\PSBdingB@x\else%
    \immediate\write\fwf@g{\p@urcent\string!PS-Adobe-2.0 EPSF-2.0}%
    \PShe@der{Creator\string: TeX (fig4tex.tex)}%
    \PShe@der{Title\string: \PSfilen@me}%
    \PShe@der{CreationDate\string: \the\day/\the\month/\the\year}%
    \PSBdingB@x%
    \PShe@der{EndComments}\PSdict@\fi%
    \immediate\write\fwf@g{\c@mgsave}%
    \openin\frf@g=\auxfilen@me\c@pypsfile\fwf@g\frf@g\closein\frf@g%
    \immediate\write\fwf@g{\c@mgrestore}%
    \PSc@mment{End of file.}\immediate\closeout\fwf@g%
    \immediate\openout\fwf@g=\auxfilen@me\immediate\closeout\fwf@g%
    \immediate\write16{File \PSfilen@me\space created.}\fi\fi\curr@ntPSfalse\ps@critrue}
\ctr@ld@f\def\PShe@der#1{\immediate\write\fwf@g{\p@urcent\p@urcent#1}}
\ctr@ld@f\def\PSBdingB@x{{\v@lX=\ptT@ptps\c@@rdXmin\v@lY=\ptT@ptps\c@@rdYmin%
     \v@lXa=\ptT@ptps\c@@rdXmax\v@lYa=\ptT@ptps\c@@rdYmax%
     \PShe@der{BoundingBox\string: \repdecn@mb{\v@lX}\space\repdecn@mb{\v@lY}%
     \space\repdecn@mb{\v@lXa}\space\repdecn@mb{\v@lYa}}}}
\ctr@ld@f\def\psfcconnect[#1]{{\ifcurr@ntPS\ifps@cri\PSc@mment{psfcconnect Points=#1}%
    \pssetfillmode{no}\s@uvc@ntr@l\et@tpsfcconnect\resetc@ntr@l{2}%
    \fcc@nnect@[#1]\resetc@ntr@l\et@tpsfcconnect\PSc@mment{End psfcconnect}\fi\fi}}
\ctr@ld@f\def\fcc@nnect@[#1]{\let\N@rm=\n@rmeucDD\def\list@num{#1}%
    \extrairelepremi@r\Ai@\de\list@num\edef\pr@m{\Ai@}\v@leur=\z@\p@rtent=\@ne\c@llgtot%
    \ifcase\fclin@typ@\edef\list@num{[\pr@m,#1,\Ai@}\expandafter\pscurve\list@num]%
    \else\ifdim\fclin@r@d\p@>\z@\Pslin@conge[#1]\else\psline[#1]\fi\fi%
    \v@leur=\@rrowp@s\v@leur\edef\list@num{#1,\Ai@,0}%
    \extrairelepremi@r\Ai@\de\list@num\mili@u=\epsil@n\c@llgpart%
    \advance\mili@u-\epsil@n\advance\mili@u-\delt@\advance\v@leur-\mili@u%
    \ifcase\fclin@typ@\invers@\mili@u\delt@%
    \ifnum\@rrowr@fpt>\z@\advance\delt@-\v@leur\v@leur=\delt@\fi%
    \v@leur=\repdecn@mb\v@leur\mili@u\edef\v@lt{\repdecn@mb\v@leur}%
    \extrairelepremi@r\Ak@\de\list@num%
    \figvectPDD-1[\pr@m,\Aj@]\figpttraDD-6:=\Ai@/\curv@roundness,-1/%
    \figvectPDD-1[\Ak@,\Ai@]\figpttraDD-7:=\Aj@/\curv@roundness,-1/%
    \delt@=\@rrowheadlength\p@\delt@=\C@AHANG\delt@\edef\R@dius{\repdecn@mb{\delt@}}%
    \ifcase\@rrowr@fpt%
    \FigptintercircB@zDD-8::\v@lt,\R@dius[\Ai@,-6,-7,\Aj@]\psarrowheadDD[-5,-8]\else%
    \FigptintercircB@zDD-8::\v@lt,\R@dius[\Aj@,-7,-6,\Ai@]\psarrowheadDD[-8,-5]\fi%
    \else\advance\delt@-\v@leur%
    \p@rtentiere{\p@rtent}{\delt@}\edef\C@efun{\the\p@rtent}%
    \p@rtentiere{\p@rtent}{\v@leur}\edef\C@efde{\the\p@rtent}%
    \figptbaryDD-5:[\Ai@,\Aj@;\C@efun,\C@efde]\ifcase\@rrowr@fpt%
    \delt@=\@rrowheadlength\unit@\delt@=\C@AHANG\delt@\edef\t@ille{\repdecn@mb{\delt@}}%
    \figvectPDD-2[\Ai@,\Aj@]\vecunit@{-2}{-2}\figpttraDD-5:=-5/\t@ille,-2/\fi%
    \psarrowheadDD[\Ai@,-5]\fi}
\ctr@ld@f\def\c@llgtot{\@ecfor\Aj@:=\list@num\do{\figvectP-1[\Ai@,\Aj@]\N@rm\delt@{-1}%
    \advance\v@leur\delt@\advance\p@rtent\@ne\edef\Ai@{\Aj@}}}
\ctr@ld@f\def\c@llgpart{\extrairelepremi@r\Aj@\de\list@num\figvectP-1[\Ai@,\Aj@]\N@rm\delt@{-1}%
    \advance\mili@u\delt@\ifdim\mili@u<\v@leur\edef\pr@m{\Ai@}\edef\Ai@{\Aj@}\c@llgpart\fi}
\ctr@ld@f\def\Pslin@conge[#1]{\ifnum\p@rtent>\tw@{\def\list@num{#1}%
    \extrairelepremi@r\Ai@\de\list@num\extrairelepremi@r\Aj@\de\list@num%
    \figptcopy-6:/\Ai@/\figvectP-3[\Ai@,\Aj@]\vecunit@{-3}{-3}\v@lmax=\result@t%
    \@ecfor\Ak@:=\list@num\do{\figvectP-4[\Aj@,\Ak@]\vecunit@{-4}{-4}%
    \minim@m\v@lmin\v@lmax\result@t\v@lmax=\result@t%
    \det@rm\delt@[-3,-4]\maxim@m\mili@u{\delt@}{-\delt@}\ifdim\mili@u>\Cepsil@n%
    \ifdim\delt@>\z@\figgetangleDD\Angl@[\Aj@,\Ak@,\Ai@]\else%
    \figgetangleDD\Angl@[\Aj@,\Ai@,\Ak@]\fi%
    \v@leur=\PI@deg\advance\v@leur-\Angl@\p@\divide\v@leur\tw@%
    \edef\Angl@{\repdecn@mb\v@leur}\c@ssin{\C@}{\S@}{\Angl@}\v@leur=\fclin@r@d\unit@%
    \v@leur=\S@\v@leur\mili@u=\C@\p@\invers@\mili@u\mili@u%
    \v@leur=\repdecn@mb{\mili@u}\v@leur%
    \minim@m\v@leur\v@leur\v@lmin\edef\t@ille{\repdecn@mb{\v@leur}}%
    \figpttra-5:=\Aj@/-\t@ille,-3/\psline[-6,-5]\figpttra-6:=\Aj@/\t@ille,-4/%
    \figvectNVDD-3[-3]\figvectNVDD-8[-4]\inters@cDD-7:[-5,-3;-6,-8]%
    \ifdim\delt@>\z@\psarccircP-7;\fclin@r@d[-5,-6]\else\psarccircP-7;\fclin@r@d[-6,-5]\fi%
    \else\psline[-6,\Aj@]\figptcopy-6:/\Aj@/\fi
    \edef\Ai@{\Aj@}\edef\Aj@{\Ak@}\figptcopy-3:/-4/}\psline[-6,\Aj@]}\else\psline[#1]\fi}
\ctr@ld@f\def\psfcnode[#1]#2{{\ifcurr@ntPS\ifps@cri\PSc@mment{psfcnode Points=#1}%
    \s@uvc@ntr@l\et@tpsfcnode\resetc@ntr@l{2}%
    \def\t@xt@{#2}\ifx\t@xt@\empty\def\g@tt@xt{\setbox\Gb@x=\hbox{\Figg@tT{\p@int}}}%
    \else\def\g@tt@xt{\setbox\Gb@x=\hbox{#2}}\fi%
    \v@lmin=\h@rdfcXp@dd\advance\v@lmin\Xp@dd\unit@\multiply\v@lmin\tw@%
    \v@lmax=\h@rdfcYp@dd\advance\v@lmax\Yp@dd\unit@\multiply\v@lmax\tw@%
    \Figv@ctCreg-8(\unit@,-\unit@)\def\list@num{#1}%
    \delt@=\curr@ntwidth bp\divide\delt@\tw@%
    \fcn@de\PSc@mment{End psfcnode}\resetc@ntr@l\et@tpsfcnode\fi\fi}}
\ctr@ld@f\def\d@butn@de{\g@tt@xt\v@lX=\wd\Gb@x%
    \v@lY=\ht\Gb@x\advance\v@lY\dp\Gb@x\advance\v@lX\v@lmin\advance\v@lY\v@lmax}
\ctr@ld@f\def\fcn@deE{%
    \@ecfor\p@int:=\list@num\do{\d@butn@de\v@lX=\unssqrttw@\v@lX\v@lY=\unssqrttw@\v@lY%
    \ifdim\thickn@ss\p@>\z@
    \v@lXa=\v@lX\advance\v@lXa\delt@\v@lXa=\ptT@unit@\v@lXa\edef\XR@d{\repdecn@mb\v@lXa}%
    \v@lYa=\v@lY\advance\v@lYa\delt@\v@lYa=\ptT@unit@\v@lYa\edef\YR@d{\repdecn@mb\v@lYa}%
    \arct@n\v@leur(\v@lXa,\v@lYa)\v@leur=\rdT@deg\v@leur\edef\@nglde{\repdecn@mb\v@leur}%
    {\c@lptellDD-2::\p@int;\XR@d,\YR@d(\@nglde)}
    \advance\v@leur-\PI@deg\edef\@nglun{\repdecn@mb\v@leur}%
    {\c@lptellDD-3::\p@int;\XR@d,\YR@d(\@nglun)}%
    \figptstra-6=-3,-2,\p@int/\thickn@ss,-8/\pssetfillmode{yes}\us@secondC@lor%
    \psline[-2,-3,-6,-5]\psarcell-4;\XR@d,\YR@d(\@nglun,\@nglde,0)\fi
    \v@lX=\ptT@unit@\v@lX\v@lY=\ptT@unit@\v@lY%
    \edef\XR@d{\repdecn@mb\v@lX}\edef\YR@d{\repdecn@mb\v@lY}%
    \pssetfillmode{yes}\us@thirdC@lor\psarcell\p@int;\XR@d,\YR@d(0,360,0)%
    \pssetfillmode{no}\us@primarC@lor\psarcell\p@int;\XR@d,\YR@d(0,360,0)}}
\ctr@ld@f\def\fcn@deL{\delt@=\ptT@unit@\delt@\edef\t@ille{\repdecn@mb\delt@}%
    \@ecfor\p@int:=\list@num\do{\Figg@tXYa{\p@int}\d@butn@de%
    \ifdim\v@lX>\v@lY\itis@Ktrue\else\itis@Kfalse\fi%
    \advance\v@lXa-\v@lX\Figp@intreg-1:(\v@lXa,\v@lYa)%
    \advance\v@lXa\v@lX\advance\v@lYa-\v@lY\Figp@intreg-2:(\v@lXa,\v@lYa)%
    \advance\v@lXa\v@lX\advance\v@lYa\v@lY\Figp@intreg-3:(\v@lXa,\v@lYa)%
    \advance\v@lXa-\v@lX\advance\v@lYa\v@lY\Figp@intreg-4:(\v@lXa,\v@lYa)%
    \ifdim\thickn@ss\p@>\z@\Figg@tXYa{\p@int}\pssetfillmode{yes}\us@secondC@lor
    \c@lpt@xt{-1}{-4}\c@lpt@xt@\v@lXa\v@lYa\v@lX\v@lY\c@rre\delt@%
    \Figp@intregDD-9:(\v@lZ,\v@lYa)\Figp@intregDD-11:(\v@lZa,\v@lYa)%
    \c@lpt@xt{-4}{-3}\c@lpt@xt@\v@lYa\v@lXa\v@lY\v@lX\delt@\c@rre%
    \Figp@intregDD-12:(\v@lXa,\v@lZ)\Figp@intregDD-10:(\v@lXa,\v@lZa)%
    \ifitis@K\figptstra-7=-9,-10,-11/\thickn@ss,-8/\psline[-9,-11,-5,-6,-7]\else%
    \figptstra-7=-10,-11,-12/\thickn@ss,-8/\psline[-10,-12,-5,-6,-7]\fi\fi
    \pssetfillmode{yes}\us@thirdC@lor\psline[-1,-2,-3,-4]%
    \pssetfillmode{no}\us@primarC@lor\psline[-1,-2,-3,-4,-1]}}
\ctr@ld@f\def\c@lpt@xt#1#2{\figvectN-7[#1,#2]\vecunit@{-7}{-7}\figpttra-5:=#1/\t@ille,-7/%
    \figvectP-7[#1,#2]\Figg@tXY{-7}\c@rre=\v@lX\delt@=\v@lY\Figg@tXY{-5}}
\ctr@ld@f\def\c@lpt@xt@#1#2#3#4#5#6{\v@lZ=#6\invers@{\v@lZ}{\v@lZ}\v@leur=\repdecn@mb{#5}\v@lZ%
    \v@lZ=#2\advance\v@lZ-#4\mili@u=\repdecn@mb{\v@leur}\v@lZ%
    \v@lZ=#3\advance\v@lZ\mili@u\v@lZa=-\v@lZ\advance\v@lZa\tw@#1}
\ctr@ld@f\def\fcn@deR{\@ecfor\p@int:=\list@num\do{\Figg@tXYa{\p@int}\d@butn@de%
    \advance\v@lXa-0.5\v@lX\advance\v@lYa-0.5\v@lY\Figp@intreg-1:(\v@lXa,\v@lYa)%
    \advance\v@lXa\v@lX\Figp@intreg-2:(\v@lXa,\v@lYa)%
    \advance\v@lYa\v@lY\Figp@intreg-3:(\v@lXa,\v@lYa)%
    \advance\v@lXa-\v@lX\Figp@intreg-4:(\v@lXa,\v@lYa)%
    \ifdim\thickn@ss\p@>\z@\pssetfillmode{yes}\us@secondC@lor
    \Figv@ctCreg-5(-\delt@,-\delt@)\figpttra-9:=-1/1,-5/%
    \Figv@ctCreg-5(\delt@,-\delt@)\figpttra-10:=-2/1,-5/%
    \Figv@ctCreg-5(\delt@,\delt@)\figpttra-11:=-3/1,-5/%
    \figptstra-7=-9,-10,-11/\thickn@ss,-8/\psline[-9,-11,-5,-6,-7]\fi
    \pssetfillmode{yes}\us@thirdC@lor\psline[-1,-2,-3,-4]%
    \pssetfillmode{no}\us@primarC@lor\psline[-1,-2,-3,-4,-1]}}
\ctr@ln@m\@rrowp@s
\ctr@ln@m\Xp@dd     \ctr@ln@m\Yp@dd
\ctr@ln@m\fclin@r@d \ctr@ln@m\thickn@ss
\ctr@ld@f\def\Pssetfl@wchart#1=#2|{\keln@mtr#1|%
    \def\n@mref{arr}\ifx\l@debut\n@mref\expandafter\keln@mtr\l@suite|%
     \def\n@mref{owp}\ifx\l@debut\n@mref\edef\@rrowp@s{#2}\else
     \def\n@mref{owr}\ifx\l@debut\n@mref\setfcr@fpt#2|\else
     \immediate\write16{*** Unknown attribute: \BS@ psset flowchart(..., #1=...)}%
     \fi\fi\else%
    \def\n@mref{lin}\ifx\l@debut\n@mref\setfccurv@#2|\else
    \def\n@mref{pad}\ifx\l@debut\n@mref\edef\Xp@dd{#2}\edef\Yp@dd{#2}\else
    \def\n@mref{rad}\ifx\l@debut\n@mref\edef\fclin@r@d{#2}\else
    \def\n@mref{sha}\ifx\l@debut\n@mref\setfcshap@#2|\else
    \def\n@mref{thi}\ifx\l@debut\n@mref\edef\thickn@ss{#2}\else
    \def\n@mref{xpa}\ifx\l@debut\n@mref\edef\Xp@dd{#2}\else
    \def\n@mref{ypa}\ifx\l@debut\n@mref\edef\Yp@dd{#2}\else
    \immediate\write16{*** Unknown attribute: \BS@ psset flowchart(..., #1=...)}%
    \fi\fi\fi\fi\fi\fi\fi\fi}
\ctr@ln@m\@rrowr@fpt \ctr@ln@m\fclin@typ@
\ctr@ld@f\def\setfcr@fpt#1#2|{\if#1e\def\@rrowr@fpt{1}\else\def\@rrowr@fpt{0}\fi}
\ctr@ld@f\def\setfccurv@#1#2|{\if#1c\def\fclin@typ@{0}\else\def\fclin@typ@{1}\fi}
\ctr@ln@m\h@rdfcXp@dd \ctr@ln@m\h@rdfcYp@dd
\ctr@ln@m\fcn@de \ctr@ln@m\fcsh@pe
\ctr@ld@f\def\setfcshap@#1#2|{%
    \if#1e\let\fcn@de=\fcn@deE\def\h@rdfcXp@dd{4pt}\def\h@rdfcYp@dd{4pt}%
     \edef\fcsh@pe{ellipse}\else%
    \if#1l\let\fcn@de=\fcn@deL\def\h@rdfcXp@dd{4pt}\def\h@rdfcYp@dd{4pt}%
     \edef\fcsh@pe{lozenge}\else%
          \let\fcn@de=\fcn@deR\def\h@rdfcXp@dd{6pt}\def\h@rdfcYp@dd{6pt}%
     \edef\fcsh@pe{rectangle}\fi\fi}
\ctr@ld@f\def\psline[#1]{{\ifcurr@ntPS\ifps@cri\PSc@mment{psline Points=#1}%
    \let\pslign@=\Pslign@P\Pslin@{#1}\PSc@mment{End psline}\fi\fi}}
\ctr@ld@f\def\pslineF#1{{\ifcurr@ntPS\ifps@cri\PSc@mment{pslineF Filename=#1}%
    \let\pslign@=\Pslign@F\Pslin@{#1}\PSc@mment{End pslineF}\fi\fi}}
\ctr@ld@f\def\pslineC(#1){{\ifcurr@ntPS\ifps@cri\PSc@mment{pslineC}%
    \let\pslign@=\Pslign@C\Pslin@{#1}\PSc@mment{End pslineC}\fi\fi}}
\ctr@ld@f\def\Pslin@#1{\iffillm@de\pslign@{#1}%
    \f@gfill%
    \else\pslign@{#1}\ifx\derp@int\premp@int%
    \f@gclosestroke%
    \else\f@gstroke\fi\fi}
\ctr@ld@f\def\Pslign@P#1{\def\list@num{#1}\extrairelepremi@r\p@int\de\list@num%
    \edef\premp@int{\p@int}\f@gnewpath%
    \PSwrit@cmd{\p@int}{\c@mmoveto}{\fwf@g}%
    \@ecfor\p@int:=\list@num\do{\PSwrit@cmd{\p@int}{\c@mlineto}{\fwf@g}%
    \edef\derp@int{\p@int}}}
\ctr@ld@f\def\Pslign@F#1{\s@uvc@ntr@l\et@tPslign@F\setc@ntr@l{2}\openin\frf@g=#1\relax%
    \ifeof\frf@g\message{*** File #1 not found !}\end\else%
    \read\frf@g to\tr@c\edef\premp@int{\tr@c}\expandafter\extr@ctCF\tr@c:%
    \f@gnewpath\PSwrit@cmd{-1}{\c@mmoveto}{\fwf@g}%
    \loop\read\frf@g to\tr@c\ifeof\frf@g\mored@tafalse\else\mored@tatrue\fi%
    \ifmored@ta\expandafter\extr@ctCF\tr@c:\PSwrit@cmd{-1}{\c@mlineto}{\fwf@g}%
    \edef\derp@int{\tr@c}\repeat\fi\closein\frf@g\resetc@ntr@l\et@tPslign@F}
\ctr@ln@m\extr@ctCF
\ctr@ld@f\def\extr@ctCFDD#1 #2:{\v@lX=#1\unit@\v@lY=#2\unit@\Figp@intregDD-1:(\v@lX,\v@lY)}
\ctr@ld@f\def\extr@ctCFTD#1 #2 #3:{\v@lX=#1\unit@\v@lY=#2\unit@\v@lZ=#3\unit@%
    \Figp@intregTD-1:(\v@lX,\v@lY,\v@lZ)}
\ctr@ld@f\def\Pslign@C#1{\s@uvc@ntr@l\et@tPslign@C\setc@ntr@l{2}%
    \def\list@num{#1}\extrairelepremi@r\p@int\de\list@num%
    \edef\premp@int{\p@int}\f@gnewpath%
    \expandafter\Pslign@C@\p@int:\PSwrit@cmd{-1}{\c@mmoveto}{\fwf@g}%
    \@ecfor\p@int:=\list@num\do{\expandafter\Pslign@C@\p@int:%
    \PSwrit@cmd{-1}{\c@mlineto}{\fwf@g}\edef\derp@int{\p@int}}%
    \resetc@ntr@l\et@tPslign@C}
\ctr@ld@f\def\Pslign@C@#1 #2:{{\def\t@xt@{#1}\ifx\t@xt@\empty\Pslign@C@#2:
    \else\extr@ctCF#1 #2:\fi}}
\ctr@ln@m\c@ntrolmesh
\ctr@ld@f\def\Pssetm@sh#1=#2|{\keln@mun#1|%
    \def\n@mref{d}\ifx\l@debut\n@mref\pssetmeshdiag{#2}\else
    \immediate\write16{*** Unknown attribute: \BS@ psset mesh(..., #1=...)}%
    \fi}
\ctr@ld@f\def\pssetmeshdiag#1{\edef\c@ntrolmesh{#1}}
\ctr@ld@f\def\defaultmeshdiag{0}    
\ctr@ld@f\def\psmesh#1,#2[#3,#4,#5,#6]{{\ifcurr@ntPS\ifps@cri%
    \PSc@mment{psmesh N1=#1, N2=#2, Quadrangle=[#3,#4,#5,#6]}%
    \s@uvc@ntr@l\et@tpsmesh\Pss@tsecondSt\setc@ntr@l{2}%
    \ifnum#1>\@ne\Psmeshp@rt#1[#3,#4,#5,#6]\fi%
    \ifnum#2>\@ne\Psmeshp@rt#2[#4,#5,#6,#3]\fi%
    \ifnum\c@ntrolmesh>\z@\Psmeshdi@g#1,#2[#3,#4,#5,#6]\fi%
    \ifnum\c@ntrolmesh<\z@\Psmeshdi@g#2,#1[#4,#5,#6,#3]\fi\Psrest@reSt%
    \psline[#3,#4,#5,#6,#3]\PSc@mment{End psmesh}\resetc@ntr@l\et@tpsmesh\fi\fi}}
\ctr@ld@f\def\Psmeshp@rt#1[#2,#3,#4,#5]{{\l@mbd@un=\@ne\l@mbd@de=#1\loop%
    \ifnum\l@mbd@un<#1\advance\l@mbd@de\m@ne\figptbary-1:[#2,#3;\l@mbd@de,\l@mbd@un]%
    \figptbary-2:[#5,#4;\l@mbd@de,\l@mbd@un]\psline[-1,-2]\advance\l@mbd@un\@ne\repeat}}
\ctr@ld@f\def\Psmeshdi@g#1,#2[#3,#4,#5,#6]{\figptcopy-2:/#3/\figptcopy-3:/#6/%
    \l@mbd@un=\z@\l@mbd@de=#1\loop\ifnum\l@mbd@un<#1%
    \advance\l@mbd@un\@ne\advance\l@mbd@de\m@ne\figptcopy-1:/-2/\figptcopy-4:/-3/%
    \figptbary-2:[#3,#4;\l@mbd@de,\l@mbd@un]%
    \figptbary-3:[#6,#5;\l@mbd@de,\l@mbd@un]\Psmeshdi@gp@rt#2[-1,-2,-3,-4]\repeat}
\ctr@ld@f\def\Psmeshdi@gp@rt#1[#2,#3,#4,#5]{{\l@mbd@un=\z@\l@mbd@de=#1\loop%
    \ifnum\l@mbd@un<#1\figptbary-5:[#2,#5;\l@mbd@de,\l@mbd@un]%
    \advance\l@mbd@de\m@ne\advance\l@mbd@un\@ne%
    \figptbary-6:[#3,#4;\l@mbd@de,\l@mbd@un]\psline[-5,-6]\repeat}}
\ctr@ln@m\psnormal
\ctr@ld@f\def\psnormalDD#1,#2[#3,#4]{{\ifcurr@ntPS\ifps@cri%
    \PSc@mment{psnormal Length=#1, Lambda=#2 [Pt1,Pt2]=[#3,#4]}%
    \s@uvc@ntr@l\et@tpsnormal\resetc@ntr@l{2}\figptendnormal-6::#1,#2[#3,#4]%
    \figptcopyDD-5:/-1/\psarrow[-5,-6]%
    \PSc@mment{End psnormal}\resetc@ntr@l\et@tpsnormal\fi\fi}}
\ctr@ld@f\def\psreset#1{\trtlis@rg{#1}{\Psreset@}}
\ctr@ld@f\def\Psreset@#1|{\keln@mde#1|%
    \def\n@mref{ar}\ifx\l@debut\n@mref\psresetarrowhead\else
    \def\n@mref{cu}\ifx\l@debut\n@mref\psset curve(roundness=\defaultroundness)\else
    \def\n@mref{fi}\ifx\l@debut\n@mref\psset (color=\defaultcolor,dash=\defaultdash,%
         fill=\defaultfill,join=\defaultjoin,width=\defaultwidth)\else
    \def\n@mref{fl}\ifx\l@debut\n@mref\psset flowchart(arrowp=\defaultfcarrowposition,%
	arrowr=\defaultfcarrowrefpt,line=\defaultfcline,xpadd=\defaultfcxpadding,%
	ypadd=\defaultfcypadding,radius=\defaultfcradius,shape=\defaultfcshape,%
	thick=\defaultfcthickness)\else
    \def\n@mref{me}\ifx\l@debut\n@mref\psset mesh(diag=\defaultmeshdiag)\else
    \def\n@mref{se}\ifx\l@debut\n@mref\psresetsecondsettings\else
    \def\n@mref{th}\ifx\l@debut\n@mref\psset third(color=\defaultthirdcolor)\else
    \immediate\write16{*** Unknown keyword #1 (\BS@ psreset).}%
    \fi\fi\fi\fi\fi\fi\fi}
\ctr@ld@f\def\psset#1(#2){\def\t@xt@{#1}\ifx\t@xt@\empty\trtlis@rg{#2}{\Pssetf@rst}
    \else\keln@mde#1|%
    \def\n@mref{ar}\ifx\l@debut\n@mref\trtlis@rg{#2}{\Psset@rrowhe@d}\else
    \def\n@mref{cu}\ifx\l@debut\n@mref\trtlis@rg{#2}{\Pssetc@rve}\else
    \def\n@mref{fi}\ifx\l@debut\n@mref\trtlis@rg{#2}{\Pssetf@rst}\else
    \def\n@mref{fl}\ifx\l@debut\n@mref\trtlis@rg{#2}{\Pssetfl@wchart}\else
    \def\n@mref{me}\ifx\l@debut\n@mref\trtlis@rg{#2}{\Pssetm@sh}\else
    \def\n@mref{se}\ifx\l@debut\n@mref\trtlis@rg{#2}{\Pssets@cond}\else
    \def\n@mref{th}\ifx\l@debut\n@mref\trtlis@rg{#2}{\Pssetth@rd}\else
    \immediate\write16{*** Unknown keyword: \BS@ psset #1(...)}%
    \fi\fi\fi\fi\fi\fi\fi\fi}
\ctr@ld@f\def\pssetdefault#1(#2){\ifcurr@ntPS\immediate\write16{*** \BS@ pssetdefault is ignored
    inside a \BS@ psbeginfig-\BS@ psendfig block.}%
    \immediate\write16{*** It must be called before \BS@ psbeginfig.}\else%
    \def\t@xt@{#1}\ifx\t@xt@\empty\trtlis@rg{#2}{\Pssd@f@rst}\else\keln@mde#1|%
    \def\n@mref{ar}\ifx\l@debut\n@mref\trtlis@rg{#2}{\Pssd@@rrowhe@d}\else
    \def\n@mref{cu}\ifx\l@debut\n@mref\trtlis@rg{#2}{\Pssd@c@rve}\else
    \def\n@mref{fi}\ifx\l@debut\n@mref\trtlis@rg{#2}{\Pssd@f@rst}\else
    \def\n@mref{fl}\ifx\l@debut\n@mref\trtlis@rg{#2}{\Pssd@fl@wchart}\else
    \def\n@mref{me}\ifx\l@debut\n@mref\trtlis@rg{#2}{\Pssd@m@sh}\else
    \def\n@mref{se}\ifx\l@debut\n@mref\trtlis@rg{#2}{\Pssd@s@cond}\else
    \def\n@mref{th}\ifx\l@debut\n@mref\trtlis@rg{#2}{\Pssd@th@rd}\else
    \immediate\write16{*** Unknown keyword: \BS@ pssetdefault #1(...)}%
    \fi\fi\fi\fi\fi\fi\fi\fi\initpss@ttings\fi}
\ctr@ld@f\def\Pssd@f@rst#1=#2|{\keln@mun#1|%
    \def\n@mref{c}\ifx\l@debut\n@mref\edef\defaultcolor{#2}\else
    \def\n@mref{d}\ifx\l@debut\n@mref\edef\defaultdash{#2}\else
    \def\n@mref{f}\ifx\l@debut\n@mref\edef\defaultfill{#2}\else
    \def\n@mref{j}\ifx\l@debut\n@mref\edef\defaultjoin{#2}\else
    \def\n@mref{u}\ifx\l@debut\n@mref\edef\defaultupdate{#2}\pssetupdate{#2}\else
    \def\n@mref{w}\ifx\l@debut\n@mref\edef\defaultwidth{#2}\else
    \immediate\write16{*** Unknown attribute: \BS@ pssetdefault (..., #1=...)}%
    \fi\fi\fi\fi\fi\fi}
\ctr@ld@f\def\Pssd@@rrowhe@d#1=#2|{\keln@mun#1|%
    \def\n@mref{a}\ifx\l@debut\n@mref\edef\defaultarrowheadangle{#2}\else
    \def\n@mref{f}\ifx\l@debut\n@mref\edef\defaultarrowheadangle{#2}\else
    \def\n@mref{l}\ifx\l@debut\n@mref\y@tiunit{#2}\ifunitpr@sent%
     \edef\defaulth@rdahlength{#2}\else\edef\defaulth@rdahlength{#2pt}%
     \message{*** \BS@ pssetdefault (..., #1=#2, ...) : unit is missing, pt is assumed.}%
     \fi\else
    \def\n@mref{o}\ifx\l@debut\n@mref\edef\defaultarrowheadout{#2}\else
    \def\n@mref{r}\ifx\l@debut\n@mref\edef\defaultarrowheadratio{#2}\else
    \immediate\write16{*** Unknown attribute: \BS@ pssetdefault arrowhead(..., #1=...)}%
    \fi\fi\fi\fi\fi}
\ctr@ld@f\def\Pssd@c@rve#1=#2|{\keln@mun#1|%
    \def\n@mref{r}\ifx\l@debut\n@mref\edef\defaultroundness{#2}\else%
    \immediate\write16{*** Unknown attribute: \BS@ pssetdefault curve(..., #1=...)}%
    \fi}
\ctr@ld@f\def\Pssd@fl@wchart#1=#2|{\keln@mtr#1|%
    \def\n@mref{arr}\ifx\l@debut\n@mref\expandafter\keln@mtr\l@suite|%
     \def\n@mref{owp}\ifx\l@debut\n@mref\edef\defaultfcarrowposition{#2}\else
     \def\n@mref{owr}\ifx\l@debut\n@mref\edef\defaultfcarrowrefpt{#2}\else
     \immediate\write16{*** Unknown attribute: \BS@ pssetdefault flowchart(..., #1=...)}%
     \fi\fi\else%
    \def\n@mref{lin}\ifx\l@debut\n@mref\edef\defaultfcline{#2}\else
    \def\n@mref{pad}\ifx\l@debut\n@mref\edef\defaultfcxpadding{#2}%
                    \edef\defaultfcypadding{#2}\else
    \def\n@mref{rad}\ifx\l@debut\n@mref\edef\defaultfcradius{#2}\else
    \def\n@mref{sha}\ifx\l@debut\n@mref\edef\defaultfcshape{#2}\else
    \def\n@mref{thi}\ifx\l@debut\n@mref\edef\defaultfcthickness{#2}\else
    \def\n@mref{xpa}\ifx\l@debut\n@mref\edef\defaultfcxpadding{#2}\else
    \def\n@mref{ypa}\ifx\l@debut\n@mref\edef\defaultfcypadding{#2}\else
    \immediate\write16{*** Unknown attribute: \BS@ pssetdefault flowchart(..., #1=...)}%
    \fi\fi\fi\fi\fi\fi\fi\fi}
\ctr@ld@f\def\defaultfcarrowposition{0.5}
\ctr@ld@f\def\defaultfcarrowrefpt{start}
\ctr@ld@f\def\defaultfcline{polygon}
\ctr@ld@f\def\defaultfcradius{0}
\ctr@ld@f\def\defaultfcshape{rectangle}
\ctr@ld@f\def\defaultfcthickness{0}
\ctr@ld@f\def\defaultfcxpadding{0}
\ctr@ld@f\def\defaultfcypadding{0}
\ctr@ld@f\def\Pssd@m@sh#1=#2|{\keln@mun#1|%
    \def\n@mref{d}\ifx\l@debut\n@mref\edef\defaultmeshdiag{#2}\else%
    \immediate\write16{*** Unknown attribute: \BS@ pssetdefault mesh(..., #1=...)}%
    \fi}
\ctr@ld@f\def\Pssd@s@cond#1=#2|{\keln@mun#1|%
    \def\n@mref{c}\ifx\l@debut\n@mref\edef\defaultsecondcolor{#2}\else%
    \def\n@mref{d}\ifx\l@debut\n@mref\edef\defaultseconddash{#2}\else%
    \def\n@mref{w}\ifx\l@debut\n@mref\edef\defaultsecondwidth{#2}\else%
    \immediate\write16{*** Unknown attribute: \BS@ pssetdefault second(..., #1=...)}%
    \fi\fi\fi}
\ctr@ld@f\def\Pssd@th@rd#1=#2|{\keln@mun#1|%
    \def\n@mref{c}\ifx\l@debut\n@mref\edef\defaultthirdcolor{#2}\else%
    \immediate\write16{*** Unknown attribute: \BS@ pssetdefault third(..., #1=...)}%
    \fi}
\ctr@ln@w{newif}\iffillm@de
\ctr@ld@f\def\pssetfillmode#1{\expandafter\setfillm@de#1:}
\ctr@ld@f\def\setfillm@de#1#2:{\if#1n\fillm@defalse\else\fillm@detrue\fi}
\ctr@ld@f\def\defaultfill{no}     
\ctr@ln@w{newif}\ifpsupdatem@de
\ctr@ld@f\def\pssetupdate#1{\ifcurr@ntPS\immediate\write16{*** \BS@ pssetupdate is ignored inside a
     \BS@ psbeginfig-\BS@ psendfig block.}%
    \immediate\write16{*** It must be called before \BS@ psbeginfig.}%
    \else\expandafter\setupd@te#1:\fi}
\ctr@ld@f\def\setupd@te#1#2:{\if#1n\psupdatem@defalse\else\psupdatem@detrue\fi}
\ctr@ld@f\def\defaultupdate{no}     
\ctr@ln@m\curr@ntcolor \ctr@ln@m\curr@ntcolorc@md
\ctr@ld@f\def\Pssetc@lor#1{\ifps@cri\result@tent=\@ne\expandafter\c@lnbV@l#1 :%
    \def\curr@ntcolor{}\def\curr@ntcolorc@md{}%
    \ifcase\result@tent\or\pssetgray{#1}\or\or\pssetrgb{#1}\or\pssetcmyk{#1}\fi\fi}
\ctr@ln@m\curr@ntcolorc@mdStroke
\ctr@ld@f\def\pssetcmyk#1{\ifps@cri\def\curr@ntcolor{#1}\def\curr@ntcolorc@md{\c@msetcmykcolor}%
    \def\curr@ntcolorc@mdStroke{\c@msetcmykcolorStroke}%
    \ifcurr@ntPS\PSc@mment{pssetcmyk Color=#1}\us@primarC@lor\fi\fi}
\ctr@ld@f\def\pssetrgb#1{\ifps@cri\def\curr@ntcolor{#1}\def\curr@ntcolorc@md{\c@msetrgbcolor}%
    \def\curr@ntcolorc@mdStroke{\c@msetrgbcolorStroke}%
    \ifcurr@ntPS\PSc@mment{pssetrgb Color=#1}\us@primarC@lor\fi\fi}
\ctr@ld@f\def\pssetgray#1{\ifps@cri\def\curr@ntcolor{#1}\def\curr@ntcolorc@md{\c@msetgray}%
    \def\curr@ntcolorc@mdStroke{\c@msetgrayStroke}%
    \ifcurr@ntPS\PSc@mment{pssetgray Gray level=#1}\us@primarC@lor\fi\fi}
\ctr@ln@m\fillc@md
\ctr@ld@f\def\us@primarC@lor{\immediate\write\fwf@g{\d@fprimarC@lor}%
    \let\fillc@md=\prfillc@md}
\ctr@ld@f\def\prfillc@md{\d@fprimarC@lor\space\c@mfill}
\ctr@ld@f\def\defaultcolor{0}       
\ctr@ld@f\def\c@lnbV@l#1 #2:{\def\t@xt@{#1}\relax\ifx\t@xt@\empty\c@lnbV@l#2:
    \else\c@lnbV@l@#1 #2:\fi}
\ctr@ld@f\def\c@lnbV@l@#1 #2:{\def\t@xt@{#2}\ifx\t@xt@\empty%
    \def\t@xt@{#1}\ifx\t@xt@\empty\advance\result@tent\m@ne\fi
    \else\advance\result@tent\@ne\c@lnbV@l@#2:\fi}
\ctr@ld@f\def\Blackcmyk{0 0 0 1}
\ctr@ld@f\def\Whitecmyk{0 0 0 0}
\ctr@ld@f\def\Cyancmyk{1 0 0 0}
\ctr@ld@f\def\Magentacmyk{0 1 0 0}
\ctr@ld@f\def\Yellowcmyk{0 0 1 0}
\ctr@ld@f\def\Redcmyk{0 1 1 0}
\ctr@ld@f\def\Greencmyk{1 0 1 0}
\ctr@ld@f\def\Bluecmyk{1 1 0 0}
\ctr@ld@f\def\Graycmyk{0 0 0 0.50}
\ctr@ld@f\def\BrickRedcmyk{0 0.89 0.94 0.28} 
\ctr@ld@f\def\Browncmyk{0 0.81 1 0.60} 
\ctr@ld@f\def\ForestGreencmyk{0.91 0 0.88 0.12} 
\ctr@ld@f\def\Goldenrodcmyk{ 0 0.10 0.84 0} 
\ctr@ld@f\def\Marooncmyk{0 0.87 0.68 0.32} 
\ctr@ld@f\def\Orangecmyk{0 0.61 0.87 0} 
\ctr@ld@f\def\Purplecmyk{0.45 0.86 0 0} 
\ctr@ld@f\def\RoyalBluecmyk{1. 0.50 0 0} 
\ctr@ld@f\def\Violetcmyk{0.79 0.88 0 0} 
\ctr@ld@f\def\Blackrgb{0 0 0}
\ctr@ld@f\def\Whitergb{1 1 1}
\ctr@ld@f\def\Redrgb{1 0 0}
\ctr@ld@f\def\Greenrgb{0 1 0}
\ctr@ld@f\def\Bluergb{0 0 1}
\ctr@ld@f\def\Cyanrgb{0 1 1}
\ctr@ld@f\def\Magentargb{1 0 1}
\ctr@ld@f\def\Yellowrgb{1 1 0}
\ctr@ld@f\def\Grayrgb{0.5 0.5 0.5}
\ctr@ld@f\def\Chocolatergb{0.824 0.412 0.118}
\ctr@ld@f\def\DarkGoldenrodrgb{0.722 0.525 0.043}
\ctr@ld@f\def\DarkOrangergb{1 0.549 0}
\ctr@ld@f\def\Firebrickrgb{0.698 0.133 0.133}
\ctr@ld@f\def\ForestGreenrgb{0.133 0.545 0.133}
\ctr@ld@f\def\Goldrgb{1 0.843 0}
\ctr@ld@f\def\HotPinkrgb{1 0.412 0.706}
\ctr@ld@f\def\Maroonrgb{0.690 0.188 0.376}
\ctr@ld@f\def\Pinkrgb{1 0.753 0.796}
\ctr@ld@f\def\RoyalBluergb{0.255 0.412 0.882}
\ctr@ld@f\def\Pssetf@rst#1=#2|{\keln@mun#1|%
    \def\n@mref{c}\ifx\l@debut\n@mref\Pssetc@lor{#2}\else
    \def\n@mref{d}\ifx\l@debut\n@mref\pssetdash{#2}\else
    \def\n@mref{f}\ifx\l@debut\n@mref\pssetfillmode{#2}\else
    \def\n@mref{j}\ifx\l@debut\n@mref\pssetjoin{#2}\else
    \def\n@mref{u}\ifx\l@debut\n@mref\pssetupdate{#2}\else
    \def\n@mref{w}\ifx\l@debut\n@mref\pssetwidth{#2}\else
    \immediate\write16{*** Unknown attribute: \BS@ psset (..., #1=...)}%
    \fi\fi\fi\fi\fi\fi}
\ctr@ln@m\curr@ntdash
\ctr@ld@f\def\s@uvdash#1{\edef#1{\curr@ntdash}}
\ctr@ld@f\def\defaultdash{1}        
\ctr@ld@f\def\pssetdash#1{\ifps@cri\edef\curr@ntdash{#1}\ifcurr@ntPS\expandafter\Pssetd@sh#1 :\fi\fi}
\ctr@ld@f\def\Pssetd@shI#1{\PSc@mment{pssetdash Index=#1}\ifcase#1%
    \or\immediate\write\fwf@g{[] 0 \c@msetdash}
    \or\immediate\write\fwf@g{[6 2] 0 \c@msetdash}
    \or\immediate\write\fwf@g{[4 2] 0 \c@msetdash}
    \or\immediate\write\fwf@g{[2 2] 0 \c@msetdash}
    \or\immediate\write\fwf@g{[1 2] 0 \c@msetdash}
    \or\immediate\write\fwf@g{[2 4] 0 \c@msetdash}
    \or\immediate\write\fwf@g{[3 5] 0 \c@msetdash}
    \or\immediate\write\fwf@g{[3 3] 0 \c@msetdash}
    \or\immediate\write\fwf@g{[3 5 1 5] 0 \c@msetdash}
    \or\immediate\write\fwf@g{[6 4 2 4] 0 \c@msetdash}
    \fi}
\ctr@ld@f\def\Pssetd@sh#1 #2:{{\def\t@xt@{#1}\ifx\t@xt@\empty\Pssetd@sh#2:
    \else\def\t@xt@{#2}\ifx\t@xt@\empty\Pssetd@shI{#1}\else\s@mme=\@ne\def\debutp@t{#1}%
    \an@lysd@sh#2:\ifodd\s@mme\edef\debutp@t{\debutp@t\space\finp@t}\def\finp@t{0}\fi%
    \PSc@mment{pssetdash Pattern=#1 #2}%
    \immediate\write\fwf@g{[\debutp@t] \finp@t\space\c@msetdash}\fi\fi}}
\ctr@ld@f\def\an@lysd@sh#1 #2:{\def\t@xt@{#2}\ifx\t@xt@\empty\def\finp@t{#1}\else%
    \edef\debutp@t{\debutp@t\space#1}\advance\s@mme\@ne\an@lysd@sh#2:\fi}
\ctr@ln@m\curr@ntwidth
\ctr@ld@f\def\s@uvwidth#1{\edef#1{\curr@ntwidth}}
\ctr@ld@f\def\defaultwidth{0.4}     
\ctr@ld@f\def\pssetwidth#1{\ifps@cri\edef\curr@ntwidth{#1}\ifcurr@ntPS%
    \PSc@mment{pssetwidth Width=#1}\immediate\write\fwf@g{#1 \c@msetlinewidth}\fi\fi}
\ctr@ln@m\curr@ntjoin
\ctr@ld@f\def\pssetjoin#1{\ifps@cri\edef\curr@ntjoin{#1}\ifcurr@ntPS\expandafter\Pssetj@in#1:\fi\fi}
\ctr@ld@f\def\Pssetj@in#1#2:{\PSc@mment{pssetjoin join=#1}%
    \if#1r\def\t@xt@{1}\else\if#1b\def\t@xt@{2}\else\def\t@xt@{0}\fi\fi%
    \immediate\write\fwf@g{\t@xt@\space\c@msetlinejoin}}
\ctr@ld@f\def\defaultjoin{miter}   
\ctr@ld@f\def\Pssets@cond#1=#2|{\keln@mun#1|%
    \def\n@mref{c}\ifx\l@debut\n@mref\Pssets@condcolor{#2}\else%
    \def\n@mref{d}\ifx\l@debut\n@mref\pssetseconddash{#2}\else%
    \def\n@mref{w}\ifx\l@debut\n@mref\pssetsecondwidth{#2}\else%
    \immediate\write16{*** Unknown attribute: \BS@ psset second(..., #1=...)}%
    \fi\fi\fi}
\ctr@ln@m\curr@ntseconddash
\ctr@ld@f\def\pssetseconddash#1{\edef\curr@ntseconddash{#1}}
\ctr@ld@f\def\defaultseconddash{4}  
\ctr@ln@m\curr@ntsecondwidth
\ctr@ld@f\def\pssetsecondwidth#1{\edef\curr@ntsecondwidth{#1}}
\ctr@ld@f\edef\defaultsecondwidth{\defaultwidth} 
\ctr@ld@f\def\psresetsecondsettings{%
    \pssetseconddash{\defaultseconddash}\pssetsecondwidth{\defaultsecondwidth}%
    \Pssets@condcolor{\defaultsecondcolor}}
\ctr@ln@m\sec@ndcolor \ctr@ln@m\sec@ndcolorc@md
\ctr@ld@f\def\Pssets@condcolor#1{\ifps@cri\result@tent=\@ne\expandafter\c@lnbV@l#1 :%
    \def\sec@ndcolor{}\def\sec@ndcolorc@md{}%
    \ifcase\result@tent\or\pssetsecondgray{#1}\or\or\pssetsecondrgb{#1}%
    \or\pssetsecondcmyk{#1}\fi\fi}
\ctr@ln@m\sec@ndcolorc@mdStroke
\ctr@ld@f\def\pssetsecondcmyk#1{\def\sec@ndcolor{#1}\def\sec@ndcolorc@md{\c@msetcmykcolor}%
    \def\sec@ndcolorc@mdStroke{\c@msetcmykcolorStroke}}
\ctr@ld@f\def\pssetsecondrgb#1{\def\sec@ndcolor{#1}\def\sec@ndcolorc@md{\c@msetrgbcolor}%
    \def\sec@ndcolorc@mdStroke{\c@msetrgbcolorStroke}}
\ctr@ld@f\def\pssetsecondgray#1{\def\sec@ndcolor{#1}\def\sec@ndcolorc@md{\c@msetgray}%
    \def\sec@ndcolorc@mdStroke{\c@msetgrayStroke}}
\ctr@ld@f\def\us@secondC@lor{\immediate\write\fwf@g{\d@fsecondC@lor}%
    \let\fillc@md=\sdfillc@md}
\ctr@ld@f\def\sdfillc@md{\d@fsecondC@lor\space\c@mfill}
\ctr@ld@f\edef\defaultsecondcolor{\defaultcolor} 
\ctr@ld@f\def\Pss@tsecondSt{%
    \s@uvdash{\typ@dash}\pssetdash{\curr@ntseconddash}%
    \s@uvwidth{\typ@width}\pssetwidth{\curr@ntsecondwidth}\us@secondC@lor}
\ctr@ld@f\def\Psrest@reSt{\pssetwidth{\typ@width}\pssetdash{\typ@dash}\us@primarC@lor}
\ctr@ld@f\def\Pssetth@rd#1=#2|{\keln@mun#1|%
    \def\n@mref{c}\ifx\l@debut\n@mref\Pssetth@rdcolor{#2}\else%
    \immediate\write16{*** Unknown attribute: \BS@ psset third(..., #1=...)}%
    \fi}
\ctr@ln@m\th@rdcolor \ctr@ln@m\th@rdcolorc@md
\ctr@ld@f\def\Pssetth@rdcolor#1{\ifps@cri\result@tent=\@ne\expandafter\c@lnbV@l#1 :%
    \def\th@rdcolor{}\def\th@rdcolorc@md{}%
    \ifcase\result@tent\or\Pssetth@rdgray{#1}\or\or\Pssetth@rdrgb{#1}%
    \or\Pssetth@rdcmyk{#1}\fi\fi}
\ctr@ln@m\th@rdcolorc@mdStroke
\ctr@ld@f\def\Pssetth@rdcmyk#1{\def\th@rdcolor{#1}\def\th@rdcolorc@md{\c@msetcmykcolor}%
    \def\th@rdcolorc@mdStroke{\c@msetcmykcolorStroke}}
\ctr@ld@f\def\Pssetth@rdrgb#1{\def\th@rdcolor{#1}\def\th@rdcolorc@md{\c@msetrgbcolor}%
    \def\th@rdcolorc@mdStroke{\c@msetrgbcolorStroke}}
\ctr@ld@f\def\Pssetth@rdgray#1{\def\th@rdcolor{#1}\def\th@rdcolorc@md{\c@msetgray}%
    \def\th@rdcolorc@mdStroke{\c@msetgrayStroke}}
\ctr@ld@f\def\us@thirdC@lor{\immediate\write\fwf@g{\d@fthirdC@lor}%
    \let\fillc@md=\thfillc@md}
\ctr@ld@f\def\thfillc@md{\d@fthirdC@lor\space\c@mfill}
\ctr@ld@f\def\defaultthirdcolor{1}  
\ctr@ld@f\def\pstrimesh#1[#2,#3,#4]{{\ifcurr@ntPS\ifps@cri%
    \PSc@mment{pstrimesh Type=#1, Triangle=[#2,#3,#4]}%
    \s@uvc@ntr@l\et@tpstrimesh\ifnum#1>\@ne\Pss@tsecondSt\setc@ntr@l{2}%
    \Pstrimeshp@rt#1[#2,#3,#4]\Pstrimeshp@rt#1[#3,#4,#2]%
    \Pstrimeshp@rt#1[#4,#2,#3]\Psrest@reSt\fi\psline[#2,#3,#4,#2]%
    \PSc@mment{End pstrimesh}\resetc@ntr@l\et@tpstrimesh\fi\fi}}
\ctr@ld@f\def\Pstrimeshp@rt#1[#2,#3,#4]{{\l@mbd@un=\@ne\l@mbd@de=#1\loop\ifnum\l@mbd@de>\@ne%
    \advance\l@mbd@de\m@ne\figptbary-1:[#2,#3;\l@mbd@de,\l@mbd@un]%
    \figptbary-2:[#2,#4;\l@mbd@de,\l@mbd@un]\psline[-1,-2]%
    \advance\l@mbd@un\@ne\repeat}}
\initpr@lim\initpss@ttings\initPDF@rDVI
\ctr@ln@w{newbox}\figBoxA
\ctr@ln@w{newbox}\figBoxB
\ctr@ln@w{newbox}\figBoxC
\catcode`\@=12

\pssetdefault(update=yes)
\newbox\figbox

\usepackage{color}
\definecolor{gr}{rgb}   {0.,   0.69,   0.23 } 
\definecolor{mg}{rgb}   {0.85,  0.,    0.85} 
\newcommand{\Gr}{\color{gr}}
\newcommand{\Mg}{\color{mg}}
\newcommand{\Bk}{\color{black}}
\newcommand{\Rd}{\color{red}}                                                                                                  
\newcommand{\Bl}{\color{blue}}

\definecolor{marin}{rgb}   {0.,   0.85,   0.25}
\definecolor{rouge}{rgb}   {0.8,   0.,   0.}

\newtheorem{theorem}{Theorem}[section]
\newtheorem{lemma}[theorem]{Lemma}
\newtheorem{proposition}[theorem]{Proposition}
\newtheorem{corollary}[theorem]{Corollary}

\theoremstyle{definition}
\newtheorem{definition}[theorem]{Definition}

\theoremstyle{remark}
\newtheorem{remark}[theorem]{Remark}

\numberwithin{equation}{section}

\newcommand{\ee}{\hskip0.15ex}
\newcommand{\me}{\hskip-0.15ex}
\newcommand{\dd}[1]{_{\raise-0.3ex\hbox{$\scriptstyle #1$}}}

\newcommand{\vpha}{\left.\vphantom{T^{j_0}_{j_0}}\!\!\right.}

\newcommand {\Norm}[2]{ \mathchoice 
    {|\ee #1\ee|\dd{#2}}
    {| #1 |_{#2}}
    {| #1 |_{#2}}
    {| #1 |_{#2}} }
\newcommand {\DNorm}[2]{ \mathchoice 
    {\|\ee #1\ee\|\dd{#2}}
    {\| #1 \|_{#2}}
    {\| #1 \|_{#2}}
    {\| #1 \|_{#2}} }
\newcommand {\Normc}[2]{ \mathchoice 
    {|\ee #1\ee|\dd{#2}^2}
    {| #1 |_{#2}^2}
    {| #1 |_{#2}^2}
    {| #1 |_{#2}^2} }
\newcommand {\DNormc}[2]{ \mathchoice 
    {\|\ee #1\ee\|\dd{#2}^2}
    {\| #1 \|_{#2}^2}
    {\| #1 \|_{#2}^2}
    {\| #1 \|_{#2}^2} }

\newcommand\rd{{\mathrm d}}
\let\d=\rd
\newcommand\re{{\mathrm e}}

\renewcommand{\div}{\operatorname{\rm div}}
\newcommand{\grad}{\operatorname{\textbf{grad}}}
\newcommand{\curl}{\operatorname{\textbf{curl}}}
\newcommand{\curls}{\operatorname{\rm curl}}
\newcommand{\Cinf}{\mathscr{C}^\infty}

\renewcommand\P{{\mathbb P}}
\newcommand\Sbb{{\mathbb S}}
\newcommand\C{{\mathbb C}}
\newcommand\R{{\mathbb R}}
\newcommand\N{{\mathbb N}}
\newcommand\Q{{\mathbb Q}}
\newcommand\T{\mathbb{T}}
\newcommand\Z{{\mathbb Z}}

\newcommand{\Rin}{R_{\mathrm{in}}}
\newcommand{\Rout}{R_{\mathrm{out}}}

\newcommand\A{{\mathscr A}}
\newcommand\B{{\mathcal B}}
\newcommand\BB{{\mathscr B}}
\newcommand\sC{{\mathscr C}}
\newcommand\CC{{\mathcal C}}
\newcommand\D{{\mathcal D}}
\newcommand\DD{{\mathscr D}}
\newcommand\E{{\mathcal E}}
\newcommand\F{{\mathcal F}}
\newcommand\G{{\mathcal G}}
\newcommand\GG{{\mathcal G}}
\newcommand\I{{\mathcal I}}
\newcommand\J{{\mathcal J}}
\newcommand\K{{\mathcal K}}
\renewcommand\L{{\mathscr L}}
\newcommand{\M}{\mathcal{M}}
\newcommand\OO{{\mathcal O}}
\newcommand\PP{{\mathscr P}}
\newcommand\QQ{{\mathcal Q}}
\newcommand\RR{{\mathcal R}}
\newcommand\U{{\mathcal U}}
\renewcommand\SS{{\mathcal S}}
\newcommand\TT{{\mathcal T}}
\newcommand\VV{{\boldsymbol V}}
\newcommand\WW{{\boldsymbol W}}
\newcommand\XX{{\boldsymbol X}}

\newcommand{\ba}{\boldsymbol{a}}
\newcommand{\bb}{\boldsymbol{b}}
\newcommand{\bc}{{\boldsymbol{c}}}
\newcommand{\be}{{\boldsymbol{e}}}
\newcommand{\bff}{\boldsymbol{f}}
\newcommand{\bn}{\boldsymbol{n}}
\newcommand{\bu}{\boldsymbol{u}}
\newcommand{\bv}{\boldsymbol{v}}
\newcommand{\bw}{\boldsymbol{w}}
\newcommand{\bx}{{\boldsymbol{x}}}
\let\x=\bx
\newcommand{\by}{{\boldsymbol{y}}}

\renewcommand\H{{\boldsymbol H}}

\newcommand\ttau{\boldsymbol{\tau}}
 
\newcommand{\sS}{{\mathscr{S}}}

\newcommand\sep{\hskip-0.2ex,\hskip0.15ex}
 
\newcommand {\gA}{\mathfrak{A}}
\newcommand {\gC}{\mathfrak{C}}
\newcommand {\gE}{\mathfrak{E}}
\newcommand {\gF}{\mathfrak{F}}
\newcommand {\gK}{{\mathfrak K}}
\newcommand {\gM}{{\mathfrak M}}
\newcommand {\gP}{{\mathfrak P}}
\newcommand {\gR}{{\mathfrak R}}
\newcommand {\gS}{{\mathfrak S}}
\newcommand {\gU}{{\mathfrak U}}
\newcommand {\gW}{{\mathfrak W}}
\newcommand {\gZ}{{\mathfrak Z}}
\newcommand{\CCC}{\mathfrak{C}}
\newcommand{\FFF}{\mathfrak{F}}

\newcommand{\inff}{\mathop{\operatorname{\vphantom{p}inf}}}
\newcommand{\xix}{\boldsymbol{\xi}}
\newcommand{\ii}{\mathrm{i}}
\newcommand {\Id}{\mathrm{Id}}
\newcommand{\comp}{\mathrm{comp}}
\newcommand{\diam}{\mathrm{diam}}
\newcommand{\dist}{\mathrm{dist}}
\newcommand{\meas}{\mathrm{meas}}
\newcommand{\HP}{\mathsf{HP}}

\newcommand{\DU}{\Delta^{-1}_U}

\newcommand{\di}{\displaystyle}

\renewcommand{\Re}{\operatorname{\rm Re}}
\renewcommand{\Im}{\operatorname{\rm Im}}

\begin{document}

\title[Continuity properties of the inf-sup constant]{\bf Continuity properties of the \\inf-sup constant for the divergence}

\author{Christine Bernardi, Martin Costabel, Monique Dauge and Vivette Girault}

\address{IRMAR UMR 6625 du CNRS, Universit\'{e} de Rennes 1}

\address{\vglue -4ex Campus de Beaulieu,
35042 Rennes Cedex, France \bigskip}

\email{martin.costabel@univ-rennes1.fr}
\urladdr{http://perso.univ-rennes1.fr/martin.costabel/}

\email{monique.dauge@univ-rennes1.fr}
\urladdr{http://perso.univ-rennes1.fr/monique.dauge/}

\address{Laboratoire Jacques-Louis Lions,
  CNRS et Universit\'e Pierre et Marie Curie}

\address{\vglue -4ex 4 place Jussieu, 75252 PARIS Cedex 05, France\bigskip}

\email{bernardi@ann.jussieu.fr}

\email{girault@ann.jussieu.fr}

\date{}  

\keywords{inf-sup constant, LBB condition}

\subjclass[2010]{35Q35,65M60,76D07}

\begin{abstract}
The inf-sup constant for the divergence, or LBB constant, is explicitly known for only few domains. For other domains, upper and lower estimates are known. If more precise values are required, one can try to compute a numerical approximation. This involves, in general, approximation of the domain and then the computation of a discrete LBB constant that can be obtained from the numerical solution of an eigenvalue problem for the Stokes system. This eigenvalue problem does not fall into a class for which standard results about numerical approximations can be applied. Indeed, many reasonable finite element methods do  not yield a convergent approximation. In this article, we show that under fairly weak conditions on the approximation of the domain, the LBB constant is an upper semi-continuous shape functional, and we give more restrictive sufficient conditions for its continuity with respect to the domain. 
For numerical approximations based on variational formulations of the Stokes eigenvalue problem, we also show upper semi-continuity under weak approximation properties, and we give stronger conditions that are sufficient for convergence of the discrete LBB constant towards the continuous LBB constant. 
Numerical examples show that our conditions are, while not quite optimal, not very far from necessary.     
\end{abstract}

\maketitle

\section{Introduction}

We define the usual inf-sup constant for the divergence or Ladyzhenskaya-Babu\v{s}ka-Brezzi (LBB) constant $\beta(\Omega)$  as
\begin{equation}
\label{E:infsup}
    \beta(\Omega) = \ {\inff_{q\ee\in\ee L^2_\circ(\Omega)}} \ \ 
   {\sup_{\bv\ee\in\ee
   H^1_0(\Omega)^d}} \ \ 
   \frac{\big\langle\! \div\bv, q \big\rangle_\Omega}{\Norm{\bv}{1,\Omega}\,\DNorm{q}{0,\Omega}} \ .
\end{equation}
The constant thus depends on a domain (connected open set) $\Omega\subset\R^{d}$ and on two function spaces, the space (``pressure space'')
$L^2_\circ(\Omega)$ of square integrable functions with mean value zero,  with norm $\DNorm{\cdot}{0,\Omega}$ and scalar product $\big\langle \cdot, \cdot \big\rangle_\Omega$, and the ``velocity space'', which is the vector-valued version of the standard Sobolev space
$H^1_0(\Omega)$, closure of the space of smooth functions of compact support in $\Omega$ with respect to the $H^{1}$ seminorm, which for vector-valued functions is defined by
\[
   \Norm{\bv}{1,\Omega} = \DNorm{\grad \bv}{0,\Omega} =
   \Big(\sum_{k=1}^d \sum_{j=1}^d \DNormc{\partial_{x_j}v_k}{0,\Omega} \Big)^{1/2} .
\]
The present article is devoted to the study of the variation of the LBB constant with respect to changes of the domain and also of the two function spaces.

The inf-sup condition $\beta(\Omega)>0$ plays an important role in the existence and stability of solutions of incompressible fluid models such as the Stokes and Navier--Stokes equations, see for example \cite[Chap.~I, Theorem 4.1]{GiraultRaviart}. 
The value of the LBB constant influences the convergence rate of iterative algorithms such as the Uzawa algorithm, see~\cite{Crouzeix1997}.
The inf-sup condition has been known for a long time to be true for bounded  domains satisfying a uniform cone condition, that is, Lipschitz domains \cite{Bogovskii79} and has more recently been shown for the larger class of John domains defined by a ``twisted cone'' condition \cite{AcostaDuranMuschietti2006}. It is, however, not satisfied for domains having an exterior cusp \cite{Friedrichs1937,Tartar_NS2006}. 

If one defines  an analogous  ``discrete LBB constant'' $\beta_{n}$ by
\begin{equation}
\label{E:infsupN}
    \beta_{n} = \ {\inff_{q\ee\in\ee M_{n}}} \ \ 
   {\sup_{\bv\ee\in\ee
   X_{n}}} \ \ 
   \frac{\big\langle\! \div\bv, q \big\rangle_\Omega}{\Norm{\bv}{1,\Omega}\,\DNorm{q}{0,\Omega}} \  , 
\end{equation}
where 
\[
 M_{n}\subset L^2_\circ(\Omega) \quad\mbox{ and }\quad  X_{n}\subset H^1_0(\Omega)^d
\]
are subspaces, then it is known that the discrete LBB condition
\begin{equation}
\label{E:discreteLBB}
  \forall\, n, \quad \beta_{n}\ge \beta_\star>0
\end{equation}
plays an equally important role for the stability of Galerkin approximation methods for the Stokes system defined by a sequence of finite-dimensional subspaces $(X_{n},M_{n})_{n}$ \cite{BabuskaAziz,Brezzi1974}, \cite[Chap.~II, Theorem 1.1]{GiraultRaviart}, as well as for the convergence of solution methods for the corresponding linear systems.  
There exists a large body of finite element literature proving discrete LBB conditions \cref{E:discreteLBB} for many pairs of velocity/pressure spaces $(X_{n},M_{n})$, often referred to as \emph{inf-sup stable} elements.

Much less is known about the question whether $\beta_{n}$ is an approximation of $\beta(\Omega)$, namely
\begin{equation}
\label{E:betaNtobeta}
 \lim_{n\to\infty}\beta_{n}=\beta(\Omega)\,.
\end{equation}
It turns out that there is a large class of finite element methods for which a uniform lower bound \cref{E:discreteLBB} has been shown, but the convergence \cref{E:betaNtobeta} has not been established, and sometimes is even false.
One of the main results of this work is to give conditions on the approximation properties of the spaces $X_{n}$ and $M_{n}$ so that the convergence \cref{E:betaNtobeta} is true, see \cref{T:bNconv}. 

Under much weaker approximation properties, concerning only the pressure spaces $M_{n}$, we show upper semi-continuity (see \cref{P:USC}), which implies in particular that the uniform lower bound $\beta_\star$ in the discrete LBB condition \cref{E:discreteLBB} can never be better than the (continuous) LBB constant $\beta(\Omega)$:
\begin{equation}
\label{E:beta0<=beta}
  \beta_\star\le\beta(\Omega)\,.
\end{equation}
The latter result has been shown under more restrictive hypotheses  on the regularity of the domain  in 
\cite[Theorem 1.2]{ChizhOlsh00}.

Regarding the approximation of the domain $\Omega$, we prove two general results. 
For inner approximations, we show upper semi-continuity of $\beta(\Omega)$ under the weak condition that the difference of the domains tends to zero in measure, see \cref{T:semiconv}. We describe several examples where one does not have convergence of $\beta(\Omega)$ in this situation.

The second main result of this paper is that under convergence with respect to Lipschitz deformations of a bounded Lipschitz domain $\Omega$, one does have continuity of the constant $\beta(\Omega)$, see \cref{T:encadrement}. As a corollary, we find that the LBB constant converges if a smooth plane domain is approximated by polygonal interpolation.
This has an important application in finite element discretizations, where the first step often consists in replacing a curved domain by a union of triangles or polyhedra. Our result shows that this replacement essentially  does not deteriorate the stability of the continuous problem. A second application is in the numerical approximation of the inf-sup constant itself, where one might want to approximate both the function spaces and the domain. 

The outline of this article is as follows: 

$\bullet$ We first prove in \Cref{S:USC} upper semi-continuity results of the inf-sup constant both with respect to the function spaces and with respect to interior approximations of the domain. 

$\bullet$ \Cref{S:ExContDom} is devoted to several examples exhibiting various types of converging domains, resulting in convergence or non-convergence of the inf-sup constant.

$\bullet$ Then in  \Cref{S:ContDom} we prove the continuity with respect to Lipschitz deformations of the domain and, as a corollary, with respect to polygonal approximations of smooth plane domains.

$\bullet$ Finally, in  \Cref{S:ContFun} we prove that under certain conditions, the discrete inf-sup constants converge to the continuous inf-sup constant. We give several numerical examples exhibiting convergence or non-convergence. 

\section{Upper semi-continuity}\label{S:USC}

We first prove  a general result about the upper semi-continuity of the inf-sup constants $\beta_{n}$ defined in \cref{E:infsupN} as $n$ tends to infinity. We then show that this result can be used in two ways: First as a statement on the behavior of the sequence of the inf-sup constants 
$\beta(\Omega_{n})$ for subdomains $\Omega_{n}\subset\Omega$ converging to $\Omega$, and second as a statement of the relation between the discrete LBB condition \cref{E:discreteLBB} and the LBB constant $\beta(\Omega)$. 

\subsection{Upper semi-continuity with respect to the function spaces}\label{Ss:USC}

We now state and prove a key result.

\begin{theorem}
\label{P:USC}
Let $\Omega\subset\R^{d}$ be a bounded domain. 
Let $M_{n}\subset L^2_\circ(\Omega)$ and $X_{n}\subset H^1_0(\Omega)^d$, $n\in\N$, be a sequence of closed subspaces. Let $\beta_{n}$ be the inf-sup constant defined by these spaces according to \cref{E:infsupN}. We assume that the sequence $M_{n}$ is asymptotically dense in the sense that for every $q\in L^2_\circ(\Omega)$ there exists a sequence $(q_{n})_{n\in\N}$ with $q_{n}\in M_{n}$ converging to $q$ in $L^{2}(\Omega)$. Then
\begin{equation}
\label{E:limsup}
 \limsup_{n\to\infty} \beta_{n} \,\le\, \beta(\Omega)\,.
\end{equation}
\end{theorem}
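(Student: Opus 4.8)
The plan is to argue directly from the two variational definitions, exploiting that both $\beta(\Omega)$ and $\beta_{n}$ are infima, so that a single well-chosen test pressure controls each of them from above. For a subspace $Y\subset H^1_0(\Omega)^d$ and $q\in L^2_\circ(\Omega)$, write
\[
  b_{Y}(q)=\sup_{\bv\in Y\setminus\{0\}}\frac{\big\langle\div\bv, q\big\rangle_\Omega}{\Norm{\bv}{1,\Omega}},
  \qquad b:=b_{H^1_0(\Omega)^d},
\]
so that $\beta(\Omega)=\inf_{q\in L^2_\circ(\Omega)} b(q)/\DNorm{q}{0,\Omega}$ and $\beta_{n}=\inf_{q\in M_{n}} b_{X_{n}}(q)/\DNorm{q}{0,\Omega}$. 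Two elementary properties of $b_{Y}$ do all the work. First, monotonicity: $Y'\subset Y$ implies $b_{Y'}\le b_{Y}$ pointwise, the supremum being taken over a larger set. Second, $b_{Y}$ is a seminorm on $L^2_\circ(\Omega)$ satisfying $b_{Y}(q)\le\sqrt d\,\DNorm{q}{0,\Omega}$, because $\Norm{\div\bv}{0,\Omega}\le\sqrt d\,\Norm{\bv}{1,\Omega}$; in particular $|b_{Y}(q)-b_{Y}(q')|\le\sqrt d\,\DNorm{q-q'}{0,\Omega}$, \emph{with a constant independent of $Y$}.

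First I would fix $\varepsilon>0$. Since the ratio defining $\beta(\Omega)$ is scale invariant, I may choose $q\in L^2_\circ(\Omega)$ with $\DNorm{q}{0,\Omega}=1$ and $b(q)\le\beta(\Omega)+\varepsilon$. By the asymptotic density hypothesis there exist $q_{n}\in M_{n}$ with $q_{n}\to q$ in $L^2(\Omega)$; since $q\neq0$, we have $q_{n}\neq0$ for $n$ large and $\DNorm{q_{n}}{0,\Omega}\to1$. Because $q_{n}\in M_{n}$, the definition of $\beta_{n}$ as an infimum gives $\beta_{n}\le b_{X_{n}}(q_{n})/\DNorm{q_{n}}{0,\Omega}$ for all large $n$.

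Then I would estimate the numerator by combining the two properties above with $X_{n}\subset H^1_0(\Omega)^d$:
\[
  b_{X_{n}}(q_{n})\le b_{X_{n}}(q)+b_{X_{n}}(q_{n}-q)\le b(q)+\sqrt d\,\DNorm{q_{n}-q}{0,\Omega}\le \beta(\Omega)+\varepsilon+\sqrt d\,\DNorm{q_{n}-q}{0,\Omega}.
\]
Hence $\beta_{n}\le\big(\beta(\Omega)+\varepsilon+\sqrt d\,\DNorm{q_{n}-q}{0,\Omega}\big)/\DNorm{q_{n}}{0,\Omega}$, and letting $n\to\infty$ (so that $\DNorm{q_{n}-q}{0,\Omega}\to0$ and $\DNorm{q_{n}}{0,\Omega}\to1$) yields $\limsup_{n\to\infty}\beta_{n}\le\beta(\Omega)+\varepsilon$. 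As $\varepsilon>0$ was arbitrary, \cref{E:limsup} follows.

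There is no serious obstacle here; the only point requiring care — and the reason for the precise form of the hypothesis — is that the near-minimizing pressure $q$ of the continuous problem need not lie in $M_{n}$, so one must replace it by $q_{n}\in M_{n}$ and absorb the resulting perturbation of $b_{X_{n}}$ by a bound uniform in $n$, which is exactly the Lipschitz property above. It is worth noting that no approximation property of the velocity spaces $X_{n}$ is used: they enter only through the inclusion $X_{n}\subset H^1_0(\Omega)^d$ and the monotonicity of $b_{\,\cdot\,}$; likewise the closedness of the $X_{n}$ and $M_{n}$ plays no role in this particular argument.
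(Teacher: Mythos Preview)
Your proof is correct and follows essentially the same line as the paper's: both define the sup-functional (the paper's $J$ is your $b$), use the inclusion $X_n\subset H^1_0(\Omega)^d$ to bound the discrete version by the continuous one, and exploit the density of $M_n$ together with continuity in $q$ to conclude. The only cosmetic differences are that the paper establishes continuity of $J$ via the auxiliary Dirichlet solution $\bw(q)=\Delta^{-1}\grad q$ (a construction it reuses later) rather than via your direct seminorm/Lipschitz observation, and it passes to a convergent subsequence instead of using a near-minimizer and an $\varepsilon$-argument.
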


\begin{proof}
 For $q\in L^2_\circ(\Omega)$ define 
\begin{equation}
\label{E:J}
 J(q) = \sup_{\bv\ee\in\ee H^1_0(\Omega)^d}
   \frac{\big\langle\! \div\bv, q \big\rangle_\Omega}{\Norm{\bv}{1,\Omega}}
  \quad\mbox{ and }\quad 
 J_{n}(q) = \sup_{\bv\ee\in\ee X_{n}}
   \frac{\big\langle\! \div\bv, q \big\rangle_\Omega}{\Norm{\bv}{1,\Omega}}.
\end{equation}
Owing to the inequality 
$$\DNorm{\div\bv}{0,\Omega} \le \Norm{\bv}{1,\Omega},$$
the functionals $J$ and $J_n$ satisfy for all $q\in L^2_\circ(\Omega)$
$$|J(q)| \le \DNorm{q}{0,\Omega}\,,\qquad |J_n(q)| \le \DNorm{q}{0,\Omega}.
$$
Moreover,  the functional $J$ is continuous in $L^2_\circ(\Omega)$, as can be seen by introducing the function $\bw(q)\in H^1_0(\Omega)^d$ defined as the solution of the Dirichlet problem 
$\Delta\bw(q)=\grad q$, or in variational form
\begin{equation}
\label{E:w(q)}
 \forall \,\bv\in H^1_0(\Omega)^d,\quad
 \big\langle\grad\bw(q), \grad\bv\big\rangle_\Omega 
  = \big\langle \div\bv,q \big\rangle_\Omega \,.
\end{equation}
It is easy to see that the supremum in $J(q)$ is attained by the function $\bv=\bw(q)$. 
The mapping $q\mapsto\bw(q)$ being continuous from $L^2_\circ(\Omega)$ to $H^1_0(\Omega)^d$, the continuity of the functional $J$ follows from the formula
\[
  J(q) = \frac{\big\langle\! \div\bw(q), q \big\rangle_\Omega}{\Norm{\bw(q)}{1,\Omega}}
       = \Norm{\bw(q)}{1,\Omega}.
\]
We assume now that, possibly after passing to a subsequence, the sequence $(\beta_{n})_n$ converges to some $\beta_{\infty}$. Let a nontrivial function $q\in L^2_\circ(\Omega)$ be given and choose $q_{n}\in M_{n}$ such that $q_{n}\to q$ in $L^2(\Omega)$. We have
\[ 
  \frac{J(q)}{\DNorm{q}{0,\Omega}} 
  = \lim_{n\to\infty}\frac{J(q_{n})}{\DNorm{q_{n}}{0,\Omega}}
\]
and 
\[
 J(q_{n})\ge J_{n}(q_{n}) \ge \beta_{n} \DNorm{q_{n}}{0,\Omega}\,,
\]
which together imply
\[
  \frac{J(q)}{\DNorm{q}{0,\Omega}} \ge\beta_{\infty}\,.
\]
From the definition
\[
\beta(\Omega) = \inff_{q\ee\in\ee L^2_\circ(\Omega)} \frac{J(q)}{\DNorm{q}{0,\Omega}}
\]
we therefore get the desired inequality
$\beta(\Omega)\ge\beta_{\infty}$.
\end{proof}

\subsection{Inner approximations of the domain}\label{Ss:InnerApprox}
A first corollary of \cref{P:USC} is obtained by defining the subspaces $X_{n}$ and $M_{n}$ via the natural inclusion of the spaces $H^1_0(\Omega_{n})^d$ and $L^2_\circ(\Omega_{n})$, respectively, where $\Omega_{n}$ is a subdomain of $\Omega$.

\begin{theorem}
\label{T:semiconv}
Let $\Omega\subset\R^{d}$ be a bounded domain. Assume that the sequence of domains $\big(\Omega_n\big)_{n\in\N}$ converges to the limiting domain $\Omega$ in the following sense
\begin{equation}
\label{A:semiconv}
   \forall n\in\N, \; \Omega_n\subset\Omega\ \quad\mbox{and}\quad
   \meas(\Omega\setminus\Omega_n) \underset{n\to\infty}{\longrightarrow} 0\,,
\end{equation}
where $\meas$ denotes the Lebesgue measure. 
Then there holds
\begin{equation}
\label{E:semiconv}
   \limsup_{n\to\infty} \beta(\Omega_n) \le \beta(\Omega)\,.
\end{equation}
\end{theorem}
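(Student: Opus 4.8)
The plan is to deduce \cref{T:semiconv} from \cref{P:USC} by a suitable choice of the subspaces $X_n$ and $M_n$. The natural choice is to set $X_n = H^1_0(\Omega_n)^d$, viewed as a closed subspace of $H^1_0(\Omega)^d$ via extension by zero (this is an isometric embedding because the $H^1$ seminorm is used for the norm), and to take $M_n$ to be the image of $L^2_\circ(\Omega_n)$ in $L^2_\circ(\Omega)$, where a function $q\in L^2_\circ(\Omega_n)$ is first extended by zero to $\Omega$ and then corrected by subtracting its (now possibly nonzero) mean value over $\Omega$. One checks that with these spaces the inf-sup constant defined by \cref{E:infsupN} coincides with $\beta(\Omega_n)$: the numerator $\langle\div\bv, q\rangle_\Omega$ for $\bv\in H^1_0(\Omega_n)^d$ and $q$ as above reduces to $\langle\div\bv, q\rangle_{\Omega_n}$ since $\bv$ and its divergence are supported in $\Omega_n$ and $\int_{\Omega_n}\div\bv = 0$, so the subtracted constant contributes nothing; and the two norms are unchanged under the zero-extension. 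One should also note $M_n$ and $X_n$ are closed, $X_n$ because $H^1_0(\Omega_n)$ is complete, $M_n$ because it is a closed subspace of $L^2(\Omega)$ (it is the image of a bounded below operator, or simply the orthogonal complement within $L^2_\circ(\Omega)$ of functions vanishing on $\Omega_n$).

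The remaining hypothesis of \cref{P:USC} to verify is the asymptotic density of $(M_n)_n$ in $L^2_\circ(\Omega)$: for every $q\in L^2_\circ(\Omega)$ we must produce $q_n\in M_n$ with $q_n\to q$ in $L^2(\Omega)$. The first step is to approximate $q$ by a function with compact support in $\Omega$: since such functions are dense in $L^2(\Omega)$ and subtracting the mean is a bounded operation that preserves the zero-mean property, it suffices to treat $q$ with $\operatorname{supp} q \Subset \Omega$. For such $q$, the condition $\meas(\Omega\setminus\Omega_n)\to 0$ does \emph{not} immediately give $\operatorname{supp} q\subset\Omega_n$, so the cleanest route is instead: for arbitrary $q\in L^2_\circ(\Omega)$, set $\tilde q_n = \mathbf{1}_{\Omega_n} q$, i.e. the restriction of $q$ to $\Omega_n$ extended by zero; then $\|q - \tilde q_n\|_{0,\Omega}^2 = \int_{\Omega\setminus\Omega_n}|q|^2 \to 0$ by dominated convergence (or absolute continuity of the integral), since $\meas(\Omega\setminus\Omega_n)\to 0$. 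Finally define $q_n = \tilde q_n - c_n$ with $c_n = \tfrac{1}{\meas(\Omega)}\int_\Omega \tilde q_n$, so that $q_n\in M_n$; since $\int_\Omega q = 0$, we have $c_n = \tfrac{1}{\meas(\Omega)}\int_\Omega(\tilde q_n - q)\to 0$, hence $q_n\to q$ in $L^2(\Omega)$.

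With both hypotheses of \cref{P:USC} checked, that theorem yields $\limsup_{n\to\infty}\beta_n\le\beta(\Omega)$, and since $\beta_n = \beta(\Omega_n)$ by the identification above, this is exactly \cref{E:semiconv}.

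I expect the main obstacle to be the bookkeeping around the zero-mean constraint: $L^2_\circ(\Omega_n)$ and $L^2_\circ(\Omega)$ use different mean-value normalizations, so the embedding is not literally inclusion, and one must verify carefully that (i) the mean-correction does not change the value of the inf-sup quotient defining $\beta(\Omega_n)$ — this uses $\int_{\Omega_n}\div\bv=0$ for $\bv\in H^1_0(\Omega_n)^d$ — and (ii) $M_n$ so defined is genuinely closed and the construction $q_n\in M_n$ in the density step actually lands in $M_n$. A secondary, more technical point is confirming that the supremum over $\bv\in H^1_0(\Omega_n)^d$ of the quotient with the corrected $q$ equals the supremum with the uncorrected restriction of $q$ to $\Omega_n$; this is immediate once (i) is in hand. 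None of these steps is deep, but they must be stated precisely for the reduction to \cref{P:USC} to be airtight.
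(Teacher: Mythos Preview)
Your approach is the same as the paper's: reduce to \cref{P:USC} by taking $X_n$ and $M_n$ to be the images under zero-extension of $H^1_0(\Omega_n)^d$ and $L^2_\circ(\Omega_n)$. There is, however, a bookkeeping slip that creates a genuine (though easily repaired) gap in the density step.

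The correction you build into the definition of $M_n$ is vacuous: if $p\in L^2_\circ(\Omega_n)$ then its zero-extension $\tilde p$ already satisfies $\int_\Omega \tilde p=\int_{\Omega_n}p=0$, so the ``possibly nonzero'' mean you subtract is in fact always zero. Consequently $M_n$ consists exactly of the functions in $L^2_\circ(\Omega)$ that vanish on $\Omega\setminus\Omega_n$. But your density candidate $q_n=\mathbf{1}_{\Omega_n}q-c_n$, with the constant $c_n$ subtracted on \emph{all} of $\Omega$, equals $-c_n$ on $\Omega\setminus\Omega_n$; since in general $c_n=\frac{1}{\meas(\Omega)}\int_{\Omega_n}q\ne0$, this $q_n$ does \emph{not} lie in $M_n$. (Your parenthetical description of $M_n$ as an orthogonal complement is also off, for the same reason.) The paper's remedy is to subtract the mean over $\Omega_n$ \emph{only on} $\Omega_n$ and then zero-extend: set
\[
   p_n \;=\; q\big|_{\Omega_n}-\frac{1}{\meas(\Omega_n)}\int_{\Omega_n}q\ \in\ L^2_\circ(\Omega_n),
   \qquad q_n=\tilde p_n\in M_n.
\]
Then $q_n\to q$ in $L^2(\Omega)$ follows exactly as you argue (absolute continuity of the integral plus the vanishing of the subtracted constants). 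With this correction your proof coincides with the paper's.
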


\begin{proof}
For a function $q$ defined on $\Omega_{n}$, let $\tilde q$ be its extension by $0$ to $\Omega$. This mapping $\mathcal{Z}_{n}:q\mapsto\tilde q$ defines the natural inclusions of 
$L^2(\Omega_{n})$ into $L^2(\Omega)$ and of $H^1_0(\Omega_{n})$ into $H^1_0(\Omega)$. 
We define 
\[
  M_{n} = \mathcal{Z}_{n} L^2_\circ(\Omega_{n})\subset L^2_\circ(\Omega)
  \quad\mbox{ and }\quad
  X_{n} = \mathcal{Z}_{n} H^1_0(\Omega_{n})^d \subset H^1_0(\Omega)^d
\]
and check that $\beta_{n}$ as defined in \cref{E:infsupN} then coincides with $\beta(\Omega_{n})$. 
We have to verify the hypothesis on the approximation property of $M_{n}$:
Choose $q\in L^2_\circ(\Omega)$,   
let us set on $\Omega_n$:
\[
   q_n = q\big|_{\Omega_n} - \frac{1}{\meas(\Omega_n)} \int_{\Omega_n} q(\x)\,\d\x,
\]
and let $\tilde q_n$ be the extension by $0$ of $q_n$ to $\Omega$. Then it is easy to see that
\[
   q_n\in L^2_\circ(\Omega_n),\quad
   \tilde q_n\in M_{n}\subset L^2_\circ(\Omega),\quad\mbox{ and }\quad
  \tilde q_n \underset{n\to\infty}{\longrightarrow} q
   \quad\mbox{in}\quad L^2(\Omega). 
\]
\cref{P:USC} can now be applied and gives the inequality \cref{E:semiconv}.
\end{proof}

We  discuss in \Cref{S:ExContDom} several examples of domains $\Omega_{n}$ tending to a 
domain $\Omega$ and observe whether or not $\beta(\Omega_{n})$ converges to $\beta(\Omega)$.

\subsection{Discrete and continuous LBB conditions}\label{Ss:DiscLBB}

In a conforming finite element discretization of the Stokes or Navier--Stokes equations, the trial space of the pressure variable is a finite-dimensional subspace $M_{n}$ of $L^2_\circ(\Omega)$, and the trial space for the velocity variable is a finite-dimensional subspace $X_{n}$ of $H^1_0(\Omega)^d$. The index $n$ may be representative for the sum of the dimensions of these spaces. The discrete inf-sup constant is defined as in \cref{E:infsupN}, and the uniform positivity of $\beta_{n}$, as expressed by the discrete LBB condition \cref{E:discreteLBB} plays an important role in the analysis of the method.

Proving uniform lower bounds for $\beta_{n}$ for various pairs of finite element spaces $(X_{n},M_{n})$ is an important subject of many papers in numerical fluid dynamics. A standard procedure in such proofs is the construction of a Fortin operator, see \cite{GiraultScott2003}. This is a projection operator 
\[
  \Pi_{n}: H^1_0(\Omega)^d \to X_{n}
\]
satisfying for each $q\in M_{n}$
\[\forall \bv\in H^1_0(\Omega)^d, \quad
  \big\langle \div\Pi_{n}\bv, q  \big \rangle_\Omega = \big\langle \div\bv, q \big \rangle_\Omega. 
\]
The Fortin operator provides a lower bound for the discrete inf-sup constant  via the inequality
\[
\begin{aligned}
  \beta(\Omega) \DNorm{q}{0,\Omega} \le J(q)
 &=
 \sup_{\bv\ee\in\ee H^1_0(\Omega)^d}
 \frac{\big\langle\! \div\bv, q \big\rangle_\Omega}{\Norm{\bv}{1,\Omega}} 
 =
 \sup_{\bv\ee\in\ee H^1_0(\Omega)^d}
 \frac{\big\langle\! \div\Pi_{n}\bv, q \big\rangle_\Omega}{\Norm{\Pi_{n}\bv}{1,\Omega}}
 \frac{\Norm{\Pi_{n}\bv}{1,\Omega}}{\Norm{\bv}{1,\Omega}}\\
 &\le
 \sup_{\bv\ee\in\ee X_{n}}
   \frac{\big\langle\! \div\bv, q \big\rangle_\Omega}{\Norm{\bv}{1,\Omega}}
   \|\Pi_{n}\|
  = J_{n}(q)\|\Pi_{n}\|
\end{aligned}
\]
valid for all $q\in M_{n}$. Here $\|\Pi_{n}\|$ is the operator norm of $\Pi_{n}$ in $H^1_0(\Omega)^d$ associated with the norm $\Norm{\cdot}{1,\Omega}$.
Dividing by $\DNorm{q}{0,\Omega}$ and taking the infimum over $q\in M_{n}$, one finds 
\[
 \beta(\Omega)\le \beta_{n}\|\Pi_{n}\|\,.
\]
Uniformly bounding $\|\Pi_{n}\|$ is then the main work in the proof of the discrete LBB condition.

The resulting lower bound  $\beta_{n}\ge \frac{\beta(\Omega)}{\|\Pi_{n}\|}$ lies always below $\beta(\Omega)$, because the norm of the projection operator $\Pi_{n}$ is always at least $1$.

The available theoretical estimates of the discrete LBB conditions may be very pessimistic compared to what is really observed in the numerical algorithms, but the fact that the uniform discrete LBB bound is always below the inf-sup constant of the domain is a corollary to the upper semi-continuity shown in \cref{P:USC}.

\begin{theorem}
\label{T:discreteUSC}
Suppose that the sequence of spaces $(X_{n},M_{n})_{n\in\N}$ satisfies the discrete LBB condition
\[
 \inf_{n\in\N}\beta_{n}=\beta_\star>0
\]
and that the sequence $M_{n}$ is asymptotically dense in $L^2_\circ(\Omega)$.
Then 
\[
  \beta_\star \le \beta(\Omega)\,.
\]
\end{theorem}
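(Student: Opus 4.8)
The plan is to deduce this statement directly from \cref{P:USC}, of which it is an immediate corollary. The hypotheses match exactly: the pair $(X_{n},M_{n})$ defines the discrete inf-sup constants $\beta_{n}$ via \cref{E:infsupN}, and the assumed asymptotic density of $(M_{n})$ in $L^2_\circ(\Omega)$ is precisely the approximation property required in \cref{P:USC}. Hence \cref{P:USC} applies and yields
\[
  \limsup_{n\to\infty}\beta_{n} \,\le\, \beta(\Omega)\,.
\]

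It then remains only to observe that the discrete LBB condition forces $\beta_\star$ to lie below this limit superior. Indeed, $\beta_\star = \inf_{n\in\N}\beta_{n} \le \beta_{n}$ for every $n$, so passing to the limit superior gives $\beta_\star \le \limsup_{n\to\infty}\beta_{n} \le \beta(\Omega)$, which is the claimed inequality. There is no real obstacle here: the entire substance of the theorem is contained in \cref{P:USC}, and the present statement is just its reformulation for a fixed family of subspaces satisfying a uniform lower bound. One could additionally remark that, unlike the Fortin-operator argument given just above, this route requires no construction of a projection $\Pi_{n}$ and no control of its norm — only the weak density of the pressure spaces.
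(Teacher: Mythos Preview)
Your proof is correct and follows exactly the same route as the paper: the authors simply state that the result is an immediate consequence of \cref{P:USC}, and your argument spells out the one remaining trivial step $\beta_\star=\inf_n\beta_n\le\limsup_n\beta_n\le\beta(\Omega)$.
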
 
\begin{proof}
This is an immediate consequence of \cref{P:USC}. 
\end{proof}

\section{Examples of dependence with respect to the domain}\label{S:ExContDom}
We present several examples of sequences of domains $\Omega_n$ that converge to a limiting domain $\Omega$ in various senses. All examples satisfy the assumption of \cref{T:semiconv}, after a possible rescaling to achieve $\Omega_{n}\subset\Omega$. Some of them satisfy a stronger assumption (namely, the transformations tend to the identity in Lipschitz norm) and convergence occurs. As was already mentioned in \cite{Zsuppan2011} for conformal mappings, the absence of such a condition results in an absence of convergence.

\subsection{Cusp domains tending to a Lipschitz domain}
\label{Ss:CusptoLips}
For each integer $n \ge 1$, let $\Omega_n$ be the set of points $(x,y)$ such that
$$0< x <1, \quad -x^{\frac1n+1} < y < x^{\frac1n+ 1}.$$
For any $n$, the domain $\Omega_n$ has a cusp at the origin. By similar arguments as in \cite{Tartar_NS2006}, we derive that $\beta(\Omega_n)=0$. In the limit $n\to\infty$, we obtain the isosceles right triangle
\[
   \Omega = \{(x,y)\in\R^2,\quad 0< x <1, \quad -x < y < x\}.
\]
So $\Omega$ is a Lipschitz domain and its inf-sup constant is positive. We note that for any $n\ge1$
$$\Omega_n \subset \Omega,$$
and the distance of $\partial\Omega_n$ to $ \partial\Omega$ is smaller than
$\sup_{0\le x \le 1} x(1-x^{\frac1n})$. This implies that the measure of $\Omega\setminus\Omega_n$ tends to zero when $n$ tends to infinity. So we are in the framework of \cref{P:USC}. We have the upper semi-continuity without the continuity:
\[
   \beta(\Omega_n)=0<\beta(\Omega).
\]

\subsection{A self-similar excrescence}
\label{Ss:sefsim}
Another example where the convergence does not hold is obtained if we add to a disk $\Omega$ a small equilateral triangle whose surface tends to $0$. Let us say that $\Omega_n$ is the union of the unit disk centered at the origin and the equilateral triangle with side length $1/n$ and top vertex at $(0,1+\frac{1}{2n})$, see \cref{F:DiskEpi}.
The inf-sup constants of $\Omega_n$ do not converge to that of the disk as $n\to\infty$ because the presence of the angle $\frac\pi3$ on the boundary of $\Omega_n$ implies by virtue of \cite[Theorem 3.3]{CoCrDaLa15} that we have the upper bound for all  $n\ge1$ 
\[
   \beta(\Omega_n) \le \sqrt{\frac12-\frac{\sin\omega}{2\omega}}\quad
   \mbox{with}\quad \omega=\frac{\pi}{3}.
\]
This upper bound is smaller than $\beta(\Omega)$:
\[
   \beta(\Omega_n) \le \sqrt{\frac12-\frac{3\sqrt3}{4\pi}} < \sqrt{\frac12}=\beta(\Omega)\,.
\]

\begin{figure}[ht]
\begin{minipage}{0.48\textwidth}
    \figinit{1.3mm}
    \figpt 1:( 0,  0)
    \figpt 3:( 0, 15.)
    \figpt 4:( 5, 6.34)
    \figpt 5:( -5, 6.34)
    \figptcirc 6 :: 1;10 (71.40962211)
    \figptcirc 7 :: 1;10 (108.59037789)
    \figpt 21:( 23,  0)
    \figpt 23:( 23, 11.)
    \figpt 24:( 24, 9.27)
    \figpt 25:( 22, 9.27)
    \figptcirc 26 :: 21;10 (86.632987031)
    \figptcirc 27 :: 21;10 (93.367012969)

    \def\MyPSfile{}
    \psbeginfig{\MyPSfile}
    \psline[6,3,7]
    \psarccircP 1 ; 10 [7,6]
    \psline[26,23,27]
    \psarccircP 21 ; 10 [27,26]
  \psendfig

    \figvisu{\figbox}{}{%
    \figinsert{\MyPSfile}
    \figwrites 1 :{$n=1$}(12)
    \figwrites 21 :{$n=5$}(12)
    }
\centerline{\box\figbox}
\end{minipage}
\;\;
\begin{minipage}{0.48\textwidth}
\begin{tabular}{@{}c@{\,\,}c@{}}
\ &\ \\
\includegraphics[width=0.44\textwidth]{./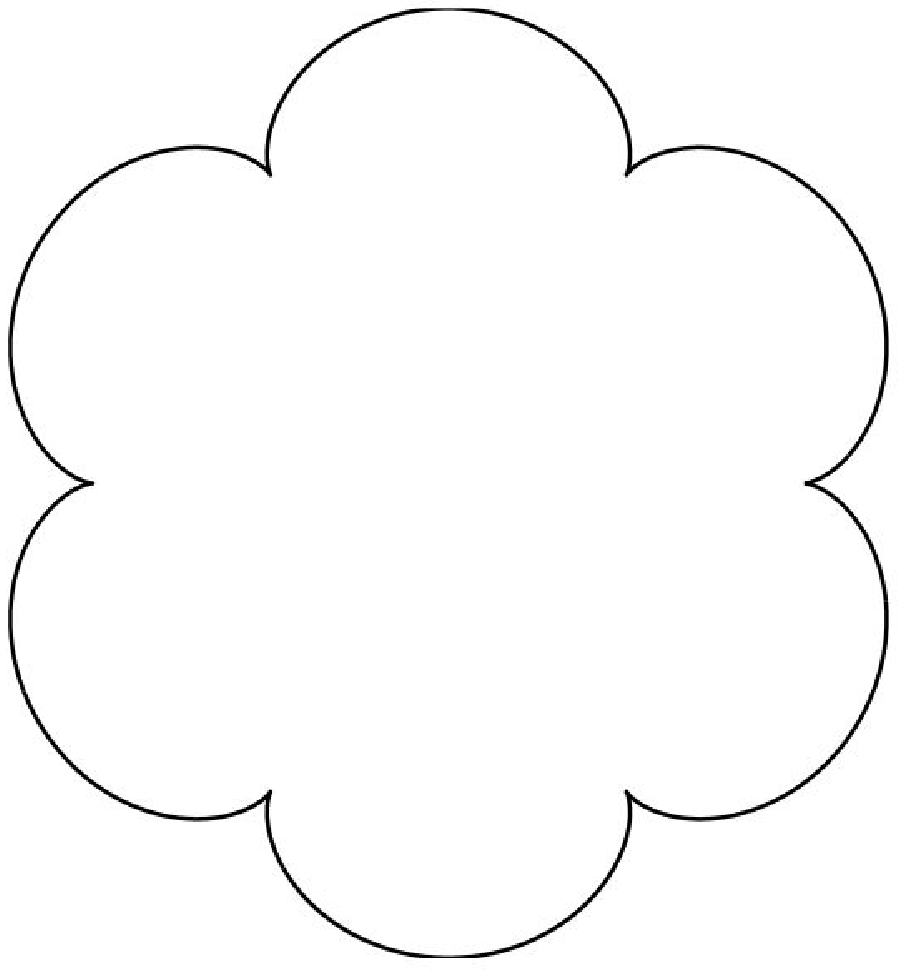}%
&
\includegraphics[width=0.47\textwidth]{./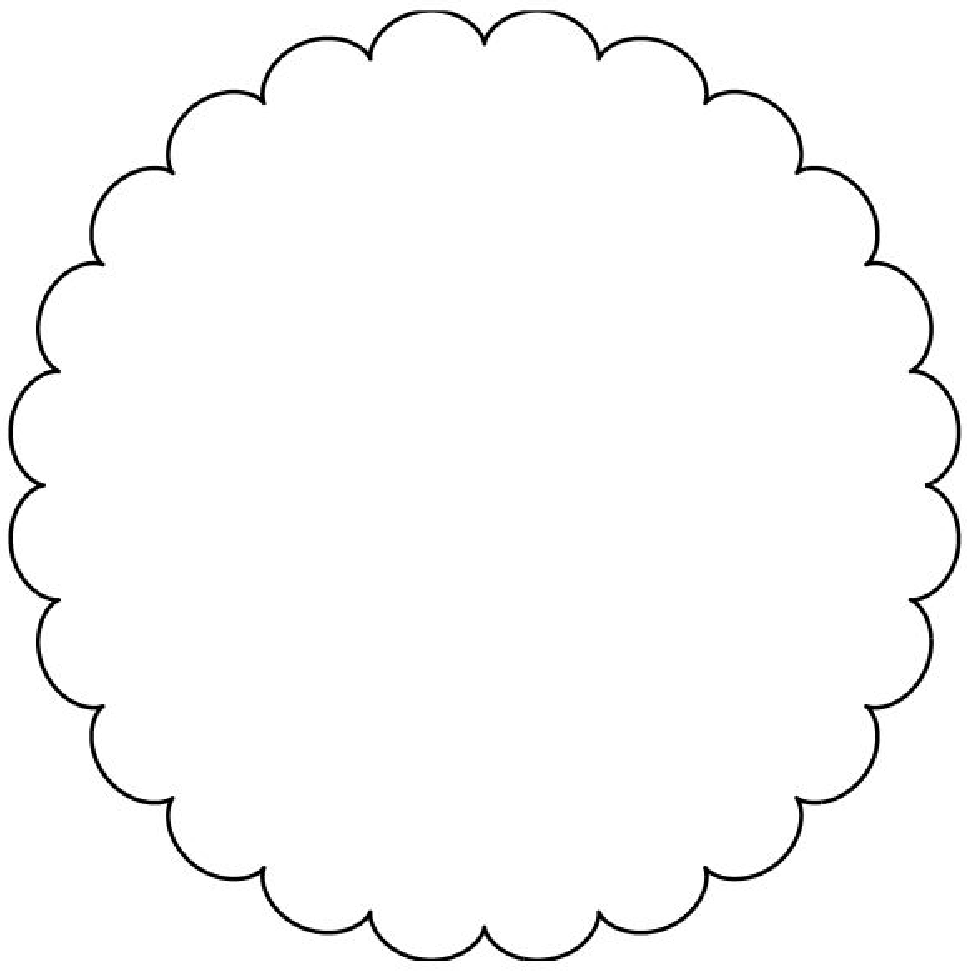} \\[0.1ex]
$n=7$ & $n=25$
\end{tabular}
\end{minipage}
\caption{Disks with hats from \S\,\ref{Ss:sefsim} (left) \quad Epitrochoids from \S\,\ref{Ss:Epitro} (right)}
\label{F:DiskEpi}
\end{figure}


\subsection{Regular polygons tending to a disk}
\label{Ss:Regpoltodisc}
Let $\Omega$ be the unit disk in $\R^2$ and let $\Omega_n$  be a regular convex polygon with $n$ edges which is inscribed in $\Omega$. The Horgan--Payne angle (cf. \cite{Stoyan2001}) of $\Omega_n$ with respect to the center of the ball is  $\frac{\pi}{2}- \frac{\pi}{n}$. Thus, it follows from \cite{HorganPayne1983} and \cite[Theorem 5.1]{CoDa2015} that 
$$
   \sin (\frac{\pi}{4}- \frac{\pi}{2n}) \le 
   \beta(\Omega_n) \le \frac{1}{\sqrt{2}} = \sin \frac{\pi}{4}.
$$
Since the inf-sup constant of the disk $\Omega$ is $\frac{1}{\sqrt{2}}$, we find
\[
   0 \le \beta(\Omega) - \beta(\Omega_n) \le 
   \sin \frac{\pi}{4} - \sin \big(\frac{\pi}{4} - \frac{\pi}{2n}\big) \le 
   \frac{\pi}{2n} \,.
\]
So we have (at least) a convergence of order $1$. This example pertains typically to the framework of polygonal approximation of a regular domain. We prove generally the convergence of the inf-sup constant in this case, see \cref{T:Omegah}  further  on.

\subsection{Conformal mappings and epitrochoids}
\label{Ss:Epitro}
The situation where plane domains are transformed by conformal mappings has been investigated by Zsupp\'an \cite{Zsuppan2011}: Corollary 3.9 {\em loc.cit.} can be stated as follows

\begin{theorem}[Zsupp\'an]
\label{T:Zs}
Let $\Omega$ and $\widetilde\Omega$ be two simply connected plane domains with piecewise smooth   boundaries.  Let $g$ denote the bijective conformal mapping of $\Omega$ onto $\widetilde\Omega$. If $|g'-1| \le \varepsilon<1$ in the closure of $\Omega$, then we have
\[
   \big| \beta(\Omega) - \beta(\widetilde\Omega) \big| \le \frac{\sqrt{2}\, \varepsilon}{1-\varepsilon}\,.
\]
\end{theorem}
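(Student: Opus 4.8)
The plan is to relate the inf-sup constant on $\Omega$ to that on $\widetilde\Omega = g(\Omega)$ by transporting test functions through the conformal change of variables, and to control all the resulting distortion factors by $\varepsilon$. Writing $\widetilde\x = g(\x)$, a velocity field $\widetilde\bv \in H^1_0(\widetilde\Omega)^2$ pulls back to $\bv = \widetilde\bv\circ g \in H^1_0(\Omega)^2$, and a pressure $\widetilde q \in L^2(\widetilde\Omega)$ pulls back to $q = \widetilde q\circ g$. The key facts of two-dimensional conformal calculus are: the Jacobian of $g$ is $|g'|^2$; the gradient transforms with the Cauchy--Riemann structure so that $|\grad\bv(\x)| = |g'(\x)|\,|\grad\widetilde\bv(\widetilde\x)|$ pointwise; and, crucially for the divergence, one has the identity $\div_{\x}\bv(\x) = |g'(\x)|\,(\div_{\widetilde\x}\widetilde\bv)(\widetilde\x)$ for the appropriately rotated vector field — indeed, writing $g' = a+ib$, the Cauchy--Riemann relations make the pull-back of $\widetilde\bv$ via the (complex-)linear action of $g'/|g'|$ satisfy exactly this, which is the classical reason conformal maps behave well for the Stokes problem. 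Combining these, the bilinear form $\big\langle\div\bv, q\big\rangle_\Omega$ and $\big\langle\div\widetilde\bv,\widetilde q\big\rangle_{\widetilde\Omega}$ differ only through the weight $|g'|$ appearing once (the Jacobian $|g'|^2$ is split as $|g'|\cdot|g'|$ between the change of variables and the gradient/divergence scaling), and similarly for the two norms in the denominator.

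Concretely, I would first mean-correct: given $q\in L^2_\circ(\Omega)$, the pulled-forward function on $\widetilde\Omega$ need not have mean zero, so I subtract its mean over $\widetilde\Omega$; since subtracting a constant only decreases the $L^2$ norm and does not change $\big\langle\div\widetilde\bv,\cdot\big\rangle_{\widetilde\Omega}$ (because $\div\widetilde\bv$ integrates to zero on $\widetilde\Omega$ for $\widetilde\bv\in H^1_0$), this correction is harmless and in the right direction. Then I would write the quotient defining $\beta(\widetilde\Omega)$ using these transported functions and estimate numerator and denominator. Every occurrence of the weight $|g'|$, which lies in $[1-\varepsilon, 1+\varepsilon]$ by hypothesis, is pulled out of an integral as an $L^\infty$ bound; matching one numerator weight against the product of weights from the two denominator factors yields a multiplicative distortion of the quotient by a factor between $\frac{1-\varepsilon}{1+\varepsilon}$ and $\frac{1+\varepsilon}{1-\varepsilon}$ (the exact exponents depending on how the $|g'|^2$ Jacobians distribute, but always controlled by powers of $(1\pm\varepsilon)$). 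Taking the sup over $\widetilde\bv$ (equivalently over $\bv$, since pull-back is a bijection of the respective $H^1_0$ spaces) and then the inf over $\widetilde q$, one obtains $\beta(\widetilde\Omega) \le \frac{1+\varepsilon}{1-\varepsilon}\,\beta(\Omega)$ or a comparable bound; running the argument with $g^{-1}$ in place of $g$ (whose derivative is $1/(g'\circ g^{-1})$, also within the required tolerance of $1$ up to adjusting constants) gives the reverse inequality, and subtracting yields $|\beta(\Omega)-\beta(\widetilde\Omega)| \le \big(\tfrac{1+\varepsilon}{1-\varepsilon}-1\big)\min(\beta(\Omega),\beta(\widetilde\Omega)) = \tfrac{2\varepsilon}{1-\varepsilon}\,\beta(\cdot) \le \tfrac{\sqrt2\,\varepsilon}{1-\varepsilon}$, using $\beta\le\tfrac1{\sqrt2}$.

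I expect two points to require care. The first, and main, obstacle is getting the divergence transformation rule exactly right: a naive pull-back $\widetilde\bv\mapsto\widetilde\bv\circ g$ does \emph{not} send divergence-nice fields to divergence-nice fields — one must compose with the pointwise rotation by $g'/|g'|$ (i.e. view $\bv$ and $\widetilde\bv$ as complex-valued and act by multiplication by $\overline{g'}/|g'|$ or a similar factor), and one must verify that this does not spoil the $H^1_0$ membership nor the $|\grad|$-scaling; this is where the piecewise-smoothness of $\partial\Omega$ and the bound $|g'-1|\le\varepsilon<1$ (which keeps $g$ a bi-Lipschitz homeomorphism up to the boundary) are used. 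The second point is bookkeeping the exact power of $(1\pm\varepsilon)$ so that the final constant comes out as $\sqrt2\,\varepsilon/(1-\varepsilon)$ rather than something slightly larger; this is routine once the transformation identities are pinned down, but it is worth double-checking the split of the Jacobian $|g'|^2$ between "change of variables" and "derivative scaling" in both numerator and denominator, and the elementary inequality $\tfrac{1+\varepsilon}{1-\varepsilon}-1 = \tfrac{2\varepsilon}{1-\varepsilon}$ together with $\beta\le\tfrac1{\sqrt2}$.
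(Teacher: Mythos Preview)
The paper does not actually prove this theorem: it is quoted from \cite{Zsuppan2011} (Corollary~3.9 there) and then immediately generalized to Lipschitz diffeomorphisms in Theorem~\ref{T:encadrement}. So there is no ``paper's own proof'' of this particular statement to compare against; the relevant comparison is with the proof of Theorem~\ref{T:encadrement}.

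Your strategy differs from the paper's in one essential way. You try to transport the \emph{inf-sup quotient} directly, relying on a pointwise identity of the form $\div_{\x}\bv = |g'|\,(\div_{\widetilde\x}\widetilde\bv)\circ g$ after a suitable rotation of the velocity field. The paper's proof of Theorem~\ref{T:encadrement} deliberately avoids needing any such identity: given $q\in L^2_\circ(\Omega)$, it forms the Jacobian-weighted pull-back $\tilde q_0 = (q\circ\F)\,\J_{\F}\in L^2_\circ(\widetilde\Omega)$, invokes the inf-sup condition on $\widetilde\Omega$ to produce $\tilde\bv$ with $\div\tilde\bv = \tilde q_0$ \emph{exactly}, and only then transports $\tilde\bv$ back and estimates the defect $\div\tilde\bv - (\div\bv)\circ\F$ in $L^2$. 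This is more robust (it works for arbitrary bi-Lipschitz maps, not just conformal ones) but yields only an unspecified dimensional constant $c(d)\,\varepsilon$ rather than the sharp $\sqrt{2}\,\varepsilon/(1-\varepsilon)$.

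Your approach, if the conformal calculus is carried out correctly, should indeed recover the sharper constant, and your endgame $\tfrac{2\varepsilon}{1-\varepsilon}\cdot\beta \le \tfrac{\sqrt2\,\varepsilon}{1-\varepsilon}$ via the universal bound $\beta\le 1/\sqrt2$ for simply connected plane domains is fine. But the proposal as written has a genuine gap precisely where you flag it: the divergence transformation. The claim $\div_{\x}\bv = |g'|\,(\div_{\widetilde\x}\widetilde\bv)\circ g$ is false for the naive pull-back $\bv=\widetilde\bv\circ g$, and you have not actually written down the corrected transport (multiplication by $\overline{g'}/|g'|$, or equivalently the Piola-type transform) nor verified that it simultaneously gives the divergence identity \emph{and} the gradient-norm scaling $|\grad\bv| = |g'|\,|\grad\widetilde\bv|\circ g$. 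These two requirements pull in different directions, and getting both (up to controllable $\varepsilon$-errors) is the whole content of the proof; until that computation is on the page, the argument is a plan rather than a proof.
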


We will generalize this statement to Lipschitz diffeomorphisms in \cref{T:encadrement} below. Still in \cite{Zsuppan2011}, an explicit example of a family of conformal mappings is investigated. The domain $\Omega$ is chosen as the unit disk and the mapping $g$ depends on two parameters: an integer $n\ge2$ and a real number $c>0$:
$$
 g_{n;c}(z) = 
    z-\frac {c}{n} z^{n}, \quad z\in\C.
$$
For any $c\le1$, the transformation $g_{n;c}$ is bijective on the unit disk $\Omega$.
Then $\Omega_{n}$ is defined as the image of $\Omega$ under $g_{n;c}$. This is an epitrochoid, see examples with $c=1$ in \cref{F:DiskEpi}. From \cite{Zsuppan2011} 
\[
 \beta(\Omega_n)^{2} =
 \begin{cases}
   \frac12-\frac{c}{4}(1+\frac1n)\,, & \mbox{ for $n$ odd,} \\
   \frac12-\frac{c}{4}\sqrt{1+\frac2n}\,, & \mbox{ for $n$ even.}
\end{cases} 
\]
As $n\to\infty$, $\Omega_{n}$ tends to $\Omega$, because $g_{n;c}(z)$ tends to $z$ uniformly in $z$ for $\vert z\vert \le 1$. Nevertheless, the assumption of \cref{T:Zs} is not satisfied, and 
\[
   \beta(\Omega_n) \to \beta_{\infty}=\sqrt{\tfrac12-\tfrac c4} < \sqrt{\tfrac12} = \beta(\Omega)\,.
\]
In contrast, if we make $c$ depend on $n$ via the law $c=n^{-\alpha}$ for some positive $\alpha$, these $\Omega_n$ enter  the framework of \cref{T:Zs} and the convergence of $\beta(\Omega_{n})$ to $\beta(\Omega)$ occurs.

\section{Continuity with respect to the domain}\label{S:ContDom}

\subsection{The continuity theorem}\label{Ss:Cont}

We recall that $d$ is the space dimension. We denote by $\I_d$ the identity matrix in $\R^d$ and by $\E_d$ the space of endomorphisms of $\R^d$ equipped with the norm subordinated to the Euclidean norm on $\R^d$. In this section $c(d)$ refers to various constants {\em depending only on} $d$ (and not on the domains or functions under consideration).

\begin{definition}
\label{D:perturbation}
Let $\Omega$ and $\widetilde\Omega$ be two bounded Lipschitz domains in $\R^d$, and let $\varepsilon$ be a positive number. We say that $\Omega$ and $\widetilde\Omega$ are {\em $\varepsilon$-close in Lipschitz norm} if there exists a diffeomorphism $\F=(\F_1,\ldots,\F_d)$ from $\widetilde\Omega$ onto $\Omega$ that satisfies
\begin{equation}
\label{E:perturbation}
\begin{cases}
   \F\in W^{1,\infty}(\widetilde\Omega)^d \quad\mbox{and}\quad
   \DNorm{ D\F -\I_d}{L^\infty(\widetilde\Omega; \E_d)} \le\varepsilon\,,\\
   \F^{-1}\in W^{1,\infty}(\Omega)^d \quad\mbox{and}\quad
   \DNorm{ D\F^{-1} -\I_d}{L^\infty(\Omega; \E_d)} \le \varepsilon\,.
\end{cases}
\end{equation}
\end{definition}

\begin{remark}
\label{R:perturbation}
Condition \cref{E:perturbation} is slightly redundant in the following sense: By an expansion of $D\F^{-1}$ as a Neumann series of $D\F-\I_d$  we find that if the first line of \cref{E:perturbation} is satisfied with some value $\varepsilon_0<1$ of $\varepsilon$, then the second line is satisfied for $\varepsilon =\varepsilon_0/(1-\varepsilon_0)$.
\end{remark} 

If condition \cref{E:perturbation} holds, the Jacobian determinant $\J_{\F}$ of $\F$ satisfies the estimate
\begin{equation}
\label{E:Jac}
   \DNorm{ 1- \J_{\F}}{L^\infty(\widetilde\Omega)} \le c(d)\varepsilon.
\end{equation}
Thus, the change of variables $\F$ in integrals and partial derivatives yields:

\begin{lemma} 
\label{L:Pio10}
We assume that the diffeomorphism $\F$ satisfies property \cref{E:perturbation}. 
Then there holds  
\\
(i) For $q\in L^2(\Omega)$, let $\tilde q$ denote $q\circ\F$. Then $\tilde q$ belongs to $L^2(\widetilde\Omega)$ and 
\begin{equation}
\label{E:Fq}
   \frac{1}{1+c(d)\,\varepsilon}\,  \DNorm{q }{0,\Omega} \le 
    \DNorm{\tilde q }{0,\widetilde\Omega}  \le 
   \big(1+c(d)\,\varepsilon\big)\DNorm{q }{0,\Omega}. 
\end{equation}
(ii) For $v\in H^1_0(\Omega)$, let $\tilde v$ denote $v\circ\F$. Then $\tilde v$ belongs to $H^1_0(\widetilde\Omega)$ and satisfies the estimates
\begin{gather}
\label{E:Fvj}
   \forall j=1,\ldots,d,\qquad
    \DNorm{\partial_{\tilde x_j}\tilde v - (\partial_{x_j}v) \circ\F }{0,\widetilde\Omega} 
   \le c(d)\,\varepsilon\, \Norm{v }{1,\Omega}\,,  \\
\label{E:Fv}
   \frac{1}{1+c(d)\varepsilon} \Norm{v }{1,\Omega}  \le 
    \Norm{\tilde v }{1,\widetilde\Omega}  \le 
   (1+c(d)\varepsilon) \Norm{v }{1,\Omega} \,.
\end{gather}
\end{lemma}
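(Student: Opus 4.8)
The plan is to prove \cref{L:Pio10} by reducing everything to the classical change-of-variables formula for the diffeomorphism $\F\colon\widetilde\Omega\to\Omega$ and then tracking constants by means of the hypotheses \cref{E:perturbation} and the Jacobian estimate \cref{E:Jac}. First I would establish the $L^\infty$ bound on the Jacobian $\J_\F$: from $\DNorm{D\F-\I_d}{L^\infty} \le\varepsilon$ one expands $\det(D\F) = \det(\I_d + (D\F-\I_d))$ by the Leibniz/cofactor formula, giving $\J_\F = 1 + \mathrm{tr}(D\F-\I_d) + O(\varepsilon^2)$, whence $|1-\J_\F|\le c(d)\varepsilon$ pointwise; this is exactly \cref{E:Jac}. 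For part (i), for $q\in L^2(\Omega)$ and $\tilde q = q\circ\F$, the substitution $\x = \F(\tilde\x)$ gives
\[
   \DNormc{q}{0,\Omega} = \int_\Omega |q(\x)|^2\,\d\x
   = \int_{\widetilde\Omega} |q(\F(\tilde\x))|^2\, \J_\F(\tilde\x)\,\d\tilde\x
   = \int_{\widetilde\Omega} |\tilde q(\tilde\x)|^2\, \J_\F(\tilde\x)\,\d\tilde\x,
\]
and combining with $\frac{1}{1+c(d)\varepsilon}\le\J_\F\le 1+c(d)\varepsilon$ (valid once $\varepsilon$ is small, which we may assume, absorbing the threshold into $c(d)$) yields \cref{E:Fq} after taking square roots and adjusting $c(d)$. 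That $\tilde q\in L^2(\widetilde\Omega)$ follows from the same identity.

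For part (ii), the membership $\tilde v\in H^1_0(\widetilde\Omega)$ follows because $\F\in W^{1,\infty}$ with $W^{1,\infty}$ inverse preserves $H^1$ (chain rule for Sobolev functions under bi-Lipschitz maps) and preserves the vanishing trace, since $\F$ maps $\partial\widetilde\Omega$ to $\partial\Omega$. The chain rule gives, for a.e.\ $\tilde\x$,
\[
   \partial_{\tilde x_j}\tilde v(\tilde\x)
   = \sum_{k=1}^d (\partial_{x_k}v)(\F(\tilde\x))\, \partial_{\tilde x_j}\F_k(\tilde\x).
\]
To get \cref{E:Fvj} I would write $\partial_{\tilde x_j}\F_k = \delta_{jk} + (D\F-\I_d)_{jk}$, so that $\partial_{\tilde x_j}\tilde v - (\partial_{x_j}v)\circ\F = \sum_k (\partial_{x_k}v)(\F)\,(D\F-\I_d)_{jk}$; taking the $L^2(\widetilde\Omega)$-norm, using $\DNorm{D\F-\I_d}{L^\infty}\le\varepsilon$ to pull out a factor $\varepsilon$, and then applying the already-proved \cref{E:Fq} (to each $\partial_{x_k}v$) to convert $\DNorm{(\partial_{x_k}v)\circ\F}{0,\widetilde\Omega}$ back to $\DNorm{\partial_{x_k}v}{0,\Omega}\le\Norm{v}{1,\Omega}$ gives the bound $c(d)\varepsilon\Norm{v}{1,\Omega}$. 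Finally \cref{E:Fv} follows from \cref{E:Fvj} by the triangle inequality: $\bigl|\Norm{\tilde v}{1,\widetilde\Omega} - \DNorm{(\grad v)\circ\F}{0,\widetilde\Omega}\bigr| \le c(d)\varepsilon\Norm{v}{1,\Omega}$, and \cref{E:Fq} applied componentwise to $\grad v$ shows $\DNorm{(\grad v)\circ\F}{0,\widetilde\Omega}$ is within a factor $1+c(d)\varepsilon$ of $\DNorm{\grad v}{0,\Omega}=\Norm{v}{1,\Omega}$; collecting the error terms and relabelling $c(d)$ yields the two-sided estimate.

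The only genuinely delicate point is the justification of the chain rule and the trace-preservation for $H^1_0$ under a merely bi-Lipschitz (not $C^1$) diffeomorphism — i.e.\ that $v\circ\F\in H^1_0(\widetilde\Omega)$ with the pointwise composition formula for the gradient holding a.e.; this is a standard fact (see e.g.\ the theory of Sobolev spaces under bi-Lipschitz change of variables), so I would simply invoke it. Everything else is bookkeeping: every estimate reduces to the change-of-variables formula combined with \cref{E:Jac} and $\DNorm{D\F-\I_d}{L^\infty}\le\varepsilon$, with all $d$-dependent constants absorbed into the generic $c(d)$, and with the implicit assumption (harmless for the intended application, where $\varepsilon\to0$) that $c(d)\varepsilon$ stays below $1$ so that $\J_\F$ and its reciprocal are controlled simultaneously.
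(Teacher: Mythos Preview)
Your proposal is correct and follows exactly the approach indicated in the paper, which in fact gives no detailed proof of the lemma at all: the paper merely notes the Jacobian estimate \cref{E:Jac} and then says ``Thus, the change of variables $\F$ in integrals and partial derivatives yields'' the lemma. Your write-up supplies precisely the details this sentence elides --- the change-of-variables identity for the $L^2$ norms, the chain rule $\partial_{\tilde x_j}\tilde v = \sum_k (\partial_{x_k}v)\circ\F\,(D\F)_{kj}$ combined with $D\F = \I_d + (D\F-\I_d)$, and the bookkeeping via \cref{E:Jac} and \cref{E:perturbation} --- so there is nothing to compare beyond noting that you have made explicit what the authors left implicit.
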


Thanks to \cref{L:Pio10}, we are in a position to compare $\beta(\widetilde\Omega)$ with $\beta(\Omega)$. Recall that both $\beta(\widetilde\Omega)$ and $\beta(\Omega)$ are positive since $\widetilde\Omega$ and $\Omega$ are bounded and Lipschitz.

\begin{theorem}
\label{T:encadrement}
There exists a constant $c(d)$ depending only on the dimension $d$ such that, if $\Omega$ and $\widetilde\Omega$ are $\varepsilon$-close in Lipschitz norm, 
the following estimate holds
\begin{equation}
\label{E:encadr}
   \big|\beta(\Omega) - \beta(\widetilde\Omega) \big|
   \le c(d)\,\varepsilon.
\end{equation}
\end{theorem}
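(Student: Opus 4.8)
The plan is to transport the inf-sup problem from $\widetilde\Omega$ back to $\Omega$ (and vice versa) using the diffeomorphism $\F$, and to show that the pull-back operation perturbs the relevant Rayleigh quotient only by a factor $1+c(d)\varepsilon$. By symmetry of the hypothesis \cref{E:perturbation} in $\Omega$ and $\widetilde\Omega$, it suffices to prove the one-sided estimate $\beta(\widetilde\Omega)\ge\beta(\Omega) - c(d)\varepsilon$; applying the same reasoning with the roles exchanged then yields \cref{E:encadr}. Actually, since the bound we will obtain is of the form $\beta(\widetilde\Omega)\ge(1-c(d)\varepsilon)\beta(\Omega) - c(d)\varepsilon$, and $\beta(\Omega)\le 1$, this already gives one inequality in \cref{E:encadr}, and the reverse follows by swapping $\Omega\leftrightarrow\widetilde\Omega$.

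First I would fix a nontrivial $\tilde q\in L^2_\circ(\widetilde\Omega)$ realizing (almost) the infimum for $\beta(\widetilde\Omega)$, i.e. with $J_{\widetilde\Omega}(\tilde q)\le (\beta(\widetilde\Omega)+\delta)\DNorm{\tilde q}{0,\widetilde\Omega}$. Set $q_0 = \tilde q\circ\F^{-1}\in L^2(\Omega)$; by \cref{L:Pio10}(i) (applied to $\F^{-1}$) its norm is comparable to $\DNorm{\tilde q}{0,\widetilde\Omega}$ up to $1+c(d)\varepsilon$. Then subtract its mean, $q = q_0 - \frac{1}{\meas(\Omega)}\int_\Omega q_0$, so that $q\in L^2_\circ(\Omega)$; I must check that the mean value correction is $O(\varepsilon)$ small, which follows because $\int_\Omega q_0 = \int_{\widetilde\Omega}\tilde q\,\J_{\F^{-1}}$ and $\int_{\widetilde\Omega}\tilde q = 0$, so $\int_\Omega q_0 = \int_{\widetilde\Omega}\tilde q(\J_{\F^{-1}}-1)$, which by \cref{E:Jac} is bounded by $c(d)\varepsilon\,\meas(\widetilde\Omega)^{1/2}\DNorm{\tilde q}{0,\widetilde\Omega}$; dividing by $\meas(\Omega)$ and using that measures are comparable gives $\DNorm{q_0 - q}{0,\Omega}\le c(d)\varepsilon\DNorm{\tilde q}{0,\widetilde\Omega}$. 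Hence $\DNorm{q}{0,\Omega}$ is still comparable to $\DNorm{\tilde q}{0,\widetilde\Omega}$ up to a factor $1+c(d)\varepsilon$.

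Next, by definition of $\beta(\Omega)$ there is $\bv\in H^1_0(\Omega)^d$ with $\langle\div\bv,q\rangle_\Omega \ge \beta(\Omega)\DNorm{q}{0,\Omega}\Norm{\bv}{1,\Omega}$ (up to $\delta$); normalize $\Norm{\bv}{1,\Omega}=1$. Pull it back: $\tilde\bv = \bv\circ\F\in H^1_0(\widetilde\Omega)^d$, with $\Norm{\tilde\bv}{1,\widetilde\Omega}\le 1+c(d)\varepsilon$ by \cref{E:Fv}. The crux is to estimate $\langle\div\tilde\bv,\tilde q\rangle_{\widetilde\Omega}$ from below in terms of $\langle\div\bv,q\rangle_\Omega$. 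Writing out $\div\tilde\bv = \sum_j\partial_{\tilde x_j}\tilde v_j$, I would split $\partial_{\tilde x_j}\tilde v_j = (\partial_{x_j}v_j)\circ\F + r_j$ where $\DNorm{r_j}{0,\widetilde\Omega}\le c(d)\varepsilon$ by \cref{E:Fvj}. Then
\[
  \langle\div\tilde\bv,\tilde q\rangle_{\widetilde\Omega}
  = \int_{\widetilde\Omega}\big((\div\bv)\circ\F\big)\,\tilde q \;+\; \sum_j\int_{\widetilde\Omega} r_j\,\tilde q.
\]
The second sum is bounded in absolute value by $c(d)\varepsilon\DNorm{\tilde q}{0,\widetilde\Omega}$. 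For the first integral, change variables back to $\Omega$: $\int_{\widetilde\Omega}((\div\bv)\circ\F)\tilde q = \int_\Omega (\div\bv)\, q_0\,\J_{\F^{-1}}$; replacing $\J_{\F^{-1}}$ by $1$ costs $c(d)\varepsilon\DNorm{\div\bv}{0,\Omega}\DNorm{q_0}{0,\Omega}\le c(d)\varepsilon\DNorm{\tilde q}{0,\widetilde\Omega}$, and replacing $q_0$ by $q$ costs $\DNorm{\div\bv}{0,\Omega}\DNorm{q_0-q}{0,\Omega}\le c(d)\varepsilon\DNorm{\tilde q}{0,\widetilde\Omega}$ by the mean-value estimate above. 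What remains is exactly $\langle\div\bv,q\rangle_\Omega\ge\beta(\Omega)\DNorm{q}{0,\Omega} - \delta$. Collecting terms: $\langle\div\tilde\bv,\tilde q\rangle_{\widetilde\Omega}\ge \beta(\Omega)\DNorm{q}{0,\Omega} - c(d)\varepsilon\DNorm{\tilde q}{0,\widetilde\Omega} - \delta \ge (\beta(\Omega) - c(d)\varepsilon)\DNorm{\tilde q}{0,\widetilde\Omega} - \delta$ (using $\DNorm{q}{0,\Omega}\ge(1-c(d)\varepsilon)\DNorm{\tilde q}{0,\widetilde\Omega}$ and $\beta(\Omega)\le 1$). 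Dividing by $\Norm{\tilde\bv}{1,\widetilde\Omega}\le 1+c(d)\varepsilon$ and recalling $J_{\widetilde\Omega}(\tilde q)\le(\beta(\widetilde\Omega)+\delta)\DNorm{\tilde q}{0,\widetilde\Omega}$ gives, after letting $\delta\to0$, the bound $\beta(\widetilde\Omega)\ge\beta(\Omega) - c(d)\varepsilon$. Exchanging the roles of the two domains yields the opposite inequality, hence \cref{E:encadr}.

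I expect the main obstacle to be purely bookkeeping: making sure that every replacement (mean-value subtraction on $q$, Jacobian $\to 1$, norms of $\bv$ and $\tilde\bv$, norms of $q$ and $\tilde q$) is uniformly controlled by $c(d)\varepsilon$ with the constant depending on $d$ only, and in particular checking that $\meas(\Omega)$ and $\meas(\widetilde\Omega)$ are comparable with a $d$-only ratio — which again is immediate from \cref{E:Jac}. No single estimate is hard; the care lies in keeping the perturbation genuinely first order in $\varepsilon$ and independent of the (possibly very bad) Lipschitz character of $\Omega$ and $\widetilde\Omega$ themselves, which is precisely what \cref{L:Pio10} secures.
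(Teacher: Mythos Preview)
Your proof is correct and follows the same transport-and-estimate strategy as the paper, with only minor tactical differences: the paper corrects the zero-mean condition by multiplying by the Jacobian ($\tilde q_0=\tilde q\,\J_{\F}$) rather than subtracting the average, and it invokes the equivalent right-inverse-of-$\div$ formulation of $\beta(\widetilde\Omega)$ (choosing $\tilde\bv$ with $\div\tilde\bv=\tilde q_0$ exactly) instead of working with the sup over test functions. The overall structure and the use of \cref{L:Pio10} to bound every replacement error by $c(d)\,\varepsilon$ are the same.
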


\begin{proof} 
Since the property \cref{E:perturbation} defining the $\varepsilon$-closeness is symmetric, it suffices to prove one inequality
\begin{equation}
\label{E:encadr2}
   \beta(\widetilde\Omega) - c(d)\varepsilon \le \beta(\Omega)
\end{equation}
to prove estimate \cref{E:encadr}.

Thus, let $q$ be any function in $L^2_\circ(\Omega)$. 
Setting $\tilde q = q \circ \F$, since the mean value of $\tilde q$ is not necessarily zero, we  consider  instead
\[
   \tilde q_0 = \tilde q\, \J_{\F}
\] 
that belongs to $L^2_\circ(\widetilde\Omega)$.
By \cref{E:Jac}, it satisfies
\begin{equation}
\label{E:Fq0}
   \DNorm{\tilde q - \tilde q_0 }{0,\widetilde\Omega}  \le 
   c(d)\,\varepsilon\,  \DNorm{\tilde q }{0,\widetilde\Omega} \,.
\end{equation}
Combining this with estimates \cref{E:Fq}, we find
\begin{equation}
\label{E:Fq0b}
   \frac{1}{1+c(d)\,\varepsilon}\,  \DNorm{q }{0,\Omega}   \le 
    \DNorm{\tilde q_0 }{0,\widetilde\Omega}  \le 
   \big(1+c(d)\,\varepsilon\big)  \DNorm{q }{0,\Omega}.
\end{equation}
Now it is well known (see for example \cite[Chap.~I, Lemma 4.1]{GiraultRaviart}) that the inf-sup condition on $\widetilde\Omega$ yields the existence of a vector function $\tilde\bv$ in $H^1_0(\widetilde\Omega)^d$  such that
\begin{equation}
\label{E:infsupwO}
   \div\tilde \bv=\tilde q_0 \quad\mbox{and}\quad
   \beta(\widetilde\Omega)  \Norm{\tilde \bv }{1,\widetilde\Omega}   \le
    \DNorm{\tilde q_0 }{0,\widetilde\Omega}. 
\end{equation}
We now set:
$$\bv = \tilde \bv \circ \F^{-1}.$$
The idea is to bound the quantity $\int_{\Omega} (\div\bv)(\x)q(\x)\,d\x$  from below  as follows.  Using \cref{E:Fvj} we immediately obtain
\begin{equation}
\label{E:divtv}
    \DNorm{\div\tilde\bv - (\div\bv) \circ\F }{0,\widetilde\Omega} 
   \le c(d)\,\varepsilon\,  \Norm{\bv }{1,\Omega}\,, 
\end{equation}
which allows to write
\begin{equation}
\label{E:I1I2I3}
   \int_{\Omega} (\div\bv)(\x)q(\x)\,\d\x =
   \int_{\widetilde\Omega} (\div\tilde\bv)(\tilde\x)\tilde q_0(\tilde\x)\,\d\tilde\x +
   \E
\end{equation}
where the error term $\E$ is given by
\[
   \E = \int_{\widetilde\Omega} \big((\div\bv)\circ\F - \div\tilde\bv\big)(\tilde\x)
   \  \tilde q_0(\tilde\x)\,\d\tilde\x.
\]
We bound this term using \cref{E:divtv} first, 
\[
   |\E| \le c(d)\,\varepsilon\,  \Norm{\bv }{1,\Omega}  
    \DNorm{\tilde q_0 }{0,\widetilde\Omega} 
\]
and next \cref{E:Fq0b}
\begin{equation}
\label{E:I1I2I3b}
   |\E| \le  
   c(d)\,\varepsilon\,  \Norm{\bv }{1,\Omega} 
    \DNorm{q }{0,\Omega} . 
\end{equation}
By substituting \cref{E:I1I2I3b} and the choice \cref{E:infsupwO} of $\tilde\bv$ into \cref{E:I1I2I3}, we obtain successively
\begin{align*}
   \int_{\Omega} (\div\bv)(\x)q(\x)\,\d\x &\ge
   \int_{\widetilde\Omega} \tilde q_0(\tilde\x)^2 \,\d\tilde\x -
   c(d)\,\varepsilon\,  \Norm{\bv }{1,\Omega} 
    \DNorm{q }{0,\Omega} \\
&\ge
   \beta(\widetilde\Omega)  \Norm{\tilde \bv }{1,\widetilde\Omega}  
    \DNorm{\tilde q_0 }{0,\widetilde\Omega}   -
   c(d)\,\varepsilon\,  \Norm{\bv }{1,\Omega} 
    \DNorm{q }{0,\Omega}  \\[1ex]
&\ge
   \big(\beta(\widetilde\Omega) -
   c(d)\,\varepsilon\big)  \Norm{\bv }{1,\Omega} 
    \DNorm{q }{0,\Omega} , 
\end{align*}
where we have used \cref{E:Fv} and \cref{E:Fq0b} for the last line (recall that $c(d)$ denotes a generic constant that may change between lines). This proves that $\beta(\Omega)$ is larger than $\beta(\widetilde\Omega) - c(d)\,\varepsilon$, which is our aim.
\end{proof}

\subsection{Polygonal approximation of plane curved domains}\label{Ss:Polyg}

An important application of \cref{T:encadrement} is the finite element approximation of curved domains.
Let $\Omega$ be a two-dimensional curved polygon with a Lipschitz-continuous and piecewise
$\CC^2$ boundary. This means that the boundary of $\Omega$ is a finite union of $\CC^2$-arcs $\Gamma_j$ that touch at corners $\bc_k$ and determine opening angles distinct from $0$ and $2\pi$, thus excluding outward and inward cusps.

\begin{definition}
\label{D:happrox}
Let $\Omega$ be a two-dimensional curved Lipschitz polygon with a piecewise $\CC^2$-boundary. 
Let $h$ be a positive number. 
A {\em polygonal $h$-approximation} of $\Omega$ denotes a polygon $\Omega_h$ with straight sides such that
\begin{itemize}
\item[(i)] Its  set of  corners  contains   the set $\{\bc_k\}$ of corners of $\Omega$,
\item[(ii)] Its corners belong to the boundary $\partial\Omega$ of $\Omega$,
\item[(iii)] The length of each side is less than $h$.
\end{itemize}
\end{definition}

We now state the main result of this section.

\begin{theorem}
\label{T:Omegah}
Let $\Omega$ be a two-dimensional curved Lipschitz polygon with a piecewise $\CC^2$-boundary. There exists a constant $c(\Omega)$ such that for all $h$-approximation $\Omega_h$ of $\Omega$
\begin{equation}
\label{E:encadrh}
   \big|\beta(\Omega) - \beta(\Omega_h) \big|
   \le c(\Omega)\,h.
\end{equation}
\end{theorem}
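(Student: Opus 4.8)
The plan is to reduce \Cref{T:Omegah} to \Cref{T:encadrement} by constructing, for a polygonal $h$-approximation $\Omega_h$ of $\Omega$, a bi-Lipschitz diffeomorphism $\F\colon\Omega_h\to\Omega$ that is $c(\Omega)h$-close to the identity in the sense of \Cref{D:perturbation}. Once such an $\F$ exists with $\DNorm{D\F-\I_2}{L^\infty}\le c(\Omega)h$ and the analogous bound for $\F^{-1}$ (the latter following automatically for $h$ small by \Cref{R:perturbation}), the estimate \cref{E:encadrh} is immediate from \cref{E:encadr}. So the whole proof is the construction of $\F$ and the verification of its Lipschitz bound, uniformly in the choice of $h$-approximation.

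First I would set up local coordinates near the boundary. Since $\partial\Omega$ is a finite union of $\CC^2$ arcs $\Gamma_j$ meeting at corners $\bc_k$ with opening angles in $(0,2\pi)$, one can cover a neighbourhood of $\partial\Omega$ by finitely many "collar" charts: near the interior of each arc, a tubular neighbourhood in which $\Omega$ looks like the subgraph of a $\CC^2$ function; near each corner $\bc_k$, a sectorial neighbourhood. Because the corners of $\Omega_h$ contain those of $\Omega$ and lie on $\partial\Omega$, and each side has length $<h$, for $h$ small the polygon $\partial\Omega_h$ stays inside these collars and, on each arc $\Gamma_j$, is an inscribed polygonal line whose vertices lie on $\Gamma_j$. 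In the graph coordinates of $\Gamma_j$ this is just piecewise-linear interpolation of a $\CC^2$ function at nodes spaced $\le h$ apart, so $\partial\Omega_h$ is $C^0$-close to $\partial\Omega$ with distance $O(h^2)$ and, crucially, the \emph{slope} of each polygonal segment differs from the tangent slope of $\Gamma_j$ by $O(h)$ (mean value theorem applied to the $\CC^2$ graph). Near a corner $\bc_k$ the two incident polygonal segments have exactly the directions of the two tangent half-lines of $\Omega$ at $\bc_k$ up to $O(h)$, so the polygon reproduces the corner angle up to $O(h)$.

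Then I would build $\F$ by the standard device of a boundary-fitted flow/interpolation. Concretely: choose a smooth cutoff $\chi$ equal to $1$ in a thin collar of $\partial\Omega_h$ and supported in a slightly larger collar contained in $\Omega_h\cap\Omega$. In the graph coordinates of each arc, define the vertical shift $\phi_h(x)=\big(\text{graph of }\Gamma_j\big)(x)-\big(\text{polygonal interpolant}\big)(x)$, which is $O(h^2)$ in sup norm with derivative $O(h)$; set $\F(x,y)=(x,\,y+\chi(x,y)\phi_h(x))$ in that chart, and glue the charts together with a partition of unity subordinate to the finite collar cover, taking care near the corners that the shifts from adjacent arcs are compatible (they both vanish to the right order at $\bc_k$). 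Away from the collar, $\F=\mathrm{id}$. By construction $\F$ maps $\partial\Omega_h$ onto $\partial\Omega$, is the identity in the bulk, and $D\F-\I_2$ is a sum of terms each bounded by $c(\Omega)h$ (products of $\nabla\chi$ with $\phi_h=O(h^2)$ and of $\chi$ with $\phi_h'=O(h)$), hence $\DNorm{D\F-\I_2}{L^\infty(\Omega_h;\E_2)}\le c(\Omega)h$. For $h$ below a threshold depending on $\Omega$ this is $<\tfrac12$, so $\F$ is a diffeomorphism and \Cref{R:perturbation} gives the matching bound on $D\F^{-1}-\I_2$; thus $\Omega$ and $\Omega_h$ are $c(\Omega)h$-close in Lipschitz norm and \Cref{T:encadrement} finishes the proof. (For $h$ above the threshold, \cref{E:encadrh} holds trivially by enlarging $c(\Omega)$, since $0<\beta(\Omega),\beta(\Omega_h)\le 1$.)

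The main obstacle is the behaviour near the corners $\bc_k$: there the collar coordinates of the two incident arcs do not match up smoothly, the curvature is only piecewise bounded, and one must check that the interpolation defect $\phi_h$ and its derivative still have the $O(h^2)$ and $O(h)$ bounds \emph{up to} the corner and that the glued map $\F$ remains genuinely bi-Lipschitz with the right norm — in particular that $\F$ does not degenerate where the corner sector is thin. This is where the hypotheses "opening angles distinct from $0$ and $2\pi$" (no cusps) and "$\Gamma_j$ piecewise $\CC^2$" are essential: they guarantee a uniform interior-cone condition near each $\bc_k$ and uniform $\CC^2$ bounds on each arc up to its endpoints, which keep all the constants in $D\F-\I_2$ controlled by a single $c(\Omega)$. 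Everything else — the graph-interpolation estimates, the partition of unity, invoking \Cref{R:perturbation} and \Cref{T:encadrement} — is routine.
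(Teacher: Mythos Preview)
Your overall strategy---reduce to \Cref{T:encadrement} by showing that $\Omega$ and $\Omega_h$ are $c(\Omega)h$-close in Lipschitz norm, using that the polygonal boundary is the piecewise-linear interpolant of a piecewise $\CC^2$ graph---is exactly the paper's. The implementation differs, however, and the difference matters precisely at the corners, which you correctly flag as the main obstacle.

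You attempt a \emph{single} global diffeomorphism $\F:\Omega_h\to\Omega$ by gluing local vertical shifts with a partition of unity. The paper instead introduces a chain of intermediate domains $\Omega=\Omega^0_h,\Omega^1_h,\dots,\Omega^K_h=\Omega_h$, where $\Omega^{k-1}_h$ and $\Omega^k_h$ differ only inside one rectangular chart $\U^k$ in which the relevant boundary piece is a single graph. The local map $(x_1,x_2)\mapsto(x_1,\tfrac{\varphi_h(x_1)}{\varphi(x_1)}\,x_2)$ is the identity on $\partial\U^k\cap\Omega^{k-1}_h$ because $\varphi=\varphi_h$ at the chart endpoints (the corners $\bc_{k-1},\bc_k$); hence it extends by the identity to a global diffeomorphism with no gluing whatsoever. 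One then applies \Cref{T:encadrement} $K$ times and finishes with the triangle inequality $|\beta(\Omega)-\beta(\Omega_h)|\le\sum_k|\beta(\Omega^{k-1}_h)-\beta(\Omega^k_h)|$.

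The paper's route sidesteps the corner difficulty entirely, whereas your partition-of-unity gluing has a genuine gap there. In an overlap of two charts near $\bc_k$, the convex combination $\psi_1\F_1+\psi_2\F_2$ of two maps that each send $\partial\Omega_h$ to $\partial\Omega$ need not itself send $\partial\Omega_h$ to $\partial\Omega$: the two ``vertical'' directions differ and $\partial\Omega$ is not affine. That both shifts vanish at $\bc_k$ only guarantees the image lies within $O(h^2)$ of $\partial\Omega$, not on it, so as written your $\F$ is not a diffeomorphism onto $\Omega$. This is repairable (e.g.\ by composing with a further normal projection, or---closer to the paper---by taking non-overlapping charts that meet exactly at the corners so each local map is already the identity on the chart's lateral boundary and no partition of unity is needed), but the intermediate-domain trick is the clean way around it.
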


\begin{proof}
In view of \cref{T:encadrement}, it would suffice to prove that $\Omega$ and $\Omega_h$ are $\varepsilon$-close for an $\varepsilon=c(\Omega)h$. In fact, we are going to construct a finite number of intermediate domains $\Omega^k_h$, $k=1,\ldots,K-1$,  such that
\begin{itemize}
\item The number of these domains is independent of $h$,
\item Setting $\Omega^0_h=\Omega$ and $\Omega^K_h=\Omega_h$, each pair of consecutive domains $(\Omega^{k-1}_h,\Omega^k_h)$ are $\varepsilon$-close for an $\varepsilon=c(\Omega)h$, for $k=1,\ldots,K$.
\end{itemize}
Taking \cref{D:perturbation} and \cref{R:perturbation} into account, this amounts to prove for each $k$ that there exists a Lipschitz mapping $\F^k_h$ in $W^{1,\infty}(\Omega^{k-1}_h)^2$ which is a diffeomorphism from $\Omega^{k-1}_h$ onto $\Omega^k_h$ and satisfies
\begin{equation}
\label{E:perturbationh}
   \F^k_h =  \Id + \G^k_h, \quad \hbox{with} \quad 
   \DNorm{ D\G^k_h }{L^\infty(\Omega^{k-1}_h; \E_2)} \le C(\Omega)h\,,
\end{equation}
where the constant $C(\Omega)$ depends only on $\Omega$. Here $\Id$ denotes the identity mapping $\bx\mapsto\bx$ in $\R^2$. To construct this mapping, we proceed in several steps.

\medskip\noindent
{\sc Step 1: Partition of the domain.}
After the possible adjunction of extra points inside the original sides of $\Omega$, that we will still denote by $\bc_k$, we may assume that each new smaller side $\Gamma_k$ is the graph of a $\CC^2$ function in some coordinate system. 
Thus we can cover the boundary of the domain $\Omega$  by the closure of  open sets $\U^1, \ldots,  \U^K$ so that for each $k$, $1 \le k \le K$, after a possible rigid motion $\M_k$,
\begin{itemize}
\item the set $\overline\U{}^k$ is a rectangle $[a_k,b_k]\times[0,r_k]$,
\item the local parts of the boundaries $\partial\Omega\cap\U^k$ and $\partial\Omega_h\cap\U^k$ are the graphs of a $\CC^2$ map $\varphi^k$ and a Lipschitz map $\varphi^k_h$, respectively, defined on $[a_k,b_k]$ with values in $[0,r_k]$
\item $(a_k,\varphi(a_k))=(a_k,\varphi^k_h(a_k))=\bc_{k-1}$ and $(b_k,\varphi(b_k))=(b_k,\varphi^k_h(b_k))=\bc_{k}$.
\end{itemize}   
%
\begin{figure}[ht]
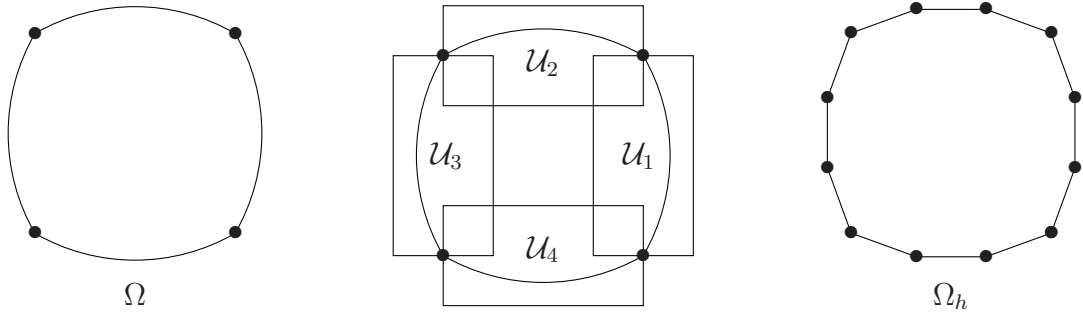

\begin{center}
\begin{minipage}{0.3\textwidth}
    \figinit{1.33mm}
    \figpt 1:( 0,  0)
    \figpt 2:( -7.320508075688775,0)
    \figptcirc 3 :: 2;20 (-30)
    \figptcirc 4 :: 2;20 (-10)
    \figptcirc 5 :: 2;20 ( 10)
    \figptcirc 6 :: 2;20 ( 30)
    \figptsrot 7 = 1, 2, 3, 4, 5, 6 /1, 90/
    \figptsrot 13 = 1, 2, 3, 4, 5, 6 /1, 180/
    \figptsrot 19 = 1, 2, 3, 4, 5, 6 /1, 270/
    \def\MyPSfile{}
    \psbeginfig{\MyPSfile}
    \psarccircP 2 ; 20 [3,6]
    \psarccircP 8 ; 20 [9,12]
    \psarccircP 14 ; 20 [15,18]
    \psarccircP 20 ; 20 [21,24]
  \psendfig
    \figvisu{\figbox}{}{%
    \figinsert{\MyPSfile}
    \figwrites 1 :{$\Omega$}(15)
    \figwritec[3,9,15,21]{$\bullet$}
    }
\centerline{\box\figbox}
\end{minipage}
\hfill
\begin{minipage}{0.3\textwidth}
    \figinit{1.33mm}
    \figpt 1:( 0,  0)
    \figpt 2:( -7.320508075688775,0)
    \figptcirc 3 :: 2;20 (-30)
    \figptcirc 4 :: 2;20 ( 30)
    \figvectC 30 (5,0)
    \figptstra 5 = 3, 4 /1, 30/
    \figptstra 7 = 3, 4 /-1, 30/
    \figptsrot 9 = 1, 2, 3, 4, 5, 6, 7, 8 /1, 90/
    \figptsrot 17 = 1, 2, 3, 4, 5, 6, 7, 8 /1, 180/
    \figptsrot 25 = 1, 2, 3, 4, 5, 6, 7, 8 /1, 270/
    \def\MyPSfile{}
    \psbeginfig{\MyPSfile}
    \psarccircP 2 ; 20 [3,4]
    \psarccircP 10 ; 20 [11,12]
    \psarccircP 18 ; 20 [19,20]
    \psarccircP 26 ; 20 [27,28]
    \psline [5,6,8,7,5]
    \psline [13,14,16,15,13]
    \psline [21,22,24,23,21]
    \psline [29,30,32,31,29]
  \psendfig
    \figvisu{\figbox}{}{%
    \figinsert{\MyPSfile}
    \figwrites 1 :{$\U_4$}(8)
    \figwritee 1 :{$\U_1$}(8)
    \figwriten 1 :{$\U_2$}(8)
    \figwritew 1 :{$\U_3$}(8)
    \figwritec[3,11,19,27]{$\bullet$}
    }
\centerline{\box\figbox}
\end{minipage}
\hfill
\begin{minipage}{0.3\textwidth}
    \figinit{1.33mm}
    \figpt 1:( 0,  0)
    \figpt 2:( -7.320508075688775,0)
    \figptcirc 3 :: 2;20 (-30)
    \figptcirc 4 :: 2;20 (-10)
    \figptcirc 5 :: 2;20 ( 10)
    \figptcirc 6 :: 2;20 ( 30)
    \figptsrot 7 = 1, 2, 3, 4, 5, 6 /1, 90/
    \figptsrot 13 = 1, 2, 3, 4, 5, 6 /1, 180/
    \figptsrot 19 = 1, 2, 3, 4, 5, 6 /1, 270/
    \def\MyPSfile{}
    \psbeginfig{\MyPSfile}
    \psline [3,4,5,9,10,11,15,16,17,21,22,23,24]
  \psendfig
    \figvisu{\figbox}{}{%
    \figinsert{\MyPSfile}
    \figwrites 1 :{$\Omega_h$}(15)
    \figwritec[3,4,5,9,10,11,15,16,17,21,22,23,24]{$\bullet$}
    }
\centerline{\box\figbox}
\end{minipage}
\end{center}
\caption{A curved square, its map neighborhoods, and its polygonal approximation}
\label{F:CurvedSquare2}
\end{figure}
%
Then we introduce intermediate domains $\Omega^0_h=\Omega, \Omega^1_h, \ldots, \Omega^{K }_h=\Omega_h$ so that
$$ \Omega^{k-1}_h\cap\U^k = \Omega\cap\U^{k}, \qquad
   \Omega^{k}_h\cap\U^k = \Omega_h\cap\U^k, \qquad
   \Omega^{k-1}_h\cap\complement\,\U^k = \Omega^{k}_h\cap\complement\,\U^k.
$$
Here  $\complement\,\U$ stands for $\R^d\setminus \overline \U$.
It follows that $\Omega^{k}_h$ is a local polygonal $h$-approximation of $\Omega^{k-1}_h$ subordinate to the neighborhood $\U^k$ (in the sense that $\Omega^{k-1}_h$ and $\Omega^k_h$ coincide outside $\U^k$ and that the properties (i) to (iii) of \cref{D:happrox} hold for the parts $\partial\Omega\cap\U^k$). Indeed, we intend to construct a diffeomorphism
$\F_h^k$  from $\Omega^{k-1}_h$ onto  $\Omega^k_h$ which is equal to the identity outside of $\U^k$.

\medskip\noindent
{\sc Step 2: Construction of the diffeomorphism at step $k$.}
From now on, we drop the exponent $k$ and restrict the discussion to a local polygonal $h$-approximation in a rectangle $\U=(a,b)\times(0,r)$  such that 
\begin{align*}
   \overline\Omega\cap\U &= 
   \big\{(x_1,x_2)\in\R^2,\quad a<x_1<b,\ \ 0<x_2\le \varphi(x_1)\big\}\,, \\
   \overline\Omega_h\cap\U 
   &= \big\{(x_1,x_2)\in\R^2,\quad a<x_1<b,\ \ 0<x_2\le \varphi_h(x_1) \big\} \,.
\end{align*}
Without restriction, we assume that $\varphi$ is bounded  from below  by a constant $\eta_0>0$
\begin{equation}
\label{E:eta0}
   \exists\eta_0>0,\quad\forall x_1\in[a,b],\quad \varphi(x_1)\ge\eta_0.\end{equation}
Since, in this situation, $\Omega$ and $\Omega_h$ coincide outside $\U$, then $\varphi(a)=\varphi_h(a)$ and $\varphi(b)=\varphi_h(b)$. We define the diffeomorphism $\F_h$ by
\[
  \F_h\x =
\begin{cases}
  \x & \mbox{if}\quad \x\in\Omega\cap\complement\,\U\\
	(x_1, \frac{\varphi_h(x_1)}{\varphi(x_1)}\, x_2) & \mbox{if}\quad \x\in\Omega\cap\U\,.
\end{cases}
\]
So the mapping $\F_h$  has a continuous extension to  $\overline\Omega$ since it is the identity on $\partial\U\cap\Omega$ (i.e., on the three segments $\{ x_1\in[a,b],\ x_2=0\}$, $\{x_1=a,\ x_2\in[0,\varphi(a)]\}$, and $\{x_1=b,\ x_2\in[0,\varphi(b)]\}$). We write
\[
  \F_h = \Id + \G_h \quad\mbox{with}\quad \G_h\x =
\begin{cases}
  0 & \mbox{if}\quad \x\in\Omega\cap\complement\,\U\\
	(0, \frac{\varphi_h(x_1)-\varphi(x_1)}{\varphi(x_1)}\, x_2) 
	& \mbox{if}\quad \x\in\Omega\cap\U\,.
\end{cases}
\]
Therefore, we have the bound for $D\G_h$, with a constant $c(\Omega)$ depending only on $\Omega$:
$$\DNorm{ D\G_h }{L^\infty(\Omega; \E_2)} \le c(\Omega) \Big\Vert 
   \frac{\varphi_h-\varphi}{\varphi} \Big\Vert_{W^{1,\infty}(a,b)} .$$
Since $\varphi'$ is bounded from above and $\varphi$ satisfies \cref{E:eta0},  we obtain, for another constant $c(\Omega)$,
$$  \DNorm{ D\G_h }{L^\infty(\Omega; \E_2)} \le c(\Omega)
 \DNorm{ \varphi_h-\varphi }{W^{1,\infty}(a,b)} .$$
By definition of the polygonal $h$-approximation, the interval $[a,b]$ is the union of smaller intervals $[a',b']$ of length less than $h$ and such that
$$ \varphi_h(a')-\varphi(a') = 0, \qquad
 \varphi_h(b')-\varphi(b') = 0, \qquad\hbox{and}\qquad
 \forall x_1\in[a',b'],\; \varphi''_h(x_1) = 0.$$
  Thus, $\varphi_h$ is a piecewise affine interpolate of $\varphi$ (see \cref{D:happrox}).
Using that $\varphi$ is of class $\CC^2$, one obtains immediately
$$\DNorm{ \varphi_h-\varphi }{W^{1,\infty}(a,b)} \le \DNorm{\varphi''}{L^{\infty}(a,b)}\,(h+h^2).$$
Then, reintroducing the exponent $k$, we find, as a direct consequence of \cref{T:encadrement}:
\begin{equation}
\label{E:betak}
  \big|\beta(\Omega_h^{k-1}) - \beta(\Omega_h^k) \big|
   \le c(\Omega)\,h, \quad k=1,\ldots, K.
\end{equation}

\noindent{\sc Step 3: Conclusion}.
Finally, the bound for $\big|\beta(\Omega) - \beta(\Omega_h) \big|$
follows from the triangle inequality
$$\big|\beta(\Omega) - \beta(\Omega_h) \big| \le \sum_{k = 1}^K \big|\beta(\Omega_h^{k-1}) - \beta(\Omega_h^k) \big|$$
and estimates \cref{E:betak}, since the number $K$ does not depend on $h$. 
\end{proof}

\begin{remark}
Along the same lines as for the two-dimensional case, one can prove convergence of the inf-sup constant for polyhedral approximations of  certain classes of three-dimensional domains.

(1) Suppose  that $\Omega\subset\R^3$ has a $\CC^2$ boundary $\partial\Omega$. Then one may consider a family of polyhedral approximations $\Omega_h$ defined by regular triangulations of $\partial\Omega$. This means that the boundary $\partial\Omega_h$ of $\Omega_h$ is defined by planar triangles with vertices on $\partial\Omega$ whose diameter is bounded from above by  $Ch$ and whose inner radius is bounded from below  by $ch$, where $c$ and $C$ are constants independent of $h$.  In this situation, we can prove  an error estimate of order $h$ for the approximation of $\beta(\Omega)$ by $\beta(\Omega_h)$ as in \cref{E:encadrh}.  The proof has two steps: (i) construction of a global diffeomorphism $\F_h$, (ii) error estimates. For (i), we use the regularity of the inner unit normal field $\bn$ on the $\CC^2$ surface $\sS:=\partial\Omega$ and note that for a sufficiently small positive $\varepsilon_0$, the mapping
\[
\begin{array}{ccc}
   \Psi: \sS\times[0,2\varepsilon_0] &\longrightarrow &\overline\U \\
   (\by,\rho) & \longmapsto & \bx=\by + (\varepsilon_0-\rho)\bn(\by)
\end{array}
\]
is a diffeomorphism onto a tubular neighborhood $\overline\U$ of $\sS$ in $\R^3$, of width $2 \varepsilon_0$, that sends $\sS\times\{\varepsilon_0\}$ onto $\partial\Omega$. Then for $h$ small enough, the boundary of $\Omega_h$ is represented by the graph $\rho=\varphi_h(\by)$ of a function $\varphi_h:\sS\to[\frac12\varepsilon_0,\frac32\varepsilon_0]$:
\[
    \sS \ni \by \longmapsto \bx = \by + (\varepsilon_0-\varphi_h(\by))\bn(\by)\in\partial\Omega_h.
\]
The diffeomorphism $\F_h$ is then defined as
\[
  \F_h\x =
\begin{cases}
  \x & \mbox{if}\quad \x\in\Omega\cap\complement\,\U\\
	\by + (\varepsilon_0 - \frac{\rho}{\varepsilon_0}\varphi_h(\by))\, \bn(\by) & \mbox{if}\quad \x\in\Omega\cap\U
	\quad\mbox{with}\quad (\by,\rho)=\Psi^{-1}(\bx)\,.
\end{cases}
\]
(ii) Estimates for $\F_h-\Id$ in the Lipschitz norm then follow from standard $W^{1,\infty}$ estimates for interpolation with two-dimensional finite elements (see \cite[Theorem 16.1]{Ciarlet1991}, for example). 

(2) We can extend the proof above in the spirit of \cref{T:Omegah} if $\Omega$ is a piecewise $\CC^2$ domain with {\em straight edges} that form the boundaries of its faces $\mathsf{f}_j$ which are polygonal subdomains of $\CC^2$ surfaces $\sS_j$. Just as we imposed in 2D that corners of $\Omega$ be corners of $\Omega_h$, we impose now that the edges of $\Omega$ be edges of $\Omega_h$. 
\end{remark}

\section{Continuity with respect to the function spaces}\label{S:ContFun}

 Whereas the uniform discrete LBB condition~\cref{E:discreteLBB} is regularly addressed in the literature about numerical approximation of the solutions of incompressible fluid models, the convergence of the discrete inf-sup constants  to the exact inf-sup constant is rarely (if ever) examined.
 In this section, we intend to fill this gap and  give conditions on the sequence of finite-dimensional function spaces 
$X_{n}\subset H^1_0(\Omega)^d$, $M_{n}\subset L^2_\circ(\Omega)$, $n\in\N$ that guarantee that the discrete inf-sup constants $\beta_{n}$ converge to the inf-sup constant $\beta(\Omega)$ of the domain.
The convergence proof uses arguments in the spirit of the proof of the discrete LBB condition in the paper \cite{Verfuerth1984}, namely a combination of inverse estimates for the pressure spaces and approximation properties of the velocity spaces.

\subsection{The continuity theorem}\label{Ss:ContFun}

For a regularity index $s>0$, we introduce two characteristic constants associated with the function spaces $X_{n}$ and $M_{n}$.
The first constant is defined by comparing the Sobolev norm of order $s$ on $M_{n}$ with the equivalent $L^2$ norm \ (recall that $M_n$ is finite-dimensional)\begin{equation}
\label{E:etaN}
 \eta_{n,s} = \sup_{q\in M_{n}} \frac{\DNorm{q}{s,\Omega}}{\DNorm{q}{0,\Omega}}.
\end{equation} 
The second constant is defined by the approximation property of the spaces $X_{n}$
\begin{equation}
\label{E:epsN}
 \varepsilon_{n,s}= \sup_{\bu\ee\in\ee H^{1+s}(\Omega)\cap H^1_0(\Omega)}\ \ 
                 \inff_{\bv\ee\in\ee X_{n}}
                 \frac{\Norm{\bu-\bv}{1,\Omega}}{\DNorm{\bu}{1+s,\Omega}}\,.
\end{equation}

\begin{theorem}
\label{T:bNconv}
Let $\Omega\subset\R^{d}$ be a bounded Lipschitz domain. We assume that $X_{n}\subset H^1_0(\Omega)^d$, $M_{n}\subset L^2_\circ(\Omega)$, $n\in\N$, are finite-dimensional subspaces and that for some $0<s<\frac12$, $M_{n}$ is contained in the Sobolev space $H^{s}(\Omega)$. 
Then there is a constant $C_{s}$ depending on the domain $\Omega$ such that 
with the constants $\eta_{n,s}$ and $\varepsilon_{n,s}$ defined above,
we have 
\begin{equation}
\label{E:bNb}
  \beta_{n} \ge \beta(\Omega) - C_{s}\,\eta_{n,s}\,\varepsilon_{n,s}\,.
\end{equation}
In particular, if the $M_{n}$ are asymptotically dense in $L^2_\circ(\Omega)$ and 
\begin{equation}
\label{E:suff}
 \mbox{ if }\quad 
  \lim_{n\to\infty}\eta_{n,s}\,\varepsilon_{n,s}=0 
  \quad\mbox{ then }\quad
 \lim_{n\to\infty}\beta_{n} = \beta(\Omega)\,.
\end{equation}
\end{theorem}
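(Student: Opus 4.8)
The plan is to fix $q\in M_{n}$ and estimate $J_{n}(q)$ from below by $\beta(\Omega)\DNorm{q}{0,\Omega}$ minus a controllable error, then take the infimum over $q\in M_{n}$. Starting point: by the (continuous) inf-sup condition on the Lipschitz domain $\Omega$ there is $\bw = \bw(q)\in H^1_0(\Omega)^d$ with $\div\bw = q$ and $\Norm{\bw}{1,\Omega}\le\beta(\Omega)^{-1}\DNorm{q}{0,\Omega}$. If we could insert $\bw$ directly into $J_{n}(q)$ we would get $\beta(\Omega)$, but $\bw\notin X_{n}$, so we must replace it by some $\bv\in X_{n}$. The key point is that we cannot afford an arbitrary $H^{1}$-approximation; we need one whose $H^{1}$-error is controlled by $\varepsilon_{n,s}$, and for that $\bw$ must live in $H^{1+s}(\Omega)$. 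This is exactly where the hypothesis $s<\tfrac12$ and $M_{n}\subset H^{s}(\Omega)$ enter: by elliptic regularity for the ``Bogovskii-type'' problem (or the problem \cref{E:w(q)}) on a bounded Lipschitz domain, one has the shift estimate $\DNorm{\bw}{1+s,\Omega}\le C_{s}\DNorm{q}{s,\Omega}$ for $0<s<\tfrac12$ — the threshold $\tfrac12$ being the well-known limit of $H^{s}$-regularity on general Lipschitz domains. I would invoke this regularity result (citing the relevant literature on divergence/Bogovskii operators on Lipschitz domains) as the analytic input.

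The main steps then run as follows. First, given $q\in M_{n}$ nontrivial, produce $\bw\in H^{1+s}(\Omega)^d\cap H^1_0(\Omega)^d$ with $\div\bw=q$, $\Norm{\bw}{1,\Omega}\le\beta(\Omega)^{-1}\DNorm{q}{0,\Omega}$, and $\DNorm{\bw}{1+s,\Omega}\le C_{s}\DNorm{q}{s,\Omega}$. Second, apply the definition \cref{E:epsN} of $\varepsilon_{n,s}$ to get $\bv\in X_{n}$ with $\Norm{\bw-\bv}{1,\Omega}\le 2\,\varepsilon_{n,s}\DNorm{\bw}{1+s,\Omega}\le 2C_{s}\,\varepsilon_{n,s}\DNorm{q}{s,\Omega}$, and then use the inverse-type inequality \cref{E:etaN}, $\DNorm{q}{s,\Omega}\le\eta_{n,s}\DNorm{q}{0,\Omega}$, to conclude $\Norm{\bw-\bv}{1,\Omega}\le 2C_{s}\,\eta_{n,s}\varepsilon_{n,s}\DNorm{q}{0,\Omega}$. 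Third, estimate
\[
 \big\langle\div\bv,q\big\rangle_\Omega
 = \big\langle\div\bw,q\big\rangle_\Omega - \big\langle\div(\bw-\bv),q\big\rangle_\Omega
 \ge \DNormc{q}{0,\Omega} - \Norm{\bw-\bv}{1,\Omega}\DNorm{q}{0,\Omega}
 \ge \big(1 - 2C_{s}\,\eta_{n,s}\varepsilon_{n,s}\big)\DNormc{q}{0,\Omega},
\]
using $\DNorm{\div(\bw-\bv)}{0,\Omega}\le\Norm{\bw-\bv}{1,\Omega}$. Fourth, divide by $\Norm{\bv}{1,\Omega}\DNorm{q}{0,\Omega}$; since $\Norm{\bv}{1,\Omega}\le\Norm{\bw}{1,\Omega}+\Norm{\bw-\bv}{1,\Omega}\le\big(\beta(\Omega)^{-1}+2C_{s}\eta_{n,s}\varepsilon_{n,s}\big)\DNorm{q}{0,\Omega}$, one obtains
\[
 \frac{\big\langle\div\bv,q\big\rangle_\Omega}{\Norm{\bv}{1,\Omega}\DNorm{q}{0,\Omega}}
 \ge \frac{1-2C_{s}\eta_{n,s}\varepsilon_{n,s}}{\beta(\Omega)^{-1}+2C_{s}\eta_{n,s}\varepsilon_{n,s}}
 \ge \beta(\Omega) - C_{s}'\,\eta_{n,s}\varepsilon_{n,s},
\]
after a short elementary manipulation (absorbing $\beta(\Omega)\le 1$ and renaming constants). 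Taking the supremum over $\bv\in X_{n}$ and then the infimum over $q\in M_{n}$ yields \cref{E:bNb}.

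For the second assertion \cref{E:suff}: if $\eta_{n,s}\varepsilon_{n,s}\to0$, then \cref{E:bNb} gives $\liminf_{n}\beta_{n}\ge\beta(\Omega)$, while \cref{P:USC} (applicable because the $M_{n}$ are assumed asymptotically dense) gives $\limsup_{n}\beta_{n}\le\beta(\Omega)$; together these force $\beta_{n}\to\beta(\Omega)$. The main obstacle is entirely in the first step: establishing, with a constant depending only on $\Omega$ and $s$, the fractional regularity $\bw\in H^{1+s}$ with the bound $\DNorm{\bw}{1+s,\Omega}\le C_{s}\DNorm{q}{s,\Omega}$ for a right-hand divergence datum $q\in H^{s}(\Omega)$ on a general bounded Lipschitz domain. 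Everything else — the approximation estimate, the inverse inequality, the algebra to pass from the ratio to $\beta(\Omega)-C_{s}\eta_{n,s}\varepsilon_{n,s}$, and the combination with upper semi-continuity — is routine. I would also remark that the restriction $s<\tfrac12$ is not merely technical convenience but the genuine regularity threshold for such problems on Lipschitz domains, which is why the theorem is stated in that range.
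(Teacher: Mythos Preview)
Your approach differs from the paper's. The paper does not use a right inverse of the divergence at all. Instead it works with the Riesz representative $\bw(q)=\Delta^{-1}\grad q$ from \cref{E:w(q)}, which realizes the supremum $J(q)=\Norm{\bw(q)}{1,\Omega}$, and its Galerkin approximation $\bw_{n}(q)\in X_{n}$ defined by the same variational identity restricted to $X_{n}$, which realizes $J_{n}(q)=\Norm{\bw_{n}(q)}{1,\Omega}$. Choosing $q_{n}\in M_{n}$ normalized and minimizing $J_{n}(\cdot)/\DNorm{\cdot}{0,\Omega}$ (equivalently, the bottom eigenfunction of the discrete Schur complement), one has directly
\[
 \beta(\Omega)\le J(q_{n}) \le \beta_{n} + \Norm{\bw(q_{n})-\bw_{n}(q_{n})}{1,\Omega},
\]
and the Galerkin error is bounded by $\varepsilon_{n,s}\DNorm{\bw(q_{n})}{1+s,\Omega}\le C_{s}\,\varepsilon_{n,s}\DNorm{q_{n}}{s,\Omega}\le C_{s}\,\varepsilon_{n,s}\,\eta_{n,s}$ using only the $H^{1+s}$ regularity of the scalar Dirichlet Laplacian on Lipschitz domains. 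No ratio manipulation is needed, and the analytic input is the most classical one available.

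Your route also leads to \cref{E:bNb}, but step~1 is not correctly sourced, and this is more than a citation detail. You require the \emph{same} $\bw$ to satisfy both $\Norm{\bw}{1,\Omega}\le\beta(\Omega)^{-1}\DNorm{q}{0,\Omega}$ and $\DNorm{\bw}{1+s,\Omega}\le C_{s}\DNorm{q}{s,\Omega}$: the first is essential, since with a non-sharp constant $C_{0}$ in place of $\beta(\Omega)^{-1}$ your step~4 only yields $\beta_{n}\ge C_{0}^{-1}-C'\eta_{n,s}\varepsilon_{n,s}$, not \cref{E:bNb}. The Bogovskii operator gives the $H^{1+s}$ bound but with such a non-sharp $H^{1}$ constant; the function from \cref{E:w(q)} has the right regularity but does \emph{not} satisfy $\div\bw(q)=q$, so your step~3 would fail for it. The object you actually need is the minimal-$H^{1}_{0}$-norm solution of $\div\bw=q$, which solves the Stokes problem $-\Delta\bw+\grad p=0$, $\div\bw=q$, $\bw|_{\partial\Omega}=0$; its $H^{1+s}$ regularity for $0<s<\tfrac12$ on Lipschitz domains is indeed known, but it is a substantially deeper result than the scalar Dirichlet regularity the paper uses. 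So your argument can be completed, at the cost of invoking a harder theorem; the paper's choice of $\bw(q)$ buys a cleaner proof with a more elementary regularity input. Your treatment of \cref{E:suff} via \cref{P:USC} coincides with the paper's.
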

\begin{proof}
Define the functionals $J$ and $J_{n}$ as in \cref{E:J}. Recall that for $J$ we have the relation
\[
 J(q)=\Norm{\bw(q)}{1,\Omega}
\]
where $\bw(q)=\Delta^{-1}\grad q$ is the solution of the harmonic Dirichlet problem with $\grad q$ as right-hand  side. Likewise, the sup in $J_{n}$ is attained at $\bw_{n}(q)\in X_{n}$, which is the Galerkin solution 
$\bw_{n}(q)=\Delta_{n}^{-1}\grad q$
of the Dirichlet problem with the same right-hand side:
\begin{equation}
\label{E:wN(q)}
 \forall \,\bv\in X_{n},\quad
 \big\langle\grad\bw_{n}(q), \grad\bv\big\rangle_\Omega 
  = \big\langle \div\bv,q \big\rangle_\Omega \,.
\end{equation}
Taking $\bv=\bw_{n}(q)$ it follows that
\[
   J_{n}(q)=\Norm{\bw_{n}(q)}{1,\Omega}\,.
\]
Now for each $n$, we solve in $M_{n}$ the finite-dimensional eigenvalue problem for the Schur complement of the discretized Stokes system
\begin{equation}
\label{E:SchurEV}
  \SS_{n}q = \sigma q \quad \mbox{ with } \quad
    \big\langle \SS_{n}q,p \big \rangle_\Omega  = -\big\langle \bw_{n}(q),\grad p\big\rangle
    \;\;\forall\,p\in M_{n}\,.
\end{equation}
Let $\sigma_{n}$ be the smallest eigenvalue and $q_{n}$ a corresponding normalized eigenfunction. Thus $q_{n}$ is a minimizer in $M_{n}$  of the Rayleigh quotient
\[
  \frac{\big\langle \SS_{n}q,q \big\rangle_\Omega }{\DNormc{q}{0,\Omega}}
  =
  \frac{\big\langle \div \bw_{n}(q),q  \big\rangle_\Omega }{\DNormc{q}{0,\Omega}}
  = 
  \frac{\Normc{\bw_{n}(q)}{1,\Omega}}{\DNormc{q}{0,\Omega}}
  =
  \Big( \frac{J_{n}(q)}{\DNorm{q}{0,\Omega}} \Big)^{2}.
\]
We see that $q_{n}$ realizes the inf-sup condition, and if $\DNorm{q_{n}}{0,\Omega}=1$, we simply have 
\[
  \beta_{n}=J_{n}(q_{n})\,.
\]
Incidentally, we also have shown that $\sigma_{n}=\beta_{n}^{2}$.

Now we can write
\[
\begin{aligned}
 \beta(\Omega) \le J(q_{n}) &= J_{n}(q_{n}) + \big(J(q_{n})-J_n(q_{n})\big)\\
  & = \beta_{n} + \big(\Norm{\bw(q_{n})}{1,\Omega} - \Norm{\bw_{n}(q_{n})}{1,\Omega}\big)\\
  &\le \beta_{n} + \Norm{\bw(q_{n})-\bw_{n}(q_{n})}{1,\Omega}\,.
\end{aligned}
\]
Thus we have to estimate $\Norm{\bw(q_{n})-\bw_{n}(q_{n})}{1,\Omega}$, which is the Galerkin error in the solution of the Dirichlet problem with $\grad q_{n}$ as right-hand  side. We have
\[
 \Norm{\bw(q_{n})-\bw_{n}(q_{n})}{1,\Omega} = 
   \inff_{\bv_{n}\in X_{n}} \Norm{\bw(q_{n})-\bv_{n}}{1,\Omega}
   \le \varepsilon_{n,s}\DNorm{\bw(q_{n})}{1+s,\Omega}
\]
if $\bw(q_{n})\in H^{1+s}(\Omega)^{d}$. 
Now we use the fact that on a Lipschitz domain we have $H^{1+s}$ regularity for the solution of the Dirichlet problem of the Laplacian, because $s\in(0,\frac12)$, see \cite{CoLip}:
\[
  \Norm{\bw(q_{n})}{1+s,\Omega}\le C_{s}\DNorm{\grad q_{n}}{-1+s,\Omega}
   \le C_{s} \DNorm{q_{n}}{s,\Omega}\,.
\]
With the definition of $\eta_{n,s}$ we can further estimate
\[
 \DNorm{q_{n}}{s,\Omega} \le \eta_{n,s} \DNorm{q_{n}}{0,\Omega}=\eta_{n,s}\,.
\]
Altogether we have shown 
\[
  \beta(\Omega) \le \beta_{n} +\varepsilon_{n,s}\,C_{s}\,\eta_{n,s}
\]
as claimed in \cref{E:bNb}.
Finally, if $\lim_{n\to\infty}\eta_{n,s}\varepsilon_{n,s}=0$, then \cref{E:bNb} implies
\[
  \liminf_{n\to\infty} \beta_{n}\ge \beta(\Omega),
\]
which together with inequality \cref{E:limsup} from \cref{P:USC} proves that
$\di\lim_{n\to\infty}\beta_{n} = \beta(\Omega)$.
\end{proof}

\begin{remark}
\label{R:exactvelocity}
 The main tools in the proof, namely introduction of Sobolev spaces of fractional order, the approximation property \cref{E:epsN}, the inverse estimate \cref{E:etaN} and the $H^{1+s}$ regularity for the Dirichlet problem, were in the end only used to prove the error estimate for the Galerkin approximation of the Dirichlet problem \cref{E:wN(q)} with $\grad q_{n}$ as right-hand side, namely
\begin{equation}
\label{E:Galerr}
 \Norm{\bw(q_{n})-\bw_{n}(q_{n})}{1,\Omega}
 \le \delta_{n} \DNorm{q_{n}}{0,\Omega} \,,
\end{equation}
where $\delta_{n}= \varepsilon_{n,s}\,C_{s}\,\eta_{n,s}$. The assumptions of  \cref{T:bNconv}  implied that $\lim_{n\to\infty}\delta_{n}=0$, which allowed to complete the proof of the convergence of $\beta_{n}$.
 A weaker hypothesis that would still be sufficient for the convergence result would therefore be to assume directly that 
\[
 \mbox{ for all } q_{n}\in M_{n} \mbox{ , the estimate \cref{E:Galerr} holds with }
 \lim_{n\to\infty}\delta_{n}=0\,.
\]
There is one situation where this hypothesis obviously holds, namely when $\bw(q_{n})\in X_{n}$ for all $q_{n}\in M_{n}$, that is, the Dirichlet problem 
\[
 \Delta \bw = \grad q_{n}
\] 
can be solved exactly in $X_{n}$, because then $\bw(q_{n})=\bw_{n}(q_{n})$. 
This can be made to happen in very particular cases, if the Laplacian with Dirichlet conditions can be inverted explicitly for  right-hand  sides of the form $\grad q_{n}$ where $q_{n}$ runs through a basis of $M_{n}$. The space $X_{n}$ can then simply be chosen as the space generated by these solutions of the Dirichlet problem.
Such a special situation is analyzed in \cite{GaultierLezaun1996}, where the convergence $\beta_{n}\to\beta(\Omega)$ is then proved.
\end{remark}

\subsection{Convergence of the first eigenfunction}\label{Ss:convq1}

For the approximation of the inf-sup constant $\beta(\Omega)$ in \cref{T:bNconv} we computed the discrete inf-sup constant $\beta_{n}$ by solving an eigenvalue problem in $M_{n}$. In \cref{E:SchurEV}, this eigenvalue problem was described by the Schur complement of the discretized Stokes system. It is easy to see that the eigenvalue problem \cref{E:SchurEV} is equivalent to the following discretized Stokes eigenvalue problem in variational form.\\
Find $(\bw_{n},q)\in X_{n}\times M_{n}$ and $\sigma\in\C$ such that $q\ne0$ and
\begin{equation}
 \label{E:StokesEV}
\begin{aligned}
 &\forall\,\bv\in X_{n}, \quad   
 &\big\langle \grad\bw_{n},\grad\bv  \big\rangle_\Omega  - \big\langle \div\bv,q  \big\rangle_\Omega  &=0 , \\
  &\forall\,p\in M_{n},\quad 
  &\big\langle \div\bw_{n}, p \big\rangle_\Omega 
   &= \sigma \big\langle q,p  \big\rangle_\Omega . 
 \end{aligned}
\end{equation}
This is an eigenvalue problem in mixed form of the second type studied in \cite[Section 4]{BoffiBrezziGastaldi1997}. In the case of the Stokes system, however, the convergence analysis of \cite{BoffiBrezziGastaldi1997} does not apply, because the basic assumption is not satisfied, namely the compactness of the solution operator $T_{\Xi}:g\mapsto p$ in $L^2_\circ(\Omega)$ of the Stokes system for 
$(\bu,p)\in H^1_0(\Omega)^d \times L^2_\circ(\Omega)$
\[
 \begin{aligned}
  -\Delta \bu + \grad p &= 0\\
  \div\bu  &= g \,.
 \end{aligned}
\] 
Indeed, $T_{\Xi}$ is the inverse of the Schur complement operator $\SS=\div\Delta^{-1}\grad$, and it has long been known from the analysis of the Cosserat eigenvalue problem  that $\SS$ has a non-trivial essential spectrum: 
Namely, if $\Omega$ has a smooth boundary, then as shown in \cite{Mikhlin1973}, the essential spectrum is the set $\{\frac12,1\}$ with an eigenspace of infinite dimension at $\sigma=1$ and an accumulation point of eigenvalues at $\sigma=\frac12$. 
In \cite{CoCrDaLa15}, it is shown that if $\Omega$ is a polygonal plane domain with corner angles $\omega_j$, then the essential spectrum is the union of $\{1\}$ and of the closed intervals
\[
   \Big[\frac12-\frac{\sin\omega_j}{2\omega_j},\frac12+\frac{\sin\omega_j}{2\omega_j}\Big]\,,
\] 
whereas for three-dimensional piecewise smooth domains, the essential spectrum contains at least such intervals associated with the opening angles of the edges.
In many cases of non-smooth domains, for example a square in two dimensions, it is unknown whether $\sigma(\Omega)$ is the bottom of the essential spectrum of $\SS$ or an isolated eigenvalue below the essential spectrum. 

Due to the presence of the essential spectrum, the eigenvalue problem \cref{E:StokesEV} does not fit into any known convergence analysis of eigenvalue approximation methods (and this is the main reason why we had to develop the analysis presented in this section). In particular, the discrete eigenfunctions $q_{n}$ will, in general, not converge to an eigenfunction of the corresponding continuous eigenvalue problem. This is true even when our sufficient conditions for the convergence of the eigenvalues $\sigma_{n}\to\beta(\Omega)^{2}$ are satisfied. We can, however, prove convergence of $q_{n}$ if the continuous eigenvalue $\sigma(\Omega)=\beta(\Omega)^{2}$ lies below the essential spectrum. Due to the inherent non-uniqueness of eigenfunctions, the convergence has to be formulated in a somewhat convoluted fashion.
\begin{theorem}
\label{T:convqN}
Let the sequence of finite-dimensional subspaces $X_{n}$ and $M_{n}$ be such that the sufficient conditions of \cref{T:bNconv} for the validity of 
$\lim_{n\to\infty}\beta_{n} = \beta(\Omega)$ are satisfied. Assume further that $\sigma_{0}=\beta(\Omega)^{2}$ is an isolated eigenvalue of the Schur complement operator $\SS=\div\Delta^{-1}\grad$ and let $\mathcal{P}$ be the orthogonal projection operator in $L^2_\circ(\Omega)$ onto the eigenspace of $\SS$ with eigenvalue $\sigma_{0}$. Define $q_{n}$ to be a normalized eigenfunction of the operator $\SS_{n}$ defined in \cref{E:SchurEV} associated with its smallest eigenvalue $\sigma_{n}=\beta_{n}^{2}$. \\
Then $q_{n}$ converges to an eigenfunction of $\SS$ in the following sense:
\begin{equation}
\label{E:q(N)}
 \lim_{n\to\infty}\DNorm{q_{n}- \mathcal{P}q_{n}}{0,\Omega}=0\,.
\end{equation}
\end{theorem}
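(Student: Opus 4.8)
I want to show that the normalized discrete eigenfunctions $q_{n}$ become asymptotically concentrated in the eigenspace $\mathcal{E}_{0}=\operatorname{ran}\mathcal{P}$ of $\SS$ at the isolated eigenvalue $\sigma_{0}=\beta(\Omega)^{2}$. The starting point is the Rayleigh-quotient characterization already used in the proof of \cref{T:bNconv}: since $q_{n}$ minimizes the discrete Rayleigh quotient, we have $\langle\SS_{n}q_{n},q_{n}\rangle_{\Omega}=\beta_{n}^{2}$, and the key estimate \cref{E:Galerr} gives $\Norm{\bw(q_{n})-\bw_{n}(q_{n})}{1,\Omega}\le\delta_{n}$ with $\delta_{n}\to0$ (using $\DNorm{q_{n}}{0,\Omega}=1$). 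This lets me compare the continuous and discrete Rayleigh quotients on the \emph{same} function $q_{n}$: indeed $\langle\SS q_{n},q_{n}\rangle_{\Omega}=\Normc{\bw(q_{n})}{1,\Omega}$ while $\langle\SS_{n}q_{n},q_{n}\rangle_{\Omega}=\Normc{\bw_{n}(q_{n})}{1,\Omega}$, so
\[
  \big|\langle\SS q_{n},q_{n}\rangle_{\Omega}-\beta_{n}^{2}\big|
  = \big|\Normc{\bw(q_{n})}{1,\Omega}-\Normc{\bw_{n}(q_{n})}{1,\Omega}\big|
  \le \big(\Norm{\bw(q_{n})}{1,\Omega}+\Norm{\bw_{n}(q_{n})}{1,\Omega}\big)\delta_{n}
  \le 2\delta_{n}\,,
\]
since both norms are bounded by $\DNorm{q_{n}}{0,\Omega}=1$ (as $\DNorm{\div\bv}{0,\Omega}\le\Norm{\bv}{1,\Omega}$). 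Combined with $\beta_{n}\to\beta(\Omega)$, this yields $\langle\SS q_{n},q_{n}\rangle_{\Omega}\to\sigma_{0}$, i.e. the $q_{n}$ form an almost-minimizing sequence for the \emph{continuous} Rayleigh quotient.

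**From almost-minimizer to spectral concentration.** Now I invoke the spectral theorem for the bounded self-adjoint operator $\SS$ on $L^2_\circ(\Omega)$. Because $\sigma_{0}$ is the bottom of the spectrum of $\SS$ (it equals $\inf$ of the Rayleigh quotient, i.e. $\beta(\Omega)^{2}$) and is an \emph{isolated} eigenvalue by hypothesis, there is a gap: $\operatorname{spec}(\SS)\setminus\{\sigma_{0}\}\subset[\sigma_{0}+\gamma,\infty)$ for some $\gamma>0$. Decompose $q_{n}=\mathcal{P}q_{n}+q_{n}^{\perp}$ with $q_{n}^{\perp}=(\mathrm{Id}-\mathcal{P})q_{n}\perp\mathcal{E}_{0}$. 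Using the spectral measure $dE_{\lambda}$ of $\SS$,
\[
  \langle\SS q_{n},q_{n}\rangle_{\Omega}
  = \sigma_{0}\DNormc{\mathcal{P}q_{n}}{0,\Omega}
    + \int_{[\sigma_{0}+\gamma,\infty)}\lambda\,d\langle E_{\lambda}q_{n}^{\perp},q_{n}^{\perp}\rangle_{\Omega}
  \ge \sigma_{0}\DNormc{\mathcal{P}q_{n}}{0,\Omega} + (\sigma_{0}+\gamma)\DNormc{q_{n}^{\perp}}{0,\Omega}\,,
\]
and since $\DNormc{\mathcal{P}q_{n}}{0,\Omega}+\DNormc{q_{n}^{\perp}}{0,\Omega}=1$, this reads $\langle\SS q_{n},q_{n}\rangle_{\Omega}\ge\sigma_{0}+\gamma\,\DNormc{q_{n}^{\perp}}{0,\Omega}$. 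Therefore
\[
  \DNormc{q_{n}-\mathcal{P}q_{n}}{0,\Omega}=\DNormc{q_{n}^{\perp}}{0,\Omega}
  \le \frac{1}{\gamma}\big(\langle\SS q_{n},q_{n}\rangle_{\Omega}-\sigma_{0}\big)
  \le \frac{1}{\gamma}\big(2\delta_{n}+\beta_{n}^{2}-\beta(\Omega)^{2}\big)\xrightarrow[n\to\infty]{}0\,,
\]
which is exactly \cref{E:q(N)}.

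**Where the care is needed.** The routine parts are the change of Rayleigh quotients and the spectral-gap computation; the one point that needs a clean argument is that $\sigma_{0}=\beta(\Omega)^{2}$ really is the \emph{minimum} of $\operatorname{spec}(\SS)$ and not merely an isolated eigenvalue sitting somewhere inside the spectrum — otherwise almost-minimization of the Rayleigh quotient would force $q_{n}$ toward the bottom of the spectrum rather than toward $\mathcal{E}_{0}$. This is true because $\beta(\Omega)^{2}=\inf_{q}\Normc{\bw(q)}{1,\Omega}/\DNormc{q}{0,\Omega}=\inf_{q}\langle\SS q,q\rangle_{\Omega}/\DNormc{q}{0,\Omega}=\min\operatorname{spec}(\SS)$, by the definition of $\beta(\Omega)$ and self-adjointness of $\SS$; I would state this explicitly. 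The only other thing to verify is that $\SS$ is indeed bounded and self-adjoint on $L^2_\circ(\Omega)$, which follows from $\SS=\div\Delta^{-1}\grad$ and the symmetry of the Dirichlet form (equation \cref{E:w(q)}): $\langle\SS q,p\rangle_{\Omega}=\langle\grad\bw(q),\grad\bw(p)\rangle_{\Omega}$, manifestly symmetric, with norm $\le1$. No difficulty should arise from the essential spectrum: the gap hypothesis $\gamma>0$ is precisely what isolates $\sigma_{0}$ from it, and the spectral projection $\mathcal{P}$ onto the (possibly multi-dimensional) eigenspace absorbs all the non-uniqueness, which is why the conclusion is phrased as $\DNorm{q_{n}-\mathcal{P}q_{n}}{0,\Omega}\to0$ rather than convergence to a fixed limit.
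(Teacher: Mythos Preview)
Your proof is correct and follows essentially the same route as the paper: both arguments show that $\langle \SS q_{n},q_{n}\rangle_{\Omega}=J(q_{n})^{2}\to\sigma_{0}$ using the Galerkin error estimate from the proof of \cref{T:bNconv}, then exploit the spectral gap above $\sigma_{0}$ to bound $\DNormc{q_{n}-\mathcal{P}q_{n}}{0,\Omega}$ by $\gamma^{-1}\big(\langle \SS q_{n},q_{n}\rangle_{\Omega}-\sigma_{0}\big)$. Your version is slightly more explicit (you spell out why $\sigma_{0}=\min\operatorname{spec}(\SS)$ and why $\SS$ is bounded self-adjoint, and you invoke the spectral measure rather than the direct orthogonal decomposition), but the substance is identical.
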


\begin{proof}
It follows from the selfadjointness of $\SS$  that $\mathcal P$ and $\SS$ commute. 
Since the smallest eigenvalue $\sigma_{0}$ is isolated, there exists $\delta>0$ such that for all   $q\in\ker\mathcal{P}$ there holds
$\big\langle \SS q,q  \big\rangle_\Omega  \ge (\sigma_{0}+\delta)\DNormc{q}{0,\Omega}.$

We will now use the notation from the proof of \cref{P:USC}. In particular, we have for any $q$
\[
  \big\langle \SS q,q  \big\rangle_\Omega  = \big\langle \div\bw(q),q  \big\rangle_\Omega 
  = \Normc{\bw(q)}{1,\Omega} = J(q)^{2}\,.
\]
We can therefore estimate
\[
 \begin{aligned}
 \sigma_{0} 
  &= \sigma_{0}\big(\DNormc{\mathcal{P}q_{n}}{0,\Omega}
    + \DNormc{q_{n}-\mathcal{P}q_{n}}{0,\Omega} \big)\\
  &\le \sigma_{0}\DNormc{\mathcal{P}q_{n}}{0,\Omega}
    + (\sigma_{0}+\delta)\DNormc{q_{n}-\mathcal{P}q_{n}}{0,\Omega}\\
  &\le \big\langle \SS\mathcal{P}q_{n},q_{n}  \big\rangle_\Omega 
    + \big\langle \SS(q_{n}-\mathcal{P}q_n),q_{n}  \big\rangle_\Omega \\
  & = \big\langle \SS q_{n},q_{n}  \big\rangle_\Omega  = J(q_{n})^{2} \,.    
 \end{aligned}
\]
We also know from the proof of \cref{T:bNconv} that $J(q_{n})\to\beta(\Omega)$ as $n\to\infty$. It follows that 
$\delta\DNormc{q_{n}-\mathcal{P}q_{n}}{0,\Omega}\to0$, hence \cref{E:q(N)}.
\end{proof}
\begin{remark}
\label{R:minim}
Under the sufficient conditions of \cref{T:bNconv} for the convergence of $\beta_{n} \to\beta(\Omega)$, the discrete eigenfunctions  $q_{n}$ define a minimizing sequence both for the functional $J$ and for the Rayleigh quotient of the operator $\SS$. Therefore if there exists a convergent subsequence, it will converge to an eigenfunction of the Schur complement operator $\SS$. If $\sigma_{0}=\beta(\Omega)^{2}$ is below the essential spectrum of $\SS$, hence of finite multiplicity, the previous theorem shows that every subsequence of $q_{n}$ has a subsequence converging to an eigenfunction of $\SS$ associated with $\sigma_{0}$. If, however, $\sigma_{0}$ is not an eigenvalue, then the sequence $q_{n}$ has no subsequence that converges in $L^2(\Omega)$.
\end{remark}

\subsection{Consequences for the convergence of the discrete inf-sup constant in the $p$ and $h$ versions of the finite element method}\label{Ss:pandh}

The sufficient conditions \cref{E:suff} for the convergence $\beta_{n}\to\beta(\Omega)$ given in \cref{T:bNconv} rely on the estimates of the quantities defined by \cref{E:etaN} and \cref{E:epsN}. Such estimates are known to hold in many finite element methods, and we can therefore concretize the convergence theorem for these methods.

Let us assume first that the spaces $X_{n}$ and $M_{n}$ correspond to an $h$ version finite element method. That is, they consist of piecewise polynomials of a fixed degree on meshes with meshsize tending to zero. We consider the case where velocity and pressure variables are discretized on two different families of meshes indexed by $n\in\N$. The inclusions $X_{n}\subset H^1_0(\Omega)^d$ and 
$M_{n}\subset L^2_\circ(\Omega)$ mean that $X_{n}$ consists of globally continuous functions, whereas the elements of $M_{n}$ may be discontinuous.
For the mesh of the velocity space $X_{n}$, we define $\overline h_{X_n}$ as the diameter of the largest element, and for the mesh of the pressure space $M_{n}$ we define $\underline h_{M_n}$ as the smallest radius of the inscribed sphere of any element.
Both $\overline h_{X_n}$ and $\underline h_{M_n}$ tend to zero as $n\to\infty$.

Under very mild conditions on local uniformity and shape regularity of the meshes
one has standard finite element approximation properties and inverse estimates that imply the existence of a constant $C$ independent of $n$ such that,  for some $s\in(0,\frac12)$,
\begin{equation}
\label{E:approx_h}
  \forall\, \bu\in (H^{1+s}(\Omega)\cap H^{1}_{0}(\Omega))^{d},\quad
  \inf_{\bv\in X_{n}}\Norm{\bu-\bv}{1,\Omega} 
    \le C\, (\overline h_{X_n})^{s} \DNorm{\bu}{1+s,\Omega}
\end{equation}
and
\begin{equation}
\label{E:inverse_h}
 \forall\, q\in M_{n}, \quad
 \DNorm{q}{s,\Omega} \le C\, (\underline h_{M_n})^{-s}\, \DNorm{q}{0,\Omega} \,.
\end{equation}
The approximation estimate \cref{E:approx_h} can be deduced by Hilbert space interpolation from the 
$O(\overline h_{X_n})$ approximation property for $\bu\in H^{2}(\Omega)^{d}$ that can be found in many textbooks on the mathematical theory of finite elements, see for example \cite{BrennerScott,Ciarlet1991}.
The inverse estimate \cref{E:inverse_h}, which we need for discontinuous elements, is proved in \cite{GrahamHS2005} under very general hypotheses on the meshes that include highly anisotropic elements.

Thus if the estimates \cref{E:approx_h} and \cref{E:inverse_h} are satisfied, we compare \cref{E:approx_h} with \cref{E:epsN} and \cref{E:inverse_h} with \cref{E:etaN} and conclude $\varepsilon_{n,s}\le C\overline h_{X_n}^{\,s}$ and 
$\eta_{n,s}\le C\underline h_{M_n}^{-s}$. In this case,  
\cref{T:bNconv} has the following corollary.
\begin{corollary}
\label{C:hversion}
 In the $h$ version of the finite element method,
\begin{equation}
\label{E:hversion}
 \mbox{ if }\quad
 \lim_{n\to\infty} \frac{\overline h_{X_n}}{\underline h_{M_{n}}} = 0
 \quad\mbox{ then }\quad
 \lim_{n\to\infty} \beta_{n} = \beta(\Omega)\,.
\end{equation}
\end{corollary}

\begin{remark}
\label{R:Taylor-Hood}
For many popular inf-sup stable elements, such as the Taylor-Hood pair of elements (see \cite[Chap.~II, Section 4.2]{GiraultRaviart}, \cite{HoodTaylor1973}), the velocity and pressure are discretized on the same mesh, in which case \cref{E:hversion} is not satisfied and \cref{C:hversion} gives no information on the convergence (or divergence) of $\beta_{n}$. 

The asymptotic condition \cref{E:hversion} may seem unusual, 
but it has been known for a long time that when the mesh size for the pressure is sufficiently larger than that for the velocity, then the corresponding pair of elements is inf-sup stable. An example, where the velocity mesh is a straightforward refinement of the pressure mesh, is given at the end of \cite[Chap.~II, Section 4.2]{GiraultRaviart} and in \cite[Chap.~4, Section 3.2]{Pironneau1989}.
\end{remark}

Second, we consider spectral methods or the $p$ version of the finite element method. Here the meshes are fixed, and the spaces $X_{n}$ and $M_{n}$ consist of (piecewise) polynomials of degrees $p_{X_n}$ and $p_{M_n}$, respectively, tending to infinity as $n\to\infty$. 
Then the typical approximation properties and inverse estimates are
\begin{equation}
\label{E:approx_p}
  \forall\, \bu\in  (H^{1+s}(\Omega)\cap H^{1}_{0}(\Omega))^{d},  \quad
  \inf_{\bv\in X_{n}}\Norm{\bu-\bv}{1,\Omega} 
    \le C\, (p_{X_n})^{-s}\, \DNorm{\bu}{1+s,\Omega}
\end{equation}
and
\begin{equation}
\label{E:inverse_p}
 \forall\, q\in M_{n}, \quad
 \DNorm{q}{s,\Omega} \le C\, (p_{M_n})^{2s}\, \DNorm{q}{0,\Omega} \,.
\end{equation}
Again, the approximation property for continuous elements \cref{E:approx_p} is standard, see \cite{BabuskaSuri94}. The inverse estimate \cref{E:inverse_p} under very general assumptions on the meshes is proved in \cite{Georgoulis2008}, where one can even find corresponding estimates for the $hp$ version of the finite element method. 

Now if the estimates \cref{E:approx_p} and \cref{E:inverse_p} are satisfied, we find $\varepsilon_{n,s}\le Cp_{X_n}^{-s}$ and 
$\eta_{n,s}\le Cp_{M_n}^{2s}$, and in this case
we obtain the second corollary of \cref{T:bNconv}.
\begin{corollary}
\label{C:pversion}
 In the $p$ version of the finite element method and in spectral methods,
\begin{equation}
\label{E:pversion}
 \mbox{ if }\quad
 \lim_{n\to\infty} \frac{(p_{M_n})^{2}}{p_{X_n}} = 0
 \quad\mbox{ then }\quad
 \lim_{n\to\infty} \beta_{n} = \beta(\Omega)\,.
\end{equation}
\end{corollary}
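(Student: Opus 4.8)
The plan is to obtain this as an immediate specialization of \cref{T:bNconv}, using the approximation estimate \cref{E:approx_p} and the inverse estimate \cref{E:inverse_p}. First I would fix a regularity index $s\in(0,\frac12)$ for which both \cref{E:approx_p} and \cref{E:inverse_p} hold; the hypotheses on the (fixed) meshes guarantee that such an $s$ exists, and any such choice will do. Comparing \cref{E:approx_p} with the definition \cref{E:epsN} of $\varepsilon_{n,s}$ gives $\varepsilon_{n,s}\le C\,(p_{X_n})^{-s}$, and comparing \cref{E:inverse_p} with the definition \cref{E:etaN} of $\eta_{n,s}$ gives $\eta_{n,s}\le C\,(p_{M_n})^{2s}$; in particular the latter shows $M_{n}\subset H^{s}(\Omega)$, as required in the hypotheses of \cref{T:bNconv}.

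Next I would multiply the two bounds to get
\[
 \eta_{n,s}\,\varepsilon_{n,s}\ \le\ C^{2}\,\frac{(p_{M_n})^{2s}}{(p_{X_n})^{s}}
  \ =\ C^{2}\Big(\frac{(p_{M_n})^{2}}{p_{X_n}}\Big)^{s},
\]
so that, since $s>0$, the hypothesis $(p_{M_n})^{2}/p_{X_n}\to0$ forces $\eta_{n,s}\,\varepsilon_{n,s}\to0$. It then remains to verify that the pressure spaces are asymptotically dense in $L^{2}_\circ(\Omega)$: since $p_{M_n}\to\infty$ and (piecewise) polynomials on the fixed mesh are dense in $L^{2}(\Omega)$, any $q\in L^{2}_\circ(\Omega)$ can be approximated in $L^{2}$ by piecewise polynomials of degree $\le p_{M_n}$; subtracting the mean value of each approximant yields a sequence $q_{n}\in M_{n}$, and because $\int_\Omega q=0$ the mean-value correction only improves the error (it is bounded by the current $L^2$ error), so $q_n\to q$ in $L^2(\Omega)$. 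With the density and $\eta_{n,s}\varepsilon_{n,s}\to0$ established, the implication \cref{E:suff} of \cref{T:bNconv} gives $\lim_{n\to\infty}\beta_{n}=\beta(\Omega)$, which is exactly \cref{E:pversion}.

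There is essentially no genuine obstacle here: all the analytic difficulty — the $H^{1+s}$ Dirichlet regularity on Lipschitz domains, the Galerkin error estimate for $\bw(q_n)-\bw_n(q_n)$, and the avoidance of any eigenvalue-approximation machinery in the presence of essential spectrum — has already been absorbed into \cref{T:bNconv}. The only mildly delicate point is confirming that a single exponent $s\in(0,\frac12)$ can serve simultaneously in \cref{E:approx_p} and in \cref{E:inverse_p}, which is provided by the cited references; once $s$ is fixed the argument is the elementary computation above. The very same reasoning proves \cref{C:hversion}, with $(p_{X_n})^{-s}$ replaced by $\overline h_{X_n}^{\,s}$ and $(p_{M_n})^{2s}$ replaced by $\underline h_{M_n}^{-s}$, so that $\eta_{n,s}\varepsilon_{n,s}\le C^{2}\big(\overline h_{X_n}/\underline h_{M_n}\big)^{s}$.
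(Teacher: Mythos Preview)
Your proposal is correct and follows the same approach as the paper: bound $\varepsilon_{n,s}\le C(p_{X_n})^{-s}$ and $\eta_{n,s}\le C(p_{M_n})^{2s}$ from \cref{E:approx_p}--\cref{E:inverse_p} and invoke \cref{T:bNconv}. You are slightly more explicit than the paper in checking the asymptotic density of $M_n$ and in writing out the product $\eta_{n,s}\varepsilon_{n,s}\le C^2\big((p_{M_n})^2/p_{X_n}\big)^s$, but the argument is the same.
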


\begin{remark}
\label{R:CNS?}
While the conditions \cref{E:hversion} and \cref{E:pversion} are sufficient for the convergence of the discrete inf-sup constants to the exact inf-sup constant of the domain, they might not be necessary. In  Section~\ref{Ss:NumEx}  this question is studied via concrete examples of finite element approximations. It turns out that for the $h$ version there are examples where \cref{E:hversion} is not satisfied and the $\beta_{n}$ converge to arbitrarily given small positive values. On the other hand, for the $p$ version we only have numerical observations, strongly indicating that the condition \cref{E:pversion} on the degrees is too restrictive: We only found situations where either the $\beta_{n}$ converge to $0$ (no discrete LBB condition) or else they converge to $\beta{(\Omega)}$. The optimal condition instead of \cref{E:pversion} may be conjectured to be
\[
  \limsup_{n\to\infty} \frac{p_{M_n}}{p_{X_n}} < 1\,.
\]
\end{remark}

\subsection{Examples of finite element approximations}\label{Ss:NumEx}
We discuss two examples: the first one is devoted to the Scott-Vogelius triangular element in the $h$-version, and the second one to the $p$-version (or spectral discretization) on rectangular elements.
 The examples address the points made in \cref{R:CNS?}. The first example illustrates the situation of a finite element method that satisfies a discrete LBB condition, but the discrete LBB constants converge to a limit that may be arbitrarily small. The second example reports on numerical observations concerning the degree condition \cref{E:pversion}.

\subsubsection{Scott-Vogelius $\P_{4}$--${\P_{3}}^{\!{\rm dc}}$ triangular element}
Let $\TT$ be a triangulation of the Lipschitz polygonal domain $\Omega$. We denote by $\overline h_{\TT}$ the largest diameter of its elements. For any node $\bx$ of $\TT$, let $K_1,\ldots,K_J$ be the triangles of $\TT$ that have $\bx$ as vertex, ordered so that $K_j$ and $K_{j+1}$ have a common edge for all $j=1,\ldots,J-1$. Let $\theta_j$ be the opening of $K_j$ at its vertex $\bx$.
The regularity index at $\bx$ is defined as
\begin{equation}
\label{eq:SVRegx}
   \RR(\bx) = \max_{j=1,\ldots,J-1} |\theta_j+\theta_{j+1}-\pi|\,.
\end{equation}
If $\RR(\bx)=0$, $\bx$ is said singular. Note that any interior singular point satisfies $J=4$ and the edges to which $\bx$ belongs are contained in two lines. We also denote
\begin{equation}
\label{eq:SVRegT}
   \RR(\TT) = \min_{\bx\text{\ node of\ }\TT} \RR(\bx).
\end{equation}

Let us pick a quasiuniform family of triangulations $(\TT_n)_n$ and choose the discrete $\P_{4}$--${\P_{3}}^{\!{\rm dc}}$ spaces (continuous piecewise polynomials of (total) degree $4$ for velocities, discontinuous piecewise polynomials of degree $3$ for pressures):
\begin{equation}
\label{eq:SVXM}
\begin{cases}
   X_n = \{\bv\in H^1_0(\Omega)^2,\quad \bv\big|_K\in\P_4(K)^2 \mbox{\ \ for all \ } K\in\TT_n\} ,\\
   M_n = \{q\in L^2_\circ(\Omega),\quad q\big|_K\in\P_3(K)\mbox{\ \ for all \ } K\in\TT_n\}.
\end{cases}
\end{equation}
The result of \cite[Th.5.2]{ScottVogelius1985} provides the following implication
\begin{equation}
\label{eq:SVresult} 
   \big(\exists\delta>0 \mbox{\ such that\ } \forall n,\ \RR(\TT_n)\ge\delta\big)
   \quad\Longrightarrow\quad
   \big(\exists\beta_\star>0 \mbox{\ such that\ } \forall n,\ \beta_n\ge\beta_\star\big).
\end{equation}
However this implication does not guarantee the convergence of $\beta_n$ to $\beta(\Omega)$. We now show that convergence to a value lower than $\beta(\Omega)$ may occur.

\begin{proposition}
\label{prop:SV}
Let $\Omega$ be a Lipschitz polygonal domain. There exists $\beta_0>0$ such that for all $\beta_\infty\in(0,\beta_0]$ the following property holds: There exists a sequence of quasiuniform triangulations $(\TT_n)_n$ such that $\overline h_{\TT_n}$ tends to $0$ and the inf-sup constant $\beta_n$ associated with the spaces $X_n$ and $M_n$ given by \cref{eq:SVXM} converges to $\beta_\infty$ as $n$ tends to $\infty$.
\end{proposition}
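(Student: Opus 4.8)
The plan is to construct the triangulations $\TT_n$ so that they contain a small ``bad'' region---a patch whose local geometry forces the discrete inf-sup constant to be small---while being globally quasiuniform and having $\overline h_{\TT_n}\to0$. The key mechanism is the following: if we glue into $\Omega$ a rescaled copy of a fixed reference configuration $(\omega,\mathcal{G})$ where $\omega$ is a small polygonal subdomain and $\mathcal{G}$ is a fixed triangulation of $\omega$ for which the Scott--Vogelius pair has inf-sup constant $\beta_{\mathcal{G}}$, then by the monotonicity/localization properties of the inf-sup constant one gets $\beta_n\le \beta_{\mathcal{G}}$ for every $n$ as soon as the copy of $\mathcal{G}$ sits inside $\TT_n$. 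Indeed, a pressure $q$ supported in the subdomain $\omega$ (extended by zero and corrected to have mean zero, which is harmless since $\omega$ is small) is a legitimate competitor in the discrete inf-sup quotient over $\Omega$, and the velocity test functions in $X_n$ that can pair with it nontrivially are essentially those supported near $\omega$; by scale invariance of the $H^1$-seminorm-based quotient in $2$D, the resulting quantity is controlled by the inf-sup constant of the Scott--Vogelius pair on the reference pair $(\omega,\mathcal{G})$. So $\limsup_n\beta_n\le\beta_{\mathcal{G}}$.

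Next, to make $\beta_n$ actually converge to a prescribed value $\beta_\infty$, rather than merely stay below $\beta_{\mathcal{G}}$, I would combine two triangulations: away from the bad patch, use a family of triangulations whose local regularity index $\RR$ is bounded below, so that by \cref{eq:SVresult} and, more importantly, by the convergence result for the $h$-version (here one wants the velocity and pressure effectively on comparable meshes, but one can instead invoke the upper semi-continuity \cref{P:USC} together with a matched lower bound) the ``good part'' contributes a discrete inf-sup constant close to $\beta(\Omega)$; inside a fixed small subdomain insert the reference configuration $\mathcal{G}$ with $\beta_{\mathcal{G}}=\beta_\infty$. The value $\beta_0$ is then taken to be the supremum of inf-sup constants achievable by Scott--Vogelius pairs on reference patches containing a nearly-singular vertex: since one can tune the angles $\theta_j,\theta_{j+1}$ at an interior vertex so that $\RR(\bx)$ is small but positive (hence the pair is still inf-sup stable, giving a positive $\beta_{\mathcal{G}}$) and continuously degrade towards the singular case where the local inf-sup constant drops, one obtains a whole interval $(0,\beta_0]$ of attainable limits. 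The normalization $\beta_n\to\beta_\infty$ exactly (not just $\le$) follows because the bad patch caps $\beta_n$ at $\beta_\infty$ from above while a matching family of local test pressures realizing that value shows $\beta_n$ cannot drop below $\beta_\infty-o(1)$: outside the patch the mesh is chosen good enough that no pressure there produces a smaller quotient.

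More concretely, the steps are: (1) Fix a reference triangulated patch $(\omega,\mathcal{G})$ with an interior vertex $\bx$ whose regularity index $\RR(\bx)$ is a chosen small $\delta>0$; compute (or estimate) the Scott--Vogelius inf-sup constant $\beta_{\mathcal{G}}$ on this patch, and show that as $\delta$ ranges over $(0,\delta_0)$ the value $\beta_{\mathcal{G}}$ ranges over an interval $(0,\beta_0]$, using continuity of the inf-sup constant with respect to the patch geometry (this is where \cref{T:encadrement} is useful) and the fact that in the limiting singular configuration the inf-sup constant is strictly less than $\beta(\Omega)$. (2) For each $n$, build $\TT_n$ as a quasiuniform triangulation of $\Omega$ of meshsize $\asymp 1/n$ into which a scaled copy $h_n\mathcal{G}$ (with $h_n\asymp1/n$) of the reference patch is embedded, keeping quasiuniformity and keeping $\RR(\TT_n)\ge\delta$ globally; (3) Show $\beta_n\le\beta_{\mathcal{G}}=\beta_\infty$ via the localized pressure test function argument using scale invariance; (4) Show $\liminf_n\beta_n\ge\beta_\infty$ by checking that any minimizing pressure sequence $q_n$ must concentrate on the bad patch (otherwise, by the good global regularity and the approximation/inverse estimates behind \cref{T:bNconv}, the quotient would be $\ge\beta(\Omega)-o(1)>\beta_\infty$), and on the patch the quotient is $\ge\beta_{\mathcal{G}}-o(1)$ by scale invariance and the discrete inf-sup stability there.

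The main obstacle will be step (4), proving the lower bound $\liminf_n\beta_n\ge\beta_\infty$: one must rule out the possibility that a minimizing pressure spreads across the interface between the bad patch and the good region, producing a quotient strictly smaller than both local values. This requires a quantitative localization/decoupling estimate---roughly, a discrete Helmholtz-type or local inf-sup decomposition showing that the global discrete inf-sup constant is, up to $o(1)$, the minimum of the local ones. The cleanest route is probably to prove a lower bound $\beta_n\ge\min(\beta_{\mathcal{G}},\beta_n^{\mathrm{good}})-o(1)$ by a partition-of-unity/Fortin-operator argument on the two overlapping subregions, where on the good subregion $\beta_n^{\mathrm{good}}\to\beta(\Omega)$ by \cref{T:bNconv} (applicable there since one is free to refine the velocity mesh relative to the pressure mesh in that region, e.g.\ using a Scott--Vogelius barycentric refinement or simply the known stability of the pair on meshes with $\RR\ge\delta$), and the patch contributes $\beta_{\mathcal{G}}$ exactly by scale invariance. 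Handling the mean-zero constraint cleanly across the two regions is the technical point to watch.
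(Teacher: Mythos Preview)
Your proposal follows a genuinely different route from the paper's, and it has a real gap.

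The paper's argument is far simpler and avoids both of your hard steps. For each $n$ it builds a \emph{one-parameter} family of quasiuniform meshes $\TT_n(a)$, $a\in[0,\tfrac14]$, by moving a single interior vertex in one quadrilateral: at $a=0$ the vertex is exactly singular (so $\beta_n(0)=0$ by the well-known spurious pressure mode), while at $a=\tfrac14$ the mesh satisfies $\RR(\TT_n)\ge\delta$ uniformly in $n$, so $\beta_n(\tfrac14)\ge\beta_0$ by the Scott--Vogelius result \cref{eq:SVresult}. Since $\beta_n(a)$ is a continuous function of $a$ (it is the square root of the smallest nonzero eigenvalue of a finite-dimensional problem whose matrices depend continuously on $a$), the intermediate value theorem furnishes $a_n\in(0,\tfrac14)$ with $\beta_n(a_n)=\beta_\infty$ \emph{exactly}, for every $n$. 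No localization, no decoupling, no asymptotic lower bound is needed.

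Your step (3) does not yield the claimed inequality $\beta_n\le\beta_{\mathcal G}$. If $q$ is supported in the patch $\omega$, then indeed $\langle\div\bv,q\rangle_\Omega$ depends only on $\bv|_\omega$; but for $\bv\in X_n$ the restriction $\bv|_\omega$ need not vanish on $\partial\omega$, so it is \emph{not} a competitor in the local Dirichlet inf-sup that defines $\beta_{\mathcal G}$. The sup over $X_n$ can therefore exceed the local Dirichlet sup, and what you control is at best an inf-sup constant with a larger (free-boundary) local velocity space, which is not $\beta_{\mathcal G}$. The mechanism that actually makes the upper bound work in this setting is the \emph{algebraic} identity at a singular vertex (the spurious mode satisfies $\langle\div\bv,q\rangle=0$ for \emph{all} continuous $\P_4$ velocities, with or without boundary conditions), and its perturbation for nearly-singular vertices; your write-up does not invoke this. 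Your step (4) you yourself flag as the main obstacle; the paper's IVT device removes the need for it entirely.
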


\begin{proof}
We mesh $\Omega$ by a finite number of  quadrilaterals  $Q$ that are images of the unit square $\widehat{Q} = (0,1)^2$ under bi-affine diffeomorphisms  and denote by $\QQ_1$ this first mesh. For $n\ge2$ we define the quadrangular mesh $\QQ_n$ as the refinement of $\QQ_1$ obtained by the  images   of the meshes associated with the grid $\{0,\frac1n,\frac2n,\ldots,1\}^2$ of $\widehat{Q}$. Each element $Q$ of $\QQ_n$ is itself  bi-affinely  diffeomorphic to $\widehat{Q}$. For a parameter $b\in[0,\frac12)$ and each element $Q$ of $\QQ_n$ we define the interior point $\ba_Q(b)$ as the  image   of $(\frac12,\frac12+b)$. If $b=0$, $\ba_Q(b)$ is the center of $Q$. Then we associate with the four edges $\be$ of $Q$ the four triangles having $\be$ as an edge and $\ba_Q(b)$ as a vertex. Choosing $b=\frac14$, for instance, we define in this way the triangular mesh $\TT^0_n$ and we can check that the  left-hand  condition of \cref{eq:SVresult} for the sequence $(\TT^0_n)_n$ is satisfied. Then the  right-hand  condition of \cref{eq:SVresult} defines $\beta_\star$ and we take $\beta_0=\beta_\star$. Let us choose $\beta_\infty\in(0,\beta_0]$.

In a second step, we pick one element $Q_n$ in each quadrangular mesh $\QQ_n$. We consider another parameter $a\in(-\frac12,\frac12)$ and the interior point $\ba_{Q_n}(a)$. The triangular mesh $\TT_n(a)$ is obtained by the method above with the interior point $\ba_{Q_n}(a)$ for $Q_n$, and $\ba_{Q}(\frac14)$ for the other  quadrilaterals  $Q$ of $\QQ_n$. Let $\beta_n(a)$ be the associated inf-sup constant. It is easy to see that it is a continuous function of $a$. Since $\beta_n(0)=0$ (because the node $\ba_{Q_n}(0)$ is singular) and $\beta_n(\frac14)\ge\beta_0$, there exists $a_n\in(0,\frac14)$ such that $\beta_n(a_n)=\beta_\infty$. Choosing $\TT_n=\TT_n(a_n)$ proves the proposition.
\end{proof}

\begin{figure}[h]
\begin{minipage}{0.42\textwidth}
\includegraphics[width=1\textwidth]{./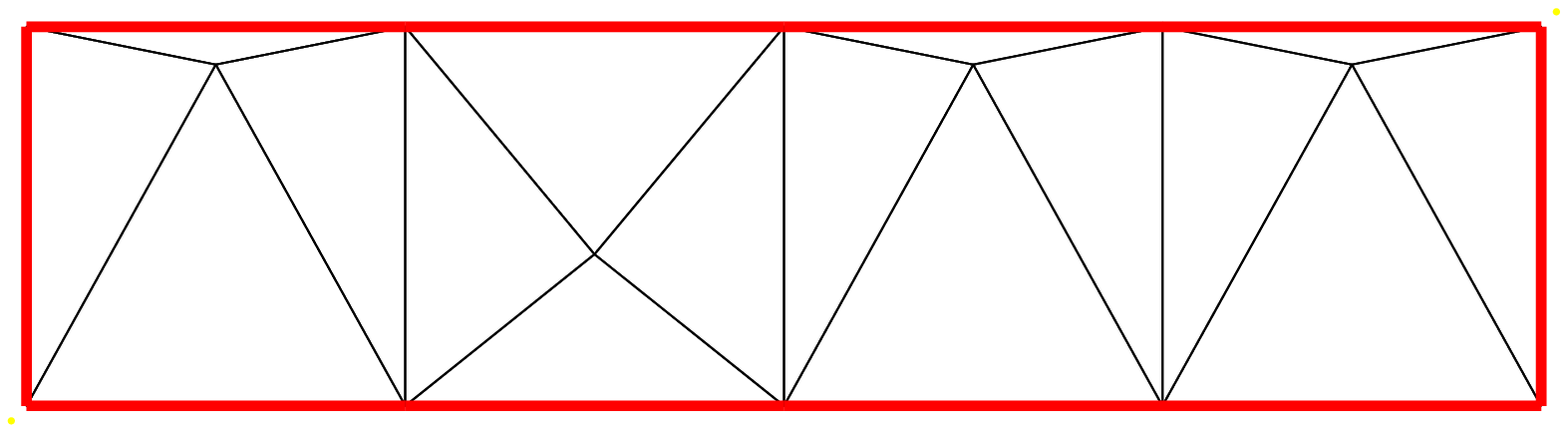}\\[0.5ex]
\includegraphics[width=1\textwidth]{./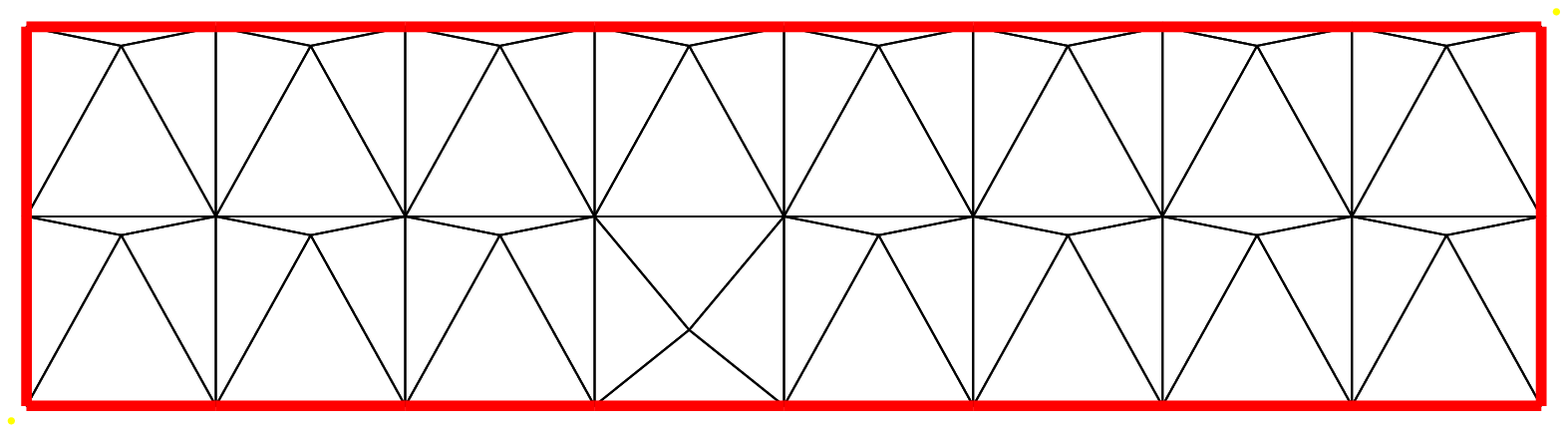}\\[0.5ex]
\includegraphics[width=1\textwidth]{./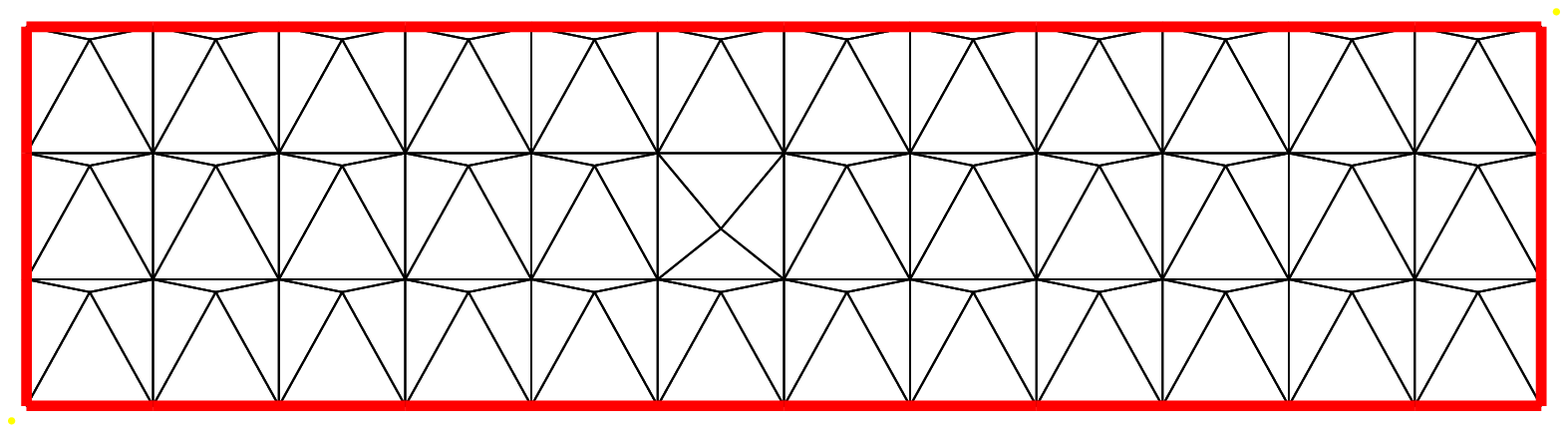}\\[0.5ex]
\includegraphics[width=1\textwidth]{./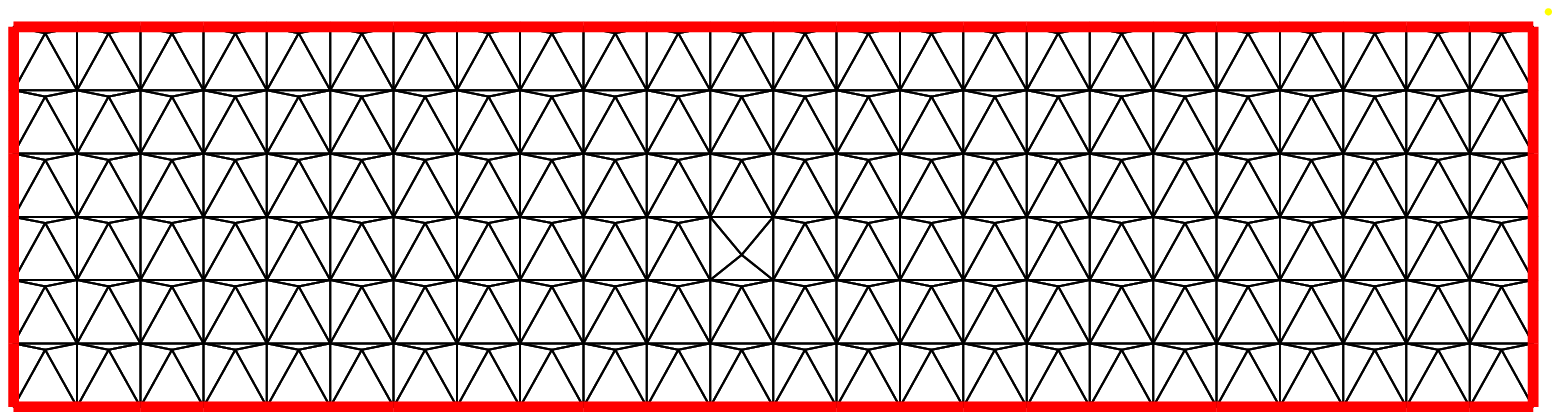}
\end{minipage}
\hfil
\begin{minipage}{0.56\textwidth}
\includegraphics[width=\textwidth]{./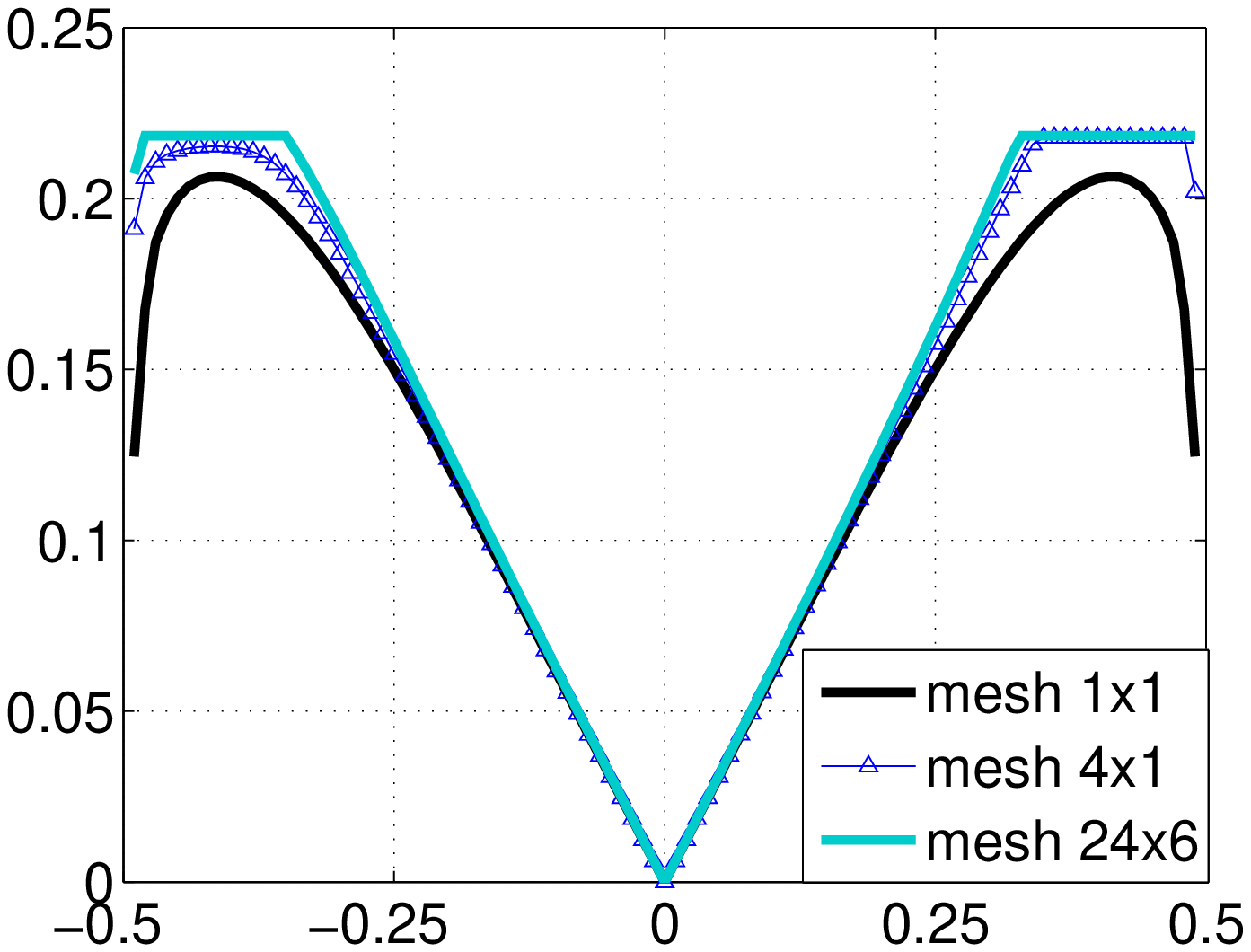}\\
\end{minipage}
\caption{Meshes of $\Omega=(0,4)\times(0,1)$ for $a=-0.1$ and $b=0.4$ (left). $\beta_{n}(a)$ for discrete spaces \cref{eq:SVXM} versus $a\in(-0.5,0.5)$ on a one-element mesh of the square and on two meshes of the rectangle (right)}
\label{Fig:SV1}
\end{figure}
We illustrate this proof by the example of $\Omega$ as the rectangle $(0,4)\times(0,1)$.
In this case, we know \cite[Section~5.1]{CoCrDaLa15} that $\beta(\Omega)^{2}$ is an isolated eigenvalue of the Cosserat operator, and the most precise computations give a value of $\beta(\Omega)\simeq0.218444$. 
 In our computations with the Scott-Vogelius $\P_{4}$--${\P_{3}}^{\!{\rm dc}}$ element, the quadrangular meshes consist of squares, specifically $4\times1$, $8\times2$, $12\times3$, and $24\times6$, see \cref{Fig:SV1} (left). The general ``decentering'' parameter $b$ is chosen as $0.4$ and the special parameter $a$ is running from $-0.49$ to $0.49$ by steps of $0.01$. We have computed with the code \texttt{FreeFem++} \cite{FreeFem} the lowest eigenvalues $\sigma_j$, $j=0,1,\ldots$ of the discrete Stokes system \cref{E:StokesEV}.
The first one $\sigma_0$ is always $0$ and, as already mentioned in Section \ref{Ss:convq1}, the square root of $\sigma_1$ equals the inf-sup constant $\beta_{n}$ of the discretization.

In \cref{Fig:SV1} (right) we observe a quasi-constant value for $\beta_{n}(a)$ when $|a|$ lies between $0.35$ and $0.48$. This value is a good approximation of the inf-sup constant $\beta(\Omega)$. Our computations approach the exact value from below, see \cref{Fig:SV1} (right). For $|a|$ less that $0.3$, we observe a linear behavior of $\beta_{n}(a)$ as the central node $\ba_{Q_n}(a)$ approaches the center (singular point), that is $\beta_{n}(a)$ behaves as a multiple of the regularity index \cref{eq:SVRegx} at $\ba_{Q_n}(a)$.
Interestingly, the same behavior with a very similar proportionality constant can already be observed on the one-element square mesh. 

\begin{figure}[h]
\hglue-0.02\textwidth
\begin{minipage}{0.5\textwidth}
\includegraphics[width=\textwidth]{./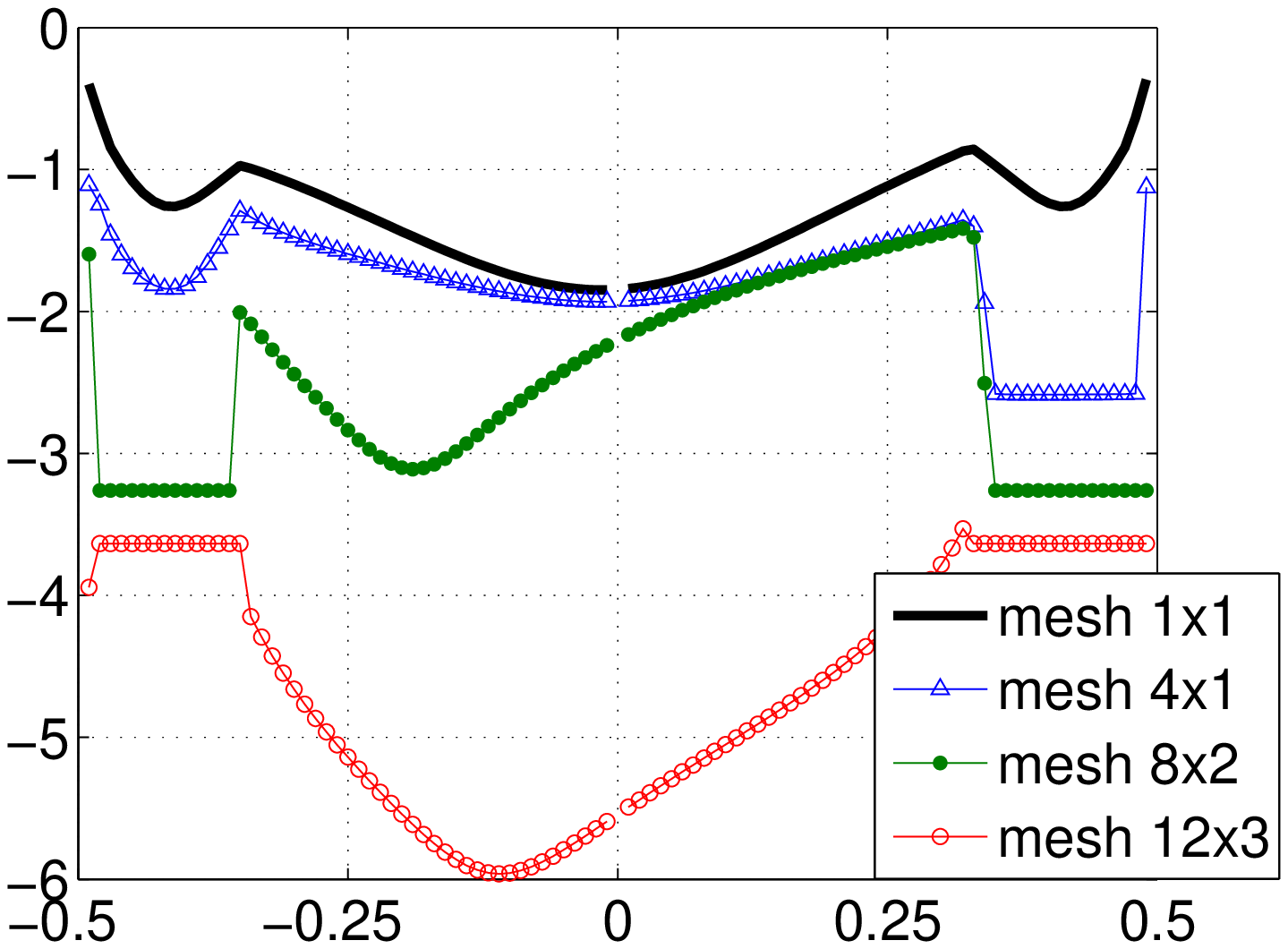}\\[0.5ex]
\end{minipage}
\hfil
\begin{minipage}{0.5\textwidth}
\includegraphics[width=\textwidth]{./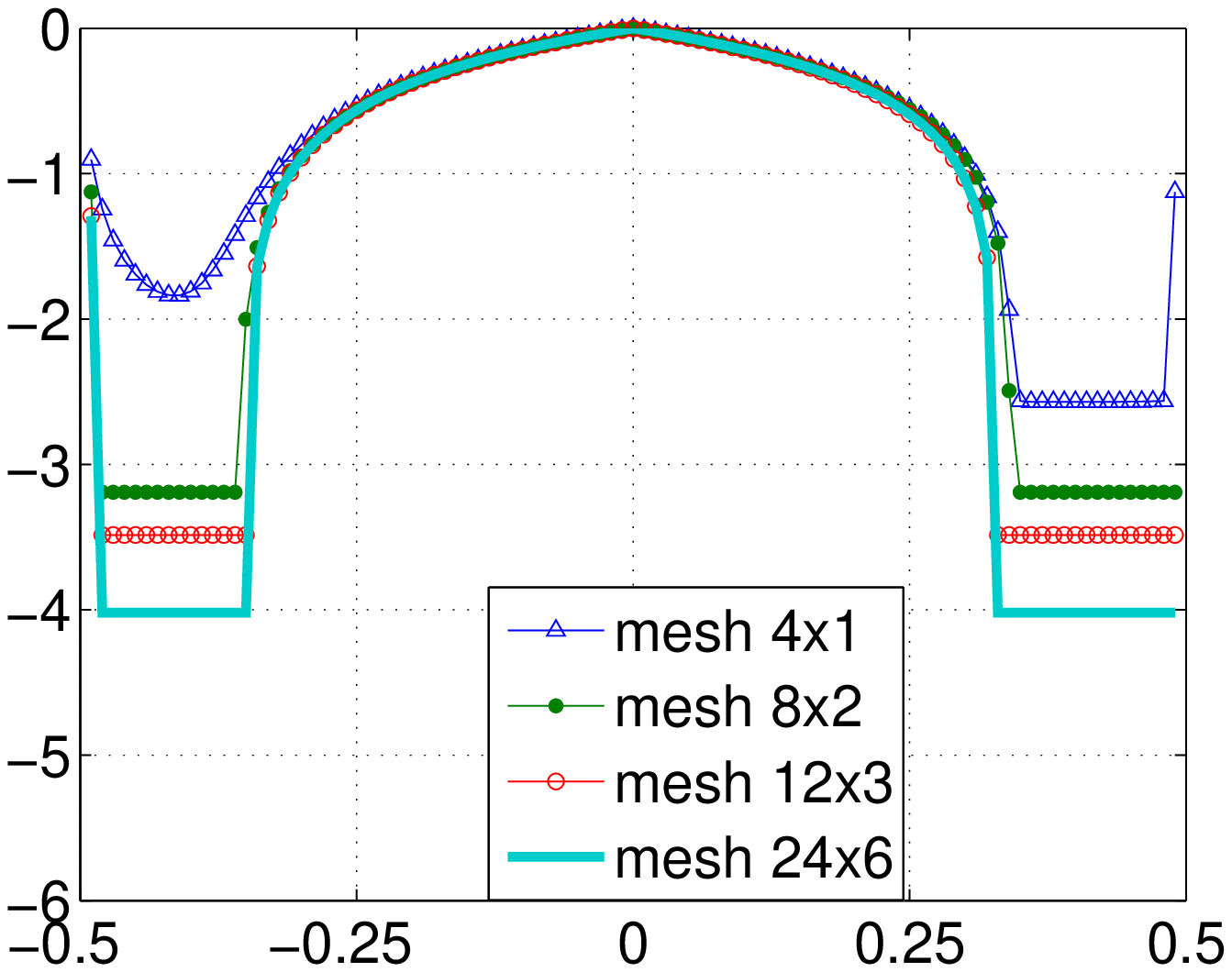}\\
\end{minipage}
\caption{$\beta_{n}(a)$ for discrete spaces \cref{eq:SVXM} versus $a\in(-0.5,0.5)$ on four meshes of the rectangle: Difference with the finer mesh (left) and difference with $\beta(\Omega)$ (right) in log10 scale.}
\label{Fig:SV2}
\end{figure}

In \cref{Fig:SV2} we plot the (relative) difference between the  constants $\beta$  when the mesh grows finer (left) and the difference with $\beta(\Omega)\simeq0.218444$ (right). More precisely, denoting by $\beta_n$ the inf-sup constant associated with the mesh $4n\times n$, $n=1,2,3,6$, and $\beta_\Box$  the inf-sup constant of  the one-element square mesh, we plot $a\mapsto \log_{10}\{(\beta_6(a)-\beta_n(a))/\beta_6(a)\}$, $ n=\Box,1,2,3$ on the left part of \cref{Fig:SV2}, and $a\mapsto \log_{10}(\beta(\Omega)-\beta_n(a))$, $n=1,2,3,6$, on the right part of \cref{Fig:SV2}.

\subsubsection{$p$-version on rectangular elements}
For one spectral element in dimension $2$, i.e. for the square $\Omega=(-1,1)^2$ discretized by means of the space $\Q_p$ of polynomials with  degree $\le p$ with respect to each variable, we have the following results, quoted or proved in \cite{BernardiMaday1999}: As discrete space for velocities let us take
\[
   X_n = H^1_0(\Omega)^2\cap\Q_n^2,\quad n\ge2\,.
\]
Then, according to the choice of the pressure space $M_n$ there holds
\begin{enumerate}
\item If $M_n=\Q_{n-1}$, then $\beta_n=0$.
\item If $M_n=\Q_{n-d}$ with a chosen $d\ge2$, then $\beta_n=\OO(n^{-1})$ as $n\to\infty$.
\item If $M_n=\Q_{n-\lambda n}$ with a chosen $\lambda\in(0,1)$, then $\beta_n\ge\beta_\star>0$.
\end{enumerate}
In this paper, we have proved convergence of $\beta_n$ to $\beta(\Omega)$ if $M_n=\Q_{\lambda_n\sqrt{n}}$ for any sequence of positive numbers $\lambda_n$  such that $\lambda_n\to0$ and $\lambda_n\sqrt{n}\to\infty$ as $n\to\infty$.

We have performed computations on rectangles of different aspect ratios with two meshes $\TT$ with $1\times1$ or $2\times2$ isometric rectangular elements. Here we use the finite element library \texttt{M\'elina++}. Now we set
\begin{equation}
\label{eq:pversionXM}
\begin{cases}
   X_n = \{\bv\in H^1_0(\Omega)^2,\quad \bv\big|_K\in\Q_n(K)^2 \mbox{\ \ for all \ } K\in\TT\} ,\\
   M_n = \{q\in L^2_\circ(\Omega),\quad q\big|_K\in\Q_k(K)\mbox{\ \ for all \ } K\in\TT\}.
\end{cases}
\end{equation}
We observe
\begin{enumerate}
\item If $k=n-1$, then $\beta_n=0$ on the mesh $1\times1$ and $\beta_n$ behaves as in the next point on the mesh $2\times2$.
\item If $k=n-d$ with $d=2,3$, then $\beta_n$ may display a preasymptotic convergence to $\beta(\Omega)$, and eventually tends to $0$ as $n\to\infty$.
\item If $k=n/2$, then $\beta_n$ converges to $\beta(\Omega)$.
\end{enumerate}

\begin{figure}[h]
\hglue-0.02\textwidth
\begin{minipage}{0.5\textwidth}
\includegraphics[width=\textwidth]{./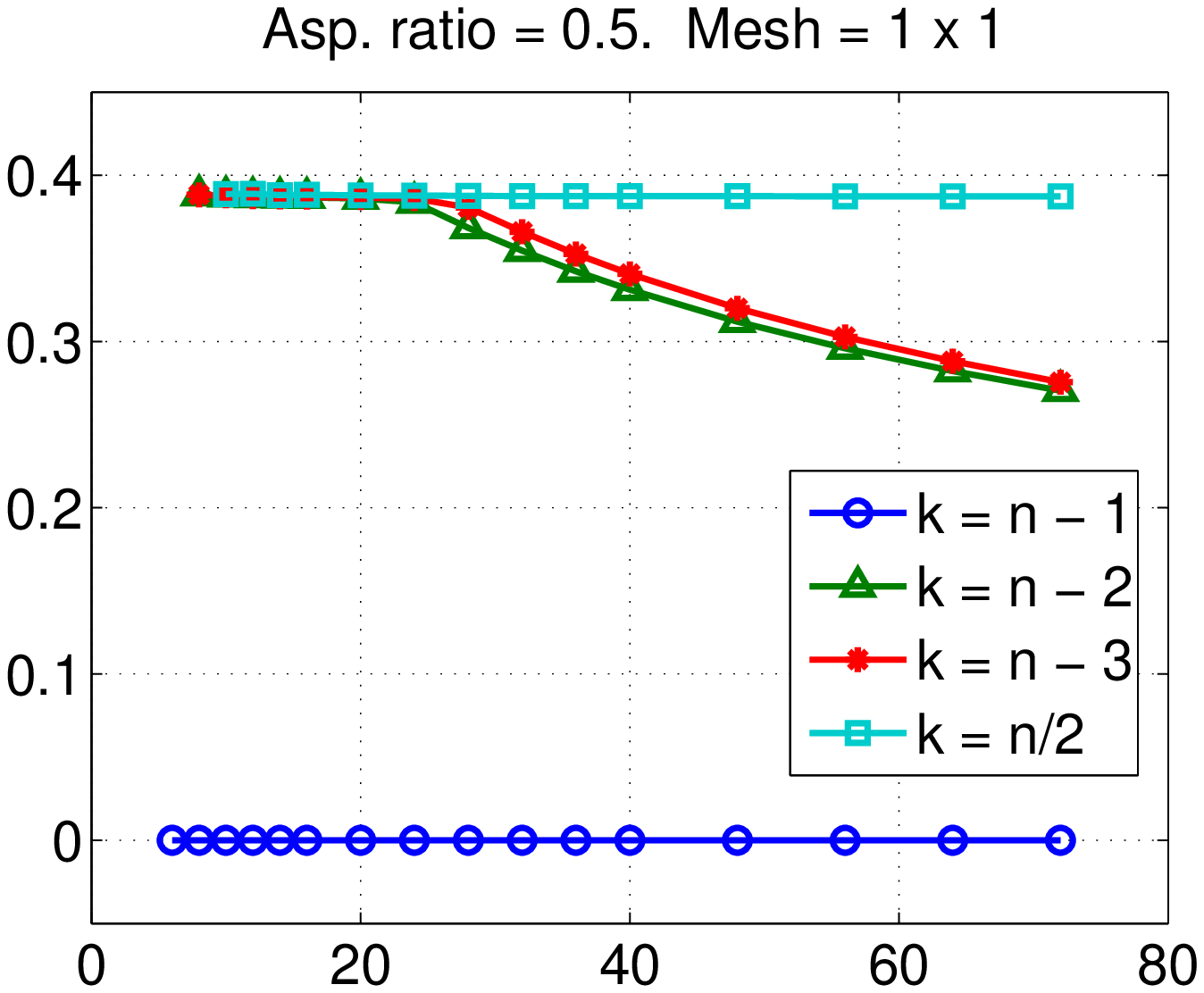}
\end{minipage}
\hfill
\begin{minipage}{0.5\textwidth}
\includegraphics[width=\textwidth]{./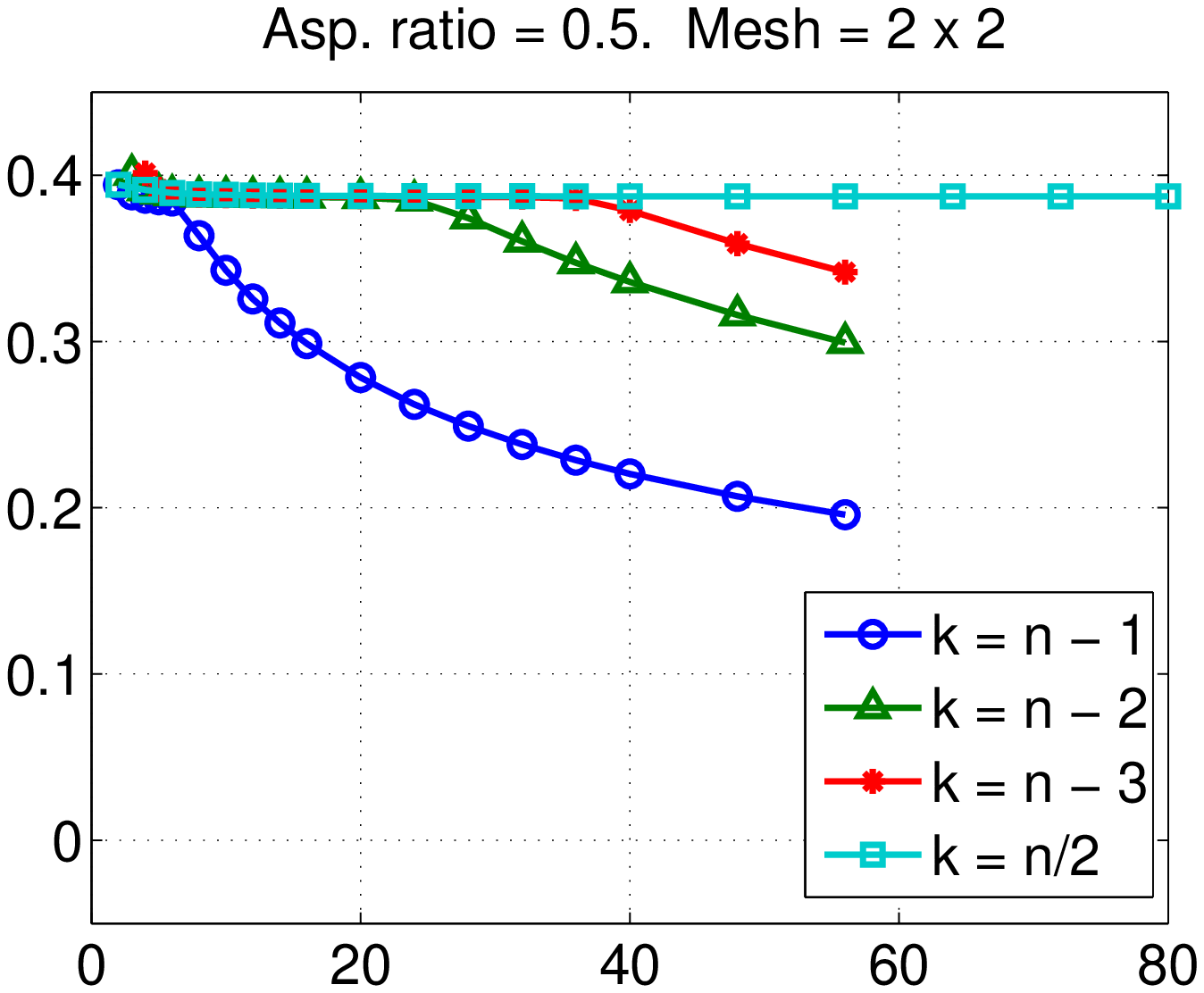}
\end{minipage}
\caption{$\beta_{n}$ for discrete spaces \cref{eq:pversionXM} versus $n$ on two meshes of the rectangle $(0,2)\times(0,1)$.}
\label{Fig:pv1}
\end{figure}

In \cref{Fig:pv1}, we present numerical results for the rectangle $\Omega=(0,2)\times(0,1)$ for which we know that $\beta(\Omega)^2$ is an isolated eigenvalue of the Cosserat operator, and our most precise computations yield the approximate value $0.387262$ for $\beta(\Omega)$.

\bibliographystyle{siam}
\bibliography{infsup}
\bigskip
\bigskip

\end{document}